\newtheoremstyle{citing}
  {3pt}
  {3pt}
  {\itshape}
  {}
  {\bfseries}
  {.}
  {.5em}
  {\thmnote{#3}}
\theoremstyle{citing}
\newtheorem*{citing}{}
\theoremstyle{plain}
\newtheorem{theorem}{Theorem}[section]
\newtheorem{lemma}[theorem]{Lemma}
\newtheorem{corollary}[theorem]{Corollary}
\theoremstyle{remark}
\newtheorem{remark}[theorem]{Remark}
\newtheorem{example}[theorem]{Example}
\theoremstyle{definition}
\newtheorem*{intro_definition}{Definition}
\newtheorem{definition}[theorem]{Definition}
\newtheorem{miniremark}[theorem]{}
\newcounter{counter1}
\newcommand{\Var}{\mathbf{V}}     
\newcommand{\RVar}{\mathbf{RV}}   
\newcommand{\Lp}[1]{\mathbf{L}_{#1}}
\newcommand{\Lploc}[1]{\mathbf{L}_{#1}^{\mathrm{loc}}}
\newcommand{\trunc}{\mathbf{T}}
\newcommand{\nat}{\mathscr{P}}
\newcommand{\rel}{\mathbf{R}}
\newcommand{\grass}[2]{\mathbf{G}(#1,#2)}
\newcommand{\pp}{\mathbf{p}}
\newcommand{\qq}{\mathbf{q}}
\newcommand{\oball}[2]{\mathbf{U}(#1,#2)}
\newcommand{\cball}[2]{\mathbf{B}(#1,#2)}
\newcommand{\density}{\boldsymbol{\Theta}}
\newcommand{\unitmeasure}[1]{\boldsymbol{\alpha}(#1)}
\newcommand{\besicovitch}[1]{\boldsymbol{\beta}(#1)}
\newcommand{\id}[1]{\mathbf{1}_{#1}}
\newcommand{\weakD}{\operatorname{\mathbf{D}}}
\newcommand{\derivative}[2]{{#1}\weakD{#2}}
\newcommand{\boundary}[2]{{#1}\,\partial{#2}}
\newcommand{\ud}{\,\mathrm{d}}
\DeclareMathOperator{\with}{:}
\newcommand{\classification}[3]{{#1} \cap \{ {#2} \with {#3} \}}
\newcommand{\project}[1]{#1_\natural}
\newcommand{\lIm}{[}
\newcommand{\rIm}{]}
\newcommand{\vdim}{{m}}
\newcommand{\adim}{{n}}
\newcommand{\mdistance}[2]{\boldsymbol{|} #1 \boldsymbol{|}_{#2}}
\newcommand{\printRoman}[1]{\setcounter{counter1}{#1}\Roman{counter1}}
\newcommand{\class}[1]{#1}
\newcommand{\tint}[2]{{\textstyle\int_{#1}^{#2}}}
\newcommand{\tsum}[2]{{\textstyle\sum_{#1}^{#2}}}
\newcommand{\measureball}[2]{{#1}\,{#2}}
\DeclareMathOperator{\without}{\sim}
\DeclareMathOperator{\restrict}{\llcorner}
\DeclareMathOperator{\card}{card}   
\DeclareMathOperator{\Bdry}{Bdry}   
\DeclareMathOperator{\Clos}{Clos}   
\DeclareMathOperator{\Int}{Int}     
\DeclareMathOperator{\Tan}{Tan}     
\DeclareMathOperator{\spt}{spt}     
\DeclareMathOperator{\im}{im}       
\DeclareMathOperator{\diam}{diam}   
\DeclareMathOperator{\Lip}{Lip}     
\DeclareMathOperator{\grad}{grad}   
\DeclareMathOperator{\dmn}{dmn}     
\DeclareMathOperator{\dist}{dist}   
\DeclareMathOperator{\Hom}{Hom}     
\DeclareMathOperator{\Der}{D}       
\DeclareMathOperator{\sign}{sign}   
\DeclareMathOperator{\ap}{ap}       
\DeclareMathOperator*{\aplim}{\mathrm{ap}\,\mathrm{lim}}
\newcommand{\Lpnorm}[3]{{#1}_{({#2})}({#3})}
\newcommand{\eqLpnorm}[3]{{(#1)}_{({#2})}({#3})}
\newcommand{\SWnorm}[3]{\mathbf{H}_{#1} ( {#2}, {#3} )}
\newcommand{\SWloc}[1]{\mathbf{H}_{#1}^{\mathrm{loc}}}
\newcommand{\SWSob}[1]{\mathbf{H}_{#1}}
\newcommand{\SWzero}[1]{\mathbf{H}_{#1}^{\diamond}}
\newenvironment{comment}
{

	\begin{footnotesize}
		\begin{quote}
}
{		\end{quote}
	\end{footnotesize}

}
\begin{document}


\title{Sobolev functions on varifolds}
\author{Ulrich Menne}
\maketitle
\begin{abstract}
	This paper introduces first order Sobolev spaces on certain
	rectifiable varifolds.  These complete locally convex spaces are
	contained in the generally nonlinear class of generalised weakly
	differentiable functions and share key functional analytic properties
	with their Euclidean counterparts.

	Assuming the varifold to satisfy a uniform lower density bound and a
	dimensionally critical summability condition on its mean curvature,
	the following statements hold. Firstly, continuous and compact
	embeddings of Sobolev spaces into Lebesgue spaces and spaces of
	continuous functions are available. Secondly, the geodesic distance
	associated to the varifold is a continuous, not necessarily H{\"o}lder
	continuous Sobolev function with bounded derivative.  Thirdly, if the
	varifold additionally has bounded mean curvature and finite measure,
	the present Sobolev spaces are isomorphic to those previously
	available for finite Radon measures yielding many new results for
	those classes as well.

	Suitable versions of the embedding results obtained for Sobolev
	functions hold in the larger class of generalised weakly
	differentiable functions.
\end{abstract}
\paragraph{MSC-classes 2010}
	46E35 (Primary); 49Q15, 53C22 (Secondary).

\paragraph{Keywords}
	Rectifiable varifold, generalised mean curvature, Sobolev function,
	generalised weakly differentiable function, Rellich's theorem,
	embeddings, geodesic distance.
    

\addcontentsline{toc}{section}{\numberline{}Introduction}
\section*{Introduction}
\subsection*{Overview}
The main purpose of this paper is to present a concept of first order Sobolev
functions on nonsmooth ``surfaces'' in Euclidean space with
arbitrary dimension and codimension arising in variational problems involving
the area functional.  The model for such surfaces are certain rectifiable
varifolds, see the general hypothesis below. This class is sufficiently broad
to include area minimising rectifiable currents, perimeter minimising
``Caccioppoli sets'', or typical time slices of ``singular'' mean curvature
flow as well as many surfaces occurring in mathematical models for natural
sciences, see \cite[p.\,2]{snulmenn:tv.v2}.

The envisioned concept should satisfy the following two requirements.
\begin{enumerate}
	\item \label{item:vectorspace} Sobolev functions on varifolds should
	give rise to Banach spaces.
	\item \label{item:properties} Sobolev functions on varifolds should
	share as many embedding estimates and structural results as possible
	with their Euclidean counterparts.
\end{enumerate}
This is accomplished by the present paper which thus provides the basis for
the study of divergence form, second order elliptic partial differential
equations on varifolds in their natural setting.  The new concept is based on
generalised weakly differentiable functions on varifolds introduced by the
author in \cite{snulmenn:tv.v2} along with an array of properties in the
spirit of \eqref{item:properties}.  However, \emph{generalised weakly
differentiable functions do not form a linear space}, hence they violate the
requirement \eqref{item:vectorspace} which is necessary for the use of almost
any standard tool from functional analysis.  The Sobolev spaces on varifolds
introduced here provide a way to overcome this difficulty.  They satisfy
\eqref{item:vectorspace} and, as subsets of the nonlinear space of generalised
weakly differentiable functions, satisfy \eqref{item:properties} as well.

Sobolev functions also provide a new toolbox for the study of the delicate
local connectedness properties of varifolds satisfying suitable conditions on
their first variation.  Understanding these properties is a key challenge in
any regularity consideration.  Local connectedness is analytically measured by
the degree to which control of the gradient of a function entails control on
its oscillation.  Basic estimates in this respect were provided by the Sobolev
Poincar{\'e} inequalities obtained in \cite[\S\,10]{snulmenn:tv.v2} and the
oscillation estimate of \cite[\S\,13]{snulmenn:tv.v2}.

The present paper contains three main contributions to the study of local
connectedness of varifolds satisfying a uniform lower density bound and
dimensionally critical summability condition on their mean curvature, see the
hypotheses below.  Firstly, a Rellich type embedding theorem for Sobolev
functions is proven which is related to the local connectedness structure of
the varifold through subtle oscillation estimates in its proof.  In fact, a
Rellich type embedding theorem is established for generalised weakly
differentiable functions in a significantly more general setting.  Secondly,
it is proven that the geodesic distance -- even if the space is incomplete --
is an example of a continuous Sobolev function with bounded derivative.  In
\cite[\S\,14]{snulmenn:tv.v2} it had only been proven that the geodesic
distance is a real valued function.  Thirdly, an example is constructed
showing that the geodesic distance may fail to be locally H{\"o}lder
continuous with respect to any exponent.  In particular, the embedding of
Euclidean Sobolev functions with suitably summable derivative into H{\"o}lder
continuous functions does not extend to the varifold case.

A distinctive feature of the presently developed theory of first order Sobolev
spaces is the key role played by the first variation of the varifold. The
latter carries information of the extrinsic geometry of the varifold,
considered as generalised submanifold, namely its generalised mean curvature
and its ``boundary''. This is in line with potential use of the present theory
in the study of regularity properties of varifolds and it distinguishes the
present approach from the completely intrinsic viewpoint of metric measure
spaces. The latter perspective is described in the books of Heinonen, see
\cite{MR1800917}, Bj{\"o}rn and Bj{\"o}rn, see \cite{MR2867756}, and Heinonen,
Koskela, Shanmugalingam, and Tyson, see \cite{MR3363168}.

\subsection*{Hypotheses} The notation is mainly that of Federer
\cite{MR41:1976} and Allard \cite{MR0307015}, see Section \ref{sec:notation}.
Throughout the paper footnotes recall some parts of this notation or point to
related terminology whenever that appeared to be desirable.

Firstly, a list of hypotheses relevant for the present theory will be drawn
up.
\begin{citing} [General hypothesis]
        Suppose $\vdim$ and $\adim$ are positive integers, $\vdim \leq \adim$,
        $U$ is an open subset of $\rel^\adim$, $V$ is an $\vdim$ dimensional
        rectifiable varifold in $U$ whose first variation $\delta V$ is
        representable by integration, and $Y$ is a finite dimensional normed
	vectorspace.%
	\begin{footnote}
		{To recall the notation concerning varifolds from Allard
		\cite[3.1, 3.5, 4.2, 4.3]{MR0307015}, first recall the
		following items from \cite{MR41:1976} and \cite{MR1777737}.
		\begin{itemize}
			\item The inner product of $x$ and $y$ is denoted $x
			\bullet y$, see \cite[1.7.1]{MR41:1976}.
			\item If $X$ is a locally compact Hausdorff space then
			$\mathscr{K} (X)$ is the vectorspace of all continuous
			real valued functions on $X$ with compact support, see
			\cite[2.5.14]{MR41:1976}.
			\item If $U$ is an open subset of some finite
			dimensional normed vectorspace and $Z$ is a Banach
			space then $\mathscr{D}(U,Z)$ denotes the space of all
			functions $\theta : U \to Z$ of class $\infty$,
			i.e.~``smooth'' functions, with compact support, see
			\cite[4.1.1]{MR41:1976}.
			\item Whenever $P$ is an $\vdim$ dimensional plane in
			$\rel^\adim$, the orthogonal projection of
			$\rel^\adim$ onto $P$ will be denoted by
			$\project{P}$, see Almgren
			\cite[T.1\,(9)]{MR1777737}.
		\end{itemize}
		Whenever $U$ is an open subset of $\rel^\adim$ an $\vdim$
		dimensional varifold $V$ is a Radon measure over $U \times
		\grass \adim \vdim$, where $\grass \adim \vdim$ denotes the
		space of $\vdim$ dimensional subspaces of $\rel^\adim$.  An
		$\vdim$ dimensional varifold $V$ in $U$ is called rectifiable
		if and only if there exist sequences of compact subsets $C_i$
		of $\vdim$ dimensional submanifolds $M_i$ of $U$ of class $1$
		and $0 < \lambda_i < \infty$ such that
		\begin{equation*}
			V(k) = \sum_{i=1}^\infty \lambda_i \tint{C_i}{} k
			(x,\Tan(M_i,x)) \ud \mathscr{H}^\vdim \, x \quad
			\text{for $k \in \mathscr{K} ( U \times \grass \adim
			\vdim )$},
		\end{equation*}
		where $\Tan (M_i,x)$ denotes the tangent space of $M_i$ at
		$x$.  The first variation $\delta V : \mathscr{D} ( U,
		\rel^\adim ) \to \rel$ of $V$ is defined by
		\begin{equation*}
			( \delta V ) ( \theta ) = \tint {}{} \project P
			\bullet \Der \theta (x) \ud V \, (x,P) \quad \text{for
			$\theta \in \mathscr{D} (U,\rel^\adim )$}.
		\end{equation*}
		The total variation $\| \delta V \|$ is largest Borel regular
		measure over $U$ satisfying
		\begin{equation*}
			\| \delta V \| ( G ) = \sup \{ ( \delta V ) (\theta)
			\with \text{$\theta \in \mathscr{D} (U,\rel^\adim)$,
			$\spt \theta \subset G$ and $| \theta | \leq 1$} \}
		\end{equation*}
		whenever $G$ is an open subset of $U$. The first variation
		$\delta V$ is representable by integration if and only if $\|
		\delta V \|$ is a Radon measure. If $\| \delta V \|$ is
		absolutely continuous with respect to $\| V \|$ then
		the generalised mean curvature vector of $V$,
		$\mathbf{h}(V,\cdot)$, is $\| V \|$ almost characterised
		amongst functions in $\Lploc 1 ( \| V \|, \rel^\adim )$ by the
		condition
		\begin{equation*}
			( \delta V ) ( \theta ) = - \tint{}{} \mathbf{h} (V,x)
			\bullet \theta (x) \ud \| V \| \, x \quad \text{for
			$\theta \in \mathscr{D} (U,\rel^\adim)$}.
		\end{equation*}}
	\end{footnote}
\end{citing}
Aspects of the theory involving the isoperimetric inequality are most
conveniently developed under the following density hypothesis.
\begin{citing} [Density hypothesis]
        Suppose $\vdim$, $\adim$, $U$, and $V$ are as in the general
        hypothesis and satisfy%
	\begin{footnote}
		{If $\mu$ measures a metric space $X$, $a \in X$, and $\vdim$
		is a positive integer then
		\begin{equation*}
			\density^\vdim ( \mu, a ) = \lim_{r \to 0+}
			\frac{\measureball \mu { \cball ar}}{\unitmeasure
			\vdim r^\vdim}, \quad \text{where $\unitmeasure \vdim
			= \measureball{\mathscr{L}^\vdim}{\cball 01}$}
		\end{equation*}
		and $\cball ar$ is the closed ball with centre $a$ and radius
		$r$, see \cite[2.7.16, 2.8.1, 2.10.19]{MR41:1976}.}
	\end{footnote}
        \begin{equation*}
                \density^\vdim ( \| V \|, x ) \geq 1 \quad \text{for $\| V \|$
                almost all $x$}.
        \end{equation*}
\end{citing}
Several theorems will also make use of the following additional hypothesis.
\begin{citing} [Mean curvature hypothesis]
        Suppose $\vdim$, $\adim$, $U$, and $V$ are as in the general
        hypothesis and satisfies the following condition.

	If $\vdim > 1$ then $\| \delta V \|$ is absolutely continuous with
	respect to $\| V \|$, the function $\mathbf{h} (V,\cdot)$ belongs to
	$\Lploc \vdim (\| V \|, \rel^\adim )$,%
	\begin{footnote}
		{The spaces $\Lp p ( \mu, Y )$ and $\Lploc p ( \mu, Y)$
		contain functions rather than equivalence classes of
		functions.}
	\end{footnote}
	and $\psi$ is Radon measure over $U$ such that
	\begin{equation*}
		\psi (A) = \tint{A}{} |\mathbf{h}(V,x)|^\vdim \ud \| V \| \, x
		\quad \text{whenever $A$ is a Borel subset of $U$}.
	\end{equation*}
\end{citing}
The density hypothesis and the mean curvature hypothesis will be referred to
whenever they shall be in force.

\subsection*{Known results} As the present paper extends the author's paper
\cite{snulmenn:tv.v2}, it seems expedient to review those results of that
paper most relevant for the present development.

\subsubsection*{Axiomatic approach to Sobolev spaces} Given an open subset of
Euclidean space and a finite dimensional normed vectorspace $Y$, the class of
weakly differentiable $Y$ valued functions is clearly closed under addition
and composition with members of $\mathscr{D} ( Y, \rel )$ and any $Y$ valued
function constant on connected components belongs to that class. Replacing the
decomposition into connected components by the decomposition of a varifold in
the sense of \cite[6.9]{snulmenn:tv.v2}, one may formulate the following list
of desirable properties for a concept of weakly differentiable functions or
Sobolev functions on a varifold.

\begin{enumerate}[label=(\Roman*)]
        \item \label{item:addition} The class is closed under addition.
	\item \label{item:truncation} The class is closed under composition
	with members of $\mathscr{D}(Y,\rel)$.
	\item \label{item:decomposition} Each appropriately summable function
	which is constant on the components of \emph{some} decomposition of
	the varifold belongs to the class.
\end{enumerate}

Owing to the fact that decompositions of varifolds are nonunique, see
\cite[6.13]{snulmenn:tv.v2}, one can show that it is impossible to realise all
three properties in a single satisfactory concept, see
\cite[8.28]{snulmenn:tv.v2}. Accordingly, three concepts have been developed,
two in \cite{snulmenn:tv.v2} and one in the present paper, each missing
precisely one distinct one of the above three properties.

\subsubsection*{Integration by parts identity} The seemingly most natural way
to define a concept of weak differentiability is to employ the fact that the
first variation $\delta V$ is representable by integration to formulate an
integration by parts identity.

\begin{intro_definition} [see \protect{\cite[8.27]{snulmenn:tv.v2}}]
	Suppose $\vdim$, $\adim$, $U$, $V$, and $Y$ are as in the general
	hypothesis.

	Then the class $\mathbf{W} ( V, Y )$ is defined to consist of all $f
	\in \Lploc{1} ( \| V \| + \| \delta V \|, Y)$ such that for some $F
	\in \Lploc{1} ( \| V \|, \Hom ( \rel^\adim, Y ) )$ there holds
        \begin{equation*}
		( \delta V ) ( ( \alpha \circ f ) \theta ) = \alpha \big (
		\tint{}{} ( \project{P} \bullet \Der  \theta (x)) f (x) + F(x)
		(\theta(x)) \ud V \, (x,P) \big )
        \end{equation*}
        whenever $\theta \in \mathscr{D} (U, \rel^\adim )$ and $\alpha \in
        \Hom ( Y, \rel )$.
\end{intro_definition}

The function $F$ is $\| V \|$ almost unique and could act as weak derivative
of~$f$ with respect to $V$. One readily verifies that this class satisfies
\ref{item:addition} and  \ref{item:decomposition}. However, it fails to
satisfy \ref{item:truncation}, see \cite[8.27]{snulmenn:tv.v2}. Moreover, it
may happen that $f \in \mathbf{W}(V,\rel)$ has zero weak derivative but the
distributional $V$ boundaries $\boundary V E(y) : \mathscr{D} ( U, \rel^\adim
) \to \rel$ of the superlevel sets $ E(y) = \{ x \with f(x) > y \}$ satisfy
\begin{equation*}
	\mathscr{L}^1 \big ( \rel \cap \{ y \with \boundary V E(y) \neq 0 \}
	\big ) > 0,
\end{equation*}
see \cite[8.32]{snulmenn:tv.v2}. Consequently, no coarea formula
analogous to that for weakly differentiable functions in Euclidean space, see
\cite[4.5.9\,(13)]{MR41:1976}, may be formulated in this class.  These two
facts seem to pose a serious obstacle to the development of a satisfactory
theory for the class $\mathbf{W} (V,Y)$.

\subsubsection*{Generalised weakly differentiable functions} To overcome this
difficulty, one may modify the integration by parts identity by requiring it
to hold also for compositions with a class of nonlinear functions.
\begin{intro_definition} [see \protect{\cite[8.3]{snulmenn:tv.v2}}]
	Suppose $\vdim$, $\adim$, $U$, $V$, and $Y$ are as in the general
	hypothesis.

	Then a $Y$ valued $\| V \| + \| \delta V \|$ measurable%
	\begin{footnote}
		{If $\mu$ measures $X$ and $f$ maps a subset of $X$ into a
		topological space $Y$, then $f$ is $\mu$ measurable if and
		only if $\mu ( X \without \dmn f ) = 0$ and the preimage of
		every open subset of $Y$ under $f$ is $\mu$ measurable, see
		\cite[2.3.2]{MR41:1976}.}
	\end{footnote}
	function $f$ with domain contained in $U$ is called \emph{generalised
	$V$ weakly differentiable} if and only if for some $\| V \|$
	measurable $\Hom (\rel^\adim, Y )$ valued function $F$ the following
	two conditions hold:
	\begin{enumerate}
		\item If $K$ is a compact subset of $U$ and $0 \leq s <
		\infty$, then
		\begin{equation*}
			\tint{\classification{K}{x}{|f(x)|\leq s}}{} \|F\| \ud
			\| V \| < \infty.
		\end{equation*}
		\item If $\theta \in \mathscr{D} ( U, \rel^\adim )$, $\gamma
		\in \mathscr{E} ( Y, \rel)$ and $\spt \Der \gamma$ is compact,
		then%
		\begin{footnote}
			{If $U$ is an open subset of some finite dimensional
			normed vectorspace and $Z$ is Banach space then
			$\mathscr{E} (U,Z)$ denotes the vectorspace of
			functions $\theta : U \to Z$ of class $\infty$.}
		\end{footnote}
		\begin{gather*}
			\begin{aligned}
				& ( \delta V ) ( ( \gamma \circ f ) \theta ) \\
				& \quad = \tint{}{} \gamma(f(x)) \project{P}
				\bullet \Der  \theta (x) \ud V \, (x,P) +
				\tint{}{} ( \Der \gamma (f(x)) \circ F (x)) (
				\theta(x)) \ud \| V \| \, x.
			\end{aligned}
		\end{gather*}
	\end{enumerate}
	The set of all $Y$ valued generalised $V$ weakly differentiable
	functions will be denoted by $\trunc ( V, Y )$.
\end{intro_definition}
The function $F$ is $ \| V \|$ almost unique. Accordingly, the
\emph{generalised $V$ weak derivative of $f$}, denoted by $\derivative{V}{f}$,
may be defined to equal a particular such $F$ characterised by an approximate
continuity condition, see \cite[8.3]{snulmenn:tv.v2}.

This class has a favourable behaviour under truncation and composition as well
as decomposition of the underlying varifold, see \cite[8.12, 8.13, 8.15, 8.16,
8.18, 8.24]{snulmenn:tv.v2}. In particular, it satisfies properties
\ref{item:truncation} and \ref{item:decomposition}. In case $\vdim = \adim$,
$U = \rel^\adim$, $\| V \| = \mathscr{L}^\adim$, and $Y = \rel$ a function $f$
belongs to $\trunc (V,Y)$ if and only if the truncated functions $f_s : \dmn f
\to Y$ defined by
\begin{equation*}
	f_s(x)=f(x) \quad \text{if $|f(x)| \leq s$}, \qquad f_s (x) = (\sign
	f(x)) s \quad \text{if $|f(x)|>s$}
\end{equation*}
for $x \in \dmn f$ and $0 < s < \infty$, are weakly differentiable in the
classical sense, see \cite[8.19]{snulmenn:tv.v2}, and the subclass
\begin{equation*}
	\trunc (V,Y) \cap \Lploc 1 ( \| V \| + \| \delta V \|, Y) \cap \big \{
	f \with \derivative Vf \in \Lploc 1 ( \| V \|, \Hom ( \rel^\adim, Y )
	\big \}
\end{equation*}
equals the usual space of weakly differentiable functions, see
\cite[8.18]{snulmenn:tv.v2}. However, considering the varifold associated to
three lines in $\rel^2$ meeting at a common point at equal angles shows that
the indicated subclass need not to be closed with respect to addition, see
\cite[8.25]{snulmenn:tv.v2}.  In particular, it does not have property
\ref{item:addition}.  Of course, the class $\trunc (V,\rel)$ itself need not
to be closed under addition even in case of Lebesgue measure, see B{\'e}nilan,
Boccardo, Gallou{\"e}t, Gariepy, Pierre, and Vazquez \cite[p.\,245]{MR1354907}.
This drawback is partially compensated by the fact that the class $\trunc
(V,Y)$ is closed under addition of a locally Lipschitzian function, see
\cite[8.20\,(3)]{snulmenn:tv.v2}.

Whenever $G$ is a relatively open subset of $\Bdry U$, one may also realise
the concept of ``zero boundary values on $G$'' for nonnegative functions $f$
in $\trunc (V,\rel)$ by means of the class $\trunc_G (V)$, see
\cite[9.1]{snulmenn:tv.v2}. This class has good properties under composition
and convergence of the functions in measure with appropriate bounds on the
derivatives, see \cite[9.9, 9.13, 9.14]{snulmenn:tv.v2}. Of course, instead of
restricting to nonnegative functions, one could also consider the class
\begin{equation*}
	\trunc (V,Y) \cap \{f \with |f| \in \trunc_G (V) \}
\end{equation*}
but stability under compositions would fail in this case, see \cite[9.10,
9.11]{snulmenn:tv.v2}.

The more elaborate properties of $\trunc (V,Y)$ build on the isoperimetric
inequality which works most effectively under the density hypothesis. This
hypothesis allows for the formulation of various Sobolev Poincar{\'e} type
inequalities with and without boundary condition, see \cite[10.1, 10.7,
10.9]{snulmenn:tv.v2}. Furthermore, pointwise differentiability results both
of approximate and integral nature then hold for generalised $V$ weakly
differentiable functions, see \cite[11.2, 11.4]{snulmenn:tv.v2}.

Turning to some relevant results concerning the local connectedness structure
of varifolds, the mean curvature hypothesis becomes more relevant. Under the
density hypothesis and the mean curvature hypothesis, the connected components
of $\spt \| V \|$ are relatively open and any two points belonging to the same
connected component may be connected by a path of finite length whose image
lies in that component, see \cite[6.14\,(3), 14.2]{snulmenn:tv.v2}. At the
heart of the proof of the second part of this assertion lies an oscillation
estimate for an a~priori continuous generalised weakly differentiable function
whose derivative satisfies a $q$-th power summability hypothesis with $q >
\vdim$, see \cite[13.1]{snulmenn:tv.v2}. This estimate differs in two points
from the well known oscillation estimate for weakly differentiable functions
in Euclidean space. Firstly, the function needs to be continuous a~priori;
otherwise -- in view of property \ref{item:decomposition} -- a counterexample
is immediate from considering two crossing lines. Secondly, the estimate does
not yield H{\"o}lder continuity; in fact, an example showing that H{\"ol}der
continuity is not implied by those hypotheses will be constructed in the
present paper, see \hyperlink{Thm_C}{Theorem~C} and
\hyperlink{Thm_D}{Theorem~D} below.

\subsection*{Results of the present paper} \label{subsec:present-results}

The main contributions, apart from introducing the concept of Sobolev
functions on varifolds, are the five named theorems,
\hyperlink{Thm_A}{Theorem~A} to \hyperlink{Thm_E}{Theorem~E}, and its two
corollaries, \hyperlink{Cor_A}{Corollary~A} and
\hyperlink{Cor_B}{Corollary~B}, which will be described below.  In order to
complete the picture, further theorems are included which essentially follow
from combining the present theory with that of \cite{snulmenn:tv.v2}.

\subsubsection*{Approximation of Lipschitzian functions, see Section 3} The
Sobolev spaces on varifold will be defined by a completion procedure starting
from locally Lipschitzian functions.  As a consequence of the next result,
``smooth'' functions  are dense in these spaces for finite exponents, see
\hyperlink{Cor_A}{Corollary~A}.

{ \hypertarget{Thm_A}{}
\begin{citing} [Theorem~A, see \ref{corollary:approximation_lip} and \ref{remark:approximation_lip}]
	Suppose $\vdim$ and $\adim$ are positive integers, $\vdim \leq \adim$,
	$U$ is an open subset of $\rel^\adim$, $V$ is an $\vdim$ dimensional
	rectifiable varifold in $U$, $Y$ is a finite dimensional normed
	vectorspace, $K$ is a compact subset of $U$, and $f: U \to Y$ is a
	Lipschitzian function with $\spt f \subset \Int K$.

	Then there exists a sequence $f_i \in \mathscr{D} (U, Y)$ satisfying
	\begin{gather*}
		f_i(x) \to f(x) \quad \text{uniformly for $x \in \spt \| V \|$
		as $i \to \infty$}, \\
		\big \| ( \| V \|, \vdim ) \ap \Der (f_i-f) \big \| \to 0
		\quad \text{in $\| V \|$ measure as $i \to \infty$}, \\
		\spt f_i \subset K \quad \text{for each $i$}, \qquad
		\limsup_{i \to \infty} \Lip f_i \leq \Gamma \Lip f,
	\end{gather*}
	where $\Gamma$ is a positive, finite number depending only on $Y$.%
	\begin{footnote}
		{Suppose $\mu$ measures an open subset $U$ of a normed
		vectorspace $X$, $a \in U$, and $m$ is a positive integer.
		Then $\Tan^m ( \mu, a)$ denotes the closed cone of
		\emph{$(\mu,m)$ approximate tangent vectors} at $a$ consisting
		of all $u \in X$ such that
	        \begin{gather*}
			\density^{\ast m} ( \mu \restrict \mathbf{E}
			(a,u,\varepsilon), a ) > 0 \quad \text{for every
			$\varepsilon > 0$}, \\
			\text{where $\mathbf{E}(a,u,\varepsilon) = X \cap \{ x
			\with \text{$|r(x-a) -u| < \varepsilon$ for some $r >
			0$} \}$}.
	        \end{gather*}
		Moreover, if $f$ maps a subset of $X$ into another normed
		vectorspace $Y$, then $f$ is called \emph{$(\mu,m)$
		approximately differentiable} at $a$ if and only if there
		exist $b \in Y$ and a continuous linear map $L : X \to Y$ such
		that
	        \begin{equation*}
			\density^\vdim ( \mu \restrict X \without \{ x \with
			|f(x)-b-L (x-a)| \leq \varepsilon |x-a| \}, a ) = 0
			\quad \text{for every $\varepsilon > 0$}.
	        \end{equation*}
		In this case $L | \Tan^\vdim ( \mu, a )$ is unique and it is
		called the \emph{$(\mu,m)$ approximate differential} of $f$ at
		$a$, denoted $(\mu,m) \ap \Der f(a)$, see
		\cite[3.2.16]{MR41:1976}.}
	\end{footnote}
	Moreover, if $Y = \rel^l$, then one may take $\Gamma = 1$.
\end{citing} }

As there is no hypothesis on $\delta V$, the only available notion of
derivative is that of approximate derivative.  If $V$ satisfies the general
hypothesis, then $( \| V \|, \vdim ) \ap \Der (f_i-f)$ can be replaced by
$\derivative V{(f-f_i)}$, see \cite[8.7]{snulmenn:tv.v2}.

It is instructive to compare \hyperlink{Thm_A}{Theorem~A} with the familiar
fact that there exists a sequence of continuously differentiable functions
$g_i : U \to Y$ such that \begin{equation*} \| V \| ( U \without \{ x \with
\text{$f(x)=g_i(x)$ and $\ap \Der f(x) = \ap \Der g_i(x)$} \}) \to 0 \quad
\text{as $i \to \infty$}, \end{equation*} where ``$\ap$'' refers to $( \| V
\|, \vdim )$ approximate differentiation.  The difficulty to construct the
asserted functions $f_i$ from the functions $g_i$ is that agreement of the
approximate derivatives of $f$ and $g_i$ at $x$ only implies \begin{equation*}
\big \| \Der g_i (x) | \Tan^\vdim ( \| V \|, x ) \big \| \leq \Lip f
\end{equation*} but $\| \Der g_i(x)\|$ may be much larger than $\Lip f$.  This
is resolved by employing a special retraction onto continuously differentiable
submanifolds of $\rel^\adim$, see \ref{corollary:special-retraction}.

\subsubsection*{A Rellich type embedding theorem, see Section 4}

The Rellich type embedding theorem for Sobolev functions on varifolds, see
\hyperlink{Cor_B}{Corollary~B}, will be derived as a consequence of the
following significantly more general theorem for generalised weakly
differentiable functions on varifolds.

{ \hypertarget{Thm_B}{}
\begin{citing} [Theorem~B, see \ref{thm:rellich-in-measure}]
	Suppose $\vdim$, $\adim$, $U$, $V$, and $Y$ are as in the general
	hypothesis and the density hypothesis, and $f_i \in \trunc (V,Y)$ is a
	sequence satisfying
	\begin{gather*}
		\lim_{t \to \infty} \sup \big \{ \| V \| ( K \cap \{ x \with
		|f_i (x) | > t \} ) \with i = 1, 2, 3, \ldots \big \} = 0, \\
		\sup \big \{ \tint{K \cap \{ x \with |f_i(x)| \leq t \}}{} \|
		\derivative V{f_i} \| \ud \| V \| \with i = 1, 2, 3, \ldots
		\big \} < \infty \quad \text{for $0 \leq t < \infty$}
	\end{gather*}
	whenever $K$ is a compact subset of $U$.

	Then there exist a $\| V \|$ measurable $Y$ valued function $f$ and a
	subsequence of $f_i$ which, whenever $K$ is a compact subset of $U$,
	converges to $f$ in $\| V \| \restrict K$ measure.
\end{citing} }

As a consequence of \hyperlink{Thm_B}{Theorem~B} one obtains sequential
closedness results under weak convergence of both functions and derivative for
$\trunc (V,Y)$ and $\trunc_G (V)$, see \ref{corollary:closedness-tv} and
\ref{corollary:closedness_tg}.  Such results are of natural importance in
considering variational problems in these nonlinear spaces.

\subsubsection*{Definitions of Sobolev spaces, see Section 5} Next, the
definitions of the Sobolev spaces on varifolds and some basic properties that
are direct consequences of the theory of \cite{snulmenn:tv.v2} shall be
presented.  Firstly, the largest classes, that is the local Sobolev spaces,
will be defined.

\begin{intro_definition} [local Sobolev space, see \ref{def:strict_local_sobolev_space}
	and \ref{remark:strict_local_sobolev_space_inclusion}] Suppose
	$\vdim$, $\adim$, $U$, $V$, and $Y$ are as in the general hypothesis
	and $1 \leq q \leq \infty$.

	Then the \emph{local Sobolev space with respect to $V$ and exponent
	$q$}, denoted by $\SWloc{q} ( V , Y )$, is defined to be the class of
	all $f \in \trunc (V,Y)$ such that
	\begin{equation*}
		(f,\derivative Vf) \in \Clos \big \{ (g,\derivative Vg) \with
		\text{$g \in Y^U$ and $g$ is locally Lipschitzian} \big \},
	\end{equation*}
	where the closure is taken in $\Lploc q(\| V \|+\| \delta V \|, Y )
	\times \Lploc q ( \| V \|, \Hom ( \rel^\adim, Y ) )$.%
	\begin{footnote}
		{For any $Y$ and $U$, the expression $Y^U$ denotes the class
		of all functions $g : U \to Y$.}
	\end{footnote}
\end{intro_definition}
The letter ``$\mathbf{H}$'' is chosen to emphasise the fact that the space is
defined by a closure operation, see \cite[3.2]{MR2424078}, and the placement
of~$q$ as subscript is in line with the symbol $\Lp q ( \mu, Y )$ employed for
Lebesgue spaces, see \cite[2.4.12]{MR41:1976}.

It is important that $\SWloc q (V,Y)$ is a subset of the possibly nonlinear
space $\trunc (V,Y)$ but itself is a vectorspace; in fact, it is a complete
locally convex space when endowed with its natural topology resulting from its
inclusion into
\begin{equation*}
	\Lploc q(\| V \|+\| \delta V \|, Y ) \times \Lploc q ( \| V \|, \Hom (
	\rel^\adim, Y ) ),
\end{equation*}
see \ref{def:topology-SWSob} and \ref{remark:completeness_wqloc}.  Moreover,
it has good stability properties under composition, see
\ref{remark:strict_local_sobolev_space}\,\eqref{item:strict_local_sobolev_space:composition}\,\eqref{item:strict_local_sobolev_space:truncation}.
Therefore $\SWloc q (V,Y)$ has properties \ref{item:addition} and
\ref{item:truncation} but it does not have property \ref{item:decomposition}
as may be seen from considering two crossing lines, see
\ref{remark:two-crossing-lines}.

At first sight, one might consider to replace $\Lploc q ( \| V \| + \| \delta
V \|, Y )$ in the above definition by $\Lploc q ( \| V \|, Y )$. Indeed,
assuming the density hypothesis, the mean curvature hypothesis, and $\vdim >
1$, the same definition would have resulted, see \ref{thm:replacement-w1q-loc}
and \ref{remark:replacement-w1q-loc}. But in the general case it appears
natural (and indispensable) to require control with respect to the measure $\|
\delta V \|$ since both the integration by parts identities and the
isoperimetric inequalities involve control of the function with respect to $\|
\delta V \|$.

\begin{intro_definition} [Sobolev space, see \ref{def:sobolev-seminorm} and
\ref{def:strict_sobolev_space}]
	Suppose $\vdim$, $\adim$, $U$, $V$, and $Y$ are as in the general
	hypothesis and $1 \leq q \leq \infty$.

	Then define the \emph{Sobolev space with respect to $V$ and
	exponent $q$} by%
	\begin{footnote}
		{If $\mu$ measures $X$, $1 \leq q \leq \infty$, and $f$ is a
		$\mu$ measurable function with values in some Banach space
		$Y$, then one defines (see \cite[2.4.12]{MR41:1976})
	        \begin{gather*}
			\mu_{(q)}(f) = ( \tint{}{} |f|^q \ud \mu )^{1/q} \quad
			\text{in case $1 \leq q < \infty$}, \\
			\mu_{(\infty)}(f) = \inf \big \{ s \with \text{$s \geq
			0$, $\mu ( \{ x \with |f(x)| > s \} ) = 0$} \big \}.
	        \end{gather*}}
	\end{footnote}
	\begin{align*}
		& \SWSob{q} (V,Y ) = \classification{\SWloc{q} (V,Y)}{f}{
		\SWnorm{q}{V}{f} < \infty}, \\
		& \qquad \text{where $\SWnorm{q}{V}{f} = \eqLpnorm{\| V \|+\|
		\delta V \|}{q}{f} + \Lpnorm{\| V \|}{q}{ \derivative{V}{f} }$
		for $f \in \trunc (V,Y)$}.
	\end{align*}
\end{intro_definition}
The usage of the letter ``$\mathbf{H}$'' in the name of the seminorm $\SWnorm
qV\cdot | \SWloc q(V,Y)$ is modelled on the usage of the letter
``$\mathbf{F}$'' in the name of the seminorm $\mathbf{F}_K$ related to the
space of flat chains $\mathbf{F}_{m,K} (U )$, see \cite[4.1.12]{MR41:1976}.

The space $\SWSob q (V,Y)$ is $\SWnorm q V \cdot | \SWSob q (V,Y)$ complete,
see \ref{remark:strict_sobolev_space}.
\begin{intro_definition} [Sobolev space with ``zero boundary values'', see
\ref{def:strict-sobolev-space-zero-boundary-values}]
	Suppose $\vdim$, $\adim$, $U$, $V$, and $Y$ are as in the general
	hypothesis and $1 \leq q \leq \infty$.

	Then define $\SWzero{q} (V,Y )$ to be the $\SWnorm qV\cdot | \SWSob
	q(V,Y)$ closure of
	\begin{equation*}
		Y^U \cap \{ g \with \text{$\Lip g< \infty$, $\spt g$ is
		compact} \}
	\end{equation*}
	in $\SWSob q(V,Y)$.
\end{intro_definition}
The space $\SWzero q (V,Y)$ is $\SWnorm q V \cdot | \SWzero q (V,Y)$ complete,
see \ref{remark:zero_sobolev_space}.
If $U = \rel^\adim$ and $q < \infty$, then
$\SWzero q(V,Y) = \SWSob q (V,Y)$, see \ref{remark:zero_sobolev_euclid_space}.

The inclusions amongst the various local spaces are given by
\begin{align*}
	& Y^U \cap \{ g \with \text{$g$ locally Lipschitzian} \} \subset
	\SWloc q (V,Y) \\
	& \qquad \subset \trunc (V,Y) \cap \Lploc 1 ( \| V \| + \| \delta V
	\|, Y) \cap \big \{ f \with \derivative Vf \in \Lploc 1 ( \| V \|,
	\Hom ( \rel^\adim, Y ) \big \} \\
	& \qquad \subset \mathbf{W}(V,Y),
\end{align*}
see \ref{remark:strict_local_sobolev_space_inclusion} and
\cite[8.27]{snulmenn:tv.v2}. Finally, one may also consider the quotient of
$\SWloc q (V,Y)$ or $\SWloc q (V,Y)$ or $\SWzero q (V,Y)$ by
\begin{align*}
	& \SWloc q (V,Y) \cap \{ f \with \text{$f(x)=0$ for $\| V \|$ almost
	all $x$} \} \\
	& \qquad
	\begin{aligned}
		= \SWloc q (V,Y) \cap \big \{ f \with & \, \text{$f(x)=0$ for
		$\| V \| + \| \delta V \|$ almost all $x$}, \\
		& \, \text{$\derivative Vf(x)=0$ for $\| V \|$ almost all $x$}
		\big \},
	\end{aligned}
\end{align*}
see \ref{remark:eq_classes}, \ref{remark:swloc-quotient},
\ref{remark:quotient-sobolev-space}, and
\ref{remark:quotient-zero-sobolev-space}, which allows to conveniently apply
certain functional analytic results.

\subsubsection*{Basic theorems for Sobolev spaces, see Section 5} Having
\hyperlink{Thm_A}{Theorem~A} at one's disposal, the following density result
may be deduced analogously to the Euclidean case.

{ \hypertarget{Cor_A}{}
\begin{citing} [Corollary~A, see
\ref{remark:basic-top-swloc}\,\eqref{item:basic-top-swloc:q<oo_dense},
\ref{remark:density_sobolev_space}, and
\ref{remark:density-in-SWzero}]
	Suppose $\vdim$, $\adim$, $U$, $V$, and $Y$ are as in the general
	hypothesis, and $1 \leq q < \infty$.

	Then the following three statements hold.
	\begin{enumerate}
		\item The set $\mathscr{D}(U,Y)$ is dense in $\SWloc q (V,Y)$.
		\item The set $\SWSob q (V,Y) \cap \mathscr{E} (U,Y)$
		is $\SWnorm qV\cdot | \SWSob q (V,Y)$ dense in $\SWSob q
		(V,Y)$.
		\item The set $\mathscr{D}(U,Y)$ is $\SWnorm qV\cdot |
		\SWzero q (V,Y)$ dense in $\SWzero q (V,Y)$.
	\end{enumerate}
\end{citing} }

Evidently, the Sobolev space $\SWloc q (V,Y)$ is contained in $\trunc (V,Y)$.
The analogous statement involving ``zero boundary values'' is less obvious.

\begin{citing} [Theorem, see \ref{thm:zero_implies_zero}]
	Suppose $\vdim$, $\adim$, $U$, $V$, and $Y$ are as in the general
	hypothesis, $1 \leq q \leq \infty$, and $f \in \SWzero q (V,Y)$.

	Then $|f| \in \trunc_{\Bdry U} (V)$.
\end{citing}

Consequently, the Sobolev inequalities of \cite[10.1\,(2)]{snulmenn:tv.v2}
apply in the case of Sobolev functions as well, see
\ref{corollary:sob_poin_summary} and \ref{thm:sob_poin_summary}.

\subsubsection*{Geodesic distance, see Section 6} Apart of the envisioned use
of Sobolev functions for certain elliptic partial differential equations on
varifolds, Sobolev functions also occur naturally in the study of the geodesic
distance on the support of the weight measure of a varifold.

{ \hypertarget{Thm_C}{}
\begin{citing} [Theorem C, see \ref{thm:intrinsic_metric}] Suppose $\vdim$, $\adim$, $U$, and $V$ are as in the density
	hypothesis and the mean curvature hypothesis, $X = \spt \| V \|$, $X$
	is connected, $\varrho$~is the geodesic distance on $X$, see
	\ref{miniremark:intrinsic_metric}, and $W \in \Var_{2\vdim} ( U \times
	U )$ satisfies
   	\begin{equation*}
		W(k) = \tint{}{} k ((x_1,x_2),P_1\times P_2) \ud ( V \times V
		)\, ((x_1,P_1),(x_2,P_2))
	\end{equation*}
	whenever $k \in \mathscr{K} ( U \times U, \grass{ \rel^\adim \times
	\rel^\adim}{2 \vdim} )$.

	Then the following two statements hold.
	\begin{enumerate}
		\item The function $\varrho$ is continuous, a metric on $X$,
		and belongs to $\SWloc q (W,\rel)$ for $1 \leq q < \infty$
		with
		\begin{equation*}
			| \langle (u_1,u_2), \derivative W \varrho (x_1,x_2)
			\rangle | \leq | u_1 | + | u_2 | \quad \text{whenever
			$u_1,u_2 \in \rel^\adim$}
		\end{equation*}
		for $\| W \|$ almost all $(x_1,x_2)$.
		\item If $a \in X$, then $\varrho(a,\cdot) \in \SWloc q
		(V,\rel)$ for $1 \leq q < \infty$ and
		\begin{equation*}
			| \derivative V{(\varrho(a,\cdot))}(x)| = 1 \quad
			\text{for $\| V \|$ almost all $x$}.
		\end{equation*}
	\end{enumerate}
\end{citing} }

The previous result from \cite[14.2]{snulmenn:tv.v2} only showed that the
function $\varrho$ is real valued.  The sharpness of the preceding theorem is
illustrated by an example whose properties are summarised in the next theorem.

{ \hypertarget{Thm_D}{}
\begin{citing} [Theorem~D, see \ref{example:geodesic_distance}]
	There exist $\vdim$, $\adim$, $U$, and $V$ satisfying the density
	hypothesis and the mean curvature hypothesis and $a \in \spt \| V \|$
	such that the geodesic distance $\varrho$ on $\spt \| V \|$, see
	\ref{miniremark:intrinsic_metric}, has the following two properties.
	\begin{enumerate}
		\item The function $\varrho (a,\cdot)$ does not belong to
		$\SWloc \infty (V,\rel)$.
		\item The function $\varrho(a,\cdot)$ is not H{\"o}lder
		continuous with respect to any exponent.
	\end{enumerate}
\end{citing} }

\subsubsection*{Further theorems on Sobolev spaces, see Section 7}

All further results focus on the case when the density hypothesis and the mean
curvature hypothesis are satisfied.

{ \hypertarget{Cor_B}{}
\begin{citing} [Corollary~B, see
\ref{thm:rellich_embedding}\,\eqref{item:rellich_embedding:m>1} and \ref{remark:rellich_embedding}]
	Suppose $\vdim$, $\adim$, $U$, $V$, and $\psi$ are as in the density
	hypothesis and the mean curvature hypothesis, $\| V \| ( U ) <
	\infty$, $\Lambda = \Gamma_{\textup{\cite[10.1]{snulmenn:tv.v2}}} (
	\adim )$, $\psi ( U ) \leq \Lambda^{-1}$, $1 \leq q < \vdim$, $1 \leq
	\alpha < \vdim q/(\vdim-q)$, and $Y$ is a finite dimensional normed
	vectorspace.

	Then any sequence $f_i \in \SWzero q(V,Y)$ with
	\begin{equation*}
		\sup \big \{ \Lpnorm {\| V \|} q{\derivative V{f_i}} \with i =
		1, 2, 3, \ldots \big \} < \infty
	\end{equation*}
	admits a subsequence converging in $\Lp \alpha ( \| V \|, Y)$.
\end{citing} }

The smallness condition on $\psi (U)$ ensures that $\SWzero q(V,Y)$ does not
contain nontrivial functions with vanishing derivative as would be the case if
$V$ corresponded to a sphere for example. Moreover, a similar result holds for
$\vdim = 1$, see
\ref{thm:rellich_embedding}\,\eqref{item:rellich_embedding:m=1} and
\ref{remark:rellich_embedding}. Both results have an analogous formulation in
the space $\SWloc q (V,Y)$, see \ref{thm:rellich-local-embedding} and
\ref{remark:rellich-local-embedding}.

Next, an embedding theorem into the space $\mathscr{C} (\spt \| V \|, Y )$ of
continuous functions from $\spt \| V \|$ into $Y$ endowed with the topology of
locally uniform convergence, see
\ref{def:space-of-continuous-functions}, is stated.

\begin{citing} [Theorem, see \ref{thm:sob_continuous_emb} and
	\ref{remark:rellich-local-embedding}] Suppose $\vdim$, $\adim$, $U$,
	$V$, and $\psi$ are as in the density hypothesis and the mean
	curvature hypothesis, $1 < \vdim < q$, and $Y$ is a finite dimensional
	normed vectorspace.

	Then there exists a continuous linear map $L :\SWloc q (V,Y) \to
	\mathscr{C}( \spt \| V \|,Y)$ uniquely characterised by
	\begin{equation*}
		L(f)(x) = f(x) \quad \text{for $\| V \|$ almost all $x$}.
	\end{equation*}
	Moreover, if $f_i \in \SWloc q (V,Y)$ form a sequence satisfying
	\begin{equation*}
		\sup \big \{ \eqLpnorm{\| V \| \restrict K} q{f_i} +
		\eqLpnorm{\| V \| \restrict K}q {\derivative V{f_i}} \with i
		= 1, 2, 3, \ldots \big \} < \infty
	\end{equation*}
	whenever $K$ is a compact subset of $U$, then the sequence $L(f_i)$
	admits a subsequence converging in $\mathscr{C}( \spt \| V \|, Y )$.
\end{citing}

In view of \hyperlink{Thm_C}{Theorem~C} and \hyperlink{Thm_D}{Theorem~D}
concerning the geodesic distance to a point, it follows that, in contrast to
the Euclidean case, $L(f)$ need not to be H{\"o}lder continuous with respect
to any exponent, see \ref{remark:sob_continuous_emb}.

Finally, a subspace of $\SWloc q(V,Y)$ is considered which is defined by a
seminorm not involving $\| \delta V \|$ and hence more closely resembles the
Euclidean case.

{ \hypertarget{Thm_E}{}
\begin{citing} [Theorem~E, see
\ref{thm:replacement-w1q}\,\eqref{item:replacement-w1q:complete}\,\eqref{item:replacement-w1q:q<oo_dense}\,\eqref{item:replacement-w1q:equal}]
	Suppose $\vdim$, $\adim$, $U$, $V$, and $\psi$ are as in the density
	hypothesis and the mean curvature hypothesis, $1 \leq q \leq \infty$,
	$Y$ is a finite dimensional normed vectorspace,
	\begin{equation*}
		\sigma (f) = \Lpnorm{\| V \|} q f + \Lpnorm{\| V \|} q
		{\derivative Vf} \quad \text{for $f \in \SWloc q ( V,Y )$},
	\end{equation*}
	and $E = \SWloc q (V,Y) \cap \{ f \with \sigma (f) < \infty \}$.

	Then the following three statements hold.
	\begin{enumerate}
		\item \label{item:intro_complete} The vectorspace $E$ is
		$\sigma$ complete.
		\item \label{item:intro_dense} If $q < \infty$, then
		$\mathscr{E} (U, Y) \cap \{ f \with \sigma (f) < \infty \}$ is
		$\sigma$ dense in $E$, and if additionally $U = \rel^\adim$,
		then $\mathscr{D} (\rel^\adim,Y)$ is $\sigma$ dense in $E$.
		\item \label{item:intro_agree} If $U = \rel^\adim$ and $\psi (
		\rel^\adim ) < \infty$, then $E = \SWSob q (V,Y)$.
	\end{enumerate}
\end{citing} }

\eqref{item:intro_complete} is simple unless $\vdim = 1$ in which case the non
absolutely continuous part of $\| \delta V\|$ with respect to $\| V \|$
requires additional care, see \ref{thm:one_dim}.  \eqref{item:intro_dense} is
a corollary to \hyperlink{Thm_A}{Theorem~A}.  Finally,
\eqref{item:intro_agree} relies on an estimate of $\Lpnorm{\| \delta V \|} qf$
for generalised weakly differentiable functions $f$, see
\ref{thm:global_special_sob}.

In view of \hyperlink{Thm_E}{Theorem~E}, depending on the intended usage, both
\begin{equation*}
	\SWSob q (V,Y) \quad \text{and} \quad \SWloc q (V,Y) \cap \{ f \with
	\sigma (f) < \infty \}
\end{equation*}
could act as substitute for the Euclidean Sobolev space.

\subsubsection*{Comparison to other Sobolev spaces, Section 8} Finally, the
presently introduced notion of Sobolev space shall be compared to notions of
Sobolev space for finite Radon measures $\mu$ over $\rel^\adim$ defined in
Bouchitt{\'e}, Buttazzo and Fragal{\`a}, see \cite{MR1857850}; see also
Bouchitt{\'e}, Buttazzo and Seppecher in \cite{MR1424348}. To describe this
approach suppose $1 \leq q \leq \infty$, $1 \leq r \leq \infty$, and
$1/q+1/r=1$. Firstly, one defines vectorspaces $T_\mu^q (x)$ for $x \in
\rel^\adim$, acting as tangent space, by means of $r$-th power $\mu$ summable
vectorfields such that a suitable distributional divergence involving $\mu$ is
also $r$-th power summable, see \ref{miniremark:setup_bss}.%
\begin{footnote}
	{In \cite[p.\,403]{MR1857850} Bouchitt{\'e}, Buttazzo and Fragal{\`a}
	define $T_\mu^q$ to be an equivalence class of functions agreeing
	$\mu$ almost everywhere; see \ref{miniremark:setup_bss} for a
	canonical representative.}
\end{footnote}
Accordingly, the gradient $\nabla_\mu^q f$ of $f \in \mathscr{D} ( \rel^\adim,
\rel )$ is given by%
\begin{footnote}
	{In \cite[p.\,403]{MR1857850} Bouchitt{\'e}, Buttazzo and Fragal{\`a}
	suppress the dependency on $q$, and $\nabla_\mu$ is considered as a
	linear map of a subset of $L_q ( \mu, \rel )$ into $( L_q ( \mu,
	\rel))^\adim$, see \ref{example:quotient-lp-spaces}.}
\end{footnote}
\begin{equation*}
	\nabla_\mu^q f(x) = \project{T_\mu^q (x)} ( \grad f(x) ) \quad
	\text{for $\mu$ almost all $x$}.
\end{equation*}
Then one obtains both the strong Sobolev space $H_\mu^{1,q} ( \rel^\adim )$
and the associated weak derivative by taking a suitable closure of the
afore-mentioned gradient operator $\nabla_\mu^q$.

Examples due to Di~Marino and Speight \cite[Theorem~1]{MR3411142} imply that
the vectorspace $T_\mu^q(x)$ and hence also the weak derivative depend on $q$,
see \ref{miniremark:tangent_planes} and \ref{remark:di_marino_speight}.  This
is in some sense analogous to the dependency of the $(\mu,m)$ approximate
derivative on the dimension $m$.  It appears to be largely unknown what is the
weakest condition on the first variation, for instance amongst those
considered in \ref{miniremark:situation}, to ensure that the two concepts
agree for an $\vdim$ dimensional rectifiable varifold. By results of
Fragal{\`a} and Mantegazza in \cite{MR1686704}, one is at least assured that
they agree provided $\vdim$, $\adim$, $U$, and $V$ are as in the mean
curvature hypothesis, $\| V \| ( \rel^\adim ) < \infty$, $\| \delta V \|$ is
absolutely continuous with respect to $\| V \|$, and $\mathbf{h} (V,\cdot) \in
\Lp \infty ( \| V \|, \rel^\adim )$, see \ref{miniremark:setup_bss_continued}.
In case the tangent spaces agree, taking $\sigma$ as in the preceding theorem,
one may isometrically identify the strong Sobolev space $H_{\| V \|}^{1,q} (
\rel^\adim)$ with the quotient space
\begin{equation*}
	\big ( \SWloc q ( V, Y ) \cap \{ f \with \sigma (f) < \infty \} \big )
	\Big / \big ( \SWloc q (V,Y) \cap \{ f \with \sigma (f) = 0 \} \big )
\end{equation*}
provided $\vdim$, $\adim$, $U$, and $V$ are as in the density hypothesis and
the mean curvature hypothesis, $U = \rel^\adim$, $1 \leq q < \infty$, and $\|
V \| ( \rel^\adim ) < \infty$, see \ref{miniremark:prelim-comparison}.  Under
the same hypotheses, one may similarly identify the weak Sobolev space $W_{\|
V \|}^{1,q} ( \rel^\adim )$ introduced by Bouchitt{\'e}, Buttazzo, and
Fragal{\`a} in \cite[p.\,403]{MR1857850} with a quotient space based on
$\mathbf{W} (V,Y)$, see \ref{miniremark:comparison-weak-sobolev-spaces}.

Summarising, the approach initiated by Bouchitt{\'e}, Buttazzo and Seppecher
in \cite{MR1424348} allows to treat geometric objects consisting of pieces of
different dimensions.  However, it seems not to be tailored for the study of
varifolds as is exemplified by the behaviour of the different notions of
tangent planes.  Moreover, apart from a general coarea formula, see
Bellettini, Bouchitt{\'e} and Fragal{\`a} \cite[\S\,4]{MR1736243}, very few
structural results and no embedding estimates appear to have been known for
those spaces even if the Radon measure is the weight of a suitable varifold.
Now, in the cases where the above-mentioned isometries to the spaces developed
here are valid, much of the theory of the present paper and its predecessor,
\cite{snulmenn:tv.v2}, applies to Sobolev spaces in the sense of
Bouchitt{\'e}, Buttazzo and Seppecher as well.

\subsection*{Possible lines of further study} \subsubsection*{Second order
elliptic partial differential equations in divergence form} The primary
motivation for this paper was to provide a natural framework for the study of
divergence form, second order elliptic partial differential equations.  More
concretely, the author's motivation stems from two results announced already
in \cite{snulmenn.mfo1230}.  Firstly, a local maximum estimate for
subsolutions generalising those of Allard \cite[7.5\,(6)]{MR0307015} or
Michael and Simon \cite[3.4]{MR0344978} could be used to deduce a strong
second order differentiability of the support of \emph{integral}%
\begin{footnote}
	{An $\vdim$ dimensional rectifiable varifold $V$ in an open subset of
	$\rel^\adim$ is integral if and only if $\density^\vdim ( \| V \|, x
	)$ is an integer for $\| V \|$ almost all $x$, see Allard
	\cite[3.5\,(1c)]{MR0307015}.}
\end{footnote}
varifolds satisfying the mean curvature hypothesis from the author's second
order rectifiability result \cite[4.8]{snulmenn.c2}, see
\cite[Corollary~2\,(1)]{snulmenn.mfo1230}.  Secondly, a suitable version of a
weak Harnack estimate could be employed to prove an area formula for a
suitably defined substitute of the Gauss map of at least two dimensional such
varifolds in codimension one, see \cite[Theorem~3]{snulmenn.mfo1230}.  For
these two specific applications, it would suffice to formulate the estimates
for Lipschitzian subsolutions respectively Lipschitzian solutions.  However,
as these estimates are of independent significance they shall be formulated in
their -- yet to be determined -- natural generality using the presently
introduced Sobolev spaces.  Finally, the author hopes that dispensing with ad
hoc formulations in favour of using Sobolev spaces will also facilitate the
exchange of ideas between varifold theory and other areas of geometric
analysis.

\subsubsection*{Minimisation of integral functionals}

For integral functionals based on the presently introduced Sobolev spaces
certain minimisation problems possess a solution. A simple example is given by
the Rayleigh quotient where one may check the conditions of the abstract
framework of Arnlind, Bj{\"o}rn and Bj{\"o}rn, see \cite[5.3]{MR3462618}, in
the situation of \ref{thm:rellich_embedding} if $1 < q < \infty$ using
\ref{remark:quotient-zero-sobolev-space} and \ref{remark:rellich_embedding}. A
more comprehensive study of the problem would include investigation of lower
semicontinuity of integral functionals defined on the presently introduced
Sobolev spaces in the spirit of quasiconvexity. In the context of Sobolev
spaces over compactly supported Radon measures in $\rel^\adim$ this topic was
considered by Fragal{\`a}, see \cite{MR1957092}.

In view of the closedness results \ref{corollary:closedness-tv} and
\ref{corollary:closedness_tg}, similar questions might be considered in the
framework of $\trunc (V,Y)$ and $\trunc_G (V)$ spaces.

\subsection*{Logical prerequisites} The present paper is a continuation of the
author's paper \cite{snulmenn:tv.v2}. Concerning newer results on varifolds,
additionally only some auxiliary results from
\cite[\S\,1]{snulmenn.isoperimetric} and Kolasi{\'n}ski and the author
\cite[\S\,3]{kol-men:decay.v2} are employed.  Concerning Sobolev spaces over
finite Radon measures, certain items from Fragal{\`a} and Mantegazza
\cite{MR1686704} and Bouchitt{\'e}, Buttazzo, and Fragal{\`a} \cite{MR1857850}
are used.

Additionally, a number of classical results are employed. For those items, as
a service to the reader, detailed references to Whitney \cite{MR0087148},
Dunford and Schwartz \cite{MR0117523}, Federer \cite{MR41:1976}, Allard
\cite{MR0307015}, Kelley \cite{MR0370454}, Castaing and Valadier
\cite{MR0467310}, Bourbaki \cite{MR910295,MR979294,MR979295}, do~Carmo
\cite{MR1138207}, and Adams and Fournier \cite{MR2424078} are given.
\begin{comment}
	Throughout the paper comments in small font are included.  These are
	not part of the logical line of arguments but are rather offered to
	the reader as a guide through the formal presentation of the material.
\end{comment}

\subsection*{Acknowledgements} The author would like to thank
Prof.~Dr.~Richard Schoen for suggesting to investigate the validity of
Rellich's theorem in the present context, Dr.~Simone Di~Marino for
conversations concerning connections of the present theory to the one of
metric measure spaces, and Dr.~S{\l}awomir Kolasi{\'n}ski for his advice on
exposition.  Moreover, the author would like to thank the University of
Warwick where a part of this paper was written for its hospitality.  Finally,
the author would like to thank the referee for his or her very careful reading
of the manuscript.

\section{Notation} \label{sec:notation} The notation of
\cite[\S\,1]{snulmenn:tv.v2} will be employed which follows with some additions
and modifications Federer \cite{MR41:1976} and Allard \cite{MR0307015}.

\paragraph{Modifications} If $X$ is a metric space metrised by $\varrho$, $A
\subset X$, and $x \in X$, then the \emph{distance of $x$ to $A$} is defined
by $\dist (x,A) = \inf \{ \varrho (x,a) \with a \in A \}$.%
\begin{footnote}
	{Notice that $\inf \varnothing = \infty$ and $\sup \varnothing = -
	\infty$, see \cite[2.1.1]{MR41:1976}.}
\end{footnote}
If $X$ is a metric space and $M$ is the class of Borel regular measures $\psi$
over $X$ such that $\psi ( U_i ) < \infty$ for $i \in \nat$ for some sequence
of open sets $U_1, U_2, U_3, \ldots$ covering $X$, measures $\psi_\phi \in M$
will be defined by
\begin{equation*} \label{page:psi_phi}
	\psi_\phi (A) = \inf \{ \psi (B) \with \text{$B$ is a Borel set and
	$\phi ( A \without B ) = 0$} \} \quad \text{for $A \subset X$}
\end{equation*}
whenever $\phi, \psi \in M$. This extends \cite[2.9.1, 2.9.2,
2.9.7]{MR41:1976} to certain measures failing to be finite on bounded sets.

\paragraph{Definitions in the text} The notion of \emph{pseudometric} is
introduced in \ref{def:pseudometric}. The local Lebesgue space $\Lploc p (
\mu, Y )$ and the space of continuous functions $\mathscr{C} (X,Y)$ are
defined in \ref{def:local-lebesgue-space} and
\ref{def:space-of-continuous-functions} respectively.
The local Sobolev space
$\SWloc q ( V, Y)$ and its topology is defined in
\ref{def:strict_local_sobolev_space} and \ref{def:topology-SWSob}. The
quantity $\SWnorm q V f$ for certain functions $f$ is defined in
\ref{def:sobolev-seminorm}. Finally, the Sobolev space $\SWSob q (V,Y)$ and
its subspace $\SWzero q (V,Y)$ are defined in \ref{def:strict_sobolev_space}
and \ref{def:strict-sobolev-space-zero-boundary-values}.

\section{Locally convex spaces}
The purpose of this section is to summarise properties of locally convex
spaces, including definitions of some particular spaces, for convenient
reference.
\begin{comment}
	Firstly, some properties of Lebesgue spaces are given.
\end{comment}
\begin{miniremark} \label{miniremark:prop_lp_spaces}
	Suppose $1 \leq p < \infty$, $\mu$ is a Radon measure over an open
	subset of $U$ of $\rel^\adim$, and $Y$ is a separable Banach space.
	Then $\mathscr{D} (U,Y)$ is $\mu_{(p)}$ dense in $\Lp p ( \mu, Y )$
	and $\Lp p ( \mu, Y )$ is $\mu_{(p)}$ separable; in fact, whenever $G$
	is an open subset of $U$ and $K$ is a compact subset of $G$, there
	exists $\zeta \in \mathscr{D} (U, \rel)$ such that $0 \leq \zeta (x)
	\leq 1$ for $x \in U$, $\zeta (x) = 1$ for $x \in K$, and $\spt \zeta
	\subset G$, hence \cite[2.2.5, 2.4.12]{MR41:1976} implies the
	denseness and the separability follows from \cite[2.2, 2.15,
	2.24]{snulmenn:tv.v2}.
\end{miniremark}
\begin{comment}
	Next, the meaning of the term pseudometric is specified.
\end{comment}
\begin{definition} [see \protect{\cite[\printRoman 2, \S\,1.2,
	def.\,3]{MR979294}, \cite[\printRoman{9}, \S\,1.1, def.\,1; \printRoman
	9, \S\,1.2, def.\,2]{MR979295}}] \label{def:pseudometric}

	Suppose $X$ is a set.

	Then $\varrho : X \times X \to \{ t \with 0 \leq t \leq \infty \}$
	will be called a \emph{pseudometric on $X$} if and only if the
	following three conditions are satisfied.
	\begin{enumerate}
		\item If $x \in X$, then $\varrho (x,x) = 0$.
		\item If $a,x \in X$, then $\varrho (a,x)=\varrho(x,a)$.
		\item If $a,x,\chi \in X$, then $\varrho (a,\chi) \leq \varrho
		(a,x) + \varrho (x,\chi)$.
	\end{enumerate}
	The sets $X \cap \{ x \with \varrho (a,x) < r \}$ corresponding to $a
	\in X$ and $0 < r < \infty$ form a base of a topology on $X$, called
	the \emph{topology induced by $\varrho$}.
\end{definition}
\begin{remark}
	Notice that $\infty$ may occur amongst the values of
	$\varrho$.\footnote{This is in contrast with the definition of
	``pseudo-metric'' in \cite[p.\,119]{MR0370454}.}
\end{remark}
\begin{comment}
	The next two items provide basic properties of locally convex spaces
	whose topology is defined by a set of real valued seminorms. All of
	these properties are readily verified and most of them can also be
	found in Bourbaki \cite{MR910295}.
\end{comment}
\begin{miniremark} \label{miniremark:lcs-induced-by-seminorms}
	Suppose $E$ is a vectorspace endowed with the topology induced by a
	nonempty family $\Sigma$ of real valued seminorms on $E$, see
	\cite[\printRoman 2, p.\,3]{MR910295}.\footnote{A real valued seminorm
	is precisely a ``semi-norm'' in the sense of \cite[\printRoman 2,
	p.\,1, def.\,1]{MR910295}.} Then $E$ is a locally convex space and the
	family of sets
	\begin{equation*}
		E \cap \{ x \with \sigma (x-a) < r \}
	\end{equation*}
	corresponding to $a \in E$, $0 < r < \infty$, and $\sigma \in \Sigma$
	form a subbase of the topology of $E$, see \cite[\printRoman 2, p.\,24,
	cor.]{MR910295}. A subset $B$ of $E$ is bounded if and only if
	\begin{equation*}
		\sup \sigma \lIm B \rIm < \infty \quad \text{whenever $\sigma
		\in \Sigma$},
	\end{equation*}
	see \cite[\printRoman 3, p.\,2]{MR910295}. If for $\sigma_1, \sigma_2
	\in \Sigma$ there exists $\sigma_3 \in \Sigma$ with $\sup \{ \sigma_1,
	\sigma_2 \} \leq \sigma_3$, then the above-mentioned family forms a
	base of the topology of $E$. If $\sigma_1, \sigma_2, \sigma_3, \ldots$
	form an enumeration of $\Sigma$, then the topology of $E$ is induced
	by the translation invariant real valued pseudometric with value
	\begin{equation*}
		\sum_{i=1}^\infty 2^{-i} \inf \{ 1, \sigma_i (x-a) \} \quad
		\text{at $(a,x) \in E \times E$}
	\end{equation*}
	which is a metric if and only if $E$ is Hausdorff.
\end{miniremark}
\begin{miniremark} \label{miniremark:lcs-quotient}
	Suppose $E$ and $\Sigma$ are as in
	\ref{miniremark:lcs-induced-by-seminorms} and $V$ is the closure of
	$\{ 0 \}$ in $E$. Then $V = \bigcap \{ \sigma^{-1} \lIm \{ 0 \} \rIm
	\with \sigma \in \Sigma \}$ is a vector subspace and $E$ is Hausdorff
	if and only if $V = \{ 0 \}$. Moreover, denoting the canonical
	projection of $E$ onto $E/V$ by $\pi$ and endowing $E/V$ with the
	topology induced by the family $\{ \sigma \circ \pi^{-1} \with \sigma
	\in \Sigma \}$ of real valued seminorms on $E/V$, one obtains the
	quotient locally convex space, see \cite[\printRoman 2, p.\,5;
	\printRoman 2, p.~29, Example~\printRoman{1}]{MR910295}.%
	\begin{footnote}
		{Whenever $f$ and $g$ are relations the inverse and
		composition satisfy
		\begin{equation*}
			f^{-1} = \{ (y,x) \with (x,y) \in f \}, \quad g \circ
			f = \{ (x,z) \with \text{$(x,y) \in f$ and $(y,z) \in
			g$ for some $y$} \}.
		\end{equation*}}
	\end{footnote}
	Clearly, $E/V$ is Hausdorff and, if $E$ is complete, so is $E/V$.
\end{miniremark}
\begin{comment}
	Often, it is more convenient to consider functions -- as contained
	in $\Lp p ( \mu, Y )$ -- instead of equivalence classes of functions.
	However, to access certain functional analytic results, the quotient
	space $L_p ( \mu, Y )$ will be introduced as well.
\end{comment}
\begin{example} \label{example:quotient-lp-spaces}
	Occasionally, the Banach spaces
	\begin{equation*}
		L_q ( \mu, Y ) = \Lp q ( \mu, Y ) \Big / \big ( \Lp q ( \mu, Y
		) \cap \{ f \with \mu_{(q)} (f) = 0 \} \big )
	\end{equation*}
	corresponding to $1 \leq q \leq \infty$, measures $\mu$, and Banach
	spaces $Y$ will be employed. If $1 < q < \infty$ and $\dim Y <
	\infty$, then $L_q ( \mu, Y )$ is reflexive, see
	\cite[2.5.7\,(i)]{MR41:1976}; in fact, a basis of $Y$ induces an
	isomorphism $L_q ( \mu, Y ) \simeq L_q ( \mu, \rel )^{\dim Y}$.
\end{example}
\begin{comment}
	Next, the local Lebesgue spaces for Radon measures over locally
	compact Hausdorff spaces $X$ are introduced; in fact, $X$ will always
	be an open subset of some Euclidean space in the later sections.
\end{comment}
\begin{definition} \label{def:local-lebesgue-space}
	Suppose $1 \leq p \leq \infty$, $\mu$ is a Radon measure over a
	locally compact Hausdorff space $X$, and $Y$ is a Banach space.

	Then $\Lploc p ( \mu, Y )$ is defined to be the vectorspace consisting
	of all functions~$f$ mapping a subset of $X$ into $Y$ such that $f \in
	\Lp p ( \mu \restrict K, Y )$ whenever $K$ is a compact subset of $X$.
	Moreover, $\Lploc p ( \mu, Y )$ is endowed with the topology induced
	by the family of seminorms mapping $f \in \Lploc p ( \mu, Y )$ onto
	$\eqLpnorm{\mu \restrict K} p f$ corresponding to all compact subsets
	$K$ of $X$, see \ref{miniremark:lcs-induced-by-seminorms}. Let $\Lploc
	p ( \mu ) =  \Lploc p ( \mu, \rel )$.
\end{definition}
\begin{remark}
	This definition is in accordance with \cite[p.\,16]{snulmenn:tv.v2}.
\end{remark}
\begin{remark} \label{remark:lploc-countable}
	If $K(i)$ is a sequence of compact subsets of $X$ with $K(i) \subset
	\Int K(i+1)$ for $i \in \nat$ and $X = \bigcup_{i=1}^\infty K(i)$,
	then the topology on $\Lploc p (\mu,Y)$ is induced by the seminorms
	$(\mu \restrict K(i))_{(p)}$ corresponding to $i \in \nat$ by
	\ref{miniremark:lcs-induced-by-seminorms}.
\end{remark}
\begin{remark} \label{remark:lploc-complete}
	The topological vector space $\Lploc p ( \mu, Y )$ is a complete
	locally convex space and
	\begin{enumerate}
		\item $\mu ( T \without \dmn f ) = 0$ whenever $\mu (T) <
		\infty$,
		\item $f^{-1} \lIm B \rIm$ is $\mu$ measurable whenever $B$ is
		a Borel subset of $Y$,
	\end{enumerate}
	whenever $f \in \Lploc p ( \mu, Y )$; this is evident if $X$ is
	countably $\mu$ measurable%
\begin{footnote}
	{A set is called countably $\mu$ measurable if and only if it equals
	the union of a countable family of $\mu$ measurable sets with finite
	$\mu$ measure, see \cite[2.3.4]{MR41:1976}.}
\end{footnote}
	and may be verified using the family $G$
	constructed in \cite[2.5.10]{MR41:1976} in the general case.
\end{remark}
\begin{remark} \label{remark:lploc-dense-sep}
	If $p < \infty$, $X$ is an open subset of $\rel^\adim$, and $Y$ is
	separable, then $\mathscr{D} (U,Y)$ is dense in $\Lploc p ( \mu, Y )$
	and $\Lploc p ( \mu, Y )$ is separable; in fact, the inclusion map of
	$\Lp p ( \mu, Y )$ topologised by $\mu_{(p)}$ into $\Lploc p ( \mu, Y
	)$ is continuous with dense image by
	\ref{miniremark:lcs-induced-by-seminorms}, hence
	\ref{miniremark:prop_lp_spaces} implies the conclusion.
\end{remark}
\begin{remark} \label{remark:lploc-quotient}
	The quotient locally convex space $Q = \Lploc q (\mu,Y) \big / V$,
	where
	\begin{equation*}
		V = \Lploc q(\mu,Y) \cap \big \{ f
		\with \text{$\eqLpnorm {\mu \restrict K}qf = 0$ whenever $K$
		is a compact subset of $U$} \big \},
	\end{equation*}
	see \ref{miniremark:lcs-quotient}, is Hausdorff and complete by
	\ref{remark:lploc-complete}. Under the conditions of
	\ref{remark:lploc-countable}, the topology of $Q$ is induced by a
	translation invariant metric by
	\ref{miniremark:lcs-induced-by-seminorms} and $Q$ is an ``$F$-space''
	in the terminology of \cite[\printRoman{2}.1.10]{MR0117523}.
\end{remark}
\begin{comment}
	Finally, the locally convex space of continuous functions defined on
	some locally compact Hausdorff space with values in some Banach space
	is introduced.
\end{comment}
\begin{definition} \label{def:space-of-continuous-functions}
	Suppose $X$ is a locally compact Hausdorff space and $Y$ is a Banach
	space.

	Then $\mathscr{C}(X,Y)$ denotes the vectorspace of all continuous
	functions mapping $X$ into $Y$. Its topology is induced by the
	seminorms $\nu_K$ defined by
	\begin{equation*}
		\nu_K(f) = \sup ( \{ 0 \} \cup \{ |f(x)| \with x \in K \} )
		\quad \text{for $f \in \mathscr{C}(X,Y)$}
	\end{equation*}
	corresponding to all compact subsets $K$ of $X$, see
	\ref{miniremark:lcs-induced-by-seminorms}.%
	\begin{footnote}
		{The topological space $\mathscr{C}(X,Y)$ is denoted
		$\mathscr{C}_c (X;Y)$ in \cite[\printRoman{10}, \S\,1.6,
		p.\,280]{MR979295}, where the letter ``$c$'' indicates the
		topology of compact convergence.}
	\end{footnote}
	Let $\mathscr{C} (X) = \mathscr{C} (X,\rel)$.
\end{definition}
\begin{remark} \label{remark:continuous-functions-complete-lcs}
	The topological vector space $\mathscr{C}(X,Y)$ is a Hausdorff
	complete locally convex space. If $K(i)$ is a sequence of compact
	subsets of $X$ with $K(i) \subset \Int K(i+1)$ for $i \in \nat$ and $X
	= \bigcup_{i=1}^\infty K(i)$, then the topology on $\mathscr{C} (X,Y)$
	is induced by the seminorms $\nu_{K(i)}$ corresponding to $i \in \nat$
	by \ref{miniremark:lcs-induced-by-seminorms}.
\end{remark}
\begin{remark} \label{remark:kx_dense_in_cx}
	The inclusion map of $\mathscr{K} (X)$ into $\mathscr{C} (X)$
	is continuous. Moreover, $\mathscr{K} (X)$ is dense in
	$\mathscr{C}(X)$ since for every compact subset $K$ of $X$ there
	exists $\zeta \in \mathscr{K} (X)$ with $\zeta(x)=1$ for $x \in K$ by
	\cite[5.17, 5.18]{MR0370454}.
\end{remark}

\section{Locally Lipschitzian functions} In this section an approximation
result for Lipschitzian functions by functions of class $1$ over rectifiable
varifolds is proven, see \ref{thm:approximation_lip}. Its main additional
feature is that agreement outside a set of small weight measure may be
achieved while essentially maintaining the Lipschitz constant of the original
function. This rests on the observation that every submanifold of class~$1$ of
$\rel^\adim$ may be expressed as the image of a retraction of class~$1$ whose
differential at each point of the submanifold equals the orthogonal projection
of $\rel^\adim$ onto the tangent space at that point, see
\ref{corollary:special-retraction}.

The approximation result of this section will be used to prove various density
results of function of class $\infty$ in Sobolev spaces with exponent $q <
\infty$, see
\ref{remark:basic-top-swloc}\,\eqref{item:basic-top-swloc:q<oo_dense},
\ref{remark:density_sobolev_space}, and \ref{remark:density-in-SWzero}.
\begin{comment}
	The following theorem is a direct consequence of the definition of
	submanifolds and Whitney's extension theorem, see
	\cite[3.1.14]{MR41:1976}.
\end{comment}
\begin{theorem} \label{thm:submanifold_extension}
	Suppose $\vdim, \adim \in \nat$, $\vdim \leq \adim$, $M$ is an $\vdim$
	dimensional submanifold of class $1$ of $\rel^\adim$, $Y$ is a normed
	vectorspace, and $f : M \to Y$ is of class $1$ relative to $M$.%
	\begin{footnote}
		{If $A \subset \rel^\adim$ and $g : A \to Y$, then $g$ is of
		class~$1$ relative to $A$ if and only if there exist an open
		subset $U$ of $\rel^\adim$ and $h : U \to Y$ of class~$1$ such
		that $A \subset U$ and $h|A=g$, see \cite[3.1.22]{MR41:1976}.}
	\end{footnote}

	Then the following two statements hold:
	\begin{enumerate}
		\item \label{item:submanifold_extension:uni_diff} If $\varrho
		(C,\delta)$ denotes the supremum of the set consisting of $0$
		and all numbers
		\begin{equation*}
			| f(x)-f(a)-\langle \project{\Tan(M,a)}(x-a), \Der f
			(a) \rangle |/|x-a|
		\end{equation*}
		corresponding to $\{ x,a \} \subset C$ with $0 < |x-a| \leq
		\delta$ whenever $C \subset M$ and $\delta > 0$, then $\varrho
		( C, \delta ) \to 0$ as $\delta \to 0+$ whenever $C$ is a
		compact subset of $M$.
		\item \label{item:submanifold_extension:extension} There exist
		an open subset $U$ of $\rel^\adim$ with $M \subset U$ and a
		function $g : U \to Y$ of class $1$ with $g|M = f$ and
		\begin{equation*}
			\Der g(a) = \Der f(a) \circ \project{\Tan(M,a)} \quad
			\text{for $a \in M$}.
		\end{equation*}
	\end{enumerate}
\end{theorem}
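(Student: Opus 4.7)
The plan is first to prove (1) by reducing to uniform differentiability of $C^1$ maps on compact sets via local extensions, and then to deduce (2) directly from (1) by applying Whitney's extension theorem as formulated in \cite[3.1.14]{MR41:1976}.

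For (1), fix a compact $C \subset M$. Since $f$ is of class $1$ relative to $M$, for each $a \in C$ there exist an open neighborhood $U_a \subset \rel^\adim$ of $a$ and an extension $\tilde f_a : U_a \to Y$ of class $1$ with $\tilde f_a | (M \cap U_a) = f | (M \cap U_a)$; in particular $\Der \tilde f_a (b) | \Tan(M,b) = \Der f(b)$ for $b \in M \cap U_a$. Cover $C$ by finitely many $U_{a_1}, \ldots, U_{a_N}$ and choose $\eta > 0$ smaller than an associated Lebesgue number. For $\{x,a\} \subset C$ with $0 < |x-a| \leq \eta$, both points lie in a common $U_{a_j}$, and
\[
f(x) - f(a) - \langle \project{\Tan(M,a)} (x-a), \Der f(a) \rangle = \Der \tilde f_{a_j} (a) \bigl( (x-a) - \project{\Tan(M,a)} (x-a) \bigr) + R,
\]
where $|R|/|x-a| \to 0$ as $\eta \to 0$ uniformly in such pairs, by uniform differentiability of $\tilde f_{a_j}$ on compact subsets of $U_{a_j}$. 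Since $M$ is a $\vdim$ dimensional submanifold of class $1$, its chord direction approximates the tangent plane uniformly on compacta, yielding
\[
|(x-a) - \project{\Tan(M,a)} (x-a)| = o(|x-a|) \quad \text{as $|x-a| \to 0$, uniformly for $a \in C$};
\]
combined with the boundedness of $\| \Der \tilde f_{a_j} \|$ on the relevant compact pieces this produces the required uniform estimate, so $\varrho (C, \delta) \to 0$ as $\delta \to 0+$.

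For (2), apply Whitney's extension theorem \cite[3.1.14]{MR41:1976} with the candidate differential $L : M \to \Hom (\rel^\adim, Y)$ defined by $L(a) = \Der f(a) \circ \project{\Tan(M,a)}$. The map $L$ is continuous on $M$ since $a \mapsto \project{\Tan(M,a)}$ is continuous on the submanifold of class $1$ and $\Der f$ is continuous relative to $M$. Moreover, since $L(a) (x-a) = \langle \project{\Tan(M,a)} (x-a), \Der f(a) \rangle$, the uniform remainder hypothesis of Whitney's theorem on every compact $C \subset M$ is exactly the content of (1). The theorem then delivers an open $U \supset M$ and a function $g : U \to Y$ of class $1$ with $g | M = f$ and $\Der g(a) = L(a) = \Der f(a) \circ \project{\Tan(M,a)}$ for $a \in M$.

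The main obstacle is the uniformity in (1): one must simultaneously control the $C^1$ remainder of the local extensions $\tilde f_{a_j}$ and the tangential approximation $|(x-a) - \project{\Tan(M,a)} (x-a)| = o(|x-a|)$, both of which depend on the base point $a$, and then patch these estimates across a finite subcover of $C$. Once (1) is in hand, the passage to (2) is a direct invocation of Whitney's extension theorem.
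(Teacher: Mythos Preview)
Your argument for (1) is correct and is essentially a spelled-out version of what the paper obtains by citing \cite[3.1.19\,(1), 3.1.11]{MR41:1976}.

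For (2) there is a genuine gap. Whitney's extension theorem in the form \cite[3.1.14]{MR41:1976} applies to a \emph{closed} subset of $\rel^\adim$, and a submanifold $M$ of class~$1$ need not be closed in $\rel^\adim$ (take, for instance, an open arc). So you cannot invoke \cite[3.1.14]{MR41:1976} directly with domain $M$; the theorem simply does not yield an extension on an open set containing $M$ in that situation. Your verification of the Whitney hypotheses (continuity of $L(a)=\Der f(a)\circ\project{\Tan(M,a)}$ and the uniform remainder estimate supplied by (1)) is fine, but it only licenses applying \cite[3.1.14]{MR41:1976} to each closed subset $A\subset\rel^\adim$ with $A\subset M$.

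This is exactly how the paper proceeds: for every such closed $A$ one obtains $g_A:\rel^\adim\to Y$ of class~$1$ with $g_A|A=f|A$ and $\Der g_A(a)=\Der f(a)\circ\project{\Tan(M,a)}$ for $a\in A$, and then a partition of unity (subordinate to a locally finite cover of $M$ by relatively compact pieces) is used to glue the $g_A$ into a single $g$ on an open neighbourhood $U$ of $M$. You need to add this localisation and patching step to close the argument.
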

\begin{proof}
    \eqref{item:submanifold_extension:uni_diff} is readily verified by use of
    \cite[3.1.19\,(1), 3.1.11]{MR41:1976}.

    Define $P_a : \rel^\adim \to Y$ by $P_a(x) = f (a) + \langle
    \project{\Tan(M,a)} (x-a), \Der  f(a) \rangle$ for $a \in M$ and $x \in
    \rel^\adim$. Noting \eqref{item:submanifold_extension:uni_diff} and
    \begin{equation*}
	\Der P_a(x)-\Der P_x(x) = \Der  f(a) \circ \project{\Tan(M,a)} - \Der
	f(x) \circ \project{\Tan(M,x)}
    \end{equation*}
    for $a,x \in M$, one applies \cite[3.1.14]{MR41:1976} to construct for
    each closed subset $A$ of $\rel^\adim$ with $A \subset M$, a function $g_A
    : \rel^\adim \to Y$ of class $1$ with $g_A|A = f |A$ and
    \begin{equation*}
	\Der g_A(a) = \Der f(a) \circ \project{\Tan(M,a)} \quad \text{for $a
	\in A$}.
    \end{equation*}
    Therefore $g$ is constructable by use of a partition of unity.
\end{proof}
\begin{comment}
	The existence of a retraction with the desired additional property now
	follows from the known existence result of retractions, see
	\cite[p.\,121]{MR0087148}.
\end{comment}
\begin{corollary} \label{corollary:special-retraction}
    Suppose $\vdim, \adim \in \nat$, $\vdim \leq \adim$, and $M$ is an $\vdim$
    dimensional submanifold of class $1$ of $\rel^\adim$.

    Then there exists a function $r$ of class $1$ retracting some open subset
    of $\rel^\adim$ onto $M$ and satisfying
    \begin{equation*}
	\Der r(a) = \project{\Tan(M,a)} \quad \text{whenever $a \in M$}.
    \end{equation*}
\end{corollary}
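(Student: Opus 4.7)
The plan is to compose an arbitrary class-$1$ retraction onto $M$ (whose existence is classical) with the specific $C^1$ extension provided by Theorem~\ref{thm:submanifold_extension}. Neither object in isolation delivers the required identity $\Der r(a) = \project{\Tan(M,a)}$, but their composition does, thanks to the chain rule combined with the fact that \emph{any} retraction onto $M$ acts as the identity on tangent vectors of $M$.

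First, I would apply \ref{thm:submanifold_extension}\eqref{item:submanifold_extension:extension} with $Y = \rel^\adim$ and $f : M \to \rel^\adim$ the inclusion (which is of class $1$ relative to $M$, with pointwise derivative equal to the identity of $\rel^\adim$). This produces an open neighborhood $U$ of $M$ and a function $g : U \to \rel^\adim$ of class $1$ such that
\[
	g(a) = a \quad \text{and} \quad \Der g(a) = \project{\Tan(M,a)} \quad \text{for } a \in M.
\]

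Second, I invoke the classical existence of a class-$1$ retraction $s : W \to M$ of some open neighborhood $W$ of $M$, as referenced by the author (Whitney \cite{MR0087148}, p.\,121). Since $g$ is continuous and $g\lIm M \rIm = M \subset W$, the set $g^{-1}\lIm W \rIm$ is an open neighborhood of $M$ contained in $U$; on it define $r = s \circ g$. Then $r$ is of class $1$, $\im r \subset \im s = M$, and for $a \in M$ one has $r(a) = s(g(a)) = s(a) = a$, so $r$ is a retraction of an open neighborhood of $M$ onto $M$.

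Third, for $a \in M$ the chain rule yields
\[
	\Der r(a) = \Der s(a) \circ \Der g(a) = \Der s(a) \circ \project{\Tan(M,a)}.
\]
Given $v \in \Tan(M,a)$, choose a class-$1$ curve $\gamma$ in $M$ with $\gamma(0) = a$ and $\gamma'(0) = v$; since $s \circ \gamma = \gamma$, differentiation at $0$ gives $\Der s(a)(v) = v$, so $\Der s(a)$ restricts to the identity on $\Tan(M,a)$. Because $\project{\Tan(M,a)}$ takes values in $\Tan(M,a)$, composition produces $\Der r(a) = \project{\Tan(M,a)}$, as required. The only nontrivial ingredient beyond Theorem~\ref{thm:submanifold_extension} is the classical existence of \emph{some} class-$1$ retraction onto $M$; once granted, the argument is entirely a chain-rule computation, so no serious obstacle arises.
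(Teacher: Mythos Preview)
Your proof is correct and follows essentially the same approach as the paper: both obtain $g$ from Theorem~\ref{thm:submanifold_extension}\,\eqref{item:submanifold_extension:extension} applied to $f=\id{M}$, take a classical class-$1$ retraction onto $M$ from Whitney, and define $r$ as their composition. You supply the chain-rule verification that the paper leaves implicit, but the construction is identical.
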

\begin{proof}
    Obtaining from \cite[p.\,121]{MR0087148} a map $h$ of class $1$ retracting
    some open subset of $\rel^\adim$ onto $M$ and from
    \ref{thm:submanifold_extension}\,\eqref{item:submanifold_extension:extension}
    with $Y = \rel^\adim$ and $f = \id{M}$ a function $g$, one may take $r = h
    \circ g$.
\end{proof}
\begin{comment}
	Three more preparatory lemmata for the approximation result are
	needed.  The first one is fairly elementary.
\end{comment}
\begin{lemma} \label{lemma:app-nonnegative}
    Suppose $U$ is an open subset of $\rel^\adim$, $\mu$ is a Radon measure
    over~$U$, $h : U \to \rel$ is of class $1$, $A = \{ x \with h(x) \geq 0
    \}$, and $\varepsilon > 0$.

    Then there exists a nonnegative function $g : U \to \rel$ of class $1$
    such that
    \begin{equation*}
	\mu ( A \without \{ x \with h(x) = g (x) \} ) \leq \varepsilon.
    \end{equation*}
\end{lemma}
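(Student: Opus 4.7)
The plan is to modify $h$ only where $h$ takes small positive values, replacing it by a smoothed nonnegative substitute that vanishes on $\{h \leq 0\}$.  First I would fix $\psi \in \mathscr{E}(\rel, \rel)$ with $\psi(t) = 0$ for $t \leq 0$, $\psi(t) = t$ for $t \geq 1$, and $0 \leq \psi(t) \leq t$ for $0 \leq t \leq 1$; for example, $\psi(t) = t \, \varphi(t)$ with a $C^\infty$ cutoff $\varphi$ satisfying $\varphi(t) = 0$ for $t \leq 0$, $\varphi(t) = 1$ for $t \geq 1$, and $0 \leq \varphi \leq 1$.  Given any $\tilde\eta \in \mathscr{E}(U, \rel)$ with $\tilde\eta > 0$, set
\[
g(x) = \tilde\eta(x) \, \psi \big( h(x) / \tilde\eta(x) \big) \quad \text{for $x \in U$}.
\]
Then $g$ is of class $1$ and nonnegative.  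Since $\psi(u) = u$ holds at $u = 0$ and for $u \geq 1$, one has $g(x) = h(x)$ on $\{ h = 0 \} \cup \{ h \geq \tilde\eta \}$, so
\[
A \without \{ x \with g(x) = h(x) \} \subset \{ x \with 0 < h(x) < \tilde\eta(x) \}.
\]

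The task thus reduces to choosing $\tilde\eta$ so that the set on the right has $\mu$ measure at most $\varepsilon$.  I would exhaust $U$ by compact sets $K_i$ with $K_i \subset \Int K_{i+1}$, $\bigcup_{i \in \nat} K_i = U$, and $\mu(K_i) < \infty$, possible since $\mu$ is Radon.  Because $\mu(K_i) < \infty$ and $\bigcap_{\delta > 0} ( K_i \cap \{ 0 < h < \delta \} ) = \varnothing$, continuity from above provides $\delta_i > 0$ with $\mu ( K_i \cap \{ 0 < h < \delta_i \} ) < \varepsilon \, 2^{-i}$; after replacing $\delta_i$ by $\min \{ \delta_1, \ldots, \delta_i \}$ I may assume $\delta_i$ is decreasing.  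Next, choose cutoffs $\zeta_i \in \mathscr{E}(U, \rel)$ with $0 \leq \zeta_i \leq 1$, $\zeta_i = 1$ on $K_i$, and $\spt \zeta_i \subset \Int K_{i+1}$, and set $\zeta_0 = 0$.  Define
\[
\tilde\eta(x) = \sum_{i=1}^\infty ( \delta_i - \delta_{i+1} ) \, \zeta_{i-1}(x).
\]
On any compact subset of $U$ the tail of this series becomes a constant (since $\zeta_{i-1}$ equals $1$ there for large $i$), so $\tilde\eta \in \mathscr{E}(U, \rel)$; a direct check using that $\zeta_{i-1}(x) = 0$ for $i - 1 \leq k - 2$ and $\zeta_{i-1}(x) = 1$ for $i - 1 \geq k$ yields $\delta_{k+1} \leq \tilde\eta(x) \leq \delta_k$ whenever $x \in K_k \without K_{k-1}$, with $K_0 = \varnothing$.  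In particular $\tilde\eta > 0$ on $U$.

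With this $\tilde\eta$, the inclusion $\{ 0 < h < \tilde\eta \} \cap ( K_k \without K_{k-1} ) \subset K_k \cap \{ 0 < h < \delta_k \}$ gives
\[
\mu \big ( \{ 0 < h < \tilde\eta \} \big ) \leq \sum_{k=1}^\infty \mu \big ( K_k \cap \{ 0 < h < \delta_k \} \big ) < \varepsilon,
\]
completing the argument.  The main obstacle is the construction of a positive threshold $\tilde\eta$ of class $1$ whose bounds tighten along an exhaustion of $U$; this is essentially a partition-of-unity calculation, but the bookkeeping must be arranged so that on each annulus $K_k \without K_{k-1}$ the bound $\delta_k$ is enforced.
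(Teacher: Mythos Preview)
Your approach is essentially the same as the paper's: both smooth out $\sup\{h,0\}$ on a locally varying strip $\{0<h<\text{threshold}\}$, choosing the threshold small on each compact piece so that the strip has small $\mu$ measure. The paper packages this via a partition of unity $\{\zeta_i\}$ with compact supports $K_i$, writing $g=\sum_i \zeta_i\,(f_i\circ h)$ where each $f_i:\rel\to\rel$ is a nonnegative $C^1$ function agreeing with $t\mapsto\sup\{t,0\}$ outside $(0,\delta_i)$; your version instead builds a single smooth positive threshold $\tilde\eta$ and sets $g=\tilde\eta\,\psi(h/\tilde\eta)$. Both constructions yield the same inclusion $A\without\{g=h\}\subset\bigcup_i K_i\cap\{0<h<\delta_i\}$.

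One small slip: your claimed lower bound $\tilde\eta(x)\geq\delta_{k+1}$ on $K_k\without K_{k-1}$ assumes the telescoping tail $\sum_{i\geq k+1}(\delta_i-\delta_{i+1})$ equals $\delta_{k+1}$, which requires $\lim_i\delta_i=0$. As written you only arrange the $\delta_i$ to be decreasing, so the tail equals $\delta_{k+1}-\lim_i\delta_i$ and $\tilde\eta$ could vanish, breaking the definition of $g$. The fix is immediate---additionally require $\delta_i\leq 1/i$ when choosing them---and nothing else in your argument changes. The upper bound $\tilde\eta(x)\leq\delta_k$, which is what the measure estimate actually uses, is unaffected.
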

\begin{proof}
    Applying \cite[3.1.13]{MR41:1976} with $\Phi = \{ U \}$, one obtains a
    sequence $\zeta_i \in \mathscr{D} (U,\rel)$ forming a partition of unity
    on $U$ associated to $\{ U \}$. Abbreviating $K_i = \spt \zeta_i$, choose
    $\delta_i > 0$ and nonnegative functions $f_i : \rel \to \rel$ of class
    $1$ with
    \begin{gather*}
	\mu ( K_i \cap \{x \with 0 < h (x) < \delta_i \} ) \leq 2^{-i}
	\varepsilon, \\
	f_i (t) = \sup \{ t, 0 \} \quad \text{if either $t \leq 0$ or $t \geq
	\delta_i$}
    \end{gather*}
    whenever $i \in\nat$. Since
    \begin{equation*}
	A \without \left \{ x \with h(x) = \sum_{i=1}^\infty \zeta_i (x) (f_i
	\circ h ) (x) \right \} \subset \bigcup_{i=1}^\infty K_i \cap \{ x
	\with 0 < h(x) < \delta_i \},
    \end{equation*}
    one may take $g = \sum_{i=1}^\infty \zeta_i (f_i \circ h)$.
\end{proof}
\begin{comment}
	The second one is slightly more elaborate and relies, among other
	things, on Kirsz\-braun's extension theorem, see
	\cite[2.10.43]{MR41:1976}, and a partition of unity, see
	\cite[3.1.13]{MR41:1976}.
\end{comment}
\begin{lemma} \label{lemma:c1_extension}
    Suppose $l, \adim \in \nat$, $U$ is an open subset of $\rel^\adim$, $A
    \subset U$, $f : U \to \rel^l$ is of class~$1$, and $\varepsilon > 0$.

    Then there exist an open subset $X$ of $U$ and a function $g : \rel^\adim
    \to \rel^l$ of class~$1$ such that $A \subset X$, $f|X = g|X$, and
    \begin{equation*}
	\Lip g \leq \varepsilon + \sup \{ \Lip ( f|A ), \sup \| \Der f \| \lIm A
	\rIm \}.
    \end{equation*}
    Moreover, if $l = 1$ and $f \geq 0$ then one may require $g \geq 0$.
\end{lemma}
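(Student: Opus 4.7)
Let $L = \sup \{ \Lip (f|A), \sup \| \Der f \| \lIm A \rIm \}$; if $L = \infty$, the asserted bound is vacuous, and it suffices to produce any function of class~$1$ on $\rel^\adim$ that agrees with $f$ on a neighbourhood of $A$, for instance by invoking Whitney's theorem (\cite[3.1.14]{MR41:1976}) on a closed substitute for $A$. Assume henceforth $L < \infty$. The plan is to construct a function $\psi : \rel^\adim \to \rel^l$ of class~$\infty$ which agrees with $f$ to first order on $A$ (that is, $\psi | A = f | A$ and $\Der \psi | A = \Der f | A$) and satisfies $\Lip \psi \leq L + \varepsilon / 2$, and then to splice $f$ and $\psi$ by means of a $C^\infty$ partition of unity in order to obtain $g$.

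For the construction of $\psi$: Kirszbraun's theorem (\cite[2.10.43]{MR41:1976}) produces a Lipschitz extension $\phi : \rel^\adim \to \rel^l$ of $f | A$ with $\Lip \phi \leq L$; a standard mollification of $\phi$ is of class~$\infty$ with the same Lipschitz bound but in general fails to match $\Der f$ at points of $A$. To enforce this first-order matching while keeping the Lipschitz constant close to $L$, $\psi$ is constructed by a sharp form of Whitney's extension applied to the first-order data $(f | A, \Der f | A)$, whose compatibility is immediate from $f$ being of class~$1$ on $U \supset A$. Rather than using the off-the-shelf dimensional constant in the Whitney construction, the cubes in the decomposition of the complement of a closed substitute for $A$ are scaled in proportion to a modulus of continuity of $\Der f$ over compact subsets, so that each piecewise linear interpolation error is bounded by $\varepsilon |x - y| / 2$ in place of the dimensional constant times $L \, |x - y|$.

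For the blending: by continuity of $\Der f$ the set $X = U \cap \{ x \with \| \Der f (x) \| < L + \varepsilon / 2 \}$ is an open neighbourhood of $A$ in $U$. Using \cite[3.1.13]{MR41:1976}, select $\eta \in \mathscr{E} (\rel^\adim, \rel)$ with $0 \leq \eta \leq 1$, $\eta = 1$ on some open $X' \supset A$, and $\spt \eta \subset X$, and set $g = \eta f + (1 - \eta) \psi$ on $X$ together with $g = \psi$ on $\rel^\adim \without \spt \eta$. Then $g$ is of class~$1$, $g | X' = f | X'$, and
\begin{equation*}
	\Der g = \eta \Der f + (1 - \eta) \Der \psi + (\Der \eta)(f - \psi).
\end{equation*}
The sum of the first two terms has norm at most $L + \varepsilon / 2$ thanks to the bounds on $\| \Der f \|$ on $X$ and on $\Lip \psi$. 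The cross term is where the first-order matching of $\psi$ with $f$ on $A$ becomes decisive: since $(\psi - f) | A = 0$ and $\Der (\psi - f) | A = 0$, continuity of $\Der (\psi - f)$ yields $|f(x) - \psi(x)| = o ( \dist (x, A) )$ as $\dist (x, A) \to 0$ uniformly on compact subsets, whereas $\| \Der \eta (x) \|$ is of order $1 / \dist (x, A)$ in a transition shell; choosing $X'$ close enough to $A$ forces this term below $\varepsilon / 2$, yielding $\| \Der g \| \leq L + \varepsilon$ pointwise, hence $\Lip g \leq L + \varepsilon$ by convexity of $\rel^\adim$.

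The main obstacle is the sharp Whitney step: a direct invocation of the classical extension theorem would yield only $\Lip \psi \leq C (\adim) L$ in place of $L + \varepsilon / 2$, and the subsequent blending cannot then recover the desired bound since the cross term requires first-order agreement but the far-from-$A$ behaviour requires Lipschitz agreement. Handling this by scale-sensitive Whitney cubes, exhausting the possibly noncompact set $A$ by compact subsets and choosing the cube scales relative to the local modulus of continuity of $\Der f$, is the technical heart of the argument. For the nonnegativity clause ($l = 1$ and $f \geq 0$): replace $\phi$ by $\max \{ \phi, 0 \}$, which remains $L$-Lipschitz and equal to $f$ on $A$, arrange the sharp Whitney construction to preserve nonnegativity (or enforce it afterwards via \ref{lemma:app-nonnegative} with sufficiently small tolerance), and observe that $g = \eta f + (1 - \eta) \psi$ is then a convex combination of nonnegative functions.
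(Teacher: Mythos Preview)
Your proposal has a genuine gap at the step you yourself flag as the ``main obstacle'': the construction of a $C^1$ function $\psi$ with $\Lip \psi \leq L + \varepsilon/2$ that simultaneously matches $f$ and $\Der f$ on $A$. You describe this as a ``sharp form of Whitney's extension'' with ``scale-sensitive cubes'', but the geometry of the Whitney decomposition is rigid---cube diameters are comparable to the distance to the closed set, and this is precisely what makes the partition-of-unity glueing work---so one cannot simply rescale. Obtaining such a $\psi$ is not a minor refinement of \cite[3.1.14]{MR41:1976}; it is essentially the full content of the lemma, and you have asserted rather than proved it. Your blending step has further unsecured ingredients: the bound $\|\Der\eta(x)\| = O(1/\dist(x,A))$ is not delivered by a generic cutoff from \cite[3.1.13]{MR41:1976}, and the uniform $o(\dist(\cdot,A))$ control of $f-\psi$ is delicate when $A$ is noncompact.

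The paper sidesteps all of this by working on a neighbourhood of $A$ rather than on $A$ itself. It first shows, via an explicit covering of $A$ by balls $\oball{a}{\eta\,\delta(a)}$ with a fixed small $\eta$ and $\delta(a)$ chosen so that $\Lip(f|\oball{a}{\delta(a)}) \leq L + \varepsilon/2$, that there is an open $G \supset A$ with $\Lip(f|G) \leq L + \varepsilon/2$. Kirszbraun then extends $f|G$ (not $f|A$) to a Lipschitz map $g_0$ on $\rel^\adim$ with the same bound. Finally $g = \sum_{j\geq 0} \phi_j g_j$, where $\phi_0$ is a cutoff equal to $1$ near $A$ and supported in $G$, and for $j\geq 1$ the $g_j$ are mollifications of $g_0$ with $(\Lip\phi_j)\sup|g_j - g_0| \leq 2^{-j-1}\varepsilon$. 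The Lipschitz estimate follows from the identity $g(x)-g(\chi) = \sum_j\big(\phi_j(x)(g_j(x)-g_j(\chi)) + (\phi_j(x)-\phi_j(\chi))(g_j(\chi)-g_0(\chi))\big)$, with no first-order matching and no distance-weighted estimates required.
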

\begin{proof}
    Assume $\kappa = \sup \{ \Lip (f|A), \sup \| \Der  f \| \lIm A \rIm \} <
    \infty$ and that $A$ is relatively closed in $U$. Firstly, it will be
    shown that there exists an open subset $G$ of $U$ with
    \begin{equation*}
	A \subset G, \quad \Lip (f | G ) \leq \varepsilon/2 + \kappa.
    \end{equation*}
    
    Define $\eta = 2^{-4} \varepsilon (\varepsilon+\kappa )^{-1}$,
    note $\eta \leq 1/2$, choose $\delta : A \to \{ r \with r > 0 \}$ such
    that
    \begin{equation*}
	\oball{a}{\delta(a)} \subset U \quad \text{and} \quad \Lip ( f |
	\oball {a}{\delta(a)} ) \leq \varepsilon/2 + \kappa \quad
	\text{whenever $a \in A$},
    \end{equation*}
    and let $G = \bigcup \{ \oball {a}{\eta \delta (a)} \with a \in A \}$.
    Suppose $a,x \in A$, $\alpha, \chi \in \rel^\adim$, $| a-\alpha | < \eta
    \delta (a)$ and $|x-\chi| < \eta \delta (x)$. In case $\delta ( a) +
    \delta (x) \leq 4 |\chi-\alpha|$, one estimates
    \begin{gather*}
	|a-\alpha|+|x-\chi| < 4 \eta |\chi-\alpha|, \quad |x-a| \leq (1+4\eta)
	|\chi-\alpha|, \\
	\begin{aligned}
	    |f(\chi)-f(\alpha)| & \leq ( \varepsilon/2 + \kappa ) (
	    |a-\alpha|+|x-\chi| ) + \kappa |x-a| \\
	    & \leq ( 8 \eta ( \varepsilon + \kappa ) + \kappa ) |\chi-\alpha|
	    = ( \varepsilon/2 + \kappa) |\chi-\alpha|
	\end{aligned}
    \end{gather*}
    and, in case $\delta (a) + \delta (x) > 4 |\chi-\alpha|$ and $\delta (a)
    \geq \delta (x)$ one estimates
    \begin{equation*}
	|\chi-a| \leq | \chi- \alpha| + | \alpha-a | < ( 1/2+\eta) \delta (a)
	\leq \delta (a), \quad | \alpha-a | < \delta (a),
    \end{equation*}
    hence always $| f ( \chi ) - f(\alpha) | \leq ( \varepsilon/2 + \kappa ) |
    \chi-\alpha |$.

    Next, choose $g_0 : \rel^\adim \to \rel^l$ with $g_0 | G = f | G$ and
    $\Lip g_0 = \Lip ( f | G)$ such that $g_0 \geq 0$ if $l=1$ and $f \geq 0$,
    see \cite[2.10.43, 4.1.16]{MR41:1976}. Using \cite[3.1.13]{MR41:1976} with
    $\Phi$ replaced by $\{ G, U \without A \}$, one constructs nonnegative
    functions $\phi_0 \in \mathscr{E} ( U , \rel)$ and $\phi_i \in \mathscr{D}
    (U,\rel)$ for $i \in \nat$ such that
    \begin{gather*}
	\card ( \nat \cap \{ i \with K \cap \spt \phi_i \neq \varnothing \} )
	< \infty \quad \text{whenever $K$ is compact subset of $U$}, \\
        A \subset \Int \{ x \with \phi_0 (x) = 1 \},
        \qquad \spt \phi_0 \subset G, \qquad \spt \phi_i \subset
        U \without A \quad \text{for $i \in \nat$}, \\
        \tsum{j=0}{\infty} \phi_j (x) = 1 \quad \text{for $x \in U$}.
    \end{gather*}
    Employing convolution, one obtains functions
    $g_i : \rel^\adim \to \rel^l$ of class $1$ satisfying
    \begin{gather*}
	\Lip g_i \leq \Lip g_0, \quad ( \Lip \phi_i ) \sup \im | g_i-g_0 |
	\leq 2^{-i-1} \varepsilon, \\
        \text{if $l=1$ and $f \geq 0$ then $g_i \geq 0$}
    \end{gather*}
    for $i \in \nat$. Let $g = \sum_{j=0}^\infty \phi_j g_j$ and observe
    that $g$ is of class $1$. Also for $x,\chi \in U$
    \begin{gather*}
        g(x)-g(\chi) = \tsum{j=0}{\infty} \big ( \phi_j (x) (
        g_j(x)-g_j(\chi)) + ( \phi_j (x) - \phi_j (\chi) ) ( g_j
        (\chi) - g_0(\chi)) \big ), \\
        \Lip g \leq \varepsilon/2 + \Lip g_0 = \varepsilon/2 + \Lip
        (f|G) \leq \varepsilon + \kappa.
    \end{gather*}
    Therefore one may take $X = \Int \{ x \with \phi_0 (x) = 1 \}$.
\end{proof}
\begin{comment}
	The third one concerns basic properties of rectifiable varifolds.
\end{comment}
\begin{lemma} \label{lemma:basic_rect_var}
	Suppose $\vdim, \adim \in \nat$, $\vdim \leq \adim$, $U$ is an open
	subset of $\rel^\adim$, $V \in \RVar_\vdim ( U )$, and $\varepsilon >
	0$.

	Then then following two statements hold.
	\begin{enumerate}
		\item \label{item:basic_rect_var:var} There exists an $\vdim$
		dimensional submanifold $M$ of class~$1$ of~$\rel^\adim$ with
		$\| V \| ( U \without M ) \leq \varepsilon$.
		\item \label{item:basic_rect_var:fct} If $Y$ is a finite
		dimensional normed vectorspace, $f$ is a $Y$~valued $\| V \|$
		measurable function and $A$ is set of points at which $f$ is
		$( \| V \|, \vdim )$ approximately differentiable, then there
		exists $g : U \to Y$ of class~$1$ such that
		\begin{equation*}
			\| V \| ( A \without \{ x \with f (x) = g (x) \} )
			\leq \varepsilon.
		\end{equation*}
	\end{enumerate}
\end{lemma}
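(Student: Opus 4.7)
The plan is to handle the two statements in sequence, with (1) providing the geometric reduction that turns (2) into a standard Lusin/Whitney-type approximation on a single submanifold combined with the extension supplied by Theorem~\ref{thm:submanifold_extension}.

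For (1), I would start from the defining representation
\[
\| V \| = \tsum{i=1}{\infty} \lambda_i \mathscr{H}^\vdim \restrict C_i
\]
with compact $C_i$ contained in $\vdim$-dimensional submanifolds $M_i$ of $U$ of class $1$ and $\lambda_i > 0$. Choose $N$ so that $\tsum{i=N+1}{\infty} \lambda_i \mathscr{H}^\vdim (C_i) \leq \varepsilon/2$; the remaining task is to build a single $\vdim$-dimensional submanifold of $\rel^\adim$ of class $1$ covering all but $\varepsilon/2$ of the weight carried by $C_1, \ldots, C_N$. I would do this inductively, extracting from each $M_i$ a relatively open subset $N_i \subset M_i$ with compact closure in $M_i$, chosen so that $\Clos N_i$ is disjoint from $\Clos N_j$ in $\rel^\adim$ for $j < i$ while $\mathscr{H}^\vdim ( C_i \without N_i ) \leq \varepsilon / (2N\lambda_i)$. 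The compactness of the previously constructed pieces makes this separation possible after shrinking $N_i$ in transverse directions: one discards only an $\mathscr{H}^\vdim$-null part of the genuine overlap locus together with an arbitrarily small buffer about the earlier pieces. The disjoint union $M = \bigcup_{i \leq N} N_i$ is then an $\vdim$-dimensional submanifold of $\rel^\adim$ of class $1$ with $\Clos M$ compact in $U$ and $\| V \| ( U \without M ) \leq \varepsilon$. This merging step is the main obstacle; everything else is routine measure-theoretic bookkeeping.

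For (2), apply (1) with tolerance $\varepsilon/3$ to obtain an $\vdim$-dimensional submanifold $M \subset U$ of $\rel^\adim$ of class $1$ with $\| V \| ( U \without M ) \leq \varepsilon/3$ and $\Clos M$ compact in $U$. At $\| V \|$-almost every point $x$ of $M$, the measure $\| V \|$ has positive $\vdim$-dimensional density and its approximate tangent space at $x$ equals $\Tan (M,x)$, so the $( \| V \|, \vdim )$ approximate differentiability of $f$ at a point $x \in A \cap M$ is equivalent to the classical $( \mathscr{H}^\vdim \restrict M, \vdim )$ approximate differentiability of $f | M$ at $x$. Working in local $C^1$ charts of $M$ to transfer the problem to open subsets of $\rel^\vdim$ and applying the classical Lusin-type theorem for approximately differentiable mappings, see Federer \cite[3.1.16]{MR41:1976}, one obtains a function $h : M \to Y$ of class $1$ relative to $M$ with $\| V \| ( A \cap M \without \{ x \with f(x) = h(x) \} ) \leq \varepsilon/3$. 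Theorem~\ref{thm:submanifold_extension}\,\eqref{item:submanifold_extension:extension} extends $h$ to a function of class $1$ on some open set $W \subset \rel^\adim$ containing $M$; multiplying by a smooth cutoff equal to $1$ on a neighbourhood of $\Clos M$ with compact support in $W \cap U$, and extending by zero, produces the desired $g : U \to Y$ of class $1$ satisfying $\| V \| ( A \without \{ x \with f(x) = g(x) \} ) \leq \varepsilon$.
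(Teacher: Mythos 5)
The main gap is in your construction for part~(1). The sets $C_i$ in the representation $\| V \| = \sum_i \lambda_i \, \mathscr{H}^\vdim \restrict C_i$ need not be even approximately disjoint: the definition permits $C_1 = C_2$ (this is how a multiplicity two piece may be represented), and more generally $C_i \cap C_j$ can have positive --- even full --- $\mathscr{H}^\vdim$ measure, with $M_i$ and $M_j$ tangent along that set. In such a case your inductive requirement is unachievable: two relatively open sets each covering $C_i = C_j$ up to measure $\varepsilon/(2N\lambda_i)$ must intersect, so $\Clos N_i \cap \Clos N_j = \varnothing$ fails. The sentence you lean on --- that only an $\mathscr{H}^\vdim$-null part of the overlap locus is discarded --- is precisely where the argument breaks: the overlap locus need not be null, and ``shrinking in transverse directions'' presupposes a transversality that is not available. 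The repair is the disjointification that the paper performs via its ``countable disjointed family'': replace $C_i$ by a compact subset $K_i$ of the Borel set $C_i \without \bigcup_{j<i} C_j$ exhausting its measure, observe that $\bigcup_{j \leq N} K_j$ still covers $\bigcup_{j \leq N} C_j$ up to small measure --- which is all that the quantity $\| V \| ( U \without M )$ requires; each individual $C_i$ need not be covered by its own $N_i$ --- and only then separate the now genuinely disjoint compacta by relatively open neighbourhoods with pairwise disjoint closures. Note also that your choice of $N$ presupposes $\sum_i \lambda_i \mathscr{H}^\vdim ( C_i ) < \infty$, which is not assumed; a preliminary reduction to finite mass (or a locally finite version of the family) is needed, as the paper indicates.

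Your part~(2) takes a genuinely different route from the paper's: the paper does not pass through (1) but directly produces a disjointed family of compact pieces on which $f$ is of class $1$ relative to the piece (citing the Lusin-type approximation for $( \| V \|, \vdim )$ approximately differentiable functions from the predecessor paper) and glues; you instead reduce to a single submanifold, transfer to charts, and invoke Federer's Euclidean theorem. This can be made to work, but it inherits the gap in (1), and the one-clause claim that $( \| V \|, \vdim )$ approximate differentiability of $f$ at $x$ is equivalent to approximate differentiability of $f|M$ in charts is itself a density-comparison statement --- one must check that, at $\| V \|$ almost every $x \in M$, sets of vanishing upper $\| V \|$ density at $x$ have vanishing $\mathscr{H}^\vdim \restrict M$ density at $x$ and conversely --- which deserves an argument. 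Finally, the cutoff at the end should be taken equal to $1$ only near a compact subset of $M$ carrying most of the relevant measure, since the open set supplied by Theorem~\ref{thm:submanifold_extension} contains $M$ but not necessarily $\Clos M$.
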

\begin{proof}
	The problem may reduced to the case $\| V \| ( U ) < \infty$.
	Concerning \eqref{item:basic_rect_var:var}, it is then sufficient to
	construct, using \cite[2.2.5]{MR41:1976} and
	\cite[3.6\,(1)]{kol-men:decay.v2}, a countable disjointed family $F$
	such that each member of $F$ is a compact subset of $U$ contained in
	an $\vdim$ dimensional submanifold of class~$1$ of $\rel^\adim$ and
	$\| V \| ( U \without \bigcup F ) = 0$.  Concerning
	\eqref{item:basic_rect_var:fct}, one similarly uses
	\cite[11.1\,(2)]{snulmenn:tv.v2} to construct a countable disjointed
	family $G$ such that each member $C$ of $G$ is a compact subset of $U$
	such that $f|C$ is of class~$1$ relative to $C$ and $\| V \| ( A
	\without \bigcup G ) = 0$.
\end{proof}
\begin{comment}
	The approximation result for locally Lipschitzian functions now
	readily follows.
\end{comment}
\begin{theorem} \label{thm:approximation_lip}
	Suppose $l, \vdim, \adim \in \nat$, $\vdim \leq \adim$, $U$ is an open
	subset of $\rel^\adim$, $V \in \RVar_\vdim ( U )$, $C$ is a relatively
	closed subset of $U$, $f : U \to \rel^l$ is locally Lipschitzian,
	$\spt f \subset \Int C$, and $\varepsilon > 0$.%
	\begin{footnote}
		{The symbol $\RVar_\vdim ( U )$ denotes the set of $\vdim$
		dimensional rectifiable varifolds in $U$, see Allard
		\cite[3.5]{MR0307015}.}
	\end{footnote}

	Then there exists $g : U \to \rel^l$ of class $1$ satisfying
	\begin{equation*}
		\spt g \subset C, \quad \Lip g \leq \varepsilon + \Lip f,
		\quad \| V \| ( U \without \{ x \with f(x) = g(x) \} ) \leq
		\varepsilon.
	\end{equation*}
	Moreover, if $l=1$ and $f \geq 0$ then one may require $g \geq 0$.
\end{theorem}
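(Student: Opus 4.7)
The plan is to combine Lemmata \ref{lemma:app-nonnegative}, \ref{lemma:c1_extension} and \ref{lemma:basic_rect_var} with a retraction trick that converts the a~priori weak control of $\Der g_0$ along tangent directions into pointwise control of the full operator norm of the derivative of a composite function. Fix $\delta > 0$ (to be taken as $\varepsilon/3$ at the end) and use \ref{lemma:basic_rect_var}\eqref{item:basic_rect_var:var} to pick an $\vdim$ dimensional submanifold $M$ of class $1$ of $\rel^\adim$ with $\|V\|(U \without M) \leq \delta$. By \ref{corollary:special-retraction}, fix a retraction $r$ of class $1$ from an open neighborhood $W \subset U$ of $M$ onto $M$ with $\Der r(a) = \project{\Tan(M,a)}$ for $a \in M$. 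Since $f$ is locally Lipschitzian, it is $(\|V\|, \vdim)$ approximately differentiable at $\|V\|$ almost every point, so \ref{lemma:basic_rect_var}\eqref{item:basic_rect_var:fct} provides $g_0 : U \to \rel^l$ of class $1$ with $\|V\|(U \without \{x \with g_0(x) = f(x)\}) \leq \delta$; in the nonnegative case \ref{lemma:app-nonnegative} lets one additionally impose $g_0 \geq 0$ at the cost of an extra $\delta$.

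Set $h = g_0 \circ r$ on $W$, still of class $1$. The point of the special retraction is the identity $\Der h(a) = \Der g_0(a) \circ \project{\Tan(M,a)}$ for $a \in M$, so that $\|\Der h(a)\|$ coincides with the norm of the restriction $\Der g_0(a) | \Tan(M,a)$. Let $A_0$ be the set of $\|V\|$ density points of $M \cap \{x \with f(x) = g_0(x)\}$; by the Lebesgue density theorem $\|V\|(U \without A_0) \leq 3\delta$. For $a \in A_0$, rectifiability of $V$ forces $\Tan(M,a) = \Tan^\vdim(\|V\|, a)$, while density of $\{f = g_0\}$ at $a$ forces $\Der g_0(a) | \Tan^\vdim(\|V\|,a)$ to coincide with $(\|V\|,\vdim) \ap \Der f(a)$, whose operator norm is at most $\Lip f$. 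Hence $h(a) = f(a)$ and $\|\Der h(a)\| \leq \Lip f$ throughout $A_0$; this pointwise bound is the core of the argument.

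To enforce the support constraint, fix $C^\infty$ cutoffs $\zeta, \psi : U \to [0,1]$ with $\zeta = 1$ on a neighborhood of $\spt f$, $\spt \zeta \subset \Int C$, $\psi = 1$ on a neighborhood of $M$, and $\spt \psi \subset W$. Define $\tilde h = \zeta \psi (g_0 \circ r)$ on $\spt \psi$ and $\tilde h = 0$ elsewhere, giving a function of class $1$ on $U$. Pick a relatively closed $D \subset U$ with $U \without \Int C \subset D$ and $\zeta = 0$ on $D$, so $\tilde h$ vanishes on a neighborhood of $D$. Set $A_1 = A_0 \cup D$. A short case check (for $a \in A_0 \cap \spt f$ one has $\zeta(a) = 1$ and $\Der \zeta(a) = \Der \psi(a) = 0$; for $a \in A_0 \without \spt f$ one has $f(a) = g_0(a) = 0$; on $D$ one has $\zeta = 0$ on a neighborhood) shows $\tilde h = f$ pointwise on $A_1$ and $\|\Der \tilde h(a)\| \leq \Lip f$ for $a \in A_1$; the global $\Lip f$ Lipschitz property of $f$ yields $\Lip ( \tilde h | A_1 ) \leq \Lip f$ as well.

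Finally, apply \ref{lemma:c1_extension} with parameter $\varepsilon/2$ to $\tilde h$ and the closure of $A_1$ in $U$ to obtain $g : \rel^\adim \to \rel^l$ of class $1$ with $g = \tilde h$ on an open neighborhood of $A_1$ and $\Lip g \leq \varepsilon/2 + \Lip f$. The neighborhood of $D$ on which $g = 0$ witnesses $\spt g \subset C$, while $g = f$ on $A_0 \cup D$ yields $\|V\|(U \without \{x \with g(x) = f(x)\}) \leq \|V\|(U \without A_0) \leq 3\delta \leq \varepsilon$. The moreover clause of \ref{lemma:c1_extension} delivers $g \geq 0$ in the nonnegative case. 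The main obstacle is the pointwise estimate $\|\Der h\| \leq \Lip f$ on $A_0$ in the second paragraph: without the retraction one only has control of $\Der g_0(a) | \Tan(M,a)$, while $\|\Der g_0(a)\|$ itself may be arbitrarily large, and then the extension in \ref{lemma:c1_extension} would only give $\Lip g \leq \varepsilon + \sup \|\Der g_0\|$ instead of the desired $\Lip g \leq \varepsilon + \Lip f$.
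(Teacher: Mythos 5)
Your proposal is correct and follows essentially the same route as the paper's proof: Lemma \ref{lemma:basic_rect_var} to produce the submanifold $M$ and a class~$1$ function agreeing with $f$ off a small set, the special retraction of \ref{corollary:special-retraction} to convert the tangential bound $\|\Der g_0(a)|\Tan(M,a)\|\leq\Lip f$ into a bound on the full derivative of the composite, and Lemma \ref{lemma:c1_extension} to extend with the Lipschitz constant and support controlled. The only differences are cosmetic: you phrase the key pointwise bound via density points and approximate derivatives where the paper uses derivatives relative to $M$, and you enforce $\spt g\subset C$ with explicit cutoffs where the paper simply feeds the zero extension on $U\without C$ into \ref{lemma:c1_extension}.
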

\begin{proof}
    Let $X = \Int C$.  Noting \cite[11.1\,(3)]{snulmenn:tv.v2}, one obtains a
    function $h : X \to \rel^l$ of class $1$ and an $\vdim$ dimensional
    submanifold $M$ of class $1$ of $\rel^\adim$ such that
    \begin{equation*}
	M \subset X, \quad \| V \| ( X \without ( M \cap \{ x \with f(x) = h
	(x) \} ) ) < \varepsilon.
    \end{equation*}
    from \ref{lemma:basic_rect_var}.  In view of \ref{lemma:app-nonnegative},
    one may require $h \geq 0$ if $l = 1$ and $f \geq 0$. Since
    \begin{equation*}
	\Der  (f|M) (x) = \Der (h|M) (x) \quad \text{for $\mathscr{H}^\vdim$
	almost all $x \in M$ with $f(x) = h(x)$},
    \end{equation*}
    by \cite[2.8.18, 2.9.11, 3.1.5, 3.1.22]{MR41:1976}, the set
    \begin{equation*}
	B = M \cap \{ x \with \text{$f(x) = h(x)$ and $\Der (f|M)(x) = \Der
	(h|M)(x)$} \}
    \end{equation*}
    satisfies $\| V \| ( X \without B ) < \varepsilon$. From
    \ref{corollary:special-retraction} one obtains a function $r$ of class $1$
    retracting some open subset $G$ of $X$ onto $M$ such that
    \begin{equation*}
    	\Der r(a) = \project{\Tan(M,a)} \quad \text{whenever $a \in M$},
    \end{equation*}
    hence $\sup \| \Der  ( h \circ r ) \| \lIm B \rIm \leq \Lip f$. Applying
    \ref{lemma:c1_extension} with $U$, $A$, and $f$ replaced by $(U \without C
    ) \cup G$, $( U \without C) \cup B$, and $(( U \without C ) \times \{ 0 \}
    ) \cup ( h \circ r )$, one obtains a function $g : U \to \rel^l$ of class
    $1$ such that
    \begin{equation*}
	g|U \without X = 0, \quad g|B = f|B, \quad \Lip g \leq \varepsilon +
	\Lip f
    \end{equation*}
    and $g \geq 0$ if $l = 1$ and $f \geq 0$.
\end{proof}
\begin{comment}
	For functions with compact support the result takes the following
	form.
\end{comment}
\begin{corollary} \label{corollary:approximation_lip}
    Suppose $l, \vdim, \adim \in \nat$, $\vdim \leq \adim$, $U$ is an open
    subset of $\rel^\adim$, $V \in \RVar_\vdim ( U )$, $K$ is a compact subset
    of $U$, and $f: U \to \rel^l$ is a Lipschitzian function with $\spt f
    \subset \Int K$.

    Then there exists a sequence $f_i \in \mathscr{D} (U, \rel^l)$ satisfying
    \begin{gather*}
	f_i(x) \to f(x) \quad \text{uniformly for $x \in \spt \| V \|$ as $i
	\to \infty$}, \\
	\big \| ( \| V \|, \vdim ) \ap \Der (f_i-f) \big \| \to 0 \quad
	\text{in $\| V \|$ measure as $i \to \infty$}, \\
	\spt f_i \subset K \quad \text{for $i \in \nat$}, \qquad \limsup_{i
	\to \infty} \Lip f_i \leq \Lip f.
    \end{gather*}
    Moreover, if $l=1$ and $f \geq 0$ one may require $f_i \geq 0$ for $i \in
    \nat$.
\end{corollary}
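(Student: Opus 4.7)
The plan is to derive this corollary by applying Theorem~\ref{thm:approximation_lip} with a carefully chosen sequence $\varepsilon_i \to 0+$, and then smoothing the resulting class~$1$ approximants by convolution with a mollifier. First, using local compactness of $U$ pick a compact $K'$ with $\spt f \subset \Int K' \subset K' \subset \Int K$ so as to leave room for mollification without exiting $K$. Apply Theorem~\ref{thm:approximation_lip} with $C = K'$ and $\varepsilon = \varepsilon_i$ to obtain $g_i : U \to \rel^l$ of class~$1$ with $\spt g_i \subset K'$, with $\Lip g_i \leq \varepsilon_i + \Lip f$, with $\| V \| ( U \without S_i ) \leq \varepsilon_i$ where $S_i = \{ x \with f(x) = g_i(x) \}$, and with $g_i \geq 0$ in case $l = 1$ and $f \geq 0$. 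Choose a standard nonnegative mollifier $\eta_{\rho} \in \mathscr{D} ( \rel^\adim, \rel)$ supported in $\oball 0 \rho$ of unit integral, extend $g_i$ by zero to $\rel^\adim$, and define $f_i = \eta_{\rho_i} \ast g_i$ with $\rho_i$ so small that $\spt f_i \subset K$ (possible since $\dist ( K', \rel^\adim \without \Int K ) > 0$); then $f_i \in \mathscr{D} ( U, \rel^l )$, $\Lip f_i \leq \Lip g_i \leq \varepsilon_i + \Lip f$, $f_i \geq 0$ in the nonnegative case, and $\sup | f_i - g_i | \leq \rho_i \Lip g_i$ may be made less than $\varepsilon_i$.

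The main issue is upgrading the almost everywhere agreement of $g_i$ with $f$ to uniform convergence on $\spt \| V \|$. Outside $K$ both $f$ and $f_i$ vanish, so it suffices to control the supremum on the compact set $L = \spt \| V \| \cap K$. Given $\eta > 0$, set $\delta = \eta / ( 4 \Lip f + 1)$. The function
\begin{equation*}
    v : L \to \{ t \with 0 < t \leq \infty \}, \qquad v(x) = \| V \| ( \oball x \delta ),
\end{equation*}
is lower semicontinuous (since $1_{\oball {x_n}\delta}(y) \geq 1_{\oball x \delta}(y)$ for large $n$ whenever $x_n \to x$, by Fatou's lemma) and positive by definition of support; hence it attains a positive minimum $c > 0$ on the compact set $L$. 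Whenever $\varepsilon_i < c$ and $x \in L$, there exists $y \in \oball x \delta \cap S_i$, so
\begin{equation*}
    | f(x) - g_i(x) | \leq \Lip f \cdot | x - y | + \Lip g_i \cdot | x - y | \leq ( 2 \Lip f + \varepsilon_i ) \delta \leq \eta/2
\end{equation*}
for all sufficiently large $i$, and combining with $\sup | f_i - g_i | \leq \varepsilon_i$ yields $\sup_{x \in L} | f_i(x) - f(x) | \leq \eta$ for large $i$, establishing the asserted uniform convergence.

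Finally, the convergence of approximate differentials in $\| V \|$ measure reduces to showing that $(\| V \|, \vdim ) \ap \Der f(x) = ( \| V \|, \vdim ) \ap \Der g_i (x)$ for $\| V \|$ almost every $x \in S_i$, which follows from the fact that $(\|V\|,\vdim)$ approximate differentiation depends only on values on sets of full $( \|V\|, \vdim )$ density and almost every point of $S_i$ has this density in $S_i$ (using for instance \cite[2.9.11, 3.2.16]{MR41:1976}). Combined with $\sup \| \Der (f_i - g_i) \| \to 0$ uniformly from standard mollification estimates (since $\Lip g_i$ is bounded, $\Der (\eta_{\rho_i} \ast g_i)(x) - \Der g_i(x)$ can be made small in a suitable approximate sense, and agreement of approximate differentials holds outside a negligible set), this gives $\| ( \| V \|, \vdim ) \ap \Der ( f_i - f ) \| = 0$ on $S_i$ up to $\|V\|$-null corrections, and the complement has $\|V\|$-measure at most $\varepsilon_i \to 0$. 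The only mildly delicate point of the argument is the uniform convergence on $\spt \|V\|$, carried out above via lower semicontinuity of the measure of open balls.
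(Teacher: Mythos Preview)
Your overall strategy---apply Theorem~\ref{thm:approximation_lip} with $\varepsilon_i\to 0$ to obtain class~$1$ approximants $g_i$, then mollify---matches the paper's proof. The paper handles the uniform convergence differently: it observes that the $f_i$ form an equicontinuous family (bounded Lipschitz constants) and that $f_i\to f$ in $\|V\|$ measure, and then invokes Ascoli's theorem \cite[2.10.21]{MR41:1976} to upgrade to uniform convergence on $\spt\|V\|$. Your direct argument via the positive lower bound $c=\inf_{x\in L}\|V\|(\oball x\delta)>0$ is a perfectly valid and more elementary alternative; it avoids the subsequence extraction implicit in Ascoli at the cost of a short explicit computation.

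One imprecision in your final paragraph: you write that $\|(\|V\|,\vdim)\ap\Der(f_i-f)\|=0$ on $S_i$ up to null sets, but in fact on $S_i$ (minus a null set) you have $\ap\Der(g_i-f)=0$ while $\|\ap\Der(f_i-g_i)\|\leq\sup\|\Der f_i-\Der g_i\|$, which you arrange to be at most $\varepsilon_i$ by choosing $\rho_i$ small (using that $\Der g_i$ is uniformly continuous, since $g_i$ is of class~$1$ with compact support). So the correct statement is $\|(\|V\|,\vdim)\ap\Der(f_i-f)\|\leq\varepsilon_i$ on $S_i$ minus a null set, which still gives convergence in measure since the complement has $\|V\|$ measure at most $\varepsilon_i$. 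The paper delegates this locality of approximate differentials to \cite[11.1\,(4)]{snulmenn:tv.v2}; your citation of \cite[2.9.11, 3.2.16]{MR41:1976} points to the right ingredients.
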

\begin{proof}
    The problem may be reduced firstly to the construction of functions $f_i$
    of class $1$ by means of convolution and secondly to establishing
    convergence of $f_i$ to $f$ in $\| V \|$ measure as $i \to \infty$ by
    \cite[2.10.21]{MR41:1976}.  In view of \cite[11.1\,(4)]{snulmenn:tv.v2}, the
    conclusion now follows from \ref{thm:approximation_lip}.
\end{proof}
\begin{remark} \label{remark:approximation_lip}
    In the preceding statement $\rel^l$ may be replaced by a finite
    dimensional normed vectorspace $Y$ provided ``$\Lip f$'' is replaced by
    ``$\Gamma \Lip f$'' in the conclusion, where $\Gamma$ is a positive,
    finite number depending only on $Y$.
\end{remark}
\section{Rellich type embeddings} \label{sec:rellich}
In the present section a Rellich type compactness result for generalised
weakly differentiable functions will be established in
\ref{thm:rellich-in-measure}.  As a consequence one obtains a sequential
closedness result under weak convergence for the nonlinear space of
generalised weakly differentiable functions with and without ``boundary
conditions'', see \ref{corollary:closedness-tv} and
\ref{corollary:closedness_tg}.

The key is to quantify the approximability on large sets by Lipschitzian
functions obtained in \cite[11.1, 11.2]{snulmenn:tv.v2} by means of maximal
function techniques.  This procedure is complicated by the fact that Sobolev
Poincar{\'e} inequalities in their usual form with one median are only
available near almost every point centred at that point on all scales below a
certain threshold which depends on the point and the varifold considered in a
rather nonuniform way. An instructive example of qualitative nature is given
by Brakke in \cite[6.1]{MR485012}, two quantified forms of which were
presented by Kolasi{\'n}ski and the author in \cite[10.3,
10.8]{kol-men:decay.v2}.
\begin{comment}
	Firstly, a simple closedness result assuming convergence locally in
	measure of the functions and local weak convergence of the generalised
	derivative is recorded.

	Several proofs in this section employ the duality of Lebesgue spaces,
	see \cite[2.5.7]{MR41:1976}, and properties of weak convergence in
	general, see \cite[\printRoman 2.3.27]{MR0117523}, and in $L_1 ( \mu,
	Y )$ in particular, see \ref{example:quotient-lp-spaces} and
	\cite[\printRoman 4.8.9--\printRoman 4.8.12]{MR0117523}.
\end{comment}
\begin{lemma} \label{lemma:weak_closedness_tv}
	Suppose $\vdim, \adim \in \nat$, $\vdim \leq \adim$, $U$ is an open
	subset of $\rel^\adim$, $V \in \RVar_\vdim (U)$, $\| \delta V \|$ is a
	Radon measure, $Y$ is a finite dimensional normed vectorspace,%
	\begin{footnote}
		{Whenever $\mu$ is a measure and $Y$ is a separable Banach
		space, $\mathbf{A} ( \mu, Y )$ equals the vectorspace of $\mu$
		measurable functions with values in $Y$, see
		\cite[2.3.8]{MR41:1976}.}
	\end{footnote}
	\begin{equation*}
		f \in \mathbf{A} ( \| V \| + \| \delta V \|, Y ), \quad F \in
		\Lploc 1 ( \| V \|, \Hom ( \rel^\adim, Y ) ),
	\end{equation*}
	and $f_i \in \trunc (V,Y)$ is a sequence with $\derivative V{f_i} \in
	\Lploc 1 ( \| V \|, \Hom ( \rel^\adim, Y ) )$ satisfying
	\begin{gather*}
		f_i \to f \quad \text{in $( \| V \| + \| \delta V \| )
		\restrict K$ measure as $i \to \infty$}, \\
		\lim_{i \to \infty} \tint K{} \langle \derivative V{f_i}, G
		\rangle \ud \| V \| = \tint K{} \langle F,G \rangle \ud \| V
		\| \quad \text{for $G \in \Lp \infty \big ( \| V \|, \Hom
		(\rel^\adim, Y )^\ast \big )$}
	\end{gather*}
	whenever $K$ is a compact subset of $U$.

	Then $f \in \trunc (V,Y)$ and
	\begin{equation*}
		F(x) = \derivative Vf(x) \quad \text{for $\| V \|$ almost all
		$x$}.
	\end{equation*}
\end{lemma}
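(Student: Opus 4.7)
The plan is to verify directly that the pair $(f, F)$ satisfies the two defining conditions of $\trunc (V,Y)$; once this is done, $F = \derivative V f$ $\| V \|$ almost everywhere follows from the $\| V \|$ almost uniqueness of the generalised weak derivative. Measurability of $f$ is granted by hypothesis, and condition~(1) is immediate from $F \in \Lploc 1 ( \| V \|, \Hom ( \rel^\adim, Y ) )$.

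For condition~(2), I would fix $\theta \in \mathscr{D} (U, \rel^\adim)$ and $\gamma \in \mathscr{E} (Y, \rel)$ with $\spt \Der \gamma$ compact, set $K = \spt \theta$, and observe that $\gamma$ is bounded, since $\Der \gamma$ has compact support in the finite dimensional space $Y$. After extracting a subsequence along which $f_i \to f$ pointwise $( \| V \| + \| \delta V \| )$ almost everywhere on $K$, dominated convergence against the Radon measures $\| \delta V \|$ and $V$ handles the first two terms of the integration by parts identity written for $f_i$, giving
\begin{equation*}
	( \delta V ) ( ( \gamma \circ f_i ) \theta ) \to ( \delta V ) ( ( \gamma \circ f ) \theta )
	\quad \text{and} \quad
	\tint{}{} \gamma ( f_i ) \project P \bullet \Der \theta \ud V \to \tint{}{} \gamma ( f ) \project P \bullet \Der \theta \ud V.
\end{equation*}

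The key remaining step is the limit of $\tint{}{} ( \Der \gamma ( f_i ) \circ \derivative V{f_i} ) ( \theta ) \ud \| V \|$. I would introduce $G_i, G \in \Lp \infty ( \| V \|, \Hom ( \rel^\adim, Y )^\ast )$ by $\langle L, G_i (x) \rangle = \Der \gamma ( f_i (x) ) ( L ( \theta (x) ) )$ and analogously for $G$ using $f$; both are supported in $K$, uniformly bounded, and $G_i \to G$ in $\| V \| \restrict K$ measure by continuity of $\Der \gamma$ and $f_i \to f$ in measure. Writing
\begin{equation*}
	\tint{}{} \langle \derivative V{f_i}, G_i \rangle \ud \| V \| - \tint{}{} \langle F, G \rangle \ud \| V \| = \tint{}{} \langle \derivative V{f_i}, G_i - G \rangle \ud \| V \| + \tint{}{} \langle \derivative V{f_i} - F, G \rangle \ud \| V \|,
\end{equation*}
the second summand vanishes by the weak convergence hypothesis applied with test function $G$ on $K$. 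For the first summand, the Dunford--Pettis theorem applied to the weakly convergent sequence $\derivative V{f_i} | K$ in $\Lp 1 ( \| V \| \restrict K, \Hom ( \rel^\adim, Y ) )$ yields uniform integrability, and a standard split of the integration by the set $\{ | G_i - G | > \varepsilon \}$ then drives the first summand to zero as well. Passing to the limit in all three terms completes the verification.

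The hard part will be this bilinear passage to the limit: the product $\Der \gamma ( f_i ) \circ \derivative V{f_i}$ pairs a factor convergent only weakly in $\Lp 1$ with a factor convergent strongly in measure and uniformly bounded. The uniform integrability provided by Dunford--Pettis is the decisive tool that lets the limit commute with the product.
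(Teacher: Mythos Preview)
Your argument is correct and follows essentially the same route as the paper's proof. The paper is simply terser: for the bilinear term it invokes Dunford--Schwartz \textrm{IV}.8.10--\textrm{IV}.8.11 directly (weakly convergent $\Lp 1$ sequence times a uniformly bounded sequence convergent in measure), whereas you unpack that lemma via Dunford--Pettis and an $\varepsilon$-splitting; the remaining verification of the defining identity in \cite[8.3]{snulmenn:tv.v2} is identical.
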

\begin{proof}
	Recall \cite[2.5.7\,(ii)]{MR41:1976} and
	\cite[\printRoman{2}.3.27]{MR0117523}. Since
	\begin{equation*}
		\lim_{i \to \infty} \tint{}{} \langle \theta (x), \Der  \gamma
		( f_i (x) ) \circ \derivative V{f_i} (x) \rangle \ud \| V \|
		\, x = \tint{}{} \langle \theta (x), \Der  \gamma ( f(x))
		\circ F(x) \rangle \ud \| V \| \, x
	\end{equation*}
	whenever $\theta \in \mathscr{D} (U,\rel^\adim )$, $\gamma \in
	\mathscr{E} (Y,\rel)$ and $\spt \Der  \gamma$ is compact by
	\cite[\printRoman{4}.8.10, \printRoman{4}.8.11]{MR0117523}, the
	conclusion is readily verified by means of \cite[8.3]{snulmenn:tv.v2}.
\end{proof}
\begin{remark} \label{remark:weak_closedness_tv}
	If $\| \delta V \|$ is absolutely continuous with respect to $\| V
	\|$, one may replace ``$\| V \| + \| \delta V \|$'' by ``$\| V \|$''
	in the preceding lemma; in fact, \cite[2.4.11, 2.8.18, 2.9.2,
	2.9.7]{MR41:1976} implies that $f$ is $\| \delta V \|$ measurable and
	that $f_i$ converges to $f$ in $\| \delta V \| \restrict K$ measure as
	$i \to \infty$ whenever $K$ is a compact subset of $U$.
\end{remark}
\begin{miniremark} \label{miniremark:compactness-pseudometric-spaces}
    The following proposition is an elementary fact about pseudometric spaces.
    \emph{If $Z$ is the space of a complete pseudometric $\varrho$, $f_i$ is a
    sequence in $Z$ and for every $\varepsilon > 0$ there exists a sequence
    $g_i$ in $Z$ with $\varrho (f_i,g_i) \leq \varepsilon$ for $i \in \nat$
    such that each subsequence of $g_i$ admits a convergent subsequence, then
    $f_i$ possesses a convergent subsequence.}
\end{miniremark}
\begin{comment}
	Next, an elementary but useful criterion for sequential compactness
	for local convergence in measure, ultimately based on the
	Ascoli theorem for Lipschitzian functions, see
	\cite[2.10.21]{MR41:1976}, is proven.
\end{comment}
\begin{lemma} \label{lemma:compactness-in-measure}
	Suppose $U$ is an open subset of $\rel^\adim$, $\mu$ is a Radon
	measure over $U$, $Y$ is a finite dimensional normed vectorspace,
	$f_i$ is a sequence in $\mathbf{A} ( \mu, Y )$ such that whenever $X$
	is a $\mu$ measurable set with $\mu (X)<\infty$ and $\varepsilon > 0$
	there exists $\kappa < \infty$ such that for each $i \in \nat$ there
	exists a subset $A$ of $X$ with $\mu ( X \without A) \leq \varepsilon$
	and $\sup | f_i | \lIm A \rIm + \Lip (f_i|A) \leq \kappa$.

	Then there exist $f \in \mathbf{A} (\mu,Y)$ and a subsequence of $f_i$
	which, whenever $K$ is a compact subset of $U$, converges to $f$ in
	$\mu \restrict K$ measure.
\end{lemma}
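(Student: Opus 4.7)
The plan is to apply the criterion of \ref{miniremark:compactness-pseudometric-spaces} to $\mathbf{A}(\mu,Y)$ equipped with a complete translation-invariant pseudometric $\varrho$ whose induced topology coincides with local convergence in $\mu$ measure on every compact subset of $U$. To construct $\varrho$, I would fix an exhausting sequence of compact sets $K_1 \subset \Int K_2 \subset K_2 \subset \cdots$ with $U = \bigcup_j K_j$ and set
\[
\varrho(f,g) = \sum_{j=1}^\infty 2^{-j} \min\!\left\{1, \int_{K_j} \min\{1,|f-g|\}\,d\mu \right\};
\]
completeness is standard, since a $\varrho$-Cauchy sequence is Cauchy in measure on each $K_j$, hence admits an almost everywhere convergent subsequence there, and a diagonal argument produces an almost everywhere limit on $U$ representing the $\varrho$-limit.

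For each $\varepsilon > 0$ the task is to produce an auxiliary sequence $g_i \in \mathbf{A}(\mu,Y)$ with $\varrho(f_i, g_i) \leq \varepsilon$ whose every subsequence has a further $\varrho$-convergent subsequence. I would choose $J$ with $2^{-J} \leq \varepsilon/2$ and $\eta < \varepsilon/2$, then apply the hypothesis with $X = K_J$ and $\eta$ in place of $\varepsilon$, obtaining $\kappa < \infty$ and, for each $i$, a set $A_i \subset K_J$ with $\mu(K_J \without A_i) \leq \eta$ and $\sup|f_i|\lIm A_i \rIm + \Lip(f_i|A_i) \leq \kappa$. I would then extend $f_i|A_i$ via Kirszbraun's theorem \cite[2.10.43]{MR41:1976} to a Lipschitz map $\rel^\adim \to Y$ of Lipschitz constant at most $\Gamma \kappa$ (with $\Gamma$ depending only on $Y$) and compose with a Lipschitz retraction of $Y$ onto the closed $\kappa$-ball about $0$, producing $g_i : U \to Y$ with $\sup |g_i|\lIm U \rIm + \Lip g_i \leq \Gamma' \kappa$ for some $\Gamma'$ depending only on $Y$.

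The bound $\varrho(f_i, g_i) \leq \varepsilon$ is then immediate: since $f_i$ and $g_i$ agree on $A_i \subset K_J$, the contributions from indices $j \leq J$ sum to at most $\mu(K_J \without A_i) \leq \eta < \varepsilon/2$ (using the $\min\{1,\cdot\}$ truncation in the integrand), while the tail contribution from $j > J$ is bounded by $\sum_{j > J} 2^{-j} \leq \varepsilon/2$. Subsequential $\varrho$-compactness of $g_i$ follows from the Ascoli theorem \cite[2.10.21]{MR41:1976} combined with a diagonal argument over $j$: on each $K_j$ the family $g_i|K_j$ is uniformly bounded and uniformly Lipschitz, hence admits a uniformly convergent subsequence, and diagonalization yields a single subsequence converging uniformly on every compact subset of $U$, hence in $\varrho$.

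The main obstacle is ensuring uniform boundedness of $g_i$ on all of $U$ rather than merely on $K_J$, since the hypothesis provides no control on $f_i$ outside $K_J$ yet Ascoli's theorem applied on compacts $K_j$ with $j > J$ requires pointwise boundedness there; this is precisely what composing the Kirszbraun extension with the retraction onto the $\kappa$-ball in $Y$ achieves. Once both conditions of \ref{miniremark:compactness-pseudometric-spaces} are verified, a $\varrho$-convergent subsequence of $f_i$ produces the required $f \in \mathbf{A}(\mu,Y)$, and convergence in $\mu \restrict K$ measure on arbitrary compact $K \subset U$ follows since any such $K$ is contained in some $K_j$.
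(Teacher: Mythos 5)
Your argument is correct and follows essentially the same route as the paper's proof: approximate $f_i$ in (local) measure by uniformly bounded, uniformly Lipschitz functions via the hypothesis and a Lipschitz extension theorem, apply Ascoli's theorem to the approximants, and conclude with the complete-pseudometric compactness criterion of \ref{miniremark:compactness-pseudometric-spaces}. The only differences are cosmetic: the paper first reduces to $\mu(U)<\infty$ and $Y=\rel$ and uses the single pseudometric $\mdistance{f-g}{\mu}$, whereas you keep the vector-valued setting (handled by Kirszbraun plus a radial retraction) and encode local convergence in measure through an exhaustion-based pseudometric.
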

\begin{proof}
	One may assume $\mu (U) < \infty$ and $Y = \rel$. Suppose $\varepsilon
	> 0$. Taking $\kappa$ as in the hypotheses for $X = U$, one constructs
	functions $g_i : U \to \rel$ with $\sup \im | g_i | + \Lip g_i \leq
	\kappa$ and $\mu ( U \without \{ x \with f_i (x) = g_i (x) \}) \leq
	\varepsilon$ for $i \in \nat$ by \cite[2.10.44, 4.1.16]{MR41:1976}, in
	particular $\mdistance{f_i-g_i}{\mu}{} \leq \varepsilon$.%
	\begin{footnote}
		{Whenever $\mu$ is a measure and $Y$ is a separable Banach
		space
		\begin{equation*}
			\mdistance f \mu = \inf \big \{ t \with \mu ( \{ x
			\with |f(x)| > t \} ) \leq t \big \} \quad \text{for
			$f \in \mathbf{A} ( \mu, Y)$},
		\end{equation*}
		see \cite[2.3.8]{MR41:1976}.}
	\end{footnote}
	Therefore, in view of \cite[2.3.8, 2.3.10, 2.10.21]{MR41:1976}, one
	may apply \ref{miniremark:compactness-pseudometric-spaces} with $Z$
	and $\varrho (f,g)$ replaced by $\mathbf{A} (\mu,\rel)$ and
	$\mdistance{f-g}{\mu}$ to obtain the conclusion.
\end{proof}
\begin{comment}
	For the use in the present and the next section, a set of hypotheses
	is collected.
\end{comment}
\begin{miniremark} \label{miniremark:minimum_conditions}
	Suppose $\vdim, \adim \in \nat$, $\vdim \leq \adim$, $U$ is an
	open subset of $\rel^\adim$, $V \in \Var_\vdim ( U )$, $\| \delta V
	\|$ is a Radon measure, and $\density^\vdim ( \| V \|, x ) \geq 1$ for
	$\| V \|$ almost all~$x$. In particular, $V$ is rectifiable by Allard
	\cite[5.5\,(1)]{MR0307015}.
\end{miniremark}
\begin{comment}
	The key estimate for the Rellich type embedding result will now be
	formulated. It is based on the Sobolev Poincar{\'e} type inequalities
	obtained in \cite[10.1]{snulmenn:tv.v2}.
\end{comment}
\begin{lemma} \label{lemma:average}
	Suppose $\vdim$, $\adim$, $U$, and $V$ are as in
	\ref{miniremark:minimum_conditions}, $\adim \leq M < \infty$, $a \in
	\rel^\adim$, $0 < r < \infty$, $1 < \lambda \leq 2$, $U = \oball
	a{\lambda r}$, $1 \leq Q \leq M$, $0 \leq \kappa < \infty$, $f \in
	\trunc (V)$,
	\begin{gather*}
		\beta = \infty \quad \text{if $\vdim = 1$}, \qquad \beta =
		\vdim/(\vdim-1) \quad \text{if $\vdim > 1$}, \\
		\Lambda = 2 \Gamma_{\textup{\cite[10.1]{snulmenn:tv.v2}}} ( M)
		\big ( 1 + 2^{\vdim+3} M^{1/\beta} \big ), \\
		C = \{ (x,\cball xs) \with \text{$x \in \oball a{\lambda r}$
		and $0 < s + |x-a| < \lambda r$} \}, \\
		\text{$f$ is $(\| V \|, C )$ approximately continuous at $a$},
	\end{gather*}
	and, for $0 < s \leq r$,
	\begin{gather*}
		\measureball{\| V \|}{\cball as} \geq (1/2) \unitmeasure \vdim
		s^\vdim, \quad \measureball{\| V \|}{\oball a{\lambda s}} \leq
		2 ( Q-M^{-1} ) \unitmeasure \vdim s^\vdim, \\
		\| V \| ( \oball a {\lambda s} \cap \{ x \with \density^\vdim
		( \| V \|, x ) < Q \} ) \leq
		\Gamma_{\textup{\cite[10.1]{snulmenn:tv.v2}}} (M)^{-1}
		s^\vdim, \\
		\tint{\oball a{\lambda s}}{} | \derivative Vf | \ud \| V \| +
		\eqLpnorm{\| V \| \restrict \oball a{\lambda s}}{\infty}{f}
		\measureball{\| \delta V \|}{\oball a{\lambda s}} \leq \kappa
		s^\vdim;
	\end{gather*}
	here $0 \cdot \infty = \infty \cdot 0 = 0$.

	Then there holds
	\begin{equation*}
		\eqLpnorm{\| V \| \restrict \cball a{(\lambda-1)s}}{\beta}{f (
		\cdot)- f(a)} \leq \Lambda \kappa s^\vdim \quad \text{for $0 <
		s \leq r$}.
	\end{equation*}
\end{lemma}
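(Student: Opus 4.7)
The plan is to iterate the Sobolev--Poincar\'e inequality \cite[10.1]{snulmenn:tv.v2} on a dyadic sequence of balls, extract a sequence of medians that converges to $f(a)$ by the approximate continuity hypothesis, and combine a telescoping estimate on consecutive medians with one further application of Sobolev--Poincar\'e at the top scale to produce the claimed bound.

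Fix $s \in (0,r]$ and set $s_k = 2^{-k}s$ for $k \in \nat \cup \{0\}$. All the quantitative hypotheses (density lower and upper bounds, smallness of $\{\density^\vdim (\|V\|,\cdot) < Q\}$, and the combined derivative-plus-boundary bound with right-hand side $\kappa s^\vdim$) hold at every admissible radius $0 < s_k \leq r$ with right-hand side $\kappa s_k^\vdim$. Hence \cite[10.1]{snulmenn:tv.v2} applies at each scale $s_k$, producing a real number $y_k$ with
\begin{equation*}
	\eqLpnorm{\|V\|\restrict \cball a{(\lambda-1)s_k}}{\beta}{f - y_k} \leq \Gamma_{\textup{\cite[10.1]{snulmenn:tv.v2}}}(M)\,\kappa\,s_k^\vdim.
\end{equation*}
The $(\|V\|,C)$ approximate continuity of $f$ at $a$, together with the lower bound $\|V\|(\cball a{s_k}) \geq (1/2)\unitmeasure \vdim s_k^\vdim$, forces $y_k \to f(a)$ as $k \to \infty$. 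Since $\cball a{(\lambda-1)s_{k+1}} \subset \cball a{(\lambda-1)s_k}$, a triangle inequality in $\Lp\beta$ gives
\begin{equation*}
	|y_{k+1}-y_k|\,\|V\|\bigl(\cball a{(\lambda-1)s_{k+1}}\bigr)^{1/\beta} \leq 2\,\Gamma_{\textup{\cite[10.1]{snulmenn:tv.v2}}}(M)\,\kappa\,s_k^\vdim,
\end{equation*}
and the density lower bound gives $\|V\|(\cball a{(\lambda-1)s_{k+1}})^{1/\beta}$ comparable to $s_{k+1}^{\vdim/\beta}$. Using $\vdim - \vdim/\beta = 1$ when $\vdim > 1$ (the case $\vdim = 1$ being simpler, with $\beta = \infty$), one deduces $|y_{k+1} - y_k| \leq C\kappa s_{k+1}$; summing the geometric series produces $|y_0 - f(a)| \leq C'\kappa s$.

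A final triangle inequality
\begin{equation*}
	\eqLpnorm{\|V\|\restrict \cball a{(\lambda-1)s}}{\beta}{f - f(a)} \leq \eqLpnorm{\|V\|\restrict \cball a{(\lambda-1)s}}{\beta}{f - y_0} + |y_0 - f(a)|\,\|V\|\bigl(\cball a{(\lambda-1)s}\bigr)^{1/\beta},
\end{equation*}
combined with the upper mass bound $\|V\|(\cball a{(\lambda-1)s}) \leq 2M\,\unitmeasure \vdim s^\vdim$ (since $Q \leq M$) and the identity $1 + \vdim/\beta = \vdim$, delivers the asserted $\Lambda \kappa s^\vdim$ estimate. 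The qualitative scheme is straightforward; the main obstacle is tracking constants carefully so that the term $|y_0 - f(a)|\,\|V\|(\cball a{(\lambda-1)s})^{1/\beta}$ contributes exactly the factor $2^{\vdim+3}M^{1/\beta}$ inside $\Lambda$, with the overall factor $2$ arising from adding the Sobolev--Poincar\'e contribution at the top scale. A minor additional point is that the convention $0\cdot\infty = 0$ from the statement must be respected when interpreting the boundary term $\eqLpnorm{\|V\|\restrict \oball a{\lambda s_k}}{\infty}{f}\|\delta V\|(\oball a{\lambda s_k})$ at each scale.
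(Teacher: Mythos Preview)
Your approach is essentially identical to the paper's: choose medians at dyadic scales, apply \cite[10.1\,(1a)]{snulmenn:tv.v2} to $(f-y_k)^\pm$, telescope, and use approximate continuity to identify the limit with $f(a)$. Two small points in your constant bookkeeping deserve care. First, the right-hand side after Sobolev--Poincar\'e is $2\Gamma_{\textup{\cite[10.1]{snulmenn:tv.v2}}}(M)\,\kappa s_k^{\vdim}$, not $\Gamma_{\textup{\cite[10.1]{snulmenn:tv.v2}}}(M)\,\kappa s_k^{\vdim}$: the boundary term in \cite[10.1]{snulmenn:tv.v2} involves $\int |f-y_k|\,d\|\delta V\|$, and one only knows $|f-y_k|\leq 2\,\eqLpnorm{\|V\|\restrict\oball a{\lambda s_k}}{\infty}{f}$ (since $|y_k|\leq\eqLpnorm{\|V\|\restrict\oball a{\lambda s_k}}{\infty}{f}$ by the median property). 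Second, in the final step you should invoke the upper mass bound at radius $(\lambda-1)s$ rather than $s$, obtaining $\|V\|(\cball a{(\lambda-1)s})\leq 2M\unitmeasure{\vdim}((\lambda-1)s)^{\vdim}$; the resulting $(\lambda-1)^{\vdim/\beta}$ exactly cancels the $(\lambda-1)^{1-\vdim}$ produced by the telescoping estimate, which is why no $(\lambda-1)$ dependence appears in $\Lambda$.
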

\begin{proof}
    Abbreviate $\Delta = \Gamma_{\textup{\cite[10.1]{snulmenn:tv.v2}}} ( M )$.
    Choose $y(s) \in \rel$ such that
    \begin{gather*}
	\| V \| ( \oball a {\lambda s} \cap \{ x \with f(x) < y(s) \} ) \leq
	(1/2) \measureball{\| V \|}{\oball a{\lambda s}}, \\
	\| V \| ( \oball a {\lambda s} \cap \{ x \with f(x) > y(s) \} ) \leq
	(1/2) \measureball{\| V \|}{\oball a{\lambda s}}
    \end{gather*}
    for $0 < s \leq r$, in particular
    \begin{equation*}
	|y(s)| \leq \eqLpnorm{\| V \| \restrict \oball a{\lambda
	s}}{\infty}{f} \quad \text{and} \quad f(a) = \lim_{s \to 0+} y(s).
    \end{equation*}
    Define $g_{s} (x) = f(x) - y(s)$ whenever $0 < s \leq r$ and $x \in \dmn
    f$, hence
    \begin{equation*}
	\eqLpnorm{\| \delta V \| \restrict \oball a{\lambda s}}{\infty}{g_s}
	\leq 2 \eqLpnorm{\| V \| \restrict \oball a{\lambda s}}{\infty}{f}
    \end{equation*}
    by \cite[8.33]{snulmenn:tv.v2}. Recalling \cite[8.12, 8.13\,(4),
    9.2]{snulmenn:tv.v2}, one applies \cite[10.1\,(1a)]{snulmenn:tv.v2} with
    $p$, $U$, $G$, $f$, and $r$ replaced by $1$, $\oball a{\lambda s}$,
    $\varnothing$, $g_{s}^+| \oball a{\lambda s}$ respectively $g_{s}^-|
    \oball a{\lambda s}$, and $s$ to infer
    \begin{align*}
	\eqLpnorm{\| V \| \restrict \cball a{(\lambda-1) s} }{\beta}{g_{s}}
	& \leq \Delta \big ( \tint{\oball{a}{\lambda s}}{} | \derivative Vf
	| \ud \| V \| + \tint{\oball a{\lambda s}}{} | g_{s}| \ud \| \delta V
	\| \big ) \\
	& \leq 2 \Delta \kappa s^\vdim
    \end{align*}
    for $0 < s \leq r$. Finally, one estimates
    \begin{gather*}
	\begin{aligned}
	    & | y(s)-y(s/2)| \cdot \| V \| ( \cball a{(\lambda-1)s/2}
	    )^{1/\beta} \\
	    & \leq \eqLpnorm{\| V \| \restrict \cball a {(\lambda-1)s/2}
	    }{\beta}{g_{s/2}} + \eqLpnorm{\| V \| \restrict \cball
	    {a}{(\lambda-1)s}}{\beta}{g_{s}} \leq 4 \Delta \kappa s^\vdim,
	\end{aligned} \\
	| y(s)-f(a)| \leq 2^{\vdim+3} \Delta \unitmeasure{\vdim}^{-1/\beta} (
	\lambda-1 )^{1-\vdim} \kappa s, \\
	\eqLpnorm{\| V \| \restrict \cball{a}{(\lambda-1)s}
	}{\beta}{f(\cdot)-f(a) } \leq \Lambda \kappa s^\vdim
    \end{gather*}
    for $0 < s \leq r$.
\end{proof}
\begin{remark}
	The preceding lemma is extracted from the proof of
	\cite[11.2]{snulmenn:tv.v2}.
\end{remark}
\begin{comment}
	Proving the following Rellich type embedding result now mainly amounts
	to applying the preceding two lemmata, defining the necessary
	parameters in the appropriate order in this process, and using
	Egoroff's theorem, see \cite[2.3.7]{MR41:1976}, to construct large
	sets on which certain conditions are satisfied uniformly.
\end{comment}
\begin{theorem} \label{thm:rellich-in-measure}
    Suppose $\vdim$, $\adim$, $U$, and $V$ are as in
    \ref{miniremark:minimum_conditions}, $Y$ is a finite dimensional normed
    vectorspace, and $f_i \in \trunc (V,Y)$ is a sequence satisfying
    \begin{gather*}
	\lim_{t \to \infty} \sup \big \{ \| V \| ( K \cap \{ x \with |f_i (x)
	| > t \} ) \with i \in \nat \big \} = 0, \\
	\sup \big \{ \tint{K \cap \{ x \with |f_i(x)| \leq t \}}{} \|
	\derivative V{f_i} \| \ud \| V \| \with i \in \nat \big \} < \infty
	\quad \text{for $0 \leq t < \infty$}
    \end{gather*}
    whenever $K$ is a compact subset of $U$.

    Then there exist $f \in \mathbf{A} ( \| V \|, Y )$ and a subsequence of
    $f_i$ which, whenever $K$ is a compact subset of $U$, converges to $f$ in
    $\| V \| \restrict K$ measure.
\end{theorem}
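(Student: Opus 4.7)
The strategy is to reduce to truncated functions and then invoke Lemma~\ref{lemma:compactness-in-measure} at each truncation level. Pick $\tau_t \in \mathscr{E}(Y,Y)$ with $\tau_t(y) = y$ for $|y| \leq t$, $\tau_t(y) = 0$ for $|y| \geq 2t$, and $\|\Der \tau_t\| \leq 1$, and set $f_{i,t} = \tau_t \circ f_i$. The composition properties in \cite[8.12, 8.15]{snulmenn:tv.v2} place each $f_{i,t}$ in $\trunc(V,Y)$, with $|f_{i,t}| \leq 2t$ and $\derivative V{f_{i,t}}$ supported in $\{x \with |f_i(x)| \leq 2t\}$ and dominated there by $\|\derivative V{f_i}\|$. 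Consequently the second hypothesis yields $\sup_i \tint{K}{} \|\derivative V{f_{i,t}}\| \ud \|V\| < \infty$, while the first gives $\lim_{t \to \infty} \sup_i \|V\|(K \cap \{x \with f_{i,t}(x) \neq f_i(x)\}) = 0$ for each compact $K \subset U$. Exhausting $U$ by compact sets and applying \ref{miniremark:compactness-pseudometric-spaces} to the complete pseudometric $\mdistance{f-g}{\|V\|\restrict K}$ on $\mathbf{A}(\|V\|,Y)$, one reduces the theorem by a diagonal extraction to producing, for each fixed $t$ and each compact $K$, a subsequence of $f_{i,t}$ converging in $\|V\| \restrict K$ measure.

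For such $t$ and $K$, I would verify the hypotheses of Lemma~\ref{lemma:compactness-in-measure}: given $\varepsilon > 0$, produce $\kappa < \infty$ and sets $A_i \subset K$ with $\|V\|(K \without A_i) \leq \varepsilon$ and $\Lip(f_{i,t}|A_i) \leq \kappa$ (the $L^\infty$ bound $2t$ being automatic). The Lipschitz control is to be generated by the $L^\beta$ oscillation estimate of Lemma~\ref{lemma:average}: for two points $a, a' \in A_i$ at scale $s \simeq |a-a'|$, comparison of its two conclusions forces $|f_{i,t}(a) - f_{i,t}(a')| \leq C \kappa |a-a'|$, provided the hypotheses of that lemma are satisfied at both points with a common value of $\kappa$ and a common lower bound $r_0$ on the admissible radius.

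The main obstacle is to arrange this uniformity simultaneously in $i$. The density-type conditions on $V$ are independent of $i$, so by the density hypothesis combined with Allard \cite[3.5\,(1c), 5.5\,(1)]{MR0307015} and \cite[2.9.5, 2.10.19]{MR41:1976} they hold on all sufficiently small scales at $\|V\|$-almost every $a$; two successive applications of Egoroff's theorem \cite[2.3.7]{MR41:1976}, one for the density conditions on $\|V\|$ and one for the upper-regularity of $\|\delta V\|$ with respect to $\|V\|$ that is valid $\|V\|$-almost everywhere since both measures are Radon, yield a radius $r_0 > 0$ and a compact $A^\ast \subset K$ with $\|V\|(K \without A^\ast) < \varepsilon/2$ on which all these conditions hold uniformly. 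For the $i$-dependent integral term, the weak-type maximal inequality for the Radon measure $\|V\|$ applied to the uniformly $L^1$-bounded functions $\|\derivative V{f_{i,t}}\|$ furnishes, for $\lambda$ sufficiently large, sets $B_i \subset K$ with $\|V\|(K \without B_i) < \varepsilon/2$ on which $s^{-\vdim} \tint{\oball a{\lambda s}}{} \|\derivative V{f_{i,t}}\| \ud \|V\| \leq C \lambda$ for all small $s$, uniformly in $i$. Taking $A_i = A^\ast \cap B_i$ validates the hypotheses of Lemma~\ref{lemma:average} with a uniform $\kappa$ and $r_0$, and Lemma~\ref{lemma:compactness-in-measure} then delivers the convergent subsequence.
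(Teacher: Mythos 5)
Your proposal is correct and follows essentially the same route as the paper's proof: truncation of the $f_i$, Egoroff's theorem to obtain uniform density-ratio conditions on a large compact set, a Besicovitch weak-type maximal function bound to control the $i$-dependent terms entering Lemma~\ref{lemma:average}, the two-point comparison of that lemma's $\mathbf{L}_\beta$ oscillation estimate to produce uniform Lipschitz bounds on large sets, and finally Lemma~\ref{lemma:compactness-in-measure} together with \ref{miniremark:compactness-pseudometric-spaces}. The only differences are organizational (the paper performs the truncation inside the verification of the hypotheses of Lemma~\ref{lemma:compactness-in-measure} and folds the $\| \delta V \|$ term into the maximal function rather than treating it by a separate Egoroff step), and these do not affect the validity of the argument.
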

\begin{proof}
    The proof will be conducted by verifying the hypotheses of
    \ref{lemma:compactness-in-measure} with $\mu$ replaced by $\| V \|$. For
    this purpose suppose $X$ is a $\| V \|$ measurable set with $\| V \| (X) <
    \infty$. Defining $C = \{ (a,\cball ar) \with \cball ar \subset U \}$, one
    may assume that for some $M$ with $\sup \{ 4, \adim \} \leq M < \infty$
    there holds
    \begin{gather*}
	1 \leq \density^\vdim ( \| V \|, x ) \leq M, \\
	\text{$\density^\vdim ( \| V \|, \cdot )$ and $f_i$ are $(\| V \|, C)$
	approximately continuous at $x$}
    \end{gather*}
    whenever $x \in X$ and $i \in \nat$ by \cite[2.8.18, 2.9.13]{MR41:1976}
    and that $X$ is compact.

    Choose a compact subset $K$ of $U$ with $X \subset \Int K$ and let
    \begin{equation*}
    	\delta = \inf \{ \dist (x, \rel^\adim \without K) \with x \in X \}.
    \end{equation*}
    Define $\lambda = (1.1)^{1/\vdim}$, hence $1 < \lambda \leq 2$, let
    \begin{equation*}
    	Q(x) = \sup \{ 1, (5/6) \density^\vdim ( \| V \|, x ) \} \quad
	\text{whenever $x \in X$}
    \end{equation*}
    and notice that
    \begin{equation*}
	\density^\vdim ( \| V \|, x ) < 2 \lambda^{-\vdim} ( Q(x)-M^{-1} )
	\quad \text{for $x \in X$}.
    \end{equation*}
    Abbreviate $\Delta_1 = \Gamma_{\textup{\cite[10.1]{snulmenn:tv.v2}}}
    (M)^{-1}$.
    
    Suppose $\varepsilon > 0$.
    
    In order to define $\kappa$, first observe that one may construct, by
    means of \cite[2.3.7, 2.6.2]{MR41:1976}, a $\| V \|$ measurable subset
    $X'$ of $X$ with $\| V \| ( X \without X' ) \leq \varepsilon/3$ and $0 < r
    \leq \delta/2$ satisfying
    \begin{gather*}
	\measureball{\| V \|}{\cball xs} \geq (1/2) \unitmeasure \vdim
	s^\vdim, \quad \measureball{\| V \|}{\oball x{\lambda s}} \leq 2 (
	Q(x)-M^{-1} ) \unitmeasure \vdim s^\vdim, \\
	\| V \| ( \oball x {\lambda s} \cap \{ \chi \with \density^\vdim ( \|
	V \|, \chi ) < Q(x) \} ) \leq \Delta_1 s^\vdim
    \end{gather*}
    for $x \in X'$ and $0 < s \leq r$. Choose $0 \leq \Delta_2 < \infty$ such
    that
    \begin{equation*}
	\| V \| ( X \cap \{ x \with |f_i(x)| > \Delta_2 \} ) \leq
	\varepsilon/3 \quad \text{for $i \in \nat$}
    \end{equation*}
    and $g \in \mathscr{D} (Y,Y)$ with $g(y)=y$ whenever $y \in \cball
    0{\Delta_2}$ and $\sup \im |g| \leq 2 \Delta_2$. Define $h_i = g \circ
    f_i$ and notice that \cite[8.12]{snulmenn:tv.v2} implies that $h_i \in
    \trunc (V,Y)$ and
    \begin{equation*}
	\Delta_3 = \sup \big \{ \tint K{} \| \derivative V{h_i} \| \ud \| V \|
	+ \| \delta V \| (K) \with i \in \nat \big \} < \infty.
    \end{equation*}
    Define $\beta$ and $\Lambda$ to be related to $\vdim$ and $M$ as in
    \ref{lemma:average}, and let
    \begin{align*}
	\Delta_4 & = 8 M ( 1+2\Delta_2) \Delta_3 \unitmeasure{\vdim}
	\besicovitch \adim \varepsilon^{-1} \Lambda, \\
	\kappa & = \sup \big \{ 2\Delta_2, 2^{\vdim+3} ( \lambda-1)^{-\vdim}
	\unitmeasure{\vdim}^{-1/\beta} \Delta_4, 8 \Delta_2 (\lambda-1)^{-1}
	r^{-1} \big \}.
    \end{align*}

    Suppose $i \in \nat$.

    Defining $F : X \to \overline{\rel}$ by
    \begin{equation*}
	F (x) = \sup \left \{ \frac{\tint{\cball xs}{} \| \derivative V h_i
	\| \ud \| V \| + \measureball{\| \delta V \|}{\cball
	xs}}{\measureball{\| V \|}{\cball xs}} \with 0 < s \leq \delta \right
	\}
    \end{equation*}
    for $x \in X$, let
    \begin{equation*}
	A = X' \cap \big \{ x \with \text{$|f_i(x)| \leq \Delta_2$ and $F(x)
	\leq 3 \Delta_3 \besicovitch \adim \varepsilon^{-1}$} \big \}
    \end{equation*}
    and observe $\| V \| ( X \without A ) \leq \varepsilon$. Noting $f_i|A =
    h_i|A$ and $\Delta_2 \leq \kappa/2$, it is sufficient (by
    \cite[\printRoman{2}.3.15]{MR0117523}) to prove that $\Lip ( \alpha
    \circ h_i|A ) \leq \kappa/2$ whenever $\alpha \in \Hom ( Y, \rel)$ and $\|
    \alpha \| \leq 1$. Noting \cite[8.18]{snulmenn:tv.v2}, one applies
    \ref{lemma:average} with $Q$, $\kappa$, and $f$ replaced by $Q(a)$, $2M
    (1+2\Delta_2) \unitmeasure \vdim F(a)$, and $\alpha \circ h_i| \oball
    a{\lambda s}$ to infer
    \begin{equation*}
	\eqLpnorm{\| V \| \restrict \cball{a}{(\lambda-1)s} }{\beta}{( \alpha
	\circ h_i ) ( \cdot ) - (\alpha \circ h_i)(a) )} \leq \Delta_4 s^\vdim
    \end{equation*}
    for $a \in A$ and $0 < s \leq r$. Therefore, if $a, x \in A$ and $|x-a|
    \leq ( \lambda-1)r/2$, then taking $s = 2 |x-a|/(\lambda-1)$ yields
    \begin{align*}
	& |(\alpha \circ h_i)(x) -( \alpha \circ h_i)(a)| \\
	& \qquad \leq 2^{\vdim+1} (\lambda-1)^{-\vdim}
	\Delta_4 \| V \| ( \cball a{2|x-a|} \cap
	\cball x{2|x-a|} )^{-1/\beta} |x-a|^\vdim \\
	& \qquad \leq 2^{\vdim+2} ( \lambda-1)^{-\vdim}
	\unitmeasure{\vdim}^{-1/\beta} \Delta_4 |x-a| \leq ( \kappa/2 ) |x-a|.
    \end{align*}
    Finally, notice that $|(\alpha \circ h_i) (x)- (\alpha \circ h_i)(a)| \leq
    2 \Delta_2 \leq (\kappa/2) |x-a|$ whenever $a, x \in A$ and $|x-a| >
    (\lambda-1)r/2$.
\end{proof}
\begin{comment}
	Combining the theorem with basic properties of weak convergence,
	the first corollary concerning the closedness of the space of
	generalised weakly differentiable functions is readily verified.
\end{comment}
\begin{corollary} \label{corollary:closedness-tv}
	Suppose $\vdim$, $\adim$, $U$, and $V$ are as in
	\ref{miniremark:minimum_conditions}, $Y$ is a finite dimensional
	normed vectorspace,
	\begin{equation*}
		f \in \mathbf{A} ( \| \delta V \|, Y ) \cap \Lploc{1} ( \| V
		\|, Y ), \quad F \in \Lploc{1} ( \| V \|, \Hom ( \rel^\adim,
		Y) ),
	\end{equation*}
	and $f_i \in \trunc (V,Y) \cap \Lploc{1} ( \| V \|, Y)$ is a sequence
	satisfying
	\begin{gather*}
		\derivative V{f_i} \in \Lploc{1} ( \| V \|, \Hom ( \rel^\adim,
		Y) ) \quad \text{for $i \in \nat$}, \\
		\lim_{i \to \infty} \tint{K}{} \langle f_i, g \rangle \ud \| V
		\| = \tint{K}{} \langle f, g \rangle \ud \| V \| \quad
		\text{for $g \in \Lp{\infty} ( \| V \|, Y^\ast )$}, \\
		\lim_{i \to \infty} \tint{K}{} \langle \derivative V{f_i}, G
		\rangle \ud \| V \| = \tint{K}{} \langle F, G \rangle \ud \| V
		\| \quad \text{for $G \in \Lp{\infty} \big ( \| V \|, \Hom (
		\rel^\adim, Y)^\ast \big )$}, \\
		f_i \to f \quad \text{in $( \| \delta V \| - \| \delta V
		\|_{\| V \|}) \restrict K$ measure as $i \to \infty$}
	\end{gather*}
	whenever $K$ is a compact subset of $U$, see page
	\pageref{page:psi_phi}.

	Then $f \in \trunc (V,Y)$ and
	\begin{gather*}
		F(x) = \derivative Vf(x) \quad \text{for $\| V \|$ almost all
		$x$}, \\
		\lim_{i \to \infty} \eqLpnorm{\| V \| \restrict K}{1}{f_i-f} =
		0 \quad \text{whenever $K$ is a compact subset of $U$}.
	\end{gather*}
\end{corollary}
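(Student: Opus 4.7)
The plan is to first upgrade the hypothesised weak convergence to $\Lp 1(\|V\|\restrict K)$ convergence on each compact $K \subset U$ by combining the Rellich embedding~\ref{thm:rellich-in-measure} with uniform integrability provided by the Dunford-Pettis theorem, and then to verify the remaining hypotheses of the closedness lemma~\ref{lemma:weak_closedness_tv}. To set up Rellich, I observe that the hypothesised weak $\Lp 1$ convergence of $f_i$ to $f$ yields, via Banach-Steinhaus, a uniform $\Lp 1(\|V\|\restrict K)$ bound on $f_i$, so Chebyshev's inequality gives $\lim_{t \to \infty} \sup_i \|V\|(K \cap \{|f_i|>t\}) = 0$; the analogous uniform $\Lp 1(\|V\|\restrict K)$ bound on $\derivative V{f_i}$ trivially implies the second hypothesis of~\ref{thm:rellich-in-measure}. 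Therefore every subsequence of $f_i$ admits a further subsequence converging in $\|V\| \restrict K$ measure to some $\tilde f \in \mathbf{A}(\|V\|, Y)$.

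Since $\|V\| \restrict K$ is a finite Radon measure and $\{f_i\}$ is weakly convergent in $\Lp 1(\|V\|\restrict K, Y)$, it is uniformly integrable there by the Dunford-Pettis theorem (reducing to the scalar case via a basis of $Y$). Combined with convergence in measure along the subsequence, Vitali's theorem then yields $\Lp 1(\|V\| \restrict K, Y)$ convergence of that subsequence to $\tilde f$, and testing against $\Lp\infty(\|V\|, Y^\ast)$ functions supported in $K$ forces $\tilde f = f$ $\|V\|$-almost everywhere on $K$. Since every subsequence of $f_i$ admits a further subsequence converging in $\Lp 1(\|V\| \restrict K, Y)$ to the common limit $f$, a standard argument shows that the full sequence $f_i$ converges to $f$ in $\Lp 1(\|V\| \restrict K, Y)$, which is one of the required conclusions.

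To obtain $f \in \trunc(V, Y)$ and $F = \derivative V f$ via~\ref{lemma:weak_closedness_tv}, it remains to establish convergence of $f_i$ to $f$ in $(\|V\| + \|\delta V\|) \restrict K$ measure. The $\Lp 1(\|V\|\restrict K)$ convergence already gives convergence in $\|V\| \restrict K$ measure. Because $\|\delta V\|_{\|V\|} \restrict K$ is a finite Borel regular measure absolutely continuous with respect to $\|V\| \restrict K$, absolute continuity of integrals applied to its Radon-Nikodym density transfers $\|V\| \restrict K$ measure convergence to $\|\delta V\|_{\|V\|} \restrict K$ measure convergence; together with the hypothesised convergence in $(\|\delta V\| - \|\delta V\|_{\|V\|}) \restrict K$ measure, this produces convergence in $(\|V\| + \|\delta V\|) \restrict K$ measure, allowing~\ref{lemma:weak_closedness_tv} to conclude. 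The decisive step is the Dunford-Pettis input, which alone bridges the gap between the abstract weak convergence hypothesis -- carrying no pointwise information -- and the Rellich-type conclusion, whose limit would otherwise remain unidentified.
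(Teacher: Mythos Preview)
Your proof is correct and follows essentially the same route as the paper's. The paper compresses your Banach--Steinhaus step into a citation of \cite[\printRoman{2}.3.27]{MR0117523}, your Dunford--Pettis/Vitali argument into a single citation of \cite[\printRoman{4}.8.12]{MR0117523} (which states precisely that weak $L_1$ convergence together with convergence in measure implies strong $L_1$ convergence), and your absolute-continuity transfer to $\|\delta V\|_{\|V\|}$ into \cite[2.4.11, 2.8.18, 2.9.7]{MR41:1976}; otherwise the skeleton---Rellich \ref{thm:rellich-in-measure}, identification of the limit, then \ref{lemma:weak_closedness_tv}---is identical.
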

\begin{proof}
    Recall \cite[2.5.7\,(ii)]{MR41:1976} and
    \cite[\printRoman{2}.3.27]{MR0117523}. Applying
    \ref{thm:rellich-in-measure} and \cite[\printRoman{4}.8.12]{MR0117523}
    yields
    \begin{equation*}
	\lim_{i \to \infty} \eqLpnorm{\| V \| \restrict K}{1}{f_i-f} = 0 \quad
	\text{whenever $K$ is a compact subset of $U$},
    \end{equation*}
    hence \cite[2.4.11, 2.8.18, 2.9.7]{MR41:1976} implies
    \begin{equation*}
    	f_i \to f \quad \text{in $( \| V \| + \| \delta V \| ) \restrict K$
	measure as $i \to \infty$}
    \end{equation*}
    whenever $K$ is a compact subset of $U$ and the conclusion follows from
    \ref{lemma:weak_closedness_tv}.
\end{proof}
\begin{comment}
	Taking the more basic closedness result obtained in
	\cite[9.13]{snulmenn:tv.v2} into account, the second corollary
	involving a ``boundary condition'' follows similarly.
\end{comment}
\begin{corollary} \label{corollary:closedness_tg}
	Suppose $\vdim$, $\adim$, $U$, and $V$ are as in
	\ref{miniremark:minimum_conditions}, $G$ is a relatively open subset
	of $\Bdry U$, $B = ( \Bdry U ) \without G$,
	\begin{gather*}
		0 \leq f \in \mathbf{A} ( \| \delta V \|, \rel ) \cap
		\mathbf{A} ( \| V \|, \rel ), \quad F \in \mathbf{A} ( \| V
		\|, \Hom ( \rel^\adim, \rel ) ), \\
		\| \delta V \| ( U \cap K ) < \infty, \quad \tint{K}{} f +
		|F| \ud \| V \| < \infty
	\end{gather*}
	whenever $K$ is a compact subset of $\rel^\adim \without B$, and $f_i
	\in \trunc_G (V)$ is a sequence satisfying
	\begin{gather*}
		\tint{U \cap K}{} f_i + | \derivative V{f_i} | \ud \| V \| <
		\infty \quad \text{for $i \in \nat$}, \\
		\lim_{i \to \infty} \tint{U \cap K}{} f_i g \ud \| V \| =
		\tint{U \cap K}{} f g \ud \| V \| \quad \text{for $g \in
		\Lp{\infty} ( \| V \| )$}, \\
		\lim_{i \to \infty} \tint{U \cap K}{} \langle \theta,
		\derivative V{f_i} \rangle \ud \| V \| = \tint{U \cap K}{}
		\langle \theta,F \rangle \ud \| V \| \quad \text{for $\theta
		\in \Lp{\infty} ( \| V \|, \rel^\adim )$}, \\
		f_i \to f \quad \text{in $( \| \delta V \| - \| \delta V
		\|_{\| V \|}) \restrict U \cap K$ measure as $i \to \infty$}
	\end{gather*}
	whenever $K$ is a compact subset of $\rel^\adim \without B$, see page
	\pageref{page:psi_phi}.

	Then $f \in \trunc_G (V)$ and
	\begin{gather*}
		F(x) = \derivative Vf(x) \quad \text{for $\| V \|$ almost all
		$x$}, \\
		\lim_{i \to \infty} \eqLpnorm{\| V \| \restrict U \cap
		K}{1}{f_i-f} = 0
	\end{gather*}
	whenever $K$ is a compact subset of $\rel^\adim \without B$.
\end{corollary}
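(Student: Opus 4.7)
The proof will follow the pattern of \ref{corollary:closedness-tv} but invoke the closedness result \cite[9.13]{snulmenn:tv.v2} for the space $\trunc_G (V)$ in place of \ref{lemma:weak_closedness_tv}. First I would observe that, since $B \cap U = \varnothing$, any compact subset $K$ of $U$ is also a compact subset of $\rel^\adim \without B$, so the hypotheses supply the weak $\Lp 1$ convergence $f_i \to f$ in $\Lp 1 ( \| V \| \restrict K )$ and $\derivative V{f_i} \to F$ in $\Lp 1 ( \| V \| \restrict K, \Hom ( \rel^\adim, \rel ) )$ against $\Lp \infty$ test functions. From the Banach--Steinhaus theorem applied to the functionals induced by the weakly convergent sequences (via \cite[\printRoman 2.3.27]{MR0117523} and \cite[2.5.7\,(ii)]{MR41:1976}), one obtains
\begin{equation*}
	\sup \big \{ \tint K{} f_i \ud \| V \| + \tint K{} | \derivative
	V{f_i} | \ud \| V \| \with i \in \nat \big \} < \infty
\end{equation*}
whenever $K$ is a compact subset of $U$.

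Next I would verify the hypotheses of \ref{thm:rellich-in-measure}. For $K$ a compact subset of $U$, Chebyshev's inequality gives $\| V \| ( K \cap \{ x \with f_i (x) > t \} ) \leq t^{-1} \tint K{} f_i \ud \| V \|$, which tends to $0$ uniformly in $i$ as $t \to \infty$ by the preceding uniform bound; the second hypothesis of \ref{thm:rellich-in-measure} is even simpler by dropping the level set restriction. Therefore one obtains a $\| V \|$ measurable function $f'$ and a subsequence, not relabelled, such that $f_i \to f'$ in $\| V \| \restrict K$ measure for every compact $K \subset U$. Combining this with the weak $\Lp 1$ convergence of $f_i$ to $f$ via \cite[\printRoman 4.8.12]{MR0117523} (as in the proof of \ref{corollary:closedness-tv}) yields $\eqLpnorm{\| V \| \restrict K} 1 {f_i-f} \to 0$ and forces $f = f'$ for $\| V \|$ almost all points.

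Then I would upgrade this to convergence in $\| \delta V \|$ measure on $U \cap K$ for every compact $K \subset \rel^\adim \without B$. By \cite[2.9.7]{MR41:1976} the measure $\| \delta V \|$ decomposes as $\| \delta V \|_{\| V \|} + ( \| \delta V \| - \| \delta V \|_{\| V \|} )$ where the first summand is absolutely continuous with respect to $\| V \|$ and the second is singular with respect to $\| V \|$. The strong $\Lp 1 ( \| V \| \restrict U \cap K )$ convergence, hence convergence in $\| V \| \restrict U \cap K$ measure, propagates to convergence in $\| \delta V \|_{\| V \|} \restrict U \cap K$ measure by \cite[2.4.11, 2.8.18]{MR41:1976}; combined with the hypothesis of convergence in $( \| \delta V \| - \| \delta V \|_{\| V \|} ) \restrict U \cap K$ measure this yields convergence in $\| \delta V \| \restrict U \cap K$ measure, and a fortiori in $( \| V \| + \| \delta V \| ) \restrict U \cap K$ measure.

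Finally I would apply \cite[9.13]{snulmenn:tv.v2} to the sequence $f_i \in \trunc_G (V)$ with the limit $(f,F)$; the hypotheses of that result -- namely convergence in $( \| V \| + \| \delta V \| ) \restrict U \cap K$ measure on compact $K \subset \rel^\adim \without B$, local integrability, and weak convergence of the derivatives against $\Lp \infty$ vectorfields -- are exactly what we have just assembled. The conclusion $f \in \trunc_G (V)$ with $\derivative V f = F$ then follows, and the $\Lp 1$ convergence on $\| V \| \restrict U \cap K$ has already been obtained. The only nonroutine point is the passage from $\| V \|$-convergence in measure to $\| \delta V \|$-convergence in measure, which is why the hypothesis concerning the singular part of $\| \delta V \|$ must be imposed separately; once that is settled the rest is a direct transcription of the argument for \ref{corollary:closedness-tv}.
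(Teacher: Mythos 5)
Your overall route is the paper's: the paper packages your first four steps as a single application of \ref{corollary:closedness-tv} with $Y = \rel$ and then, as you do, upgrades to convergence in $( \| V \| + \| \delta V \| )$ measure and invokes \cite[9.13]{snulmenn:tv.v2}. There is, however, one genuine gap. The Rellich argument and the ensuing identification $f = f'$ give you $\eqLpnorm{\| V \| \restrict K}{1}{f_i - f} \to 0$ only for $K$ a \emph{compact subset of $U$}; the conclusion of the corollary, and the step where you pass to convergence in $\| \delta V \|_{\| V \|} \restrict U \cap K$ measure, require this for $U \cap K$ with $K$ a compact subset of $\rel^\adim \without B$. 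Such a set $U \cap K$ is in general not relatively compact in $U$ --- it may accumulate on the relatively open boundary portion $G$ --- and convergence in measure on an exhaustion of $U$ by compacta does not by itself yield $\Lp 1$ convergence on all of $U \cap K$. Your closing claim that ``the $\Lp 1$ convergence on $\| V \| \restrict U \cap K$ has already been obtained'' is therefore not justified by what precedes it.

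The missing ingredient is the uniform integrability and uniform countable additivity of the indefinite integrals of the $f_i$ over $U \cap K$, which follow from the hypothesised weak convergence in $\Lp 1 ( \| V \| \restrict U \cap K )$ by \cite[\printRoman{4}.8.9, \printRoman{4}.8.10, \printRoman{4}.8.11]{MR0117523}; combined with the convergence in $\| V \|$ measure on compact subsets of $U$ furnished by \ref{thm:rellich-in-measure}, these control the contribution of $U \cap K$ near $G$ uniformly in $i$ and so yield $\eqLpnorm{\| V \| \restrict U \cap K}{1}{f_i - f} \to 0$. This is exactly the role of the citations of \cite[\printRoman{4}.8.9, \printRoman{4}.8.10]{MR0117523} in the paper's proof. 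A smaller caveat: \cite[9.13]{snulmenn:tv.v2} also requires an upper semicontinuity statement for measures of superlevel sets, of the form $\limsup_{i \to \infty} \| V \| ( K \cap A \cap \{ x \with f_i(x) > y \} ) \leq \| V \| ( K \cap A \cap \{ x \with f(x) > b \} )$ for $0 < b < y < \infty$ and $\| V \|$ measurable $A$, which the paper extracts from the convergence in measure; your appeal to that result should record this verification rather than assert that its hypotheses are exactly what has been assembled.
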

\begin{proof}
	Recall \cite[2.5.7\,(ii)]{MR41:1976} and
	\cite[\printRoman{2}.3.27]{MR0117523}.

	Applying \ref{corollary:closedness-tv} with $Y = \rel$ yields $f \in
	\trunc (V)$ with $F$ and $\derivative Vf$ being $\| V \|$ almost equal
	and, in combination with \cite[\printRoman{4}.8.9,
	\printRoman{4}.8.10]{MR0117523}, also
	\begin{equation*}
		\lim_{i \to \infty} \eqLpnorm{\| V \| \restrict U \cap
		K}{1}{f_i-f} = 0
	\end{equation*}
	whenever $K$ is a compact subset of $\rel^\adim \without B$. By
	\cite[2.4.11, 2.8.18, 2.9.7]{MR41:1976} this implies
	\begin{equation*}
		f_i \to f \quad \text{in $( \| V \| + \| \delta V \| )
		\restrict U \cap K$ measure as $i \to \infty$}
	\end{equation*}
	whenever $K$ is a compact subset of $\rel^\adim \without B$, in
	particular for such $K$
	\begin{equation*}
		\limsup_{i \to \infty} \| V \| ( K \cap A \cap \{ x \with f_i
		(x)>y \}) \leq \| V \| ( K \cap A \cap \{ x \with f(x) > b \}
		)
	\end{equation*}
	whenever $A$ is $\|V \|$ measurable and $0 < b < y < \infty$. Noting
	\cite[\printRoman{4}.8.10, \printRoman{4}.8.11]{MR0117523}, the
	conclusion now follows from \cite[9.13]{snulmenn:tv.v2}.
\end{proof}
\begin{remark}
	If $\| \delta V \|$ is absolutely continuous with respect to $\| V
	\|$, then the hypotheses of $\| \delta V \|$ measurability of $f$ and
	convergence in $( \| \delta V \| - \| \delta V \|_{\| V \|} )
	\restrict K$ measure in \ref{corollary:closedness-tv} and
	\ref{corollary:closedness_tg} are evidently redundant by \cite[2.8.18,
	2.9.2, 2.9.7]{MR41:1976}.
\end{remark}
\section{Sobolev spaces}
In this section, mainly definitions and basic properties of Sobolev spaces
with respect to certain rectifiable varifolds for functions with values in a
finite dimensional normed space are provided, see
\ref{def:strict_local_sobolev_space}--\ref{remark:quotient-zero-sobolev-space}.
In the proof of the deeper properties of these spaces their link to the spaces
of generalised weakly differentiable functions will be used heavily. The
relation to generalised weakly differentiable functions is immediate in case
of local Sobolev spaces, see
\ref{remark:strict_local_sobolev_space_inclusion}, and takes the form of a
theorem for Sobolev functions with ``zero boundary values'', see
\ref{thm:zero_implies_zero}.  A first example of the utility of this
link is provided by the Sobolev inequality for Sobolev functions with ``zero
boundary values'' in \ref{corollary:sob_poin_summary}.
\begin{comment}
	Firstly, the local Sobolev space is defined as a vectorspace; its
	topology will be defined only after some basic properties are
	established.
\end{comment}
\begin{definition} \label{def:strict_local_sobolev_space}
    Suppose $\vdim, \adim \in \nat$, $\vdim \leq \adim$, $U$ is an open subset
    of $\rel^\adim$, $Y$ is a finite dimensional normed vectorspace, $V \in
    \RVar_\vdim (U)$, $\| \delta V \|$ is a Radon measure, and $1 \leq q \leq
    \infty$.

    Then the \emph{local Sobolev space with respect to $V$ and exponent $q$},
    denoted by $\SWloc{q} ( V , Y )$, is defined to be the vectorspace
    consisting of all $f \in \Lploc{q} ( \| V \|+\| \delta V\|, Y )$ such that
    there exists $F \in \Lploc{q} ( \| V \|, \Hom ( \rel^\adim, Y ))$ with the
    following property. If $K$ is a compact subset of $U$ and $\varepsilon >
    0$ then
    \begin{equation*}
	\eqLpnorm{(\| V \| + \| \delta V \| )\restrict K}{q}{f-g} +
	\eqLpnorm{\| V \| \restrict K}{q}{F-\derivative{V}{g} } \leq
	\varepsilon
    \end{equation*}
    for some locally Lipschitzian function $g:U \to Y$. Abbreviate $\SWloc{q}
    (V,\rel) = \SWloc{q}(V)$.
\end{definition}
\begin{remark} \label{remark:strict_local_sobolev_space_inclusion}
    Notice that \cite[8.7]{snulmenn:tv.v2} and \ref{lemma:weak_closedness_tv}
    imply
    \begin{equation*}
	Y^U \cap \{ f \with \text{$f$ is locally Lipschitzian} \} \subset
	\SWloc{q} (V,Y) \subset \trunc (V,Y)
    \end{equation*}
    and $F$ is $\| V \|$ almost equal to $\derivative Vf$.
\end{remark}
\begin{remark} \label{remark:sobolev-simpler-def}
	In some cases the definition may be reformulated.
	\begin{enumerate}
		\item \label{item:sobolev-simpler-def:q=oo} If $\| \delta V
		\|$ is absolutely continuous with respect to $\| V \|$ and
		$q = \infty$, then ``$\| V \| + \| \delta V \|$'' may be
		replaced by ``$\| V \|$''.
		\item \label{item:sobolev-simpler-def:cpt} One may require $g$
		to have compact support.
		\item \label{item:sobolev-simpler-def:q<oo} If $q < \infty$,
		then one may require $g \in \mathscr{D} (U,Y)$. If
		additionally $Y = \rel$ and $f \geq 0$, then one may in turn
		also require $g \geq 0$.
		\item \label{item:sobolev-simpler-def:q=oo_sign} If $q =
		\infty$, $Y = \rel$, and $f \geq 0$ then one may require $g
		\geq 0$.
		\item \label{item:sobolev-simpler-def:family} The family of
		all compacts subsets of $U$ may be replaced by a family of
		compact subsets of $U$ whose interiors cover $U$.
	\end{enumerate}
	\eqref{item:sobolev-simpler-def:q=oo},
	\eqref{item:sobolev-simpler-def:cpt}, and
	\eqref{item:sobolev-simpler-def:q=oo_sign} are evident.
	\eqref{item:sobolev-simpler-def:q<oo} follows from
	\eqref{item:sobolev-simpler-def:cpt},
	\ref{corollary:approximation_lip}, \ref{remark:approximation_lip}, and
	\cite[8.7]{snulmenn:tv.v2}. \eqref{item:sobolev-simpler-def:family}
	may be verified by means of a partition of unity.
\end{remark}
\begin{remark} \label{remark:continuity_w_infty}
	If $f \in \SWloc{\infty} (V,Y)$ then there exists a continuous
	function $g : \spt \| V \| \to Y$ such that $f(x)=g(x)$ for $\| V \| +
	\| \delta V \|$ almost all $x$. However, modifying
	\cite[1.2\,(v)]{snulmenn.isoperimetric} shows that $g$ may fail to be
	locally Lipschitzian.
\end{remark}
\begin{remark} \label{remark:eq_classes}
	If $f \in \trunc (V,Y)$ and $f(x) = 0$ for $\| V \|$ almost all $x$,
	then $f(x) = 0$ for $\| \delta V \|$ almost all $x$ by
	\cite[8.33]{snulmenn:tv.v2}, hence $\derivative Vf(x) = 0$ for $\| V
	\|$ almost all $x$; in particular, this applies to $f \in \SWloc q
	(V,Y)$ by \ref{remark:strict_local_sobolev_space_inclusion}.
\end{remark}
\begin{remark} \label{remark:strict_local_sobolev_space}
    The following four basic statements hold.
    \begin{enumerate}
	\item \label{item:strict_local_sobolev_space:product} If $1 \leq r \leq
	\infty$, $1 \leq s \leq \infty$, $1/r+1/s=1/q$, $f \in \SWloc{r} ( V,
	Y)$, and $g \in \SWloc{s} (V) $, then $gf \in \SWloc{q} (V,Y)$ and
	\begin{equation*}
	    \derivative{V}{(gf)} (x) = \derivative{V}{g}(x)\,f(x) + g(x)
	    \derivative{V}{f} (x) \quad \text{for $\| V \|$ almost all $x$}.
	\end{equation*}
	\item \label{item:strict_local_sobolev_space:composition} If $f \in
	\SWloc{q} ( V, Y )$, $Z$ is a finite dimensional normed vectorspace,
	and $g : Y \to Z$ is of class $\class{1}$ with $\Lip g < \infty$, then
	$g \circ f \in \SWloc{q} ( V, Z )$ and
	\begin{equation*}
	    \derivative{V}{( g \circ f )} (x) = \Der  g ( f(x) ) \circ
	    \derivative{V}{f} (x) \quad \text{for $\| V \|$ almost all $x$}.
	\end{equation*}
	\item \label{item:strict_local_sobolev_space:truncation} If $f \in
	\SWloc{q} (V)$ and $q < \infty$, then $\{ f^+, f^-, |f| \} \subset
	\SWloc{q} (V)$.
	\item \label{item:strict_local_sobolev_space:addition} If $f \in
	\SWloc 1 (V,Y)$, $g \in \trunc (V,Y)$, and $\derivative Vg \in \Lploc
	1 ( \| V \| , \Hom ( \rel^\adim, Y ) )$, then $f+g \in \trunc (V,Y)$
	and
	\begin{equation*}
		\derivative V{(f+g)} (x) = \derivative Vf(x) + \derivative
		Vg(x) \quad \text{for $\| V \|$ almost all $x$}.
	\end{equation*}
    \end{enumerate}
    \eqref{item:strict_local_sobolev_space:product} and
    \eqref{item:strict_local_sobolev_space:composition} are direct
    consequences of the definition and \cite[8.7]{snulmenn:tv.v2}.
    \eqref{item:strict_local_sobolev_space:truncation} follows from
    \eqref{item:strict_local_sobolev_space:composition} and the approximation
    technique employed in \cite[8.13\,(4)]{snulmenn:tv.v2}.
    \eqref{item:strict_local_sobolev_space:addition} follows from
    \cite[8.20\,(3)]{snulmenn:tv.v2} in conjunction with
    \ref{lemma:weak_closedness_tv}.
\end{remark}
\begin{comment}
	Now, the locally convex topologies on the local Sobolev spaces can
	be defined without referring to approximating functions.
\end{comment}
\begin{definition} \label{def:topology-SWSob}
	Suppose $\vdim, \adim \in \nat$, $\vdim \leq \adim$, $U$ is an open
	subset of $\rel^\adim$, $Y$ is a finite dimensional normed
	vectorspace, $V \in \RVar_\vdim (U)$, $\| \delta V \|$ is a Radon
	measure, and $1 \leq q \leq \infty$.

	Then $\SWloc q ( V,Y )$ is endowed with the topology induced by the
	family of seminorms mapping $f \in \SWloc q ( V,Y )$ onto
	\begin{equation*}
		\eqLpnorm{( \| V \| + \| \delta V \|) \restrict K} q f +
		\eqLpnorm{\| V \| \restrict K} q{\derivative Vf}
	\end{equation*}
	corresponding to all compact subsets $K$ of $U$, see
	\ref{miniremark:lcs-induced-by-seminorms}.
\end{definition}
\begin{remark} \label{remark:completeness_wqloc}
	Clearly, $\SWloc q (V,Y)$ is a locally convex space and whenever
	$K(i)$ is a sequence of compact subsets of $U$ with $K(i) \subset \Int
	K(i+1)$ for $i \in \nat$ and $U = \bigcup_{i=1}^\infty K(i)$ the
	topology of $\SWloc q ( V,Y)$ is induced by the seminorms
	corresponding to $K(i)$ for $i \in \nat$, see
	\ref{miniremark:lcs-induced-by-seminorms}, hence
	the topology of $\SWloc q ( V,Y)$ is induced by a real valued
	translation invariant pseudometric. Moreover, $\SWloc q (V,Y)$ is
	complete since (see \ref{remark:strict_local_sobolev_space_inclusion}
	and
	\ref{remark:sobolev-simpler-def}\,\eqref{item:sobolev-simpler-def:cpt})
	\begin{align*}
		& \{ (f,F) \with \text{$f \in \SWloc q (V,Y)$ and $F(x) =
		\derivative Vf(x)$ for $\| V \|$ almost all $x$} \} \\
		& \quad = \Clos \big \{ (g,\derivative Vg) \with \text{$g \in
		Y^U$, $\Lip g < \infty$, and $\spt g$ is compact} \big \},
	\end{align*}
	where the closure is taken in $\Lploc q ( \| V \| + \| \delta V \|, Y
	) \times \Lploc q ( \| V \|, \Hom ( \rel^\adim, Y ) )$, is complete by
	\ref{remark:lploc-complete} and \cite[6.22, 6.25]{MR0370454}.
\end{remark}
\begin{remark} \label{remark:basic-top-swloc}
	The following three basic statements will be verified.
	\begin{enumerate}
		\item \label{item:basic-top-swloc:lip} The subspace $Y^U
		\cap \{ g \with \text{$\Lip g < \infty$, $\spt g$ is compact}
		\}$ is dense in $\SWloc q (V,Y)$.
		\item \label{item:basic-top-swloc:q<oo_dense} If $q <
		\infty$, then $\mathscr{D} (U,Y)$ is dense in $\SWloc q
		(V,Y)$ and $\mathscr{D} (U, \rel) \cap \{ g \with g \geq 0 \}$
		is dense in $\SWloc q (V) \cap \{ f \with f \geq 0 \}$.
		\item \label{item:basic-top-swloc:q<oo_separable} If $q <
		\infty$, then $\SWloc q ( V,Y )$ is separable.
	\end{enumerate}
	\eqref{item:basic-top-swloc:lip} is a consequence of
	\ref{remark:sobolev-simpler-def}\,\eqref{item:sobolev-simpler-def:cpt}.
	\ref{remark:sobolev-simpler-def}\,\eqref{item:sobolev-simpler-def:q<oo}
	implies \eqref{item:basic-top-swloc:q<oo_dense}.
	\ref{miniremark:lcs-induced-by-seminorms},
	\ref{remark:lploc-dense-sep}, and \ref{remark:completeness_wqloc}
	yield \eqref{item:basic-top-swloc:q<oo_separable}.
\end{remark}
\begin{remark}
	If $q = \infty$ then the topology of $\SWloc q (V,Y)$ is induced by
	the family of seminorms mapping $f \in \SWloc q (V,Y)$ onto
	\begin{equation*}
		\eqLpnorm{\| V \| \restrict K} qf + \eqLpnorm{\| V \|
		\restrict K} q {\derivative Vf}
	\end{equation*}
	corresponding to all compact subsets $K$ of $U$ by
	\ref{remark:continuity_w_infty}.
\end{remark}
\begin{comment}
	Next, in order to conveniently formulate the quotient local Sobolev
	space and to prepare for the definition of the Sobolev space, the
	following quantity is defined.
\end{comment}
\begin{definition} \label{def:sobolev-seminorm}
	Suppose $\vdim, \adim \in \nat$, $\vdim \leq \adim$, $U$ is an open
	subset of $\rel^\adim$, $Y$ is a finite dimensional normed
	vectorspace, $V \in \RVar_\vdim (U)$, $\| \delta V \|$ is a Radon
	measure, and $1 \leq q \leq \infty$.

	Then define
	\begin{equation*}
		\SWnorm{q}{V}{f} = \eqLpnorm{\| V \|+\|
		\delta V \|}{q}{f} + \Lpnorm{\| V \|}{q}{ \derivative{V}{f} }
		\quad \text{for $f \in \trunc (V,Y)$}.
	\end{equation*}
\end{definition}
\begin{remark}
	The function $\SWnorm qV\cdot | E$ is a seminorm whenever $E$
	is a vectorspace contained in $\trunc (V,Y)$. However, the function
	$\SWnorm qV\cdot$ may not be a seminorm as its domain may fail to be a
	vectorspace, see \cite[8.25]{snulmenn:tv.v2}.
\end{remark}
\begin{remark} \label{remark:swloc-quotient}
	The quotient locally convex space
	\begin{equation*}
		Q = \SWloc q(V,Y) \Big / \big ( \SWloc q(V,Y) \cap \{ f \with
		\SWnorm qVf = 0 \} \big ),
	\end{equation*}
	see \ref{miniremark:lcs-quotient}, is Hausdorff and complete (by
	\ref{remark:completeness_wqloc}) and the topology of $Q$ is induced by
	a translation invariant metric by
	\ref{miniremark:lcs-induced-by-seminorms}. In particular, $Q$ is an
	``$F$-space'' in the terminology of
	\cite[\printRoman{2}.1.10]{MR0117523}.
\end{remark}
\begin{comment}
	The definition of Sobolev space is now obvious.
\end{comment}
\begin{definition} \label{def:strict_sobolev_space}
	Suppose $\vdim, \adim \in \nat$, $\vdim \leq \adim$, $U$ is an open
	subset of $\rel^\adim$, $Y$ is a finite dimensional normed
	vectorspace, $V \in \RVar_\vdim (U)$, $\| \delta V \|$ is a Radon
	measure, and $1 \leq q \leq \infty$.

	Then define the \emph{Sobolev space with respect to $V$ and
	exponent $q$} by
	\begin{equation*}
		\SWSob{q} (V,Y ) = \classification{\SWloc{q} (V,Y)}{f}{
		\SWnorm{q}{V}{f} < \infty}.
	\end{equation*}
	Abbreviate $\SWSob{q} (V,\rel) = \SWSob{q} (V)$.
\end{definition}
\begin{remark} \label{remark:strict_sobolev_space}
	Notice that \emph{$\SWSob{q} ( V, Y )$ is a
	$\SWnorm{q}{V}{\cdot}|\SWSob q (V,Y)$ complete topological vector
	space} by \ref{remark:completeness_wqloc}; in particular the set
	\begin{equation*}
		\{ (f,F) \with \text{$f \in \SWSob q(V,Y)$ and $F(x) =
		\derivative Vf (x)$ for $\| V \|$ almost all $x$} \}
	\end{equation*}
	is closed in $\Lp q ( \| V \| + \| \delta V \|, Y ) \times \Lp q ( \|
	V \|, \Hom ( \rel^\adim, Y ) )$.
\end{remark}
\begin{remark} \label{remark:density_sobolev_space}
    \emph{The vector subspaces
    \begin{gather*}
        \text{$\SWSob{q} ( V, Y ) \cap \mathscr{E} (U, Y)$ if
        $q < \infty$}, \\
        \text{and $\classification{\SWSob{\infty} (V,Y) }{g}{\text{$g$ is
        locally Lipschitzian} }$ if $q = \infty$}
    \end{gather*}
    are $\SWnorm{q}{V}{\cdot}|\SWSob q (V,Y)$ dense in $\SWSob{q}
    (V,Y)$}; in fact, suppose $\varepsilon > 0$ and $f \in \SWSob{q} ( V, Y
    )$, choose a sequence $\zeta_i$ forming a partition of unity on $U$
    associated with $\{ U \}$ as in \cite[3.1.13]{MR41:1976}, abbreviate
    $\kappa_i = \sup \im | \Der  \zeta_i |$ and $K_i = \spt \zeta_i$, select
    $g_i \in \mathscr{E} (U,Y)$ if $q < \infty$ by
    \ref{remark:sobolev-simpler-def}\,\eqref{item:sobolev-simpler-def:q<oo},
    respectively locally Lipschitzian functions $g_i : U \to Y$ if $q =
    \infty$, with
    \begin{equation*}
	( 1 + \kappa_i ) \eqLpnorm{( \| V \| + \| \delta V \|) \restrict K_i
	}{q}{f-g_i} + \eqLpnorm{\| V \| \restrict K_i
	}{q}{\derivative{V}{(f-g_i)}} \leq 2^{-i} \varepsilon
    \end{equation*}
    for $i \in \nat$, and define $g = \sum_{i=1}^{\infty} \zeta_i g_i$, hence
    one verifies $\SWnorm{q}{V}{f-g} \leq \varepsilon$ by means of
    \ref{remark:strict_local_sobolev_space}\,\eqref{item:strict_local_sobolev_space:product}.
    Observe that \emph{if $q < \infty$ then $\mathscr{E} ( U, Y )$ may be
    replaced by $\classification{ \mathscr{E} ( U, Y ) }{g}{ \text{$\spt g$ is
    bounded} }$ in the preceding statement}. Finally, notice that in case $Y =
    \rel$ similar results for the corresponding cones of nonnegative functions
    may be formulated.
\end{remark}
\begin{comment}
	The next remark employs the fact that closed subspaces of reflexive
	Banach spaces are reflexive, see \cite[\printRoman 2.3.23]{MR0117523}.
\end{comment}
\begin{remark} \label{remark:quotient-sobolev-space}
	In view of \ref{miniremark:lcs-quotient}, the quotient space
	\begin{equation*}
		Q = \SWSob q (V,Y ) \Big / \big ( \SWSob q (V,Y) \cap \{ f
		\with \SWnorm qVf = 0 \} \big )
	\end{equation*}
	is a Banach space normed by $\SWnorm qV\cdot \circ \pi^{-1}$, where $
	\pi : \SWSob q (V,Y) \to Q$ denotes the canonical projection. If $1 <
	q < \infty$, then $Q$ is reflexive by
	\ref{example:quotient-lp-spaces}, \ref{remark:strict_sobolev_space},
	and \cite[\printRoman 2.3.23]{MR0117523}.
\end{remark}
\begin{comment}
	Also, the definition of the subspace of Sobolev functions with ``zero
	boundary values'' now follows the usual pattern.
\end{comment}
\begin{definition} \label{def:strict-sobolev-space-zero-boundary-values}
    Suppose $\vdim, \adim \in \nat$, $\vdim \leq \adim$, $U$ is an open subset
    of $\rel^\adim$, $Y$ is a finite dimensional normed vectorspace, $V \in
    \RVar_\vdim (U)$, $\| \delta V \|$ is a Radon measure, and $1 \leq q \leq
    \infty$.

    Then define $\SWzero{q} (V,Y )$ to be the $\SWnorm qV\cdot | \SWSob
    q(V,Y)$ closure of
    \begin{equation*}
    	Y^U \cap \{ g \with \text{$\Lip g< \infty$, $\spt g$ is compact} \}
    \end{equation*}
    in $\SWSob q(V,Y)$. Abbreviate $\SWzero{q} (V,\rel) = \SWzero{q} (V)$.
\end{definition}
\begin{remark} \label{remark:zero_sobolev_space}
	Notice that $\SWzero q(V,Y)$ is $\SWnorm qV\cdot | \SWzero q(V,Y)$
	complete by \ref{remark:strict_sobolev_space}; in particular the set
	\begin{equation*}
		\{ (f,F) \with \text{$f \in \SWzero q (V,Y)$ and $F(x) =
		\derivative Vf(x)$ for $\| V \|$ almost all $x$} \}
	\end{equation*}
	is closed in $\Lp q ( \| V \| + \| \delta V \|, Y ) \times \Lp q ( \|
	V \|, \Hom ( \rel^\adim, Y ) )$.
\end{remark}
\begin{remark}
	If $K$ is a compact subset of $U$, $f \in \SWSob q (V,Y)$, and $f(x) =
	0$ for $\| V \| + \| \delta V \|$ almost all $x \in U \without K$,
	then $f \in \SWzero q (V,Y)$.
\end{remark}
\begin{remark} \label{remark:basic-zero-sobolev}
	Similarly to \ref{remark:strict_local_sobolev_space}, one obtains the
	following three basic properties.
	\begin{enumerate}
		\item \label{item:zero_strict_sobolev_space:product} If $1
		\leq r \leq \infty$, $1 \leq s \leq \infty$, $1/r+1/s=1/q$, $f
		\in \SWSob{r} ( V, Y)$, and $g \in \SWzero{s} (V) $, then $gf
		\in \SWzero{q} (V,Y)$.
		\item \label{item:zero_strict_sobolev_space:composition} If $f
		\in \SWzero{q} ( V, Y )$, $Z$ is a finite dimensional normed
		vectorspace, and $g : Y \to Z$ is of class $\class{1}$ with
		$\Lip g < \infty$ and $g(0)=0$, then $g \circ f \in \SWzero{q}
		( V, Z )$.
		\item \label{item:zero_strict_sobolev_space:truncation} If $f
		\in \SWzero{q} (V)$ and $q < \infty$, then $\{ f^+, f^-, |f|
		\} \subset \SWzero{q} (V)$.
	\end{enumerate}
\end{remark}
\begin{remark} \label{remark:density-in-SWzero}
	If $q < \infty$, then $\mathscr{D} (U,Y)$ is $\SWnorm qV\cdot| \SWzero
	q(V,Y)$ dense in $\SWzero q (V,Y)$ and $\mathscr{D} (U,\rel) \cap \{ g
	\with g \geq 0 \}$ is $\SWnorm qV\cdot| \SWzero q(V,\rel)$ dense in
	$\SWzero q (V) \cap \{ f \with f \geq 0 \}$ by
	\ref{corollary:approximation_lip}, \ref{remark:approximation_lip}, and
	\cite[8.7]{snulmenn:tv.v2}.
\end{remark}
\begin{remark} \label{remark:zero_sobolev_euclid_space}
	If $U = \rel^\adim$ and $q < \infty$, then $\SWzero q (V,Y) = \SWSob q
	(V,Y)$ by \ref{remark:density_sobolev_space}.
\end{remark}
\begin{remark} \label{remark:zero_extension}
	If $f : \spt \| V \| \to Y$ is continuous and $f \in \SWzero \infty
	(V,Y)$, then
	\begin{equation*}
		\text{$\{ x \with |f(x)| \geq t \}$ is compact whenever $0 < t
		< \infty$};
	\end{equation*}
	in fact, this is trivial if $f$ has compact support and the asserted
	condition is closed under uniform convergence.
\end{remark}
\begin{remark} \label{remark:zero_finite_measure}
	If $f \in \SWzero q (V,Y)$, then
	\begin{equation*}
		( \| V \| + \| \delta V \| ) ( \{ x \with |f(x)| \geq t \} ) <
		\infty \quad \text{whenever $0 < t < \infty$}
	\end{equation*}
	by \ref{remark:continuity_w_infty} and \ref{remark:zero_extension} if
	$q = \infty$ and trivially else.
\end{remark}
\begin{remark} \label{remark:quotient-zero-sobolev-space}
	In view of \ref{miniremark:lcs-quotient}, the quotient space
	\begin{equation*}
		Q = \SWzero q (V,Y ) \Big / \big ( \SWzero q (V,Y) \cap \{ f
		\with \SWnorm qVf = 0 \} \big )
	\end{equation*}
	is a Banach space normed by $\SWnorm qV\cdot \circ \pi^{-1}$, where $
	\pi : \SWzero q (V,Y) \to Q$ denotes the canonical projection. If $1 <
	q < \infty$, then $Q$ is reflexive by
	\ref{example:quotient-lp-spaces}, \ref{remark:zero_sobolev_space},
	and \cite[\printRoman 2.3.23]{MR0117523}.
\end{remark}
\begin{comment}
	Next, the link between the two realisations, for Sobolev functions and
	generalised weakly differentiable functions, of the concept of ``zero
	boundary values'' will be established. The proof uses an
	approximation procedure and relies on basic properties of generalised
	weakly differentiable functions and the corresponding concept of
	``zero boundary values''.
\end{comment}
\begin{theorem} \label{thm:zero_implies_zero}
	Suppose $\vdim, \adim \in \nat$, $\vdim \leq \adim$, $U$ is an open
	subset of $\rel^\adim$, $V \in \RVar_\vdim ( U )$, $\| \delta V \|$ is
	a Radon measure, $1 \leq q \leq \infty$, $Y$ is a finite dimensional
	normed vectorspace, and $f \in \SWzero q (V,Y)$.

	Then $|f| \in \trunc_{\Bdry U} (V)$.
\end{theorem}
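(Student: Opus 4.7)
The plan is to approximate $f$ by its defining sequence of compactly supported Lipschitzian functions $g_i$, pass to the absolute values $h_i = |g_i|$ which obviously belong to $\trunc_{\Bdry U}(V)$, and then invoke the closedness of this class under weak convergence provided by \ref{corollary:closedness_tg}.

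By \ref{def:strict-sobolev-space-zero-boundary-values}, select a sequence $g_i : U \to Y$ with $\Lip g_i < \infty$ and $\spt g_i$ compact such that $\SWnorm qV{g_i-f} \to 0$ as $i \to \infty$, and set $h_i = |g_i|$.  Each $h_i$ is a nonnegative Lipschitzian function with compact support in $U$, so $h_i \in \trunc_{\Bdry U}(V)$ directly from the definition recalled in \cite[9.1]{snulmenn:tv.v2}.  A uniform $\Lp q$ bound on $\derivative V{h_i}$ is then obtained via the pointwise $\| V \|$ almost everywhere inequality $|\derivative V{h_i}(x)| \leq |\derivative V{g_i}(x)|$; since the norm function on $Y$ is merely Lipschitzian, the cleanest derivation approximates it by a sequence $\phi_\varepsilon \in \mathscr{E}(Y,\rel)$ with $\Lip \phi_\varepsilon \leq 1$ converging uniformly to $|\cdot|$, applies the composition rule of \cite[8.12]{snulmenn:tv.v2} to $\phi_\varepsilon \circ g_i$, and passes $\varepsilon \to 0$ using stability of $\trunc (V,\rel)$ under such approximations from \cite[8.13, 8.15]{snulmenn:tv.v2}.

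The trivial inequality $||g_i|-|f|| \leq |g_i-f|$ then gives the strong convergence $h_i \to |f|$ in $\Lp q(\|V\|+\|\delta V\|,\rel)$, which in turn yields strong $\Lp 1$ convergence on compacts and convergence in $\|V\|$ and $\|\delta V\|$ measure on each compact set, and in particular in the $(\|\delta V\|-\|\delta V\|_{\|V\|}) \restrict K$ measure demanded by \ref{corollary:closedness_tg}.  The derivatives $\derivative V{h_i}$ are bounded in $\Lploc q(\|V\|, \Hom(\rel^\adim,\rel))$ by the Cauchy (hence bounded) sequence $\derivative V{g_i}$; for $1 < q < \infty$ reflexivity, for $q = \infty$ weak-$\ast$ compactness, and for $q = 1$ the Dunford--Pettis criterion applied to the uniformly integrable sequence inherited from the Cauchy property of $\derivative V{g_i}$ (see \cite[\printRoman{4}.8.9--\printRoman{4}.8.11]{MR0117523}) each furnish a subsequence $\derivative V{h_i}$ converging weakly in $\Lploc 1(\|V\|, \Hom(\rel^\adim,\rel))$ to some $F$.

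After this extraction, every hypothesis of \ref{corollary:closedness_tg} with $G = \Bdry U$ and $B = \varnothing$ is in force, and the corollary yields $|f| \in \trunc_{\Bdry U}(V)$ as required.  The main technical obstacle I foresee is precisely the derivative bound for the non-$C^1$ composition $|g_i|$: the composition rule \cite[8.12]{snulmenn:tv.v2} is formulated for $\mathscr{E}$-class post-compositions, so the argument must be routed through the smooth approximation $\phi_\varepsilon$ described above.  Once this point is dispatched, the remaining steps are standard extraction and a direct application of \ref{corollary:closedness_tg}.
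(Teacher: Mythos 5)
Your overall strategy -- approximate $f$ by compactly supported Lipschitzian $g_i$, pass to $h_i=|g_i|\in\trunc_{\Bdry U}(V)$, and invoke a closedness result for $\trunc_{\Bdry U}(V)$ -- is reasonable and differs from the paper, which instead smooths the norm function on the target (taking proper $C^1$ functions $g_i$ with $g_i\geq 0$, $\Lip g_i\leq 1$, $g_i\to|\cdot|$ uniformly) and applies the closedness results \cite[9.13, 9.14]{snulmenn:tv.v2} twice, first to show $g\circ f\in\trunc_{\Bdry U}(V)$ for such $g$ and then to pass from $g_i\circ f$ to $|f|$. However, your proof has a genuine gap at its final step: Corollary \ref{corollary:closedness_tg} is stated only for $\vdim$, $\adim$, $U$, $V$ as in \ref{miniremark:minimum_conditions}, which includes the lower density bound $\density^\vdim(\|V\|,x)\geq 1$ for $\|V\|$ almost all $x$. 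Theorem \ref{thm:zero_implies_zero} makes no such assumption, and the density bound is not cosmetic: \ref{corollary:closedness_tg} rests on the Rellich-type theorem \ref{thm:rellich-in-measure}, whose proof runs through the Sobolev--Poincar{\'e} inequality \cite[10.1]{snulmenn:tv.v2} and Lemma \ref{lemma:average}, both of which need $\density^\vdim(\|V\|,\cdot)\geq 1$. A second, related problem is that \ref{corollary:closedness_tg} with $G=\Bdry U$, $B=\varnothing$ demands $\|\delta V\|(U\cap K)<\infty$ and $\tint{K}{}f+|F|\ud\|V\|<\infty$ for \emph{every} compact $K\subset\rel^\adim$, i.e.\ uniformly up to $\Bdry U$; since $\|V\|$ and $\|\delta V\|$ are only Radon over $U$, their mass may accumulate at $\Bdry U$ and these hypotheses do not follow from $f\in\SWzero q(V,Y)$ in general.

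The repair is to note that you do not need the weak-convergence machinery at all: you have \emph{strong} convergence $h_i\to|f|$ in $\Lp q(\|V\|+\|\delta V\|)$, hence convergence in measure, and the pointwise bound $\|\derivative V{h_i}\|\leq\|\derivative V{g_i}\|$ gives local $\Lp 1(\|V\|)$ control of the derivatives. This is exactly the setting of the elementary closedness result \cite[9.13]{snulmenn:tv.v2} for $\trunc_{\Bdry U}(V)$ used by the paper, which does not require the density hypothesis. To apply it you must first know that the limit $|f|$ already lies in $\trunc(V)$ with the expected derivative (available from \ref{remark:strict_local_sobolev_space_inclusion} and \cite[8.16]{snulmenn:tv.v2}) and verify the level-set finiteness $(\|V\|+\|\delta V\|)(\{x\with|f(x)|\geq t\})<\infty$ for $t>0$, which is \ref{remark:zero_finite_measure} and is the point where membership in $\SWzero q$ rather than $\SWSob q$ enters. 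With \ref{corollary:closedness_tg} replaced by \cite[9.13]{snulmenn:tv.v2} and these two verifications added, your route goes through; your treatment of the non-smooth composition $|g_i|$ via smooth approximation of the norm is fine and is essentially the same device the paper uses, only applied on the other side of the limit.
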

\begin{proof}
	Firstly, it will be proven that \emph{if $g : Y \to \rel$ is a
	nonnegative, proper Lipschitzian function of class $1$ and $g(0)=0$,
	then $g \circ f \in \trunc_{\Bdry U} (V)$ and
	\begin{equation*}
		\derivative V{(g \circ f)} (x) = \Der g(f(x)) \circ
		\derivative Vf (x) \quad \text{for $\| V \|$ almost all $x$}.
	\end{equation*}}
	By \cite[8.12]{snulmenn:tv.v2}, $g \circ f \in \trunc (V)$ and the
	asserted formula holds. Observe that
	\begin{equation*}
		( \| V \| + \| \delta V \| ) ( \{ x \with g(f(x)) \geq z \} )
		< \infty \quad \text{for $0 < z < \infty$}
	\end{equation*}
	by \ref{remark:zero_finite_measure}. Choose Lipschitzian functions
	$f_i : U \to Y$ with compact support and
	\begin{equation*}
		\SWnorm q V {f-f_i} \to 0 \quad \text{as $i \to \infty$}.
	\end{equation*}
	Noting $g \circ f_i \in \trunc_{\Bdry U} (V)$ by
	\cite[9.2, 9.4]{snulmenn:tv.v2} and, if $q = 1$, then
	\begin{equation*}
		\tint{}{} | \derivative V{(g \circ f)} | \ud \| V \|
		< \infty, \quad \lim_{i \to \infty} \Lpnorm{\| V \|} 1
		{\derivative{V}{(g \circ f)} - \derivative V{(g \circ f_i)}} =
		0.
	\end{equation*}
	Now, applying \cite[9.13, 9.14]{snulmenn:tv.v2} with $f$ and $f_i$
	replaced by $g \circ f$ and $g \circ f_i$ yields the assertion.

	Secondly, there exist functions $g_i : Y \to \rel$ of class $1$ with
	\begin{equation*}
		g_i \geq 0, \quad \Lip g_i \leq 1, \quad \delta_i = \sup \big
		\{ \big |g_i(y)-|y| \big | \with y \in Y \big \} < \infty
	\end{equation*}
	for $i \in \nat$ and $\delta_i \to 0$ as $i \to \infty$. In
	particular, the maps $g_i$ are proper. Therefore one may require
	additionally $g_i(0)=0$ for $i \in \nat$. Notice that $|f| \in \trunc
	(V)$ by \ref{remark:strict_local_sobolev_space_inclusion} and
	\cite[8.16]{snulmenn:tv.v2} and
	\begin{equation*}
		\tint{A \cap \{ x \with g_i(f(x)) \geq
		z \}}{} | \derivative V{(g_i \circ f)} | \ud \| V \|
		\leq \tint{A \cap \{ x \with |f(x)| \geq c \}}{} \|
		\derivative Vf \| \ud \| V \| < \infty
	\end{equation*}
	whenever $0 < c < z < \infty$, $\delta_i \leq z-c$, and $A$ is $\| V
	\|$ measurable by \ref{remark:zero_finite_measure} and H{\"o}lder's
	inequality. In view \ref{remark:zero_finite_measure} and
	\cite[2.4.11]{MR41:1976}, the assertion of the preceding paragraph
	allows to apply \cite[9.13]{snulmenn:tv.v2} with $G$, $f$, and $f_i$
	replaced by $\Bdry U$, $|f|$, and $g_i \circ f$ to obtain the
	conclusion.
\end{proof}
\begin{comment}
	The Sobolev inequality now follows immediately.
\end{comment}
\begin{corollary} \label{corollary:sob_poin_summary}
	Suppose $\vdim$, $\adim$, $U$, and $V$ are as in
	\ref{miniremark:minimum_conditions}, $1 \leq q \leq \infty$, $Y$ is a
	finite dimensional normed vectorspace, $f \in \SWzero q ( V,Y )$, and
	\begin{equation*}
		\beta = \infty \quad \text{if $\vdim = 1$}, \qquad \beta =
		\vdim/(\vdim-1) \quad \text{if $\vdim > 1$}.
	\end{equation*}

	Then there holds
	\begin{equation*}
		\Lpnorm{\| V \|}{\beta}{f} \leq
		\Gamma_{\textup{\cite[10.1]{snulmenn:tv.v2}}} ( \adim ) \big (
		\Lpnorm{\| V \|}{1}{ \derivative{V}{f} } + \| \delta V \|(f)
		\big ).
	\end{equation*}
\end{corollary}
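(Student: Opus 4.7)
The plan is to reduce everything to the Sobolev Poincaré inequality [snulmenn:tv.v2, 10.1(2)] for generalised $V$ weakly differentiable functions with ``zero boundary values'' and apply it to the scalar nonnegative function $|f|$. The bridge is the preceding Theorem \ref{thm:zero_implies_zero}, which directly yields $|f| \in \trunc_{\Bdry U}(V)$ from $f \in \SWzero q(V,Y)$. This is the only nontrivial ingredient that is specific to the present paper; once it is available the corollary should follow essentially without further work.

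More concretely, first I would invoke Theorem \ref{thm:zero_implies_zero} to place $|f|$ in $\trunc_{\Bdry U}(V)$. Next, I would verify the pointwise inequality $\|\derivative{V}{|f|}(x)\| \leq \|\derivative{V}{f}(x)\|$ for $\|V\|$ almost all $x$, by composing $f$ with $C^1$ approximants $g_i : Y \to \rel$ of the norm satisfying $\Lip g_i \leq 1$ and $g_i(0)=0$ (as constructed in the proof of \ref{thm:zero_implies_zero}), applying the chain rule from \cite[8.12]{snulmenn:tv.v2} to $g_i \circ f$, and passing to the limit. Then the required Sobolev Poincaré inequality [snulmenn:tv.v2, 10.1(2)], applied to $|f|$ with the dimensional constant $\Gamma_{\textup{\cite[10.1]{snulmenn:tv.v2}}}(\adim)$, produces
\begin{equation*}
        \Lpnorm{\|V\|}{\beta}{|f|} \leq \Gamma_{\textup{\cite[10.1]{snulmenn:tv.v2}}}(\adim) \bigl( \Lpnorm{\|V\|}{1}{\derivative{V}{|f|}} + \|\delta V\|(|f|) \bigr),
\end{equation*}
and the conclusion follows from $\Lpnorm{\|V\|}{\beta}{f} = \Lpnorm{\|V\|}{\beta}{|f|}$ together with the pointwise derivative bound.

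I do not anticipate a significant obstacle: the real work has been absorbed into Theorem \ref{thm:zero_implies_zero} (whose proof required the approximation by proper Lipschitz $C^1$ functions vanishing at the origin, so that \cite[9.13, 9.14]{snulmenn:tv.v2} could be applied) and into the Sobolev Poincaré estimate of the earlier paper. The only minor point of care is ensuring that the summability hypotheses of [snulmenn:tv.v2, 10.1(2)] are met, which is immediate from $f \in \SWSob q(V,Y) \subset \SWSob 1(V,Y) \subset \Lp 1(\|V\|+\|\delta V\|,Y)$ combined with $\derivative{V}{f} \in \Lp 1(\|V\|, \Hom(\rel^\adim,Y))$ and the chain-rule bound on $\derivative{V}{|f|}$.
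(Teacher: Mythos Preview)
Your approach is essentially the same as the paper's: reduce to the scalar nonnegative function $|f|$, use Theorem~\ref{thm:zero_implies_zero} to place $|f|$ in $\trunc_{\Bdry U}(V)$, control $\derivative{V}{|f|}$ by $\derivative{V}{f}$, and invoke \cite[10.1\,(2a)]{snulmenn:tv.v2}. The paper compresses all of this into a single sentence, citing \cite[8.16]{snulmenn:tv.v2} for the derivative bound on $|f|$ (so your approximation argument with the $g_i$ is simply a re-derivation of that fact) and citing \ref{remark:zero_finite_measure} for the hypothesis needed to apply \cite[10.1\,(2a)]{snulmenn:tv.v2}.

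One small correction: your justification of the ``summability hypotheses'' via an inclusion $\SWSob q(V,Y) \subset \SWSob 1(V,Y)$ is not valid in general, since $\|V\|(U)$ may be infinite and then $\Lp q$ need not embed into $\Lp 1$. What \cite[10.1\,(2a)]{snulmenn:tv.v2} actually requires here is that the superlevel sets of $|f|$ have finite $\|V\|+\|\delta V\|$ measure, and that is exactly what \ref{remark:zero_finite_measure} supplies for $f \in \SWzero q(V,Y)$. With that replacement your argument goes through; if the right-hand side of the claimed inequality is infinite there is nothing to prove anyway.
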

\begin{proof}
	In view of \ref{remark:zero_finite_measure} and
	\ref{thm:zero_implies_zero}, the conclusion is a consequence of
	\cite[8.16, 10.1\,(2a)]{snulmenn:tv.v2}.
\end{proof}
\section{Geodesic distance}
In this section and in the following section, varifolds satisfying a
dimensionally critical summability condition on the mean curvature and a
lower bound on their densities as described in \ref{miniremark:situation} with
$p = \vdim$ will be investigated. Here, the properties of the geodesic
distance in the support of the weight measure of such varifolds are
studied.  Since connected components of this support are relatively open
by \cite[6.14]{snulmenn:tv.v2}, one may assume for this purpose that the
support of the weight measure is connected, see \ref{remark:connectedness}.
Moreover, it is known from \cite[14.2]{snulmenn:tv.v2} that in this case the
geodesic distance between any two points is finite.

In the present section, it is established that -- under the previously
described hypotheses -- the geodesic distance is a continuous function and
gives rise to a local Sobolev function with bounded generalised weak
derivative, see \ref{thm:intrinsic_metric}. Moreover, an example is
constructed that shows that this function need not be locally H{\"o}lder
continuous with respect to any exponent, see \ref{example:geodesic_distance}.
This example also yields that the embedding result of local Sobolev functions
into continuous functions which will be obtained in
\ref{thm:sob_continuous_emb} is sharp, see \ref{remark:sob_continuous_emb}.

The proof of the properties of the geodesic distance consists of a refinement
of the techniques used in \cite[14.2]{snulmenn:tv.v2}. In particular, it
relies as well on the oscillation estimates for continuous generalised weakly
differentiable functions obtained in \cite[13.1]{snulmenn:tv.v2}.
\begin{comment}
	Firstly, the condition on the varifolds will be formulated in which,
	usually, the case $p = \vdim$ will be considered.
\end{comment}
\begin{miniremark} \label{miniremark:situation}
	Suppose $\vdim, \adim \in \nat$, $\vdim \leq \adim$, $1 \leq p \leq
	\infty$, $U$ is an open subset of $\rel^\adim$, $V \in \Var_\vdim
	(U)$, $\| \delta V \|$ is a Radon measure, $\density^\vdim ( \| V \|,
	x ) \geq 1$ for $\| V \|$ almost all $x$. If $p > 1$, then suppose
	additionally that $\mathbf{h} ( V, \cdot ) \in \Lploc{p} ( \| V \|,
	\rel^\adim )$ and
	\begin{equation*}
		(\delta V)(\theta) = - \tint{}{} \mathbf{h} (V,x) \bullet
		\theta (x) \ud \| V \| \, x \quad \text{for $\theta \in
		\mathscr{D} (U,\rel^\adim)$}.
	\end{equation*}
	In particular, $V$ is rectifiable by Allard
	\cite[5.5\,(1)]{MR0307015}. If $p = 1$ let $\psi = \| \delta V \|$. If
	$1 < p < \infty$ define a Radon measure $\psi$ over $U$ by
	\begin{equation*}
		\psi (A) = \tint A\ast | \mathbf{h} (V,x) |^p \ud \| V \| \, x
		\quad \text{for $A \subset U$}.
	\end{equation*}
\end{miniremark}
\begin{comment}
	Secondly, an observation concerning the differential of a function
	relative to a set will be made.
\end{comment}
\begin{miniremark} \label{miniremark:relative_differential}
	Suppose $X$ and $Y$ are a normed vectorspaces, $a \in A \subset X$,
	and $f : A \to Y$ is differentiable relative to $A$ at $a$.%
	\begin{footnote}
		{Suppose $X$ and $Y$ are normed vectorspaces, $A \subset X$,
		$a \in \Clos A$, and $f : A \to Y$. Then the \emph{tangent
		cone} of $A$ at $a$, denoted $\Tan(A,a)$, is the set of all $u
		\in X$ such that for every $\varepsilon > 0$ there exist $x
		\in A$ and $0 < r \in \rel$ with
		\begin{equation*}
			|x-a| < \varepsilon \quad \text{and} \quad | r(x-a)-u|
			< \varepsilon,
		\end{equation*}
		see \cite[3.1.21]{MR41:1976}. Moreover, $f$ is called
		\emph{differentiable relative to $A$ at $a$} if and only if
		there exist a neighbourhood $U$ of $a$ in $X$ and function $g
		: U \to Y$ such that
		\begin{equation*}
			g | A \cap U = f | A \cap U, \quad \text{$g$ is
			differentiable at $a$}.
		\end{equation*}
		In this case $\Der g(a)|\Tan(A,a)$ is determined by $f$ and
		$a$ and denoted $\Der f(a)$, see \cite[3.1.22]{MR41:1976}.}
	\end{footnote}
	Then
	\begin{equation*}
		\sup \{ | \Der f(a)(u) | \with \text{$u \in \Tan (A,a)$ and
		$|u|=1$} \} \leq \limsup_{x \to a} \frac{|f(x)-f(a)|}{|x-a|}
	\end{equation*}
	and equality holds if $\dim X < \infty$; in fact, by \cite[3.1.21,
	3.1.22]{MR41:1976} it is sufficient to note that if $x_i$ is a
	sequence in $A \without \{ a \}$, $u \in X$, $x_i \to a$ as $i \to
	\infty$, and $|x_i-a|^{-1}(x_i-a) \to u$ as $i \to \infty$ then
	\begin{equation*}
		|\Der f(a)(u)| = \lim_{i \to \infty}
		\frac{|f(x_i)-f(a)|}{|x_i-a|}.
	\end{equation*}
\end{miniremark}
\begin{comment}
	Notice that if $U$ is a proper subset of $\rel^\adim$ in
	\ref{miniremark:situation} then $\spt \| V \|$, which by definition is
	a subset of $U$, may be incomplete. This makes the study of the
	geodesic distance on $\spt \| V \|$ more delicate. Therefore,
	initially, the geodesic distance on the closure of $\spt \| V \|$ in
	$\rel^\adim$ will investigated before treating the general case by
	means of an exhaustion procedure.

	Some well known facts concerning geodesic distances are summarised
	below.
\end{comment}
\begin{miniremark} \label{miniremark:special_intrinsic_metric}
	Suppose $Y$ is a boundedly compact metric space metrised by $\tau$ and
	$X$ is a dense subset of $Y$. Whenever $0 < \delta \leq \infty$ one
	may define a pseudometric $\sigma_\delta : X \times X \to \overline
	\rel$ by letting $\sigma_\delta (a,x)$ denote the infimum of the set
	of numbers
	\begin{equation*}
		\sum_{i=1}^{j} \tau ( x_i,x_{i-1} )
	\end{equation*}
	corresponding to all finite sequences $x_0, x_1, \ldots, x_j$ in $X$
	with $x_0 = a$, $x_j = x$ and $\tau(x_i,x_{i-1}) \leq \delta$ for $i =
	1, \ldots, j$ and $j \in \nat$. Clearly, $\sigma_\infty = \tau | X
	\times X$ and $\sigma_\delta \geq \sigma_\varepsilon$ whenever $0 <
	\delta \leq \varepsilon$. Defining $\sigma : X \times X \to \overline
	\rel$ by
	\begin{equation*}
		\sigma (a,x) = \lim_{\delta \to 0+} \sigma_\delta (a,x) \quad
		\text{for $a,x \in X$},
	\end{equation*}
	one obtains a pseudometric over $X$ such that $\sigma (a,x)$ equals
	the infimum of the set of numbers $\mathbf{V}_{\inf I}^{\sup I} g$
	corresponding to continuous maps $g : \rel \to Y$ such that $g( \inf I
	) = a$ and $g ( \sup I ) = x$ for some nonempty compact subinterval
	$I$ of $\rel$, where the length of $g$ from $\inf I$ to $\sup I$ is
	computed with respect to $\tau$; in fact, if $\sigma (a,x) < \infty$,
	then there exists $g$ mapping $\rel$ into $Y$ satisfying
	\begin{equation*}
		g ( 0 ) = a, \quad g ( \sigma(a,x) ) = x, \quad \Lip g \leq 1.
	\end{equation*}
	These classical facts may be verified by means of \cite[2.5.16,
	2.10.21]{MR41:1976}.
\end{miniremark}
\begin{comment}
	For the auxiliary pseudometric $\sigma$ the desired result now follows
	by approximating by the pseudometrics $\sigma_\delta$ and passing to
	the limit with the help of the oscillation estimate
	\cite[13.1]{snulmenn:tv.v2} and Ascoli's theorem.
\end{comment}
\begin{lemma} \label{lemma:special_intrinsic_metric}
    Suppose $\vdim$, $\adim$, $p$, $U$, and $V$ are as in
    \ref{miniremark:situation}, $p = \vdim$, $X = \spt \| V \|$, $X$ is
    connected, $\sigma$ is associated to $X$ as in
    \ref{miniremark:special_intrinsic_metric} with $Y = \Clos X$, and $W \in
    \Var_{2\vdim} ( U \times U )$ satisfies
    \begin{equation*}
	W(k) = \tint{}{} k ((x_1,x_2),P_1\times P_2) \ud ( V \times V )\,
	((x_1,P_1),(x_2,P_2))
    \end{equation*}
    whenever $k \in \mathscr{K} ( U \times U, \grass{ \rel^\adim \times
    \rel^\adim}{2 \vdim} )$.

    Then the following two statements hold.
    \begin{enumerate}
	\item The function $\sigma$ is continuous, a metric on $X$, and
	belongs to $\SWloc q (W)$ for $1 \leq q < \infty$ with
	\begin{equation*}
	    | \langle (u_1,u_2), \derivative W \sigma (x_1,x_2) \rangle |
	    \leq | u_1 | + | u_2 | \quad \text{whenever $u_1,u_2 \in
	    \rel^\adim$}
	\end{equation*}
	for $\| W \|$ almost all $(x_1,x_2)$.
	\item \label{item:distance-sob:point} If $a \in X$, then
	$\sigma(a,\cdot) \in \SWloc q (V)$ for $1 \leq q < \infty$ and
	\begin{equation*}
	    | \derivative V{(\sigma(a,\cdot))}(x)| = 1 \quad \text{for $\| V \|$
	    almost all $x$}.
	\end{equation*}
    \end{enumerate}
\end{lemma}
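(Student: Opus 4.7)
The strategy is to approximate $\sigma(a, \cdot)$ by extensions of $\sigma_\delta(a, \cdot)$ and to pass to the limit as $\delta \to 0+$ via the oscillation estimate \cite[13.1]{snulmenn:tv.v2} and Ascoli-type compactness, refining the techniques of \cite[14.2]{snulmenn:tv.v2}. The first step is a local $1$-Lipschitz property of $\sigma_\delta$ coming directly from its definition: whenever $a,x,y \in X$ and $\tau(x,y) \leq \delta$, prepending or appending the single step $(x,y)$ to an admissible chain yields $|\sigma_\delta(a,x) - \sigma_\delta(a,y)| \leq \tau(x,y)$. Using Kirszbraun's theorem \cite[2.10.43]{MR41:1976} together with a partition of unity, extend $\sigma_\delta(a, \cdot)|X$ to a locally Lipschitz function $\tilde f_\delta : U \to \rel$ such that each $x \in X$ admits a neighbourhood on which $\tilde f_\delta$ has Lipschitz constant arbitrarily close to $1$ as the neighbourhood shrinks. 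By \ref{remark:strict_local_sobolev_space_inclusion}, $\tilde f_\delta \in \SWloc{q}(V)$ for every $1 \leq q < \infty$, with $\|\derivative V{\tilde f_\delta} (x)\| \leq 1$ for $\|V\|$ almost every $x$.

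For continuity of $\sigma$ on $X$, given $x_0 \in X$ and $\varepsilon > 0$, an application of \cite[13.1]{snulmenn:tv.v2} along the lines of the path construction in \cite[14.2]{snulmenn:tv.v2} yields a Euclidean neighbourhood of $x_0$ in which every point of $X$ is joined to $x_0$ by a path in $X$ of length less than $\varepsilon$, giving $\sigma(x_0, x) < \varepsilon$ there; continuity of $\sigma$ then follows from the triangle inequality. Combined with the pointwise monotone convergence $\sigma_\delta \nearrow \sigma$ and Dini's theorem, or alternatively by Ascoli applied directly to the equi-locally-Lipschitz family $\tilde f_\delta$, this yields locally uniform convergence $\tilde f_\delta \to \sigma(a, \cdot)$ on $X$.

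To obtain part~(2), I would extract a subsequence of $\tilde f_\delta$ whose $V$-weak derivatives converge weakly in $\Lploc{q}(\|V\|, \Hom(\rel^\adim, \rel))$ for $1 < q < \infty$; the uniform $L^\infty$ bound makes them weakly precompact there, and closedness of $\SWloc{q}(V)$ from \ref{remark:completeness_wqloc} yields $\sigma(a, \cdot) \in \SWloc{q}(V)$ with $|\derivative V{\sigma(a,\cdot)}| \leq 1$ $\|V\|$ almost everywhere, the case $q = 1$ following from the ambient $L^\infty$ bound. To upgrade this to equality, at a $\|V\|$-typical rectifiable $x$ with tangent plane $T = \Tan^\vdim(\|V\|, x)$, I would use \cite[13.1]{snulmenn:tv.v2} once more to exhibit a unit vector $u \in T$ and a sequence $y_n \to x$ in $X$ with $(y_n - x)/|y_n - x| \to u$ and $\sigma(x, y_n)/|y_n - x| \to 1$; combined with the triangle inequality and a near-geodesic from $a$ through $x$ to $y_n$, this would force the directional derivative of $\sigma(a, \cdot)$ at $x$ in direction $u$ to equal $1$.

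Finally, part~(1) follows by the same scheme applied to the product varifold $W$: extensions of $\sigma_\delta$ to $U \times U$ are $1$-Lipschitz in each variable separately at small scales, yielding in the limit the bound $|\langle (u_1,u_2), \derivative W{\sigma}(x_1,x_2)\rangle| \leq |u_1|+|u_2|$. That $\sigma$ is a metric is immediate from $\sigma(a, x) \geq \tau(a, x) > 0$ for $a \neq x$. The principal obstacles are the continuity of $\sigma$ and the lower bound $\geq 1$ on the derivative in part~(2); both rely on the full strength of \cite[13.1]{snulmenn:tv.v2} to compare the intrinsic distance with the Euclidean distance along tangent directions.
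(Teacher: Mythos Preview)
Your overall strategy matches the paper's: approximate $\sigma$ by the $\sigma_\delta$, exploit their local $1$-Lipschitz property to get uniform derivative bounds, use the oscillation estimate \cite[13.1]{snulmenn:tv.v2} for equicontinuity and Ascoli/Dini for locally uniform convergence, then pass to the limit via weak compactness. Two points deserve comment.

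First, a minor gap: weak convergence of $\derivative V{\tilde f_\delta}$ together with completeness of $\SWloc q(V)$ does not by itself place the limit in $\SWloc q(V)$, since that space is defined as a \emph{strong} closure. The paper closes this by invoking Mazur's lemma to obtain strongly convergent convex combinations; you should make that step explicit.

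Second, and more substantively, your argument for the lower bound $|\derivative V{(\sigma(a,\cdot))}(x)| \geq 1$ is muddled. You propose using \cite[13.1]{snulmenn:tv.v2} to produce a tangent direction $u$ and a sequence $y_n$ with $\sigma(x,y_n)/|y_n-x|\to 1$, then invoking a ``near-geodesic from $a$ through $x$ to $y_n$''. But the triangle inequality only gives $|\sigma(a,y_n)-\sigma(a,x)|\leq\sigma(x,y_n)$, and there is no reason a near-geodesic from $a$ should pass through your chosen $y_n$; conversely, if you start from a near-geodesic, you cannot prescribe $u$ in advance. The paper's argument is simpler and uses precisely the hypothesis that distinguishes this lemma from the subsequent theorem: because $Y=\Clos X$ is boundedly compact, \ref{miniremark:special_intrinsic_metric} furnishes an actual minimising geodesic $g:\rel\to Y$ with $g(0)=a$, $g(b)=x$, $\Lip g\leq 1$, and $\sigma(a,g(\upsilon))=\upsilon$ for $0\leq\upsilon\leq b$. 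Since $X$ is open in $\Clos X$, one has $g(\upsilon)\in X$ for $\upsilon$ near $b$, and then
\[
1\leq\Big(\limsup_{\chi\to x}\frac{|\sigma(a,\chi)-\sigma(a,x)|}{|\chi-x|}\Big)\Big(\limsup_{\upsilon\to b-}\frac{|g(\upsilon)-g(b)|}{|\upsilon-b|}\Big)
\]
together with \ref{miniremark:relative_differential} and \cite[11.2, 11.4\,(4)]{snulmenn:tv.v2} yields $|\derivative V{(\sigma(a,\cdot))}(x)|\geq 1$ directly. No separate appeal to \cite[13.1]{snulmenn:tv.v2} is needed for this step.
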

\begin{proof}
	Define a norm $\nu$ over $\rel^\adim \times \rel^\adim$ by $\nu
	(x_1,x_2) = | x_1 | + |x_2|$ for $x_1,x_2 \in \rel^\adim$. Quantities
	derived with $\nu$ replacing the norm associated to the inner product
	on $\rel^\adim \times \rel^\adim$ will be distinguished by the
	subscript $\nu$. Notice that $W$ satisfies the conditions of
	\ref{miniremark:situation} with $\vdim$, $\adim$, $p$, and $U$
	replaced by $2 \vdim$, $2 \adim$, $\vdim$, and $U \times U$ and
	\begin{equation*}
		\| \project P \|_\nu \leq 1 \quad \text{for $W$ almost all
		$(z,P)$}
	\end{equation*}
	by \cite[3.7\,(1)\,(2)\,(4)\,(5)\,(6)]{kol-men:decay.v2}.  Let
	$\sigma_\delta$ for $0 < \delta \leq \infty$ be defined as in
	\ref{miniremark:special_intrinsic_metric}.

	Notice that
	\begin{equation*}
		\sigma_\delta(a,x) \leq \sigma_\delta (\alpha,\chi) + \nu (
		(a,x) - ( \alpha, \chi ) )
	\end{equation*}
	whenever $a,x,\alpha,\chi \in X$ and $\sup \{ |a-\alpha|,|x-\chi| \}
	\leq \delta$. Since $X$ is connected and $\sigma_\delta (a,a) = 0$ for
	$a \in X$, it follows that $\sigma_\delta$ is a locally Lipschitzian
	real valued function with
	\begin{equation*}
		\Lip_\nu ( \sigma_\delta | A ) \leq 1 \quad \text{whenever $A
		\subset X \times X$ and $\diam A \leq \delta$},
	\end{equation*}
	in particular $\sigma_\delta \in \SWloc \infty (W)$ and $\sigma_\delta
	(a,\cdot) \in \SWloc \infty (V)$ with
	\begin{gather*}
		\| \derivative W\sigma_\delta (z) \|_\nu = \big \| ( \| W \|,
		2 \vdim ) \ap \Der \sigma_\delta (z) \big \|_\nu \leq 1 \quad
		\text{for $\| W\|$ almost all $z$}, \\
		| \derivative V{(\sigma_\delta(a,\cdot))} (x) | \leq 1 \quad
		\text{for $\| V \|$ almost all $x$}
	\end{gather*}
	for $a \in X$ by \cite[8.7]{snulmenn:tv.v2} in conjunction with
	\cite[3.2.16]{MR41:1976} and \ref{miniremark:relative_differential}.
	Since $\{ \sigma_\delta (a,\cdot) |K \with a \in X, \delta > 0 \}$ is
	an equicontinuous family of functions whenever $K$ is a compact subset
	of $X$ by \ref{remark:strict_local_sobolev_space_inclusion} and
	\cite[4.8\,(1), 13.1]{snulmenn:tv.v2} and $\sigma$ is real valued by
	\cite[14.2]{snulmenn:tv.v2}, one obtains that
	\begin{equation*}
		\sigma_\delta (a,\cdot) \uparrow \sigma(a,\cdot) \quad
		\text{locally uniformly as $\delta \to 0+$ for $a \in X$}.
	\end{equation*}
	by the Ascoli theorem, see \cite[7.14, 7.18]{MR0370454}. Therefore
	$\sigma(a,\cdot)$ is continuous for $a \in X$, hence $\sigma$ is
	continuous as $\sigma$ is a metric and
	\begin{equation*}
		\sigma_\delta \uparrow \sigma \quad \text{locally uniformly
		as $\delta \to 0+$}
	\end{equation*}
	by Dini's theorem, see \cite[Problem~7.E]{MR0370454}. Consequently,
	\ref{lemma:weak_closedness_tv} and \cite[8.14]{snulmenn:tv.v2} yield
	$\sigma \in \trunc (W)$ and $\sigma(a,\cdot) \in \trunc (V)$ with
	\begin{gather*}
		\| \derivative W \sigma (z) \|_\nu \leq 1 \quad \text{for $\|
		W \|$ almost all $z$}, \\
		| \derivative V {(\sigma(a,\cdot))} (x) | \leq 1 \quad
		\text{for $\| V \|$ almost all $x$}
	\end{gather*}
	for $a \in X$. From \ref{miniremark:prop_lp_spaces},
	\ref{lemma:weak_closedness_tv}, \cite[2.5.7\,(ii)]{MR41:1976}, and
	Alaoglu's theorem, see \cite[\printRoman{5}.4.2,
	\printRoman{5}.5.1]{MR0117523}, one obtains
	\begin{gather*}
		\tint{}{} \langle \theta, \derivative W{\sigma_\delta}
		\rangle \ud \| W \| \to \tint{}{} \langle \theta, \derivative
		W\sigma \rangle \ud \| W \| \quad \text{for $\theta \in \Lp 1
		( \| W \|, \rel^\adim \times \rel^\adim )$}, \\
		\tint{}{} \langle \theta, \derivative
		V{(\sigma_\delta(a,\cdot))} \rangle \ud \| V \| \to \tint{}{}
		\langle \theta, \derivative V{(\sigma(a,\cdot))} \rangle \ud
		\| V \| \quad \text{for $\theta \in \Lp 1 ( \| V \|,
		\rel^\adim )$}
	\end{gather*}
	as $\delta \to 0+$ for $a \in X$, so that Mazur's lemma, see
	\cite[\printRoman{5}.3.14]{MR0117523}, and
	\cite[2.5.7\,(i)]{MR41:1976} in fact yield $\sigma \in \SWloc q (W)$
	and $\sigma(a,\cdot) \in \SWloc q (V)$ for $a \in X$ and $1 \leq q <
	\infty$.

	Suppose $a \in X$. Then, by \cite[11.2, 11.4\,(4)]{snulmenn:tv.v2},
	$\| V \|$ almost all $x$ satisfy $x \in X$, $x \neq a$, and
	$\sigma(a,\cdot)$ is differentiable relative to $X$ at $x$ and
	$|\Der(\sigma(a,\cdot))(x)| = | \derivative V(\sigma(a,\cdot))(x)|$.
	Consider such $x$, abbreviate $b = \sigma (a,x)$, and choose $g$ as in
	\ref{miniremark:special_intrinsic_metric}. Then $\Upsilon = g^{-1}
	\lIm X \rIm$ is neighbourhood of $b$ and one observes that
	\begin{gather*}
		\sigma(a,g(\upsilon)) = \upsilon \quad \text{whenever
		$\upsilon \in \Upsilon$ and $0 \leq \upsilon \leq b$}, \\
		1 \leq \big ( \limsup_{\chi \to x} | \sigma
		(a,\chi)-\sigma(a,x)|/|\chi-x| \big ) \big ( \limsup_{\upsilon
		\to b-} |g(\upsilon)-g(b)|/|\upsilon-b| \big ).
	\end{gather*}
	Consequently, one infers $1 \leq | \Der (\sigma(a,\cdot))(x)|$ by
	\ref{miniremark:relative_differential}.
\end{proof}
\begin{comment}
	Next, the exhaustion procedure is prepared to treat the general case.
\end{comment}
\begin{miniremark} \label{miniremark:lcs_connected}
	Suppose $X$ is a connected, locally connected, locally compact,
	separable metric space. Then there exists a sequence of connected,
	open subsets $A_i$ of $X$ with compact closure and $\Clos A_i \subset
	A_{i+1}$ for $i \in \nat$ and $X = \bigcup_{i=1}^\infty A_i$; in fact,
	if $\Phi$ is a nonempty countable base of the topology of $X$
	consisting of nonempty connected open subsets of $X$ with compact
	closure, then one observes that there exists an enumeration $B_1, B_2,
	B_3, \ldots$ of $\Phi$ such that $B_{i+1} \cap \bigcup_{j=1}^i B_j
	\neq \varnothing$ for $i \in \nat$, hence one may inductively select a
	strictly increasing sequence of positive integers $j(i)$ such that
	\begin{equation*}
		A_i = \bigcup_{k=1}^{j(i)} B_k \quad \text{satisfies} \quad
		\Clos A_i \subset \bigcup_{k=1}^{j(i+1)} B_k \quad \text{for
		$i \in \nat$}.
	\end{equation*}
\end{miniremark}
\begin{miniremark} \label{miniremark:intrinsic_metric}
	Suppose $X$ is a metric space and $\varrho$ is the pseudometric on $X$
	defined by letting $\varrho (a,x)$ for $a,x \in X$ denote the infimum
	of the set of numbers
	\begin{equation*}
		\mathbf{V}_{\inf I}^{\sup I} g
	\end{equation*}
	corresponding to continuous maps $g : \rel \to X$ and nonempty compact
	subintervals $I$ of $\rel$ with $g ( \inf I ) = a$ and $g ( \sup I ) =
	x$. If $A_i$ form a sequence of open subsets of $X$ with compact
	closure and $\Clos A_i \subset A_{i+1}$ for $i \in \nat$ and $X =
	\bigcup_{i=1}^\infty A_i$, and $\varrho_i$ are the pseudometrics on
	$A_i$ such that $\varrho_i (a,x)$ for $a,x \in X$ equals the infimum
	of the set of numbers $\mathbf{V}_{\inf I}^{\sup I} g$ corresponding
	to continuous maps $g : \rel \to \Clos A_i$ and nonempty compact
	subintervals $I$ of $\rel$ such that $g ( \inf I ) = a$ and $g ( \sup
	I ) = x$, then $\varrho_i$ equals the metric constructed in
	\ref{miniremark:special_intrinsic_metric} under the name ``$\sigma$''
	with $X$ and $Y$ replaced by $A_i$ and $\Clos A_i$, $\varrho_{i+1}
	|A_i \times A_i \leq \varrho_i$ for $i \in \nat$ and
	\begin{equation*}
		\varrho_i (a,x) \to \varrho (a,x) \quad \text{as $i \to
		\infty$ for $a,x \in X$}.
	\end{equation*}
	Evidently, if $X$ is a dense subset of some boundedly compact metric
	space $Y$, then the pseudometric $\sigma$ constructed in
	\ref{miniremark:special_intrinsic_metric} satisfies $\sigma \leq
	\varrho$.
\end{miniremark}
\begin{comment}
	If $X$ is incomplete the infimum occurring in the definition of
	$\varrho (a,x)$ need not to be attained even if $\varrho (a,x) <
	\infty$. The following lemma serves as a substitute.
\end{comment}
\begin{lemma} \label{lemma:ex_mini_path}
	Suppose $Y$ is a boundedly compact metric space, $X \subset Y$,
	$\varrho$ is associated to $X$ as in
	\ref{miniremark:intrinsic_metric}, $\varrho$ is continuous, $a,x \in
	X$, and $b = \varrho(a,x) < \infty$.

	Then there exists a map $g : \rel \to Y$ satisfying
	\begin{gather*}
		g (0) = a, \quad g(b) = x, \quad \Lip g \leq 1, \\
		\varrho (a,g( \upsilon)) = \upsilon \quad \text{whenever $0
		\leq \upsilon \leq b$ and $g( \upsilon ) \in X$}.
	\end{gather*}
\end{lemma}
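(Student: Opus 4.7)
The approach is to extract a $1$-Lipschitz limit curve from a minimizing sequence of rectifiable paths in $X$ and then to transfer the arclength identity to the limit using the hypothesized continuity of $\varrho$.

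First, I would choose continuous maps $\gamma_k \colon [c_k, d_k] \to X$ satisfying $\gamma_k (c_k ) = a$ and $\gamma_k ( d_k ) = x$ whose total variations $L_k$ form a sequence with $L_k \searrow b$, as guaranteed by the definition of $\varrho$ in \ref{miniremark:intrinsic_metric}. Standard reparametrization by arclength (well defined because the arclength function is constant on precisely those subintervals on which $\gamma_k$ is constant) turns each $\gamma_k$ into a $1$-Lipschitz map $g_k \colon [0, L_k ] \to X$ with $g_k (0) = a$ and $g_k ( L_k ) = x$. I extend $g_k$ to a $1$-Lipschitz map $g_k \colon \rel \to X$ by $g_k ( \upsilon ) = a$ for $\upsilon \leq 0$ and $g_k ( \upsilon ) = x$ for $\upsilon \geq L_k$.

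Next, since each $g_k$ is $1$-Lipschitz with $g_k (0) = a$, for every $R > 0$ the image $g_k ([-R,R])$ is contained in $\cball aR$, and this closed ball is compact in the boundedly compact space $Y$. The Arzel\`a--Ascoli theorem together with a diagonal extraction will produce a subsequence, still denoted $g_k$, converging locally uniformly to a $1$-Lipschitz map $g \colon \rel \to Y$. Uniform convergence yields $g (0) = a$; and because $L_k \geq b$ with $L_k \to b$, one obtains $|g_k (b) - x | = | g_k (b) - g_k ( L_k ) | \leq L_k - b \to 0$, hence $g(b) = x$.

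For the arclength identity, I fix $\upsilon \in [0, b]$ with $g ( \upsilon ) \in X$. For each sufficiently large $k$ the restriction $g_k |_{[0, \upsilon]}$ is a path in $X$ from $a$ to $g_k ( \upsilon )$ of length at most $\upsilon$, giving $\varrho ( a, g_k ( \upsilon ) ) \leq \upsilon$; similarly $g_k |_{[\upsilon, L_k]}$ gives $\varrho ( g_k ( \upsilon ), x ) \leq L_k - \upsilon$. Since $g_k ( \upsilon ), g ( \upsilon ) \in X$ and $g_k ( \upsilon ) \to g ( \upsilon )$ in the $Y$-metric, the assumed continuity of $\varrho$ on $X \times X$ passes these inequalities to the limit as $\varrho ( a, g ( \upsilon ) ) \leq \upsilon$ and $\varrho ( g ( \upsilon ), x ) \leq b - \upsilon$. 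The triangle inequality
\begin{equation*}
    b = \varrho ( a, x ) \leq \varrho ( a, g ( \upsilon ) ) + \varrho ( g ( \upsilon ), x ) \leq \upsilon + ( b - \upsilon ) = b
\end{equation*}
then forces equality throughout, and in particular $\varrho ( a, g ( \upsilon ) ) = \upsilon$.

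The main delicate point is the invocation of continuity of $\varrho$: it must be read as continuity of $\varrho \colon X \times X \to \rel$ with respect to the $Y$-metric on $X$ (which is consistent with how ``continuity'' of $\sigma$ appears in \ref{lemma:special_intrinsic_metric}), since the only convergence of $g_k ( \upsilon )$ to $g ( \upsilon )$ available to us is in that topology; everything else is routine compactness and triangle-inequality manipulation.
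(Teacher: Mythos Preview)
Your proof is correct and follows essentially the same approach as the paper's: both take a minimising sequence of paths in $X$, reparametrise by arclength (the paper cites \cite[2.5.16]{MR41:1976}), extract a $1$-Lipschitz limit in $Y$ via Arzel\`a--Ascoli (the paper cites \cite[2.10.21]{MR41:1976}), and then use continuity of $\varrho$ together with the triangle inequality to obtain the arclength identity. Your treatment of the second inequality via $\varrho(g_k(\upsilon),x)\leq L_k-\upsilon$ is a cosmetic variant of the paper's $\varrho(g_i(\upsilon),g_i(b))\leq b-\upsilon$, and your remark about the meaning of ``continuity of $\varrho$'' is exactly right.
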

\begin{proof}
	For $i \in \nat$ choose continuous maps $g_i : \rel \to X$ and $0 \leq
	b_i < \infty$ such that $g_i(0) = a$, $g_i( b_i ) = x$, and
	$\mathbf{V}_0^{b_i} g_i \to b$ as $i \to \infty$. In view of
	\cite[2.5.16]{MR41:1976} one may require additionally $\Lip g_i \leq
	1$ and $b_i = \mathbf{V}_0^{b_i} g_i$, hence $\mathbf{V}_y^\upsilon
	g_i = \upsilon-y$ for $0 \leq y \leq \upsilon \leq b_i$. Possibly
	passing to a subsequence, one constructs a map $g : \rel \to Y$ as the
	locally uniform limit of $g_i$ as $i \to \infty$ with
	\begin{equation*}
		g(0) = a, \quad g( b ) = x, \quad \Lip g \leq 1,
	\end{equation*}
	see \cite[2.10.21]{MR41:1976}. If $0 \leq \upsilon \leq b$ and
	$g(\upsilon) \in X$, then
	\begin{gather*}
		\varrho (a,g(\upsilon)) = \lim_{i \to \infty} \varrho
		(a,g_i(\upsilon)) \leq \liminf_{i \to \infty}
		\mathbf{V}_0^\upsilon g_i = \upsilon, \\
		\varrho (g(\upsilon),x) = \lim_{i \to \infty} \varrho
		(g_i(\upsilon),g_i(b)) \leq \liminf_{i \to \infty}
		\mathbf{V}_\upsilon^b g_i = b-\upsilon,
	\end{gather*}
	hence $\varrho (a,g(\upsilon)) = \upsilon$.
\end{proof}
\begin{comment}
	Now, the general case may be treated using the same pattern of proof
	as in \ref{lemma:special_intrinsic_metric}.
\end{comment}
\begin{theorem} \label{thm:intrinsic_metric}
	Suppose $\vdim$, $\adim$, $p$, $U$, and $V$ are as in
	\ref{miniremark:situation}, $p = \vdim$, $X = \spt \| V \|$, $X$ is
	connected, $\varrho$ is associated to $X$ as in
	\ref{miniremark:intrinsic_metric}, and $W \in \Var_{2\vdim} ( U \times
	U )$ satisfies
   	\begin{equation*}
		W(k) = \tint{}{} k ((x_1,x_2),P_1\times P_2) \ud ( V \times V
		)\, ((x_1,P_1),(x_2,P_2))
	\end{equation*}
	whenever $k \in \mathscr{K} ( U \times U, \grass{ \rel^\adim \times
	\rel^\adim}{2 \vdim} )$.

	Then the following two statements hold.
	\begin{enumerate}
		\item \label{item:instrinsic_metric:product} The function
		$\varrho$ is continuous, a metric on $X$, and belongs to
		$\SWloc q (W)$ for $1 \leq q < \infty$ with
		\begin{equation*}
			| \langle (u_1,u_2), \derivative W \varrho (x_1,x_2)
			\rangle | \leq | u_1 | + | u_2 | \quad \text{whenever
			$u_1,u_2 \in \rel^\adim$}
		\end{equation*}
		for $\| W \|$ almost all $(x_1,x_2)$.
		\item \label{item:intrinsic_metric:point} If $a \in X$, then
		$\varrho(a,\cdot) \in \SWloc q (V)$ for $1 \leq q < \infty$
		and
		\begin{equation*}
			| \derivative V{(\varrho(a,\cdot))}(x)| = 1 \quad
			\text{for $\| V \|$ almost all $x$}.
		\end{equation*}
	\end{enumerate}
\end{theorem}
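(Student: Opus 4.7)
The plan is to reduce to \ref{lemma:special_intrinsic_metric} by localising on an exhaustion of $X$ and then following its limit argument. I first observe that $X = \spt \| V \|$ is connected, locally compact, separable, and locally path-connected by \cite[14.2]{snulmenn:tv.v2} and the oscillation estimate \cite[13.1]{snulmenn:tv.v2}. Then \ref{miniremark:lcs_connected} yields connected, relatively open subsets $A_i$ of $X$ with $\Clos A_i$ compact, $\Clos A_i \subset A_{i+1}$, and $X = \bigcup_i A_i$. For each $i$ I would choose an open $\tilde B_i \subset \rel^\adim$ with $\Clos \tilde B_i$ compact in $U$ and $\tilde B_i \cap X = A_i$, which is possible by intersecting an open $\rel^\adim$-witness to $A_i$ being relatively open in $X$ with a bounded open neighbourhood of $\Clos A_i$ whose closure lies in $U$. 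The restriction $V_i = V \restrict ( \tilde B_i \times \grass{\adim}{\vdim} )$, viewed as an $\vdim$ dimensional rectifiable varifold in $\tilde B_i$, inherits the conditions of \ref{miniremark:situation} with $p = \vdim$ and has $\spt \| V_i \| = A_i$. Applying \ref{lemma:special_intrinsic_metric} to $V_i$ and invoking the identification between chain and path metrics in \ref{miniremark:intrinsic_metric}, one obtains that $\varrho_i$ and $\varrho_i(a,\cdot)$ belong to the appropriate local Sobolev spaces (relative to the product varifold derived from $V_i$ and to $V_i$ respectively) with the analogous derivative bounds.

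For the passage to the limit, \ref{miniremark:intrinsic_metric} yields $\varrho_{i+1} \leq \varrho_i$ on $A_i \times A_i$ together with pointwise convergence $\varrho_i \to \varrho$; combined with continuity of each $\varrho_i$ this upgrades, by Dini's theorem, to locally uniform convergence on $X \times X$, whence $\varrho$ is continuous and manifestly a metric on $X$ (the triangle inequality and the separation property $\varrho \geq | \cdot |$ being inherited from the $\varrho_i$). The closing steps of the proof of \ref{lemma:special_intrinsic_metric} then carry over: via \ref{lemma:weak_closedness_tv}, \ref{miniremark:prop_lp_spaces}, Alaoglu's theorem (\cite[\printRoman{5}.4.2, \printRoman{5}.5.1]{MR0117523}), Mazur's lemma (\cite[\printRoman{5}.3.14]{MR0117523}), and \cite[2.5.7\,(i)]{MR41:1976}, one obtains $\varrho \in \SWloc q (W)$ and $\varrho(a,\cdot) \in \SWloc q (V)$ for $1 \leq q < \infty$ together with the asserted upper bounds on the generalised weak derivatives.

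Finally, upgrading to the equality $| \derivative V{(\varrho(a,\cdot))}(x) | = 1$ at $\| V \|$-almost every $x$ reduces, by \cite[11.2, 11.4\,(4)]{snulmenn:tv.v2} and \ref{miniremark:relative_differential}, to producing for such $x \neq a$ a sequence $\chi_j \in X$ with $\chi_j \to x$ and $| \varrho(a,x) - \varrho(a,\chi_j) | \geq | x - \chi_j |$. Such a sequence is furnished by \ref{lemma:ex_mini_path}: letting $b = \varrho(a,x)$ and choosing the corresponding map $g : \rel \to \Clos_{\rel^\adim} X$ with $g(0) = a$, $g(b) = x$, and $\Lip g \leq 1$, the fact that $X$ is closed in $U$ and $g(b) = x \in U$ forces $g(\upsilon) \in U \cap \Clos_{\rel^\adim} X = X$ for all $\upsilon$ sufficiently close to $b$; the lemma then yields $\varrho(a,g(\upsilon)) = \upsilon$ for such $\upsilon$, so that $| \varrho(a,x) - \varrho(a,g(\upsilon)) | = b - \upsilon \geq | x - g(\upsilon) |$. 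The main technical hurdle is the localisation in the first step, specifically arranging that \ref{lemma:special_intrinsic_metric} may be applied to a varifold whose support is exactly $A_i$; once that is set up, the remainder is a routine limiting argument modelled on the proof of that lemma.
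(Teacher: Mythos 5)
Your overall strategy coincides with the paper's: exhaust $X$ by connected relatively open sets $A_i$ via \ref{miniremark:lcs_connected}, restrict $V$ to an open subset of $U$ meeting $X$ exactly in $A_i$ so that \ref{lemma:special_intrinsic_metric} applies (the paper takes $U_i = U \without (X \without A_i)$ rather than a set with compact closure, but your variant works equally well), pass to the limit using \ref{lemma:weak_closedness_tv}, Alaoglu and Mazur, and obtain the lower bound $|\derivative V{(\varrho(a,\cdot))}| \geq 1$ from \ref{lemma:ex_mini_path} together with \cite[11.2, 11.4\,(4)]{snulmenn:tv.v2} and \ref{miniremark:relative_differential}. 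That last step, including the observation that $b \in \Int g^{-1}\lIm X \rIm$ because $X = U \cap \Clos X$, is exactly the paper's argument.

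There is, however, one genuine gap in your limit passage: you derive the continuity of $\varrho$ from Dini's theorem, but Dini's theorem \emph{presupposes} continuity of the limit function; a decreasing sequence of continuous functions converging pointwise may well have a discontinuous (merely upper semicontinuous) limit. Continuity of $\varrho$ is precisely the nontrivial analytic content here, and it is also needed as a hypothesis of \ref{lemma:ex_mini_path}, so the gap propagates into your final step. The paper closes it by first showing that, for fixed $a$ and compact $K$, the family $\{\varrho_i(a,\cdot)|K\}$ is equicontinuous -- this is where the oscillation estimate \cite[4.8\,(1), 13.1]{snulmenn:tv.v2} and hence the density and critical mean curvature hypotheses enter -- so that Ascoli's theorem, combined with monotonicity and the finiteness of $\varrho$ from \cite[14.2]{snulmenn:tv.v2}, yields locally uniform convergence of $\varrho_i(a,\cdot)$ and continuity of $\varrho(a,\cdot)$; joint continuity of $\varrho$ then follows because $\varrho$ is a metric, and only \emph{then} does Dini give locally uniform convergence of the two-variable functions. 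Your enumeration of the ``closing steps'' of \ref{lemma:special_intrinsic_metric} omits exactly this equicontinuity--Ascoli stage, so as written the continuity claim is unsupported. (A minor further point: local connectedness of $X$, which \ref{miniremark:lcs_connected} requires, is what \cite[6.14\,(3)]{snulmenn:tv.v2} provides; \cite[14.2]{snulmenn:tv.v2} is about finiteness of the geodesic distance rather than local connectedness.)
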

\begin{proof}
	Firstly, define a norm $\nu$ over $\rel^\adim \times \rel^\adim$ by
	$\nu (x_1,x_2) = | x_1 | + |x_2|$ for $x_1,x_2 \in \rel^\adim$.
	Quantities derived with $\nu$ replacing the norm associated to the
	inner product on $\rel^\adim \times \rel^\adim$ will be distinguished
	by the subscript $\nu$.

	Secondly, observe that \cite[6.14\,(3)]{snulmenn:tv.v2} implies that
	$X$ is locally connected, hence one may choose subsets $A_i$ of $X$ as
	in \ref{miniremark:lcs_connected} and apply
	\ref{miniremark:intrinsic_metric} to obtain $\varrho_i$.  Defining
	$U_i = U \without ( X \without A_i )$, one obtains an increasing
	sequence of open subsets $U_i$ of $U$ such that $A_i = U_i \cap X$ for
	$i \in \nat$ and $U = \bigcup_{i=1}^\infty U_i$. Let
	\begin{equation*}
		V_i = V | \mathbf{2}^{U_i \times \grass \adim \vdim}, \quad
		W_i = W | \mathbf{2}^{(U_i \times U_i) \times \grass 
		{\rel^\adim \times \rel^\adim}{2 \vdim }}
	\end{equation*}
	for $i \in \nat$. Applying
	\ref{lemma:special_intrinsic_metric} with $U$, $V$, $X$, and $W$
	replaced by $U_i$, $V_i$, $A_i$, and $W_i$ yields that $\varrho_i$ are
	continuous metrics on $A_i$ such that
	\begin{gather*}
		\varrho_i \in \SWloc q ( W_i ) \quad \text{with} \quad \|
		\derivative {W_i} \varrho_i (z) \|_\nu \leq 1 \quad \text{for
		$\| W_i \|$ almost all $z$}, \\
		\varrho_i (a,\cdot) \in \SWloc q ( V_i ) \quad \text{with}
		\quad | \derivative {V_i} {(\varrho_i(a,\cdot))} (x) | \leq 1
		\quad \text{for $\| V_i \|$ almost all $x$}
	\end{gather*}
	whenever $1 \leq q < \infty$, $i \in \nat$, and $a \in A_i$.

	Thirdly, suppose $j \in \nat$. Then $\{ \varrho_i (a,\cdot) | K \with
	a \in A_j, j \leq i \in \nat \}$ is an equicontinuous family of
	functions whenever $K$ is a compact subset of $A_j$ by
	\ref{remark:strict_local_sobolev_space_inclusion} and \cite[4.8\,(1),
	13.1]{snulmenn:tv.v2}, hence one obtains that
	\begin{equation*}
		\varrho_i(a,\cdot) | A_j \downarrow \varrho(a,\cdot) | A_j
		\quad \text{locally uniformly as $i \to \infty$ for $a \in
		A_j$}
	\end{equation*}
	by the Ascoli theorem, see \cite[7.14, 7.18]{MR0370454}. Therefore
	$\varrho(a,\cdot)|A_j$ is continuous for $a \in A_j$, hence $\varrho |
	A_j \times A_j$ is continuous as $\varrho | A_j \times A_j$ is a
	metric and
	\begin{equation*}
		\varrho_i |A_j \times A_j \downarrow \varrho|A_j \times A_j
		\quad \text{locally uniformly as $i \to \infty$}
	\end{equation*}
	by Dini's theorem, see \cite[Problem~7.E]{MR0370454}. Consequently,
	\ref{lemma:weak_closedness_tv},
	\ref{remark:strict_local_sobolev_space_inclusion}, and
	\cite[8.14]{snulmenn:tv.v2} yield $\varrho | A_j \times A_j \in \trunc
	(W_j)$ and $\varrho(a,\cdot) |A_j \in \trunc (V_j)$ with
	\begin{gather*}
		\| \derivative {W_j}{(\varrho|A_j \times A_j)} (z) \rangle
		\|_\nu \leq 1 \quad \text{for $\| W_j \|$ almost all $z$}, \\
		| \derivative {V_j} {(\varrho(a,\cdot)|A_j)} (x) | \leq 1 \quad
		\text{for $\| V_j \|$ almost all $x$}
	\end{gather*}
	for $a \in A_j$. From \ref{miniremark:prop_lp_spaces},
	\ref{lemma:weak_closedness_tv}, \cite[2.5.7\,(ii)]{MR41:1976} and
	Alaoglu's theorem, see \cite[\printRoman{5}.4.2,
	\printRoman{5}.5.1]{MR0117523}, one obtains
	\begin{equation*}
		\lim_{i \to \infty} \tint{}{} \langle \theta, \derivative
		{W_j}{(\varrho_i|A_j \times A_j)} \rangle \ud \| W_j \| =
		\tint{}{} \langle \theta, \derivative {W_j}{(\varrho | A_j
		\times A_j)} \rangle \ud \| W_j \|
	\end{equation*}
	for $\theta \in \Lp 1 ( \| W_j \|, \rel^\adim \times \rel^\adim )$ and
	\begin{equation*}
		\lim_{i \to \infty} \tint{}{} \langle \theta, \derivative
		{V_j}{(\varrho_i(a,\cdot)|A_j)} \rangle \ud \| V_j \| =
		\tint{}{} \langle \theta, \derivative
		{V_j}{(\varrho(a,\cdot)|A_j)} \rangle \ud \| V_j \| \quad
	\end{equation*}
	for $\theta \in \Lp 1 ( \| V_j \|, \rel^\adim )$, so that
	\ref{remark:completeness_wqloc}, Mazur's lemma, see
	\cite[\printRoman{5}.3.14]{MR0117523}, and
	\cite[2.5.7\,(i)]{MR41:1976} in fact yield $\varrho|A_j \times A_j \in
	\SWloc q (W_j)$ and $\varrho(a,\cdot) | A_j \in \SWloc q (V_j)$ for $a
	\in A_j$ and $1 \leq q < \infty$.

	Finally, suppose $a \in X$. Then, by \cite[11.2,
	11.4\,(4)]{snulmenn:tv.v2}, $\| V \|$ almost all $x$ satisfy $x \in X$,
	$x \neq a$, and $\varrho(a,\cdot)$ is differentiable relative to $X$
	at $x$ and $|\Der (\varrho(a,\cdot))(x)| = | \derivative
	V(\varrho(a,\cdot))(x)|$. Consider such $x$, abbreviate $b = \varrho
	(a,x)$, and choose $g$ as in \ref{lemma:ex_mini_path} with $Y = \Clos
	X$. Therefore one obtains, as $b \in \Int g^{-1} \lIm X \rIm$,
	\begin{equation*}
		1 \leq \big ( \limsup_{\chi \to x} | \varrho
		(a,\chi)-\varrho(a,x)|/|\chi-x| \big ) \big ( \limsup_{\upsilon
		\to b-} |g(\upsilon)-g(b)|/|\upsilon-b| \big ).
	\end{equation*}
	and infers $1 \leq | \Der (\varrho(a,\cdot))(x)|$ by
	\ref{miniremark:relative_differential}.
\end{proof}
\begin{remark} \label{remark:connectedness}
	The connectedness hypothesis on $X$ is not as restrictive as it may
	seem since the theorem may otherwise be applied separately to $V
	\restrict C \times \grass \adim \vdim$ whenever $C$ is a connected
	component of $\spt \| V \|$ by \cite[6.14]{snulmenn:tv.v2}.
\end{remark}
\begin{comment}
	Finally, an example concerning the possible behaviour of the geodesic
	distance to a point is constructed which uses the following
	observation.
\end{comment}
\begin{miniremark} \label{miniremark:balls_in_balls}
	The following fact is of elementary geometric nature. \emph{If
	$0 < \delta_i < r_i < \infty$ for $i \in \{1,2\}$, $2 \leq \adim \in
	\nat$, $u \in \mathbf{S}^{\adim-1}$, $\delta_2 < \delta_1$, and $r_2^2
	- (r_2-\delta_2)^2 < r_1^2 - (r_1-\delta_1)^2$, then $\cball
	  {(\delta_2-r_2)u}{r_2} \cap \{ x \with x \bullet u \geq 0 \} \subset
	  \oball {(\delta_1-r_1)u}{r_1}$.}
\end{miniremark}
\begin{example} \label{example:geodesic_distance}
	Suppose $2 \leq \vdim \in \nat$, $\adim = \vdim+1$, $p = \vdim$, $U =
	\rel^\adim$, $\varepsilon > 0$, and $\omega : \{ t \with 0 < t \leq 1
	\} \to \{ t \with 0 < t < \infty \}$ satisfies $\lim_{t \to 0+} \omega
	(t) = 0$.

	Then there exist $T \in \grass \adim \vdim$ and $V$ related to
	$\vdim$, $\adim$, $p$, and $U$ as in \ref{miniremark:situation} with
	\begin{gather*}
		\| V \| \restrict \rel^\adim \without \cball 01 =
		\mathscr{H}^\vdim \restrict T \without \cball 01, \quad
		\measureball{ \| V \|}{\cball 01} \leq 7 \unitmeasure \vdim,
		\\
		\dist (x,T) \leq \varepsilon \quad \text{for $x \in \spt \| V
		\|$}, \qquad \tint{}{} | \mathbf{h} ( V, x ) |^\vdim \ud \| V
		\| \, x \leq \varepsilon, \\
		\big \| \project{\Tan^\vdim ( \| V \|, x )} - \project T \big
		\| \leq \varepsilon \quad \text{and} \quad \density^\vdim ( \|
		V \|, x) \leq 3 \qquad \text{for $\| V \|$ almost all $x$}, \\
		T \subset \spt \| V \|, \quad \text{$\spt \| V \|$ is
		connected}, \\
		\density^\vdim ( \mathscr{H}^\vdim \restrict \spt \| V \|, 0 )
		= \infty, \quad \Tan ( \spt \| V \|, 0 ) = \rel^\adim
	\end{gather*}
	such that the metric $\varrho$ on $X = \spt \| V \|$, see
	\ref{miniremark:intrinsic_metric} and
	\ref{thm:intrinsic_metric}\,\eqref{item:instrinsic_metric:product},
	satisfies
	\begin{equation*}
		\limsup_{x \to 0} \varrho(0,x)/\omega ( |x| ) = \infty \quad
		\text{and} \quad \varrho(0,\cdot) \notin \SWloc \infty (V).
	\end{equation*}
\end{example}
\begin{proof} [Construction]
	Assume $\omega (t) \geq t$ for $0 < t \leq 1$. The projections $\pp :
	\rel^\adim \to \rel^\vdim$ and $\qq : \rel^\adim \to \rel$ defined by
	\begin{equation*}
		\pp (x) = (x_1, \ldots, x_\vdim ) \quad \text{and} \quad \qq
		(x) = x_\adim
	\end{equation*}
	for $x = (x_1, \ldots, x_\adim ) \in \rel^\adim$ will be employed, see
	\cite[5.1.9]{MR41:1976}. Let $T = \im \pp^\ast$. Choose a
	nonincreasing, locally Lipschitzian function $\gamma : \{ t \with 0 <
	t \leq 1 \} \to \rel$ such that $\lim_{t \to 0+} \gamma(t) = \infty$,
	$\gamma(1)=0$, and $\zeta : T \cap \{ x \with 0 < |x| < 1 \} \to \rel$
	defined by $\zeta (x)= \gamma(|x|)$ whenever $x \in T$ and $0 < |x| <
	1$ satisfies
	\begin{equation*}
		\tint{T \cap \oball 01}{} | \Der \zeta |^\vdim \ud
		\mathscr{H}^\vdim \leq 2^{-\vdim} \varepsilon,
	\end{equation*}
	see for instance \cite[4.43]{MR2424078}. Let $s_0=1$ and choose a
	strictly decreasing sequence $s_i$ of positive numbers such that
	\begin{equation*}
		s_i^\vdim \leq 2^{-i} \quad \text{and} \quad \gamma (s_i )
		\geq \gamma (s_{i-1}) + 2 \quad \text{for $i \in \nat$}.
	\end{equation*}
	Abbreviating $\pi = \boldsymbol{\Gamma} (1/2)^2$ ($\approx 3.14$), see
	\cite[3.2.13]{MR41:1976}, next choose sequences $\delta_i$, $\alpha_i$
	and $r_i$ such that for $i \in \nat$ the following eight conditions
	are satisfied:
	\begin{gather*}
		0 < \delta_{i+1} < \delta_i \leq \varepsilon, \quad 0 \leq
		\alpha_i \leq \pi/4, \quad 0 < r_i < \infty, \\
		r_i^2 = (r_i-\delta_i)^2 + s_i^2, \quad \sin \alpha_i =
		s_i/r_i \leq \varepsilon, \\
		\delta_i \leq s_{2i+2}, \quad i \, \omega ( \delta_i ) \leq 2
		s_i, \quad 4 \unitmeasure \vdim \vdim^\vdim r_i^{-\vdim} \leq
		2^{-i-\vdim} \varepsilon;
	\end{gather*}
	in fact the first equation is equivalent to $r_i = ( \delta_i^2 +
	s_i^2)/(2 \delta_i)$, hence it is sufficient to inductively
	choose $\delta_i$ small enough. Notice that
	\begin{equation*}
		\text{$r_i > \delta_i$ and $\cos \alpha_i =
		(r_i-\delta_i)/r_i$ for $i \in \nat$}, \quad \lim_{i \to
		\infty} s_i = 0, \quad \lim_{i \to \infty} \delta_i = 0.
	\end{equation*}
	Let $S_i = T \cap \{ x \with |x|=s_i \}$ for $i \in \nat \cup \{ 0
	\}$.

	Let $V_0 \in \RVar_\vdim ( \rel^\adim)$ by defined by
	\begin{equation*}
		V_0 (k) = \tint{T \cap \{ x \with |x|>1 \}}{} k(x,T) \ud
		\mathscr{H}^\vdim \, x \quad \text{for $k \in \mathscr{K} (
		\rel^\adim \times \grass \adim \vdim )$},
	\end{equation*}
	hence one obtains for $\theta \in \mathscr{D} ( \rel^\adim, \rel^\adim
	)$ that
	\begin{equation*}
		( \delta V_0 ) ( \theta ) = - \tint{S_0}{} x \bullet \theta
		(x) \ud \mathscr{H}^{\vdim-1} \, x.
	\end{equation*}
	For $i \in \nat$ define $V_i \in \RVar_\vdim ( \rel^\adim )$ by
	\begin{equation*}
		V_i (k) = \tint{T \cap \oball 0{s_{i-1}} \without \oball
		0{s_i}}{} k (x,T) (1+ \inf \{ 2 \cos \alpha_i, \zeta
		(x)-\gamma (s_{i-1}) \} ) \ud \mathscr{H}^\vdim \, x
	\end{equation*}
	for $k \in \mathscr{K} ( \rel^\adim \times \grass \adim \vdim )$
	and compute for $\theta \in \mathscr{D} ( \rel^\adim, \rel^\adim )$
	that
	\begin{align*}
		( \delta V_i ) ( \theta ) & = \tint{S_{i-1}}{} |x|^{-1} x
		\bullet \theta (x) \ud \mathscr{H}^{\vdim-1} \, x -
		\tint{A_i}{} \langle \project T ( \theta(x)), \Der \zeta (x)
		\rangle \ud \mathscr{H}^\vdim \, x \\
		& \phantom = \ - (1+2 \cos \alpha_i ) \tint{S_i}{} |x|^{-1} x
		\bullet \theta (x) \ud \mathscr{H}^{\vdim-1} \, x,
	\end{align*}
	where $A_i = T \cap \oball 0{s_{i-1}} \cap \{ x \with \gamma (|x|) <
	\gamma (s_{i-1}) + 2 \cos \alpha_i \}$. Define $A =
	\bigcup_{i=1}^\infty A_i$.

	Let $u = \qq^\ast (1)$. Define the sets
	\begin{equation*}
		B_i = \{ x \with x \bullet u > 0, |x-(\delta_i-r_i)u|=r_i
		\}\cup \{ x \with x \bullet u < 0, |x-(r_i-\delta_i)u|=r_i \}.
	\end{equation*}
	for $i \in \nat$. Let $C_i$ for $i \in \nat$ denote the closed convex
	hull of $B_i$ and verify
	\begin{equation*}
		C_i = \cball{(\delta_i-r_i)u}{r_i} \cap
		\cball{(r_i-\delta_i)u}{r_i}.
	\end{equation*}
	Notice that $0 \in C_i$, $\Clos B_i = \Bdry C_i$, and by
	\ref{miniremark:balls_in_balls}, also
	\begin{equation*}
		C_{i+1} \subset \Int C_i \subset \cball 01 \quad \text{for $i
		\in \nat$},
	\end{equation*}
	in particular $B = \bigcup_{i=1}^\infty B_i$ is an $\vdim$ dimensional
	submanifold of class $\infty$ of $\rel^\adim$. The condition $\sin
	\alpha_i \leq \varepsilon$ for $i \in \nat$ implies
	\begin{equation*}
		\| \project{\Tan (B,x)} - \project T \| \leq \varepsilon \quad
		\text{for $x \in B$},
	\end{equation*}
	see for instance \cite[5.1\,(2)]{kol-men:decay.v2}.  For $i \in \nat$
	define $W_i \in \RVar_\vdim ( \rel^\adim )$ by
	\begin{equation*}
		W_i(k) = \tint{B_i}{} k(x,\Tan(B_i,x)) \ud \mathscr{H}^\vdim
		\, x \quad \text{for $k \in \mathscr{K} (\rel^\adim \times
		\grass \adim \vdim )$}
	\end{equation*}
	and compute for $\theta \in \mathscr{D} ( \rel^\adim, \rel^\adim )$
	that
	\begin{equation*}
		( \delta W_i ) ( \theta ) = - \tint{B_i}{} \mathbf{h} (B_i,x)
		\bullet \theta (x) \ud \mathscr{H}^\vdim \, x + 2 \cos \alpha_i
		\tint{S_i}{} |x|^{-1} x \bullet
		\theta (x) \ud \mathscr{H}^{\vdim-1} \, x.
	\end{equation*}

	Since $\sum_{i=1}^\infty \| V_i + W_i \| ( \rel^\adim ) \leq 7
	\unitmeasure \vdim$, one may define $V \in \RVar_\vdim ( \rel^\adim )$
	by $V = V_0 + \sum_{i=1}^\infty (V_i+W_i)$. One computes and estimates
	\begin{gather*}
		( \delta V ) ( \theta ) = - \tint B{} \mathbf{h} (B,x) \bullet
		\theta (x) \ud \mathscr{H}^\vdim \, x - \tint A{} \langle
		\project T ( \theta (x)), \Der \zeta (x) \rangle \ud
		\mathscr{H}^\vdim \,x, \\
		| ( \delta V ) ( \theta ) | \leq \big ( \tint{B}{} |
		\mathbf{h}(B,\cdot)|^\vdim \ud \mathscr{H}^\vdim \big
		)^{1/\vdim} + \tint{T \cap \oball 01}{} | \Der \zeta |^\vdim
		\ud \mathscr{L}^\vdim \big )^{1/\vdim} \leq
		\varepsilon^{1/\vdim}
	\end{gather*}
	whenever $\theta \in \mathscr{D} ( \rel^\adim, \rel^\adim )$ and
	$\Lpnorm{\| V\|}{\vdim/(\vdim-1)} { \theta } \leq 1$. Using
	\ref{miniremark:prop_lp_spaces} and \cite[2.4.16,
	2.5.7\,(i)]{MR41:1976}, one deduces that $V$ is related to $\vdim$,
	$\adim$, $p$, and $U$ as in \ref{miniremark:situation} with
	\begin{gather*}
		\tint{}{} | \mathbf{h} ( V, x ) |^\vdim \ud \| V \| \, x \leq
		\varepsilon, \quad \| V \| \restrict \rel^\adim \without
		\cball 01 = \mathscr{H}^\vdim \restrict T \without
		\cball 01, \\
		\text{$\spt \| V \|$ is connected}, \qquad \density^\vdim ( \|
		V \|, x ) \leq 3 \quad \text{for $\| V \|$ almost all $x$}.
	\end{gather*}
	Moreover, defining $X = \spt \| V \|$ and noting $X = T \cup B$, one
	infers
	\begin{equation*}
		\Tan(X,0) = \rel^\adim, \quad \density^\vdim (
		\mathscr{H}^\vdim \restrict X, 0 ) = \infty;
	\end{equation*}
	in fact, to prove the second equation notice that
	\begin{equation*}
		\card ( X \cap \{ x \with \text{$\pp (x) = \pp (a)$ and $|\qq
		(x) | \leq t$} \} ) \geq 2i+3
	\end{equation*}
	whenever $a \in T \cap \cball 0t$, $s_{2i+2} \leq t < s_{2i}$, and $i
	\in \nat$.

	The well known structure of length minimising geodesics on spheres,
	see for instance \cite[Chap.\,3, Example 2.11; Chap.\,3, Proposition
	3.6]{MR1138207}, implies the following statement. \emph{If $i \in
	\nat$, then
	\begin{equation*}
		\varrho (0,\delta_iu) = s_i + \alpha_i r_i,
	\end{equation*}
	and if $i \in \nat$, $v \in T$, $|v| = 1$, and $g : \{ t \with 0 \leq
	t \leq s_i + \alpha_ir_i \} \to X$ satisfies
	\begin{align*}
		g(t) & = tv \quad \text{if $t \leq s_i$}, \\
		g(t) & = \sin (\alpha_i + (s_i-t)/r_i ) r_i v + \big (
		\delta_i-r_i+\cos ( \alpha_i + (s_i - t)/r_i ) r_i \big ) u
		\quad \text{if $s_i < t$}
	\end{align*}
	whenever $0 \leq t \leq s_i+\alpha_i r_i$, then
	$g(0)=0$, $g(s_i+\alpha_ir_i) = \delta_i u$, $\Lip g = 1$
	and $|g'(t)| = 1$ for $\mathscr{L}^1$ almost all $t \in \dmn g$.}
	Consequently, $\varrho (0,\delta_iu) \geq i \, \omega ( | \delta_i u|
	)$ for $i \in \nat$ and
	\begin{equation*}
		\limsup_{x \to 0} \varrho(0,x)/\omega ( |x| ) = \infty.
	\end{equation*}

	Suppose $f : \rel^\adim \to \rel$ were a Lipschitzian function
	satisfying $f(0)=0$ and
	\begin{equation*}
		| \derivative V {(\varrho(0,\cdot))} (x) - \derivative Vf(x) |
		\leq 1/2 \quad \text{for $\| V \|$ almost all $x \in \cball
		01$}.
	\end{equation*}
	Then \ref{thm:intrinsic_metric}\,\eqref{item:intrinsic_metric:point}
	in conjunction with \ref{remark:strict_local_sobolev_space_inclusion}
	and \cite[11.4\,(4)]{snulmenn:tv.v2} would imply
	\begin{equation*}
		| \Der (\varrho(0,\cdot)) (x) - \Der (f|X)(x) | \leq 1/2 \quad
		\text{for $\| V \|$ almost all $x \in \cball 01$}.
	\end{equation*}
	Whenever $i \in \nat$ one could select $v \in T$ with $|v|=1$ such
	that the function $g$ associated to $i$ and $v$ in the statement of
	the preceding paragraph would satisfy
	\begin{equation*}
		| \Der ( \varrho (0,\cdot)) ( g(t)) - \Der (f|X) (g (t)) |
		\leq 1/2 \quad \text{for $\mathscr{L}^1$ almost all $t \in
		\dmn g$},
	\end{equation*}
	hence, noting $\varrho ( 0, \cdot ) \circ g = \id{\dmn g}$ and $\Lip (
	f \circ g ) < \infty$, integration would yield
	\begin{align*}
		| \varrho (0,\delta_iu) - f(\delta_iu) | & \leq \tint
		0{s_i+\alpha_ir_i} | \langle g'(t), \Der (\varrho (0,\cdot) -
		f|X) ( g(t)) \rangle | \ud \mathscr{L}^1 \, t \\
		& \leq \varrho(0,\delta_iu)/2.
	\end{align*}
	Consequently, one would obtain
	\begin{equation*}
		f ( \delta_i u ) \geq \varrho (0,\delta_i u) /2 \quad
		\text{for $i \in \nat$}
	\end{equation*}
	in contradiction to $\limsup_{x \to 0} |f(x)|/|x| < \infty$. Therefore
	$\varrho (0,\cdot) \notin \SWloc \infty ( V )$.
\end{proof}
\begin{remark}
	Considering large balls centred at $0$, the preceding example also
	shows that the Reifenberg type flatness result of Allard in
	\cite[8.8]{MR0307015} does not extend to the case of dimensionally
	critical mean curvature, $p = \vdim$ in \ref{miniremark:situation}. To
	which extent the behaviour of integral varifolds satisfying $p =
	\vdim$ in \ref{miniremark:situation} is more regular is only partially
	understood.  Properties of the density ratio specific to the integral
	case were obtained by Kuwert and Sch{\"a}tzle in
	\cite[Appendix~A]{MR2119722} and \cite[3.9]{snulmenn.poincare}, see
	also \cite[7.6]{snulmenn:tv.v2}. On the other hand nonuniqueness of
	tangent cones occurs naturally for $p = \vdim$ even for integral
	varifolds associated to Lipschitzian functions, see Hutchinson and
	Meier \cite{MR831410}.
\end{remark}
\section{Further implications of critical mean curvature} In this section the
study of varifolds satisfying a dimensionally critical summability condition
on the mean curvature and a lower bound on their densities as described in
\ref{miniremark:situation} with $p = \vdim$ will be continued.  Initially,
estimates for generalised weakly differentiable functions are derived, see
\ref{thm:one_dim}--\ref{thm:global_special_sob}. Subsequently, continuous and
compact embeddings of Sobolev spaces into Lebesgue spaces and spaces of
continuous functions along with topological properties of the various Sobolev
spaces are compiled.  These results follow readily from the corresponding
results on generalised weakly differentiable functions obtained in
\cite[\S\S\,8--10, \S\,13]{snulmenn:tv.v2}, Section \ref{sec:rellich}, and
\ref{thm:one_dim}--\ref{thm:global_special_sob}.  The treatment includes local
Sobolev spaces in
\ref{thm:replacement-w1q-loc}--\ref{remark:rellich-local-embedding}, an
intermediate space between $\SWSob q (V,Y)$ and $\SWloc q (V,Y)$ in
\ref{thm:replacement-w1q}--\ref{remark:quotient-sobolov-space}, and Sobolev
spaces with ``zero boundary values'' in
\ref{thm:sob_poin_summary}--\ref{remark:rellich_embedding}.

In implementing this study, the case of one dimensional varifolds needs
particular care since the present hypotheses permit that the variation measure
of the first variation is not absolutely continuous with respect to the weight
measure of the varifold.
\begin{comment}
	The first statement concerns a local Sobolev estimate near a single
	``boundary point'' for one dimensional varifolds.

	Its proof is based on the Sobolev Poincar{\'e} inequality with several
	medians, see \cite[10.9]{snulmenn:tv.v2}, and involves the concept of
	distributional boundary of a set with respect to a varifold as defined
	in \cite[5.1]{snulmenn:tv.v2}.
\end{comment}
\begin{theorem} \label{thm:one_dim}
	Suppose $\vdim$, $\adim$, $p$, $U$, and $V$ are as in
	\ref{miniremark:situation}, $p = \vdim = 1$, $\adim \leq M < \infty$,
	$\Lambda = \Gamma_{\textup{\cite[10.1]{snulmenn:tv.v2}}} ( 1+M )$, $0
	< r < \infty$, $A = \{ x \with \oball xr \subset U \}$, $a \in A$,
	\begin{equation*}
		\| V \| ( U ) \leq M \unitmeasure 1 r, \quad \| \delta V
		\| ( U \without \{ a \} ) \leq ( 2 + \Lambda )^{-1},
	\end{equation*}
	$Y$ is a finite dimensional normed vectorspace, and $f \in \trunc
	(V,Y)$.

	Then there holds
	\begin{equation*}
		\eqLpnorm{\| V \| \restrict A} \infty f \leq \Gamma \big (
		r^{-1} \Lpnorm{\| V \|} 1f + \Lpnorm{\| V \|} 1 { \derivative
		Vf} \big ),
	\end{equation*}
	where $\Gamma = 2^4 M ( 1 + \Lambda )$.
\end{theorem}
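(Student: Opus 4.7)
By composing with norm-one linear functionals $\alpha\in\Hom(Y,\rel)$ and invoking the composition formula for $\derivative V(\alpha\circ f)$ from \cite[8.12]{snulmenn:tv.v2}, the assertion is reduced to the scalar case $Y=\rel$. So assume $f\in\trunc(V)$ is real valued.

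The main step is to apply the Sobolev Poincar\'e inequality with several medians \cite[10.9]{snulmenn:tv.v2} to $f$ on $U$. In the critical one dimensional setting $\vdim=p=1$ the ``Sobolev exponent'' $\beta$ equals $\infty$, so the inequality yields a $\|V\|$~measurable decomposition $U=D_1\cup\cdots\cup D_N$ together with medians $y_1,\dots,y_N\in\rel$ for which
\begin{equation*}
    \eqLpnorm{\|V\|\restrict D_j}{\infty}{f-y_j}\leq \Lambda \Lpnorm{\|V\|}{1}{\derivative Vf}\qquad(j=1,\dots,N),
\end{equation*}
while the number $N$ of pieces is controlled by distributional $V$-boundary data of suitable superlevel sets $E(y)=\{x\with f(x)>y\}$. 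Specifically, any refinement beyond one median is paid for by lower bounds on $\|\boundary V E(y_j)\|$, which in turn inject into $\|\delta V\|$ via the identity $\boundary V E(y)(\theta)=-(\delta V)(\theta\,\id{E(y)})-\int_{E(y)}\derivative V\id{E(y)}\bullet\theta\,d\|V\|$ from \cite[5.1]{snulmenn:tv.v2}. Since the only place where $\|\delta V\|$ may carry appreciable mass is the single point $a$, and since separating $U$ into two parts on which $f$ has distinct medians produces distributional boundary supported \emph{away from $\{a\}$}, the smallness hypothesis $\|\delta V\|(U\without\{a\})\leq(2+\Lambda)^{-1}$ forces $N=1$. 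Thus one obtains a single median $y^\ast$ with
\begin{equation*}
    \eqLpnorm{\|V\|}{\infty}{f-y^\ast}\leq \Lambda \Lpnorm{\|V\|}{1}{\derivative Vf}.
\end{equation*}

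It remains to bound $|y^\ast|$. Since $y^\ast$ is a median, $\|V\|(\{|f|\geq|y^\ast|\})\geq \tfrac12\|V\|(U)$ (up to the easily controlled loss coming from the derivative estimate on the complement), which combined with the volume hypothesis $\|V\|(U)\leq M\unitmeasure 1 r$ and the lower density estimate on $\|V\|(\oball ar)$ available at $a\in A$ (via the monotonicity identity under $p=\vdim=1$ with the small first-variation remainder) gives
\begin{equation*}
    |y^\ast|\leq \Gamma' r^{-1} \Lpnorm{\|V\|}{1}{f}+\Gamma'\Lpnorm{\|V\|}{1}{\derivative Vf},
\end{equation*}
with $\Gamma'$ absorbed into $\Gamma$. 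Combining the two displays and then taking essential supremum over $A$ finishes the proof, with the constant $2^4 M(1+\Lambda)$ arising from the volume normalisation $M\unitmeasure 1 r$ together with the numerical factors from medians and from the $(2+\Lambda)^{-1}$ threshold.

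The principal obstacle is the rigidity step showing that $N=1$: one must quantify how a nontrivial partition of $U$ into two pieces carrying distinct medians of $f$ imposes, via the distributional $V$-boundary of the associated superlevel set, a lower bound on $\|\delta V\|(U\without\{a\})$ strictly exceeding $(2+\Lambda)^{-1}$. This is where the one dimensional hypothesis and the tailored constant $\Lambda=\Gamma_{\textup{\cite[10.1]{snulmenn:tv.v2}}}(1+M)$ are essential, since higher codimensional boundary pieces would otherwise slip through the multi-median inequality without being counted against $\|\delta V\|$.
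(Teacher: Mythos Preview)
Your proposal contains a genuine gap at precisely the point you flag as ``the principal obstacle'': the claim that the smallness hypothesis $\|\delta V\|(U\without\{a\})\leq(2+\Lambda)^{-1}$ forces a \emph{single} median ($N=1$) is false in general, and your justification for it is incorrect. Consider a one dimensional varifold consisting of several line segments meeting at the single point $a$ (a ``star''); such a configuration can be stationary, so $\|\delta V\|(U\without\{a\})=0$. A function $f$ that is constant on each segment with distinct values lies in $\trunc(V)$ with $\derivative Vf=0$, yet requires several medians. In this situation the distributional boundary of any superlevel set $E(y)$ is supported \emph{exactly at} $\{a\}$, not away from it, so your mechanism for extracting a contribution to $\|\delta V\|(U\without\{a\})$ from a nontrivial partition simply does not operate.

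The paper's proof proceeds differently and never attempts to reduce to one median. It applies \cite[10.9\,(2)]{snulmenn:tv.v2} to obtain a finite set $\Upsilon\subset\rel$ with $\card\Upsilon\leq N+1$ (where $N\approx 4M$) and the oscillation bound $\eqLpnorm{\|V\|\restrict X}{\infty}{\dist(\cdot,\Upsilon)\circ f}\leq\Lambda\Lpnorm{\|V\|}{1}{\derivative Vf}$. Setting $s=\Lambda\Lpnorm{\|V\|}{1}{\derivative Vf}$ and $B=\bigcup_{\upsilon\in\Upsilon}\cball{\upsilon}{s}$, it then treats each connected component $E$ of $B$ separately. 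For $F=f^{-1}\lIm E\rIm$ the coarea formula \cite[8.29]{snulmenn:tv.v2} gives $\|\boundary VF\|(X)=0$, so the restricted varifold $W=V\restrict F\times\grass{\adim}{1}$ satisfies $\|\delta W\|(X\without\{a\})\leq 1/2$, and monotonicity \cite[4.8]{snulmenn:tv.v2} yields the crucial lower mass bound $\|W\|(X)\geq 1/4$. Integrating $|f(x)|\leq\eqLpnorm{\|W\|\restrict X}{\infty}{f}\leq|f(\cdot)|+\diam E$ against $\|W\|\restrict X$ then bounds the essential supremum of $f$ on $A\cap F$ by $4\Lpnorm{\|V\|}{1}{f}+2^4Ms$. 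The point is that the smallness of $\|\delta V\|$ away from $a$ is used not to eliminate extra medians but to guarantee, via monotonicity, that each level-set cluster carries enough mass for a Chebyshev-type bound.
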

\begin{proof}
	Assume $r = 1$ and $\Lpnorm{\| V \|} 1 f + \Lpnorm{\| V \|} 1
	{\derivative Vf} < \infty$.  In view of \cite[8.16]{snulmenn:tv.v2}
	one may also assume $Y = \rel$.  Choose $N \in \nat$ such that $N \leq
	4 M \leq N+1$ and abbreviate $X = \{ x \with \cball x{1/2} \subset U
	\}$.  Applying \cite[10.9\,(2)]{snulmenn:tv.v2} with $M$, $Q$, $r$,
	and $X$ replaced by $1+M$, $1$, $1/2$, and $\{ a \}$, one obtains a
	subset $\Upsilon$ of $\rel$ such that $1 \leq \card \Upsilon \leq N+1$
	and
	\begin{equation*}
		\eqLpnorm{\| V \| \restrict X} \infty g \leq \Lambda
		\Lpnorm{\| V \|} 1 {\derivative Vf}, \quad \text{where $g =
		\dist ( \cdot, \Upsilon ) \circ f$}.
	\end{equation*}
	Let $s = \Lambda \Lpnorm{\| V \|} 1 {\derivative Vf}$ and $B = \bigcup
	\{ \cball \upsilon s \with \upsilon \in \Upsilon \}$.

	Suppose $E$ is a connected component of $B$ and let $F = f^{-1} \lIm E
	\rIm$. The proof will be concluded by showing that
	\begin{equation*}
		\eqLpnorm{\| V \| \restrict A \cap F} \infty f \leq \Gamma
		\big ( \Lpnorm {\| V \|} 1f + \Lpnorm{\| V \|} 1 {\derivative
		Vf} \big ).
	\end{equation*}
	Hence one may assume $A \cap \spt ( \| V \| \restrict F ) \neq
	\varnothing$.  In view of \cite[8.15]{snulmenn:tv.v2} applying
	\cite[8.29]{snulmenn:tv.v2} with $f$ and $y$ replaced by $\dist (
	\cdot, E) \circ f$ and $0$ yields $\| \boundary VF \| (X) = 0$.
	Abbreviating $W = V \restrict F \times \grass \adim 1$, this implies
	\begin{equation*}
		\| \delta W \| ( X ) < \infty \quad \text{and} \quad \| \delta
		W \| ( X \without \{a\}) \leq 1/2.
	\end{equation*}
	Choose $\chi \in A \cap \spt \| W \|$ such that $\chi = a$ if $a \in
	\spt \| W \|$.  By \cite[4.8\,(1)\,(4)]{snulmenn:tv.v2} with $U$,
	$V$, and $a$ replaced by $X$, $W | \mathbf{2}^{X \times \grass \adim
	1}$, and $\chi$ one obtains that
	\begin{equation*}
		\| W \| ( X ) \geq \measureball{\| W \|}{ \oball \chi {1/2}}
		\geq 1/4.
	\end{equation*}
	Since $\diam E \leq \mathscr{L}^1 (E) \leq 2^4 Ms$, integrating the
	inequality
	\begin{equation*}
		\eqLpnorm{\| W \| \restrict X} \infty f \leq |f(x)| + \diam E
		\quad \text{for $\| W \|$ almost all $x \in X$}
	\end{equation*}
	with respect to $\| W \| \restrict X$ now yields the conclusion.
\end{proof}
\begin{comment}
	In the ``interior'' the corresponding estimate is much simpler and
	obtainable in all dimensions by localising the Sobolev inequality
	\cite[10.1\,(2)]{snulmenn:tv.v2}.
\end{comment}
\begin{theorem} \label{thm:local_sobolev}
	Suppose $\vdim$, $\adim$, $p$, $U$, $V$, and $\psi$ are as in
	\ref{miniremark:situation}, $p = \vdim$, $\Lambda =
	\Gamma_{\textup{\cite[10.1]{snulmenn:tv.v2}}} ( \adim )$, $\psi ( U )
	\leq \Lambda^{-1}$, $0 < r < \infty$, $A = \{ x \with \oball xr
	\subset U \}$,
	\begin{enumerate}
		\item \label{item:local_sobolev:m=1} either $\vdim = q = 1$,
		$\alpha = \infty$, and $\kappa = \Lambda$,
		\item \label{item:local_sobolev:q<m} or $1 \leq q < \vdim$,
		$\alpha = \vdim q/(\vdim-q)$, and $\kappa =
		\Lambda(\vdim-q)^{-1}$,
		\item \label{item:local_sobolev:m<q} or $1 < \vdim < q$,
		$\alpha = \infty$, and $\kappa = \Lambda^{1/(1/\vdim-1/q)}
		\| V \| ( U )^{1/\vdim-1/q} < \infty$,
	\end{enumerate}
	and $Y$ is a finite dimensional normed vectorspace.

	Then there holds
	\begin{equation*}
		\eqLpnorm{\| V \| \restrict A} \alpha f \leq \kappa \big (
		\Lpnorm{\| V \|} q {\derivative Vf} + r^{-1} \Lpnorm{\| V \|}
		q f \big ) \quad \text{for $f \in \trunc (V,Y)$}.
	\end{equation*}
\end{theorem}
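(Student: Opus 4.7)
The approach is to reduce each of the three cases to a direct invocation of the Sobolev-Poincaré inequality \cite[10.1]{snulmenn:tv.v2} (the constant of which defines $\Lambda$) after composing $f$ with a Lipschitzian cutoff that vanishes near $\Bdry U$ and equals $1$ on $A$. By \cite[8.16]{snulmenn:tv.v2} one has $|f| \in \trunc(V)$ with $\|\derivative V{|f|}\| \leq \|\derivative Vf\|$ almost everywhere, so it suffices to treat the case $Y = \rel$ and $f \geq 0$. Define
\begin{equation*}
    \zeta(x) = \sup \bigl\{ 0, \inf \bigl\{ 1, 2 r^{-1} \dist(x, \rel^\adim \without U) - 1 \bigr\} \bigr\} \quad \text{for $x \in U$},
\end{equation*}
so that $\Lip \zeta \leq 2/r$, $\zeta = 1$ on $A$ (as $\dist(a, \rel^\adim \without U) \geq r$ for $a \in A$), and $\zeta$ vanishes wherever $\dist(x, \rel^\adim \without U) \leq r/2$. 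Via \cite[8.12]{snulmenn:tv.v2} and \ref{remark:strict_local_sobolev_space}\,\eqref{item:strict_local_sobolev_space:product}, the product $\zeta f$ lies in $\trunc_{\Bdry U}(V)$ with $\derivative V(\zeta f) = \zeta \derivative Vf + f \Der \zeta$ almost everywhere, and therefore
\begin{equation*}
    \Lpnorm{\|V\|}{q}{\derivative V(\zeta f)} \leq \Lpnorm{\|V\|}{q}{\derivative Vf} + 2 r^{-1} \Lpnorm{\|V\|}{q}{f}.
\end{equation*}

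For case \eqref{item:local_sobolev:q<m}, the plan is to apply \cite[10.1\,(2)]{snulmenn:tv.v2} to $(\zeta f)^\gamma$ with the exponent $\gamma = q(\vdim-1)/(\vdim-q)$ chosen so that $\gamma \vdim/(\vdim-1) = \alpha$. The product rule together with Hölder's inequality with exponents $q/(q-1)$ and $q$ converts the resulting $L^1$ bound on $\Der (\zeta f)^\gamma$ into one linear in $\Lpnorm{\|V\|}{q}{\derivative V(\zeta f)}$; meanwhile the boundary contribution $\tint{}{} (\zeta f)^\gamma \ud \|\delta V\|$, which (using that $\|\delta V\|$ is absolutely continuous with respect to $\|V\|$ with density $|\mathbf{h}(V,\cdot)| \in \Lploc{\vdim}$ when $p = \vdim > 1$, together with Hölder's inequality with exponents $\vdim$ and $\vdim/(\vdim-1)$) is majorised by $\psi(U)^{1/\vdim} \eqLpnorm{\|V\|}{\alpha}{\zeta f}^\gamma$, is absorbed into the left hand side thanks to $\psi(U) \leq \Lambda^{-1}$. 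The factor $(\vdim-q)^{-1}$ in the stated $\kappa$ tracks the size of $\gamma$. Case \eqref{item:local_sobolev:m<q} is then reached from case \eqref{item:local_sobolev:q<m} by a Moser-type bootstrap: selecting an auxiliary exponent $q' < \vdim$ and iteratively applying the case \eqref{item:local_sobolev:q<m} inequality with exponents $p_k$ given by $p_{k} = (\vdim/(\vdim-1))(1 + p_{k-1}(q-1)/q)$ starting from $p_0 = q$ yields an $L^\infty$ bound. Since the ratio $p_{k-1}/p_k$ stabilises at $(\vdim-1)q/(\vdim(q-1)) < 1$, summing the geometric series of iteration costs produces the stated constant $\Lambda^{1/(1/\vdim - 1/q)}$, while the Hölder conversion of the intermediate $L^{q'}$ norms back to $L^q$ on $U$ furnishes the accompanying factor $\|V\|(U)^{1/\vdim - 1/q}$.

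Case \eqref{item:local_sobolev:m=1} calls upon Theorem \ref{thm:one_dim} rather than \cite[10.1]{snulmenn:tv.v2}: given $a \in A$ one has $\oball{a}{r} \subset U$, and the monotonicity formula \cite[4.8]{snulmenn:tv.v2}, in combination with the smallness $\psi(U) = \|\delta V\|(U) \leq \Lambda^{-1}$, yields $\measureball{\|V\|}{\oball{a}{r}} \leq M \unitmeasure 1 r$ for a suitable absolute $M$; then \ref{thm:one_dim} applied to $V \restrict \mathbf{2}^{\oball{a}{r} \times \grass{\adim}{1}}$ with distinguished point $a$ furnishes the pointwise bound on $|f(a)|$ and, maximising over $a \in A$, the stated $L^\infty$ inequality on $A$. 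The principal technical obstacle throughout is the clean absorption of the mean-curvature boundary term arising from each application of \cite[10.1]{snulmenn:tv.v2}; the specific value $\Lambda = \Gamma_{\textup{\cite[10.1]{snulmenn:tv.v2}}}(\adim)$ in the smallness hypothesis $\psi(U) \leq \Lambda^{-1}$ is calibrated so that the factor absorbed into the left hand side does not exceed $1/2$, thereby yielding the numerical constants $\kappa$ as stated in each case.
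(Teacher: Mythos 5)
Your overall strategy (reduce to $Y=\rel$, $f\geq 0$ via \cite[8.16]{snulmenn:tv.v2}, multiply by a cutoff to land in $\trunc_{\Bdry U}(V)$, then invoke a Sobolev inequality) is the same as the paper's, but you then take a long detour where the paper takes one step: the cited result \cite[10.1\,(2b)\,(2c)\,(2d)]{snulmenn:tv.v2} already contains \emph{all three} regimes of the Sobolev inequality for members of $\trunc_{\Bdry U}(V)$ — including $\vdim=q=1$ with constant $\Lambda$ and the supercritical case $1<\vdim<q$ with constant $\Lambda^{1/(1/\vdim-1/q)}\|V\|(E)^{1/\vdim-1/q}$ — under exactly the smallness hypothesis $\psi(E)\leq\Lambda^{-1}$ (compare \ref{thm:sob_poin_summary}). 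The paper therefore just applies these to $gf$ for nonnegative Lipschitzian $g$ with compact support, $\sup\im g\leq 1$ and $\Lip g\leq r^{-1}$, obtaining $\Lpnorm{\|V\|}{q}{\derivative V{(gf)}}\leq\gamma$ with $\gamma=r^{-1}\Lpnorm{\|V\|}{q}{f}+\Lpnorm{\|V\|}{q}{\derivative Vf}$ and then exhausting $A$. Your re-derivation of case \eqref{item:local_sobolev:q<m} from the $L^1$ inequality via $(\zeta f)^\gamma$ and of case \eqref{item:local_sobolev:m<q} via Moser iteration reproduces work already done in \cite{snulmenn:tv.v2}, and it is not verified (and in the Moser case doubtful) that these reproduce the \emph{exact} constants $\kappa$ asserted; already your cutoff with $\Lip\zeta\leq 2/r$ yields $2r^{-1}\Lpnorm{\|V\|}{q}{f}$ rather than $r^{-1}\Lpnorm{\|V\|}{q}{f}$, so the inequality as stated is missed by a factor of $2$.

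The genuine gap is case \eqref{item:local_sobolev:m=1}. Theorem \ref{thm:one_dim} is the wrong tool here: its hypothesis $\|\delta V\|(U\without\{a\})\leq(2+\Gamma_{\textup{\cite[10.1]{snulmenn:tv.v2}}}(1+M))^{-1}$ does not follow from $\psi(U)\leq\Gamma_{\textup{\cite[10.1]{snulmenn:tv.v2}}}(\adim)^{-1}$, since $M\geq\adim$ forces $2+\Gamma_{\textup{\cite[10.1]{snulmenn:tv.v2}}}(1+M)>\Gamma_{\textup{\cite[10.1]{snulmenn:tv.v2}}}(\adim)$, so the required smallness is strictly stronger than what is assumed; moreover even if it applied, \ref{thm:one_dim} delivers the constant $2^4M(1+\Gamma_{\textup{\cite[10.1]{snulmenn:tv.v2}}}(1+M))$, which depends on the mass ratio bound $M$ and is not the asserted $\kappa=\Lambda$. (Theorem \ref{thm:one_dim} is used in the paper for the \emph{local} embedding \ref{thm:local_sobolev_embedding}, where no precise constant is claimed, not for \ref{thm:local_sobolev}.) The correct route for $\vdim=q=1$ is again simply \cite[10.1\,(2b)]{snulmenn:tv.v2} applied to $gf$.
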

\begin{proof}
	Assume $Y = \rel$ and $f \geq 0$ by \cite[8.16]{snulmenn:tv.v2} and
	$\gamma = r^{-1} \Lpnorm{\| V \|}qf + \Lpnorm{\| V \|} q{\derivative
	Vf} < \infty$. Whenever $g : U \to \rel$ is a nonnegative Lipschitzian
	function with compact support, $\sup \im g \leq 1$ and $\Lip g \leq
	r^{-1}$, one infers from \cite[8.20\,(4), 9.2, 9.4]{snulmenn:tv.v2}
	that
	\begin{equation*}
		g f \in \trunc_{\Bdry U} (V) \quad \text{and} \quad \Lpnorm{\|
		V \|}q{\derivative V{(gf)}} \leq \gamma,
	\end{equation*}
	hence \cite[10.1\,(2b)\,(2c)\,(2d)]{snulmenn:tv.v2} implies
	$\Lpnorm{\| V \|}{\alpha}{ gf } \leq \kappa \gamma$.
\end{proof}
\begin{comment}
	Next, estimates of the Lebesgue seminorm of a generalised weakly
	differentiable function with respect to the variation measure of the
	first variation of the varifold will be studied.
\end{comment}
\begin{lemma} \label{lemma:special_sob}
	Suppose $\vdim$, $\adim$, $p$, $U$, $V$, and $\psi$ are as in
	\ref{miniremark:situation}, $p = \vdim$, $\Lambda =
	\Gamma_{\textup{\cite[10.1]{snulmenn:tv.v2}}} ( \adim )$, $1 \leq q
	\leq \infty$, $f \in \trunc_{\Bdry U} (V)$, and
	\begin{gather*}
		\| V \| ( \{ x \with f(x) > y \} ) < \infty \quad \text{for $0
		< y < \infty$}, \qquad \psi ( \{ x \with f(x)>0 \} ) \leq
		\Lambda^{-1}, \\
		\Lpnorm{\| V\|} qf + \Lpnorm{\| V \|}q{\derivative Vf} <
		\infty.
	\end{gather*}

	Then there holds
	\begin{equation*}
		\Lpnorm{\| \delta V \|} q {f} \leq \Gamma \Lpnorm{\| V \|} q
		{f}^{1-1/q} \Lpnorm{\| V \|}{q}{\derivative Vf}^{1/q},
	\end{equation*}
	where $\Gamma = 2(1+\Lambda)$ and $0^0 = 1$.
\end{lemma}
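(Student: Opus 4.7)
The plan is to establish the base case $q=1$ via a H\"older-Sobolev absorption argument, and then bootstrap to general $q$ by applying that base case to $f^q$.

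First, by \cite[8.16]{snulmenn:tv.v2} one reduces to $Y = \rel$ and $f \geq 0$, assuming the right-hand side is finite. For the base case $q=1$, when $\vdim > 1$ the measure $\| \delta V \|$ is absolutely continuous with respect to $\| V \|$ with density $| \mathbf{h} (V,\cdot) |$, so H\"older's inequality and the smallness hypothesis yield
\begin{equation*}
\| \delta V \| (f) = \tint{}{} f | \mathbf{h} (V,\cdot) | \ud \| V \| \leq \psi (\{f>0\})^{1/\vdim} \Lpnorm{\| V \|}{\vdim/(\vdim-1)}{f} \leq \Lambda^{-1/\vdim} \Lpnorm{\| V \|}{\vdim/(\vdim-1)}{f}.
\end{equation*}
I plan to invoke \cite[10.1\,(2a)]{snulmenn:tv.v2} (for $f \in \trunc_{\Bdry U}(V)$ with the smallness hypothesis in force) to bound the latter by $\Lambda \Lpnorm{\| V \|}{1}{\derivative V f}$; combining gives $\| \delta V \| (f) \leq \Lambda^{(\vdim-1)/\vdim} \Lpnorm{\| V \|}{1}{\derivative V f} \leq (1+\Lambda) \Lpnorm{\| V \|}{1}{\derivative V f}$. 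The case $\vdim=1$ is parallel: $\psi = \| \delta V \|$, the endpoint $\beta = \infty$ of Corollary \ref{corollary:sob_poin_summary} gives $\| f \|_{L^\infty(\| V \|)} \leq \Lambda \Lpnorm{\| V \|}{1}{\derivative V f}$ under the smallness hypothesis, and $\| \delta V \|(f) \leq \Lpnorm{\| V \|}{\infty}{f} \cdot \psi(\{f>0\})$ closes the estimate.

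For general $1 < q < \infty$, I would verify $f^q \in \trunc_{\Bdry U}(V)$ with $\derivative V{(f^q)} = q f^{q-1} \derivative V f$ by composing with a suitable nonnegative $C^1$ function of class considered in \cite[9.2, 9.4]{snulmenn:tv.v2} (noting $\{ f^q > 0 \} = \{ f > 0 \}$ so both the finite superlevel set hypothesis and the smallness of $\psi$ transfer). Applying the $q=1$ estimate to $f^q$ and then H\"older with conjugate exponents $q/(q-1)$ and $q$ gives
\begin{equation*}
\tint{}{} f^q \ud \| \delta V \| \leq C q \tint{}{} f^{q-1} | \derivative V f | \ud \| V \| \leq C q \Lpnorm{\| V \|}{q}{f}^{q-1} \Lpnorm{\| V \|}{q}{\derivative V f}
\end{equation*}
with $C = 1 + \Lambda$. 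Taking $q$-th roots yields the desired estimate with constant $(Cq)^{1/q}$, which is easily checked to be uniformly bounded by $2(1+\Lambda)$ for $q \geq 1$ since the function $q \mapsto (Cq)^{1/q}$ achieves its supremum at $q = e/C$ (where it equals $e^{C/e}$) or at $q=1$ (where it equals $C$), both bounded by $2(1+\Lambda)$. The endpoint $q = \infty$ follows from \cite[8.33]{snulmenn:tv.v2}, which yields $\Lpnorm{\| \delta V \|}{\infty}{f} \leq 2 \Lpnorm{\| V \|}{\infty}{f} \leq 2(1+\Lambda) \Lpnorm{\| V \|}{\infty}{f}$ for any $f \in \trunc(V)$.

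The main obstacle is the $q=1$ absorption: the smallness exponent $\Lambda^{-1/\vdim}$ from H\"older has to combine with the Sobolev constant $\Lambda$ from \cite[10.1]{snulmenn:tv.v2} to give a bound without any residual $\| \delta V \|(f)$ on the right. The hypothesis $\psi(\{f>0\}) \leq \Lambda^{-1}$ is tuned precisely so that the mean-curvature-free form of the Sobolev inequality applies (rather than the form in Corollary \ref{corollary:sob_poin_summary} which carries an unabsorbed $\| \delta V \|(f)$ term). A secondary technical point is ensuring that $f^q$ genuinely lies in $\trunc_{\Bdry U}(V)$ with the expected weak derivative, which requires care when $1 < q < 2$ since $t \mapsto t^q$ is only $C^1$ rather than Lipschitzian at $t=0$; this is addressed by invoking the composition and convergence results of \cite[9.2, 9.4, 9.13, 9.14]{snulmenn:tv.v2} applied to the truncations $\min\{f,s\}$ and a limiting argument.
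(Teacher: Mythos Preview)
Your approach is essentially the paper's: both apply the Sobolev inequality \cite[10.1\,(2b)\,(2c)]{snulmenn:tv.v2} (not (2a)---that is the version carrying the $\|\delta V\|(f)$ term you yourself flag as unusable) to $f^q$ under the smallness hypothesis, combine with H\"older on the $\psi$ side, and finish via $q^{1/q}\leq 2$. The paper simply does this in one pass for general $q$ rather than isolating $q=1$ first, and handles the regularity of $t\mapsto t^q$ by reducing at the outset to bounded $f$ via \cite[8.12, 8.13\,(4), 9.9]{snulmenn:tv.v2} (then invoking \cite[8.6, 9.9]{snulmenn:tv.v2}), which is a bit cleaner than your truncation-and-limit sketch; for $\vdim=1$ the passage from $\Lpnorm{\|V\|}{\infty}{f}$ to a bound $\|\delta V\|$~a.e.\ also needs \cite[8.33]{snulmenn:tv.v2}, which you should cite explicitly.
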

\begin{proof}
	One may assume $q < \infty$ by \cite[8.33]{snulmenn:tv.v2} and that
	$f$ is bounded by \cite[8.12, 8.13\,(4), 9.9]{snulmenn:tv.v2}.  Define
	$\alpha = \infty$ if $\vdim = 1$ and $\alpha = q\vdim/(\vdim-1)$ if
	$\vdim > 1$. Then H{\"o}lder's inequality, in case $\vdim = 1$ in
	conjunction with \cite[8.33]{snulmenn:tv.v2}, implies
	\begin{equation*}
		\Lpnorm{\| \delta V\|}qf \leq \psi (\{ x \with f(x)>0
		\})^{1/(\vdim q)} \Lpnorm{\| V \|} \alpha f \leq
		\Lambda^{-1/(\vdim q)} \Lpnorm{\| V \|} \alpha f
	\end{equation*}
	and applying \cite[10.1\,(2b)\,(2c)]{snulmenn:tv.v2} with $f$ and $q$
	replaced by $f^q$ and $1$ in conjunction with \cite[8.6,
	9.9]{snulmenn:tv.v2} yields
	\begin{equation*}
		\Lpnorm{\| V \|} \alpha{f} \leq (\Lambda q)^{1/q} \Lpnorm{\| V
		\|}{1}{f^{q-1} \derivative Vf}^{1/q}
	\end{equation*}
	and the conclusion follows from H{\"o}lder's inequality and the fact
	$q^{1/q} \leq 2$.
\end{proof}
\begin{remark} \label{remark:special_sob}
	In particular, one infers
	\begin{equation*}
		\Lpnorm{\| \delta V \|} q {f} \leq \Gamma \big ( \Lpnorm{\| V
		\|} q {f} + \Lpnorm{\| V \|}{q}{\derivative Vf} \big ).
	\end{equation*}
\end{remark}
\begin{comment}
	Localising the preceding estimate, one obtains the following theorem.
\end{comment}
\begin{theorem} \label{thm:special_sob}
	Suppose $\vdim$, $\adim$, $p$, $U$, $V$, and $\psi$ are as in
	\ref{miniremark:situation}, $p = \vdim$, $\Lambda =
	\Gamma_{\textup{\cite[10.1]{snulmenn:tv.v2}}} ( \adim )$, $\psi ( U )
	\leq \Lambda^{-1}$, $1 \leq q \leq \infty$, $Y$ is a finite
	dimensional normed vectorspace, $f \in \trunc (V,Y)$, $0 < r <
	\infty$, and $A = \{ x \with \oball xr \subset U \}$.

	Then there holds
	\begin{equation*}
		\eqLpnorm{\| \delta V \| \restrict A} q {f} \leq \Gamma \big (
		r^{-1/q} \Lpnorm{\| V \|} q {f} + r^{1-1/q} \Lpnorm{\| V
		\|}{q}{\derivative Vf} \big ),
	\end{equation*}
	where $\Gamma = 4(1+\Lambda)$.
\end{theorem}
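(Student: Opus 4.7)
The strategy is to apply Lemma~\ref{lemma:special_sob} (in its product form) to $\phi_n f$ for a sequence of Lipschitzian cutoffs $\phi_n$ that approaches $1$ on $A$, and then pass to the limit. First reduce to $Y = \rel$ and $f \geq 0$ via \cite[8.16]{snulmenn:tv.v2}, which gives $|\derivative V{|f|}| \leq |\derivative Vf|$. The case $q = \infty$ will follow immediately from the lemma applied to $\phi_n f$, since (with $0^0 = 1$) its product form collapses to $\Lpnorm{\| \delta V \|}\infty{\phi_n f} \leq 2(1+\Lambda) \Lpnorm{\| V \|}\infty f$, which already lies below the desired bound $4(1+\Lambda)(\Lpnorm{\| V \|}\infty f + r \Lpnorm{\| V \|}\infty{\derivative Vf})$. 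For the main range $1 \leq q < \infty$ one may further assume $\Lpnorm{\| V \|}q f + \Lpnorm{\| V \|}q{\derivative Vf} < \infty$, else the claim is trivial.

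For $q < \infty$, choose compact sets $K_n \subset A$ with $K_n \uparrow A$ and define
\begin{equation*}
	\phi_n (x) = \min\{1, r^{-1} \dist(x, \rel^\adim \without U)\} \cdot \max\{0, 1 - (rn)^{-1} \dist(x, K_n)\}.
\end{equation*}
Then $\phi_n$ takes values in $[0,1]$, has compact support in $U$ (the first factor vanishes off $U$ and the second confines the support to $K_n + \cball{0}{rn}$), equals $1$ on $K_n$, and satisfies $\Lip \phi_n \leq (1 + 1/n)/r$. By \cite[9.2, 9.4]{snulmenn:tv.v2}, $\phi_n f \in \trunc_{\Bdry U} (V)$, and the product rule from \cite[8.18, 8.20]{snulmenn:tv.v2} gives
\begin{equation*}
	| \derivative V{(\phi_n f)} | \leq \phi_n | \derivative Vf | + f \, | \derivative V{\phi_n} | \leq | \derivative Vf | + \tfrac{1+1/n}{r} f
\end{equation*}
$\| V \|$-almost everywhere. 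The remaining hypotheses of Lemma~\ref{lemma:special_sob} are verified from $\psi ( \{ \phi_n f > 0 \} ) \leq \psi (U) \leq \Lambda^{-1}$ and Chebyshev's inequality applied to $\Lpnorm{\| V \|}q f < \infty$.

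The lemma, combined with $\Lpnorm{\| V \|}q{\phi_n f} \leq \Lpnorm{\| V \|}q f$ and the subadditivity $(a+b)^{1/q} \leq a^{1/q} + b^{1/q}$ (valid for $q \geq 1$), bounds $\Lpnorm{\| \delta V \|}q{\phi_n f}$ by $2(1+\Lambda)$ times
\begin{equation*}
	\Lpnorm{\| V \|}q f^{1-1/q} \Lpnorm{\| V \|}q{\derivative Vf}^{1/q} + (1 + 1/n)^{1/q} r^{-1/q} \Lpnorm{\| V \|}q f.
\end{equation*}
The cross term is handled by Young's inequality $a^{(q-1)/q} b^{1/q} \leq \tfrac{q-1}{q} a s + \tfrac{1}{q} b s^{-(q-1)}$ with the choice $s = r^{-1/q}$, producing the bound $r^{-1/q} \Lpnorm{\| V \|}q f + r^{1-1/q} \Lpnorm{\| V \|}q{\derivative Vf}$. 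Since $\phi_n \to 1$ pointwise on $A$ and $0 \leq \phi_n \leq 1$, Fatou's lemma yields $\Lpnorm{\| \delta V \| \restrict A}q f \leq \liminf_n \Lpnorm{\| \delta V \|}q{\phi_n f}$; as $(1+1/n)^{1/q} \to 1$, the combined prefactor tends to $2(1+\Lambda)(1+1) = 4(1+\Lambda)$, giving the claim with $\Gamma = 4(1+\Lambda)$. The main technical obstacle is establishing $\phi_n f \in \trunc_{\Bdry U} (V)$ together with the product-rule bound on $\derivative V{(\phi_n f)}$; both rely on the behaviour of $\trunc_G (V)$ under multiplication by compactly supported Lipschitzian functions, as provided by \cite[\S\,9]{snulmenn:tv.v2}, and on the careful choice of Young's weight so that the constant is sharp.
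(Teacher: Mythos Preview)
Your strategy matches the paper's: multiply by a Lipschitz cutoff to land in $\trunc_{\Bdry U}(V)$ and then invoke Lemma~\ref{lemma:special_sob}. The paper's execution is shorter in two respects. First, it rescales to $r=1$ at the outset, so the target reads $\eqLpnorm{\|\delta V\|\restrict A}{q}{f}\leq\Gamma(\Lpnorm{\|V\|}{q}{f}+\Lpnorm{\|V\|}{q}{\derivative Vf})$ and the powers $r^{-1/q},r^{1-1/q}$ never appear; they are recovered at the end by the natural scaling of $\|V\|$, $\|\delta V\|$, and $\derivative V$. Second, with $r=1$ the paper applies the \emph{additive} consequence of the lemma (Remark~\ref{remark:special_sob}) rather than its product form: for any nonnegative Lipschitz $g$ with compact support, $\sup g\leq 1$, $\Lip g\leq 1$, one has $\Lpnorm{\|V\|}{q}{gf}+\Lpnorm{\|V\|}{q}{\derivative V(gf)}\leq 2\gamma$, hence $\Lpnorm{\|\delta V\|}{q}{gf}\leq 2(1+\Lambda)\cdot 2\gamma$. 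This avoids both the subadditivity step $(a+b)^{1/q}\leq a^{1/q}+b^{1/q}$ and Young's inequality entirely. Your route is correct and yields the same constant, but is more elaborate than necessary.

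One technical point deserves care: your cutoff $\phi_n$ need not have compact support \emph{in $U$}. The second factor gives compact support in $\rel^\adim$, but the closure of $\{\phi_n>0\}$ can meet $\Bdry U$ (take $U=\oball{0}{2}$, $r=1$, $K_1=\cball{0}{1}$: then $\phi_1>0$ on all of $U$). The citations \cite[8.20\,(4), 9.2, 9.4]{snulmenn:tv.v2}, as used in the paper, are invoked for cutoffs with compact support contained in $U$. The fix is immediate: drop the first factor and take $\phi_n(x)=\max\{0,1-(1+1/n)r^{-1}\dist(x,K_n)\}$, whose support lies in $K_n+\cball{0}{r/(1+1/n)}\subset U$ since $K_n\subset A$; this gives $\Lip\phi_n\leq(1+1/n)r^{-1}$ and $\phi_n=1$ on $K_n$, and the rest of your argument goes through unchanged.
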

\begin{proof}
	Assume firstly $r = 1$, secondly $Y = \rel$ and $f \geq 0$ by
	\cite[8.16]{snulmenn:tv.v2} and thirdly $\gamma = \Lpnorm{\| V \|}qf +
	\Lpnorm{\| V \|} q{\derivative Vf} < \infty$. Whenever $g : U \to
	\rel$ is a nonnegative Lipschitzian function with compact support,
	$\sup \im g \leq 1$ and $\Lip g \leq 1$, one infers from
	\cite[8.20\,(4), 9.2, 9.4]{snulmenn:tv.v2} that
	\begin{equation*}
		g f \in \trunc_{\Bdry U} (V) \quad \text{and} \quad \Lpnorm{\|
		V \|}q{gf} + \Lpnorm{\| V \|}q{\derivative V{(gf)}} \leq 2
		\gamma,
	\end{equation*}
	hence \ref{lemma:special_sob} and \ref{remark:special_sob} imply
	$\Lpnorm{\| \delta V \|}{q}{ gf } \leq \Gamma \gamma$.
\end{proof}
\begin{comment}
	If $U = \rel^\adim$, a global estimate under a weaker condition on
	$\psi$ will be proven.
\end{comment}
\begin{theorem} \label{thm:global_special_sob}
	Suppose $\vdim$, $\adim$, $p$, $U$, $V$, and $\psi$ are as in
	\ref{miniremark:situation}, $p = \vdim$, $U = \rel^\adim$, and $\psi (
	\rel^\adim ) < \infty$.

	Then there exists a positive, finite number $\Gamma$ with the
	following property.

	If $1 \leq q \leq \infty$ and $Y$ is a finite dimensional normed
	vectorspace, then
	\begin{equation*}
		\Lpnorm{\| \delta V \|} qf \leq \Gamma \big ( \Lpnorm{\| V \|}
		qf + \Lpnorm{\| V \|} q{\derivative Vf} \big ) \quad \text{for
		$f \in \trunc (V,Y)$}.
	\end{equation*}
\end{theorem}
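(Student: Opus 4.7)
The plan is to localise Theorem~\ref{thm:special_sob} (and Theorem~\ref{thm:one_dim} in the one dimensional case) by means of a bounded overlap covering of $\rel^\adim$. After reducing to $Y=\rel$ and $f\ge 0$ via \cite[8.16]{snulmenn:tv.v2}, the case $q=\infty$ is disposed of using \cite[8.33]{snulmenn:tv.v2}, which implies $\eqLpnorm{\|\delta V\|}{\infty}{f} \le 2\,\eqLpnorm{\|V\|}{\infty}{f}$; so the remaining task is to handle $1\le q<\infty$.

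Consider first $\vdim>1$. Rectifiability of $V$ together with $\vdim\ge 2$ force $\|V\|$ to be atomless, and the representation $\psi = |\mathbf{h}(V,\cdot)|^\vdim \|V\|$ propagates this property to $\psi$. Since $\psi$ is a finite Radon measure without atoms on $\rel^\adim$ and the function $x\mapsto \psi(\cball{x}{r})$ is upper semicontinuous, inner regularity combined with a compactness argument---extract a compact $K$ with $\psi(\rel^\adim\without K)\le\Lambda^{-1}/3$, where $\Lambda$ is the constant of~\ref{thm:special_sob}, and use upper semicontinuity on $K$---produces a single $r>0$ such that $\psi(\cball{x}{r})\le\Lambda^{-1}$ for every $x\in\rel^\adim$. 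I would then cover $\rel^\adim$ by a lattice of balls $\oball{z_i}{r/2}$ whose enlargements $\oball{z_i}{r}$ have uniformly bounded overlap. The restricted varifold $V|\mathbf{2}^{\oball{z_i}{r}\times\grass{\adim}{\vdim}}$ inherits the hypotheses of~\ref{miniremark:situation} with $\psi$ replaced by its restriction to $\oball{z_i}{r}$, of total mass at most $\Lambda^{-1}$. Theorem~\ref{thm:special_sob} applied on each such ball bounds $\eqLpnorm{\|\delta V\|\restrict\oball{z_i}{r/2}}{q}{f}$ by the $\|V\|$-integrals of $f$ and $\derivative{V}{f}$ over $\oball{z_i}{r}$; raising to the $q$-th power, summing, and using bounded overlap to convert $\sum_i \int_{\oball{z_i}{r}}\ldots\ud\|V\|$ into a multiple of the global integral yields the desired inequality with a constant independent of $q$.

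For $\vdim=1$, $\psi=\|\delta V\|$ need not be absolutely continuous with respect to $\|V\|$ and may carry atoms, but since $\psi(\rel^\adim)<\infty$ there are only finitely many atoms $a_1,\ldots,a_k$ of mass at least $(2+\Lambda)^{-1}/2$, with $\Lambda$ now from Theorem~\ref{thm:one_dim}. Around each such atom I would select a small ball $\oball{a_j}{r_j}$, pairwise disjoint, with $\|\delta V\|(\oball{a_j}{r_j}\without\{a_j\})\le(2+\Lambda)^{-1}$; outside the union of these balls the remaining part of $\psi$ is small on balls of some uniform radius $r'>0$. A bounded overlap covering adapted to these two regions renders each ball amenable either to Theorem~\ref{thm:one_dim} (with exceptional point equal to the enclosed large atom) or to Theorem~\ref{thm:special_sob}. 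The former produces a $\|V\|$-essential supremum bound for $f$ on the ball, which combined with \cite[8.33]{snulmenn:tv.v2} and the finite mass of $\psi$ yields the desired $\|\delta V\|$-$L^q$ bound over that ball; one then sums as before.

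The main obstacle is the $\vdim=1$ case: the absence of absolute continuity of $\|\delta V\|$ with respect to $\|V\|$ prevents a direct covering argument based on~\ref{thm:special_sob}. Instead the large atoms of $\psi$ have to be isolated and counted using $\psi(\rel^\adim)<\infty$, and the $\|V\|$-$L^\infty$ estimate of~\ref{thm:one_dim} has to be transferred to a $\|\delta V\|$-$L^q$ estimate via \cite[8.33]{snulmenn:tv.v2}, all while keeping the final constant independent of $q\in[1,\infty]$.
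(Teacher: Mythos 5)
Your proposal is correct and follows essentially the same route as the paper's proof: there, too, the estimate is obtained by localised applications of \ref{thm:special_sob} where $\psi$ is small on balls of a uniform radius and, when $\vdim=1$, of \ref{thm:one_dim} around finitely many points where $\| \delta V \|$ may concentrate, with the resulting $\| V \|$-essential supremum bound converted into an $\Lp q ( \| \delta V \| )$ bound by means of \cite[8.33]{snulmenn:tv.v2} and H{\"o}lder's inequality. The only difference is bookkeeping: in place of your countable bounded-overlap lattice and summation of $q$-th powers, the paper covers a compact set carrying all but $\Gamma_{\textup{\cite[10.1]{snulmenn:tv.v2}}} ( \adim )^{-1}$ of $\psi$ by finitely many balls $\oball {a_j}{r_j}$ with $\psi ( \oball {a_j}{2r_j} \without \{ a_j \} )$ small, treats the complementary region by one further application of \ref{thm:special_sob}, and adds the finitely many seminorm estimates, which gives a $q$-independent constant without requiring your Dini-type argument for a uniform radius or the classification of atoms.
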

\begin{proof}
	Define $\Delta_1 = \Gamma_{\textup{\cite[10.1]{snulmenn:tv.v2}}} (
	\adim)$.  Choose a nonempty compact subset $K$ of $\rel^\adim$ such
	that $\psi ( \rel^\adim \without K ) \leq \Delta_1^{-1}$. If $\vdim =
	1$ then let
	\begin{equation*}
		M = \sup \big ( \{ \adim \} \cup \{ r^{-1} \measureball{\| V
		\|}{ \oball a{2r} } \with a \in K, 0 < r \leq 1 \} \big ),
	\end{equation*}
	hence $M < \infty$ by \cite[4.8\,(1)]{snulmenn:tv.v2}, and
	define $\Delta_2 = 2+\Gamma_{\textup{\cite[10.1]{snulmenn:tv.v2}}} ( 1
	+ M)$. If $\vdim > 1$ then let $\Delta_2 = \Delta_1$.  Next, choose $k
	\in \nat$ and $a_j \in K$, $0 < r_j \leq 1$ for $j = 1, \ldots, k$
	such that
	\begin{gather*}
		\measureball{\| V \|}{\oball {a_j}{2r_j}} \leq M \unitmeasure 1
		r_j \quad \text{if $\vdim = 1$}, \\
		\psi ( \oball {a_j}{2r_j} \without \{ a_j \} ) \leq \inf \{
		\Delta_1^{-1}, \Delta_2^{-1} \}, \quad K \subset {\textstyle
		\bigcup \{ \oball{a_j}{r_j} \with j = 1, \ldots, k \}},
	\end{gather*}
	and let $A = \rel^\adim \without \bigcup_{j=1}^k \oball {a_j}{r_j}$.
	Abbreviating
	\begin{gather*}
		\begin{aligned}
			\Delta_3 & = \sup \big \{ 2^5 M^2 \Delta_2 (1 + \psi (
			\rel^\adim ) ), 4 ( 1 + \Delta_1 ) \big \}, && \quad
			\text{if $\vdim = 1$}, \\
			\Delta_3 & = 4(1+\Delta_1), && \quad \text{if $\vdim >
			1$},
		\end{aligned} \\
		r = \inf \big ( \{ \dist (a, A ) \with a \in K \} \cup \{ r_j
		\with j = 1, \ldots, k \} \big ),
	\end{gather*}
	define $\Gamma = ( k + 1 ) \Delta_3 r^{-1}$.

	Suppose $1 \leq q \leq \infty$ and $Y$ is a finite dimensional normed
	vectorspace.

	Then one obtains
	\begin{equation*}
		\eqLpnorm{\| \delta V \| \restrict \oball {a_j}{r_j}} q f \leq
		\Delta_3 \big ( r^{-1} \Lpnorm{\| V \|}qf + \Lpnorm{\| V \|} q
		{\derivative Vf} \big )
	\end{equation*}
	for $j=1,\ldots, k$ from \ref{thm:one_dim}, \cite[8.33]{snulmenn:tv.v2}
	and H{\"o}lder's inequality if $\vdim = 1$ and from
	\ref{thm:special_sob} if $\vdim > 1$.  Moreover, \ref{thm:special_sob}
	yields
	\begin{equation*}
		\eqLpnorm{\| \delta V \| \restrict A} qf \leq \Delta_3 \big (
		r^{-1} \Lpnorm{\| V \|} qf + \Lpnorm{\| V \|} q{\derivative
		Vf} \big).
	\end{equation*}
	Summing the preceding inequalities yields the conclusion.
\end{proof}
\begin{comment}
	Turning to local Sobolev spaces, firstly two properties of their
	topologies implied by the preceding estimates are gathered.
\end{comment}
\begin{theorem} \label{thm:replacement-w1q-loc}
	Suppose $\vdim$, $\adim$, $p$, $U$, and $V$ are as in
	\ref{miniremark:situation}, $p = \vdim$, $1 \leq q \leq \infty$, $Y$
	is a finite dimensional normed vectorspace, and $\sigma_K : \SWloc q
	(V,Y) \to \rel$ satisfy
	\begin{equation*}
		\sigma_K (f) = \eqLpnorm{\| V \| \restrict K} qf +
		\eqLpnorm{\| V \| \restrict K} q {\derivative Vf} \quad
		\text{for $f \in \SWloc q (V,Y)$}
	\end{equation*}
	whenever $K$ is a compact subset of $U$.

	Then the following two statements hold.
	\begin{enumerate}
		\item \label{item:replacement-w1q-loc:top} The topology of
		$\SWloc q (V,Y)$ is induced by the seminorms $\sigma_K$
		corresponding to all compact subsets $K$ of $U$.
		\item \label{item:replacement-w1q-loc:abs} The set of
		\begin{equation*}
			(f,F) \in \Lploc q ( \| V \|, Y ) \times \Lploc q ( \|
			V \|, \Hom ( \rel^\adim, Y ) )
		\end{equation*}
		such that some $g \in \SWloc q (V,Y)$ satisfies
		\begin{equation*}
			f(x) = g(x) \quad \text{and} \quad F(x) = \derivative
			Vg (x) \quad \text{for $\| V \|$ almost all $x$}
		\end{equation*}
		is closed in $\Lploc q ( \| V \|, Y ) \times \Lploc q ( \| V
		\|, \Hom ( \rel^\adim, Y ) )$.
	\end{enumerate}
\end{theorem}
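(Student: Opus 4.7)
The plan is to derive \eqref{item:replacement-w1q-loc:abs} from \eqref{item:replacement-w1q-loc:top} via the completeness of $\SWloc q (V,Y)$ recorded in \ref{remark:completeness_wqloc}, so the main effort concerns the equivalence of topologies in \eqref{item:replacement-w1q-loc:top}. One inclusion is immediate: since $\sigma_K(f) \leq \eqLpnorm{(\|V\|+\|\delta V\|)\restrict K}{q}{f}+\eqLpnorm{\|V\|\restrict K}{q}{\derivative V f}$, each $\sigma_K$ is continuous in the topology of $\SWloc q (V,Y)$ furnished by \ref{def:topology-SWSob}. For the reverse inclusion it is enough to exhibit, for every compact $K \subset U$, a compact $K' \subset U$ with $K \subset K'$ and a finite constant $C$ such that
\begin{equation*}
	\eqLpnorm{\|\delta V\|\restrict K}{q}{f} \leq C \sigma_{K'}(f) \quad \text{for every } f \in \SWloc q (V,Y).
\end{equation*}

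I would establish this estimate by localisation combined with either \ref{thm:special_sob} if $\vdim > 1$ or \ref{thm:one_dim} if $\vdim = 1$, writing $\Lambda = \Gamma_{\textup{\cite[10.1]{snulmenn:tv.v2}}}(\adim)$. The idea is that for every $x \in K$ some $r_x > 0$ with $\cball{x}{2r_x} \subset U$ should make the restricted varifold $V_x = V | \mathbf{2}^{\oball{x}{2r_x} \times \grass \adim \vdim}$ satisfy the hypotheses of the chosen estimate on $\oball{x}{2r_x}$, noting that $\|\delta V_x\|$ agrees with $\|\delta V\|$ on the interior of its domain. If $\vdim > 1$, then $\psi$ is absolutely continuous with respect to the atomless Radon measure $\|V\|$ (atomless since $V$ is rectifiable and $\vdim \geq 1$), so $r_x$ can be chosen with $\psi(\oball{x}{2r_x}) \leq \Lambda^{-1}$. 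If $\vdim = 1$, the atoms of $\|\delta V\|$ form a countable set, and $r_x$ can therefore be chosen small enough that the punctured ball $\oball{x}{2r_x} \without \{x\}$ contains no atom of $\|\delta V\|$ and $\|\delta V\|(\oball{x}{2r_x}\without\{x\})$ falls below the threshold appearing in \ref{thm:one_dim}. Covering $K$ by finitely many balls $\oball{x_j}{r_j}$ of this kind and applying the chosen estimate on each restricted varifold yields the claim, modulo one extra step when $\vdim = 1$: since \ref{thm:one_dim} supplies only an $L^\infty$-$\|V\|$ bound on $f$, the conclusion must be converted to an $L^q$-$\|\delta V\|$ bound via \cite[8.33]{snulmenn:tv.v2}, comparing essential suprema with respect to $\|\delta V\|$ and $\|V\|$, and H\"older's inequality applied with the finite measure $\|\delta V\|\restrict\oball{x_j}{r_j}$. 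This conversion is the main technical obstacle, as the general hypothesis in \ref{miniremark:situation} permits $\|\delta V\|$ to carry a non-trivial non-absolutely-continuous part in dimension one.

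Part \eqref{item:replacement-w1q-loc:abs} then follows at once. Given a sequence $(f_i,F_i)$ in the indicated set converging to $(f,F)$ in $\Lploc q ( \| V \|, Y ) \times \Lploc q ( \| V \|, \Hom ( \rel^\adim, Y ) )$, I would pick representatives $g_i \in \SWloc q(V,Y)$ with $g_i(x) = f_i(x)$ and $\derivative V{g_i}(x) = F_i(x)$ for $\|V\|$ almost all $x$; the sequence $(g_i)$ is then Cauchy with respect to each seminorm $\sigma_K$, hence Cauchy in the topology of $\SWloc q (V,Y)$ by \eqref{item:replacement-w1q-loc:top}, and by completeness recorded in \ref{remark:completeness_wqloc} converges to some $g \in \SWloc q (V,Y)$ satisfying $g = f$ and $\derivative V g = F$ $\|V\|$-almost everywhere, yielding the desired closedness.
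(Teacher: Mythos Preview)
Your proposal is correct and follows essentially the same route as the paper: part \eqref{item:replacement-w1q-loc:top} is obtained by localising and applying \ref{thm:special_sob} when $\vdim>1$ and \ref{thm:one_dim} (together with \cite[8.33]{snulmenn:tv.v2} and H\"older) when $\vdim=1$, and part \eqref{item:replacement-w1q-loc:abs} then follows from \eqref{item:replacement-w1q-loc:top} and the completeness recorded in \ref{remark:completeness_wqloc}.

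One small point you gloss over in the one--dimensional case: to apply \ref{thm:one_dim} on $\oball{x}{2r_x}$ with $r=r_x$ you also need the mass hypothesis $\|V\|(\oball{x}{2r_x})\leq M\unitmeasure{1}r_x$ for some finite $M\geq\adim$, uniformly over the chosen cover, and the constant in the conclusion depends on this $M$. The paper secures this via \cite[4.8]{snulmenn:tv.v2}, which provides the required local upper bound on the density ratio of $\|V\|$ under the hypotheses of \ref{miniremark:situation}. (Also, your remark that the punctured ball can be chosen to contain \emph{no} atom of $\|\delta V\|$ is neither needed nor obviously true, since atoms may accumulate at $x$; what you actually use, and what does hold by outer regularity, is that $\|\delta V\|(\oball{x}{2r_x}\without\{x\})\to 0$ as $r_x\to 0$.)
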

\begin{proof}
	Noting \ref{miniremark:lcs-induced-by-seminorms} and
	\ref{remark:strict_local_sobolev_space_inclusion}, and in case $\vdim
	= 1$ also \cite[4.8\,(2)]{snulmenn:tv.v2}, one may employ
	\ref{thm:one_dim}, \cite[8.33]{snulmenn:tv.v2}, and H{\"o}lder's
	inequality if $\vdim = 1$ and \ref{thm:special_sob} if $\vdim > 1$ to
	verify \eqref{item:replacement-w1q-loc:top}. Moreover,
	\ref{remark:completeness_wqloc} and
	\eqref{item:replacement-w1q-loc:top} yield
	\eqref{item:replacement-w1q-loc:abs}.
\end{proof}
\begin{remark} \label{remark:replacement-w1q-loc}
	If $V$ satisfies the hypotheses of
	\ref{thm:replacement-w1q-loc}, $\| \delta V \|$ is not
	absolutely continuous with respect to $\| V \|$ and $\| \delta V \| (
	\{ x \} ) = 0$ for $x \in U$, then the set occurring in
	\eqref{item:replacement-w1q-loc:abs} contains pairs $(f,F)$ with $f$
	being $\| \delta V \|$ nonmeasurable by \cite[2.2.4,
	2.9.2]{MR41:1976}. Such $V$ do exist if and only if $\vdim = 1$, see
	\cite[12.3]{snulmenn:tv.v2}.
\end{remark}
\begin{comment}
	Continuous embeddings into local Lebesgue spaces closely resemble the
	behaviour of Sobolev spaces in the Euclidean case.
\end{comment}
\begin{theorem} \label{thm:local_sobolev_embedding}
	Suppose $\vdim$, $\adim$, $p$, $U$, $V$, and $\psi$ are as in
	\ref{miniremark:situation}, $p = \vdim$,
	\begin{enumerate}
		\item \label{item:local_sobolev_embedding:m=1} either $\vdim =
		q = 1$ and $\alpha = \infty$,
		\item \label{item:local_sobolev_embedding:q<m} or $1 \leq q <
		\vdim$ and $\alpha = \vdim q/(\vdim-q)$,
		\item \label{item:local_sobolev_embedding:m<q} or $1 < \vdim <
		q$ and $\alpha = \infty$,
	\end{enumerate}
	and $Y$ is a finite dimensional normed vectorspace.

	Then $\SWloc q (V,Y)$ embeds continuously into $\Lploc \alpha ( \| V
	\|, Y )$.
\end{theorem}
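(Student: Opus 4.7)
The plan is to establish, for every compact $K \subset U$, a bound of the form
\begin{equation*}
	\eqLpnorm{\| V \| \restrict K}{\alpha}{f} \leq C \bigl( \eqLpnorm{\| V \| \restrict K'}{q}{f} + \eqLpnorm{\| V \| \restrict K'}{q}{\derivative V f} \bigr)
\end{equation*}
for some compact $K' \subset U$ and constant $C$, both independent of $f \in \SWloc q (V,Y)$; by \ref{thm:replacement-w1q-loc}\,\eqref{item:replacement-w1q-loc:top} and \ref{remark:lploc-countable}, this suffices. The inequality \ref{thm:local_sobolev} (or \ref{thm:one_dim} when $\vdim = 1$) would deliver such a bound directly, but its smallness hypothesis on $\psi$ is stated over the entire ambient set; the point of the argument is therefore to localize to balls on which that smallness is available.

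Given $K$, for each $a \in K$ I would choose $r_a > 0$ so small that $\cball a{2 r_a} \subset U$ together with the requisite smallness of $\psi$ on $\oball a{2 r_a}$. In cases \eqref{item:local_sobolev_embedding:q<m} and \eqref{item:local_sobolev_embedding:m<q} one has $\vdim \geq 2$, so by \ref{miniremark:situation} $\psi$ is absolutely continuous with respect to $\| V \|$; since $V$ is rectifiable, $\| V \| \ll \mathscr{H}^\vdim$ and hence $\| V \| ( \{ a \} ) = 0$, so $\psi ( \oball a{2 r_a} ) \to 0$ as $r_a \to 0+$ and the condition $\psi ( \oball a{2 r_a} ) \leq \Gamma_{\textup{\cite[10.1]{snulmenn:tv.v2}}}(\adim)^{-1}$ can be arranged. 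In case \eqref{item:local_sobolev_embedding:m=1}, the measure $\| \delta V \|$ may carry atoms but only countably many, so $\psi ( \oball a{2 r_a} \without \{ a \} ) \to 0$ as $r_a \to 0+$; this is precisely the single-point exception permitted by \ref{thm:one_dim}. By compactness of $K$, I extract a finite subcover $K \subset \bigcup_{j=1}^N \oball{a_j}{r_j}$ and consider the restricted varifolds $V_j = V | \mathbf{2}^{U_j \times \grass \adim \vdim}$ on $U_j = \oball{a_j}{2 r_j}$; each $V_j$ continues to satisfy \ref{miniremark:situation} on $U_j$ with $\psi_j = \psi \restrict U_j$ and, by construction, meets the smallness condition of the relevant estimate.

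For $f \in \SWloc q ( V, Y )$ one has $f \in \trunc ( V, Y )$ by \ref{remark:strict_local_sobolev_space_inclusion}, and the restriction of $f$ to $U_j$ belongs to $\trunc ( V_j, Y )$ with derivative agreeing with $\derivative V f$ on $U_j$ up to $\| V_j \|$ null sets. Applying \ref{thm:local_sobolev} (respectively \ref{thm:one_dim} in case \eqref{item:local_sobolev_embedding:m=1}) to $V_j$ and $\psi_j$ with $r = r_j$ --- noting $\cball{a_j}{r_j} \subset \{ x \in U_j \with \oball x{r_j} \subset U_j \}$ --- yields
\begin{equation*}
	\eqLpnorm{\| V \| \restrict \cball{a_j}{r_j}}{\alpha}{f} \leq \kappa_j \bigl( r_j^{-1} \eqLpnorm{\| V \| \restrict U_j}{q}{f} + \eqLpnorm{\| V \| \restrict U_j}{q}{\derivative V f} \bigr),
\end{equation*}
where $\kappa_j < \infty$; in case \eqref{item:local_sobolev_embedding:m<q}, the finiteness of $\kappa_j$ uses $\| V \| ( U_j ) < \infty$, which holds because $\Clos U_j$ is a compact subset of $U$. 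Summing these inequalities over $j = 1, \ldots, N$ and setting $K' = \bigcup_{j=1}^N \Clos U_j$ produces the desired bound. The principal obstacle is the one-dimensional case, where the lack of absolute continuity of $\| \delta V \|$ with respect to $\| V \|$ forces the use of \ref{thm:one_dim} and the additional verification that the constant $M$ entering there is finite --- which follows because $\| V \|$ is a Radon measure, so $r_j^{-1} \| V \| ( \oball{a_j}{2 r_j} )$ is finite for every $a_j$ and $r_j$.
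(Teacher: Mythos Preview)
Your proof is correct and follows essentially the same route as the paper: both arguments localise to small balls on which the smallness hypotheses of \ref{thm:one_dim} (for $\vdim = 1$) or \ref{thm:local_sobolev} (for $\vdim > 1$) are available and then sum. The only cosmetic difference is that the paper first passes to the quotient $F$-spaces and invokes \cite[\printRoman{2}.1.14]{MR0117523} to reduce continuity to the statement that bounded sets map to bounded sets, whereas you establish the seminorm inequality directly; the underlying estimates and the covering step (which you spell out and the paper leaves implicit) are the same. One small remark: in case~\eqref{item:local_sobolev_embedding:m=1} the convergence $\psi ( \oball a{2r_a} \without \{ a \} ) \to 0$ follows simply from local finiteness of the Radon measure $\| \delta V \|$ and continuity from above, so the observation about countably many atoms is not needed.
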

\begin{proof}
	In view of \ref{miniremark:lcs-induced-by-seminorms},
	\ref{miniremark:lcs-quotient}, \ref{remark:lploc-quotient},
	\ref{remark:swloc-quotient}, and \cite[\printRoman 2.1.14]{MR0117523},
	it is enough to show that bounded sets in $\SWloc q (V,Y)$ are bounded
	in $\Lploc \alpha ( \| V \|, Y )$.  Noting
	\ref{remark:strict_local_sobolev_space_inclusion} and in case $\vdim =
	1$ also \cite[4.8\,(2)]{snulmenn:tv.v2}, this is a consequence of
	\ref{thm:one_dim} and H{\"o}lder's inequality if $\vdim =1$ and of
	\ref{thm:local_sobolev}\,\eqref{item:local_sobolev:q<m}\,\eqref{item:local_sobolev:m<q}
	if $\vdim > 1$.
\end{proof}
\begin{remark}
	In case of \eqref{item:local_sobolev_embedding:m=1} or
	\eqref{item:local_sobolev_embedding:m<q} a continuous embedding of
	$\SWloc q (V,Y)$ into $\mathscr{C} ( \spt \| V \|, Y )$ will be
	constructed in \ref{thm:sob_continuous_emb}.
\end{remark}
\begin{comment}
	In combination with the Rellich type embedding result for generalised
	weakly differentiable functions, see \ref{thm:rellich-in-measure}, one
	directly infers a Rellich type embedding for local Sobolev functions.
\end{comment}
\begin{theorem} \label{thm:rellich-local-embedding}
	Suppose $\vdim$, $\adim$, $p$, $U$, $V$, and $\psi$ are as in
	\ref{miniremark:situation}, $p = \vdim$,
	\begin{enumerate}
		\item either $\vdim = 1$, $1 \leq q \leq \infty$, and $1 \leq
		\alpha < \infty$,
		\item or $1 \leq q < \vdim$ and $1 \leq \alpha < \vdim
		q/(\vdim-q)$,
	\end{enumerate}
	and $Y$ is a finite dimensional normed vectorspace.

	Then bounded subsets of $\SWloc q ( V,Y )$ have compact closure in
	$\Lploc \alpha (\| V \|,Y)$.
\end{theorem}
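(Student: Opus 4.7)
The plan is to combine the Rellich-in-measure result \ref{thm:rellich-in-measure} with the continuous Sobolev embedding \ref{thm:local_sobolev_embedding} via an elementary interpolation.

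Fix a bounded sequence $f_i \in \SWloc q(V,Y)$. For every compact $K \subset U$, Chebyshev's inequality (for $q<\infty$; trivially for $q=\infty$) applied to the uniform bound on $\eqLpnorm{(\|V\|+\|\delta V\|)\restrict K}{q}{f_i}$ gives $\lim_{t \to \infty} \sup_i \|V\|(K \cap \{|f_i|>t\}) = 0$, while H\"older's inequality together with $\|V\|(K) < \infty$ converts the uniform bound on $\eqLpnorm{\|V\|\restrict K}{q}{\derivative V{f_i}}$ into a uniform $\Lp 1$-bound on $\derivative V{f_i}$ over $K$, which dominates the truncated bound required in \ref{thm:rellich-in-measure}. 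That theorem then produces a $\|V\|$-measurable function $f$ and a single subsequence, still denoted $f_i$, with $f_i \to f$ in $\|V\|\restrict K$ measure for every compact $K \subset U$.

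Next I would identify an exponent $\alpha' \in (\alpha,\infty]$ in which $(f_i)$ remains uniformly bounded on compacts. In case (2), \ref{thm:local_sobolev_embedding}\eqref{item:local_sobolev_embedding:q<m} directly supplies $\alpha' = \vdim q/(\vdim-q)$. In case (1), H\"older's inequality and $(\|V\|+\|\delta V\|)(K) < \infty$ show that any bounded sequence in $\SWloc q$ restricts to a bounded sequence in $\SWloc 1$ on compacts, whence \ref{thm:local_sobolev_embedding}\eqref{item:local_sobolev_embedding:m=1} gives $\alpha' = \infty$. Passing to a further almost-everywhere convergent subsequence and applying Fatou then yields $f \in \Lploc{\alpha'}(\|V\|,Y)$ as well.

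Finally, on each compact $K$ I would upgrade convergence in measure to convergence in $\Lp\alpha(\|V\|\restrict K,Y)$ via Vitali's theorem. When $\alpha' < \infty$ the uniform integrability of $|f_i-f|^\alpha$ on $K$ follows from
\begin{equation*}
	\tint{K \cap \{|f_i-f|>M\}}{} |f_i-f|^\alpha \ud \|V\| \leq M^{\alpha-\alpha'} \sup_j \tint{K}{} |f_j-f|^{\alpha'} \ud \|V\|,
\end{equation*}
and the case $\alpha' = \infty$ is handled even more directly by dominated convergence. Since the same subsequence works for every compact $K$ and the topology of $\Lploc\alpha(\|V\|,Y)$ is induced by countably many seminorms (\ref{remark:lploc-countable}), we obtain convergence in $\Lploc\alpha(\|V\|,Y)$, so the bounded set has sequentially, hence topologically, compact closure in this metrizable space. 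No substantial obstacle is anticipated; the argument is essentially a Vitali interpolation between the already established Rellich-in-measure theorem and the continuous Sobolev embedding.
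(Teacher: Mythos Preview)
Your argument is correct and follows essentially the same route as the paper: apply \ref{thm:rellich-in-measure} to extract a subsequence converging locally in measure, invoke the continuous embedding \ref{thm:local_sobolev_embedding} to secure uniform higher integrability on compacta, and interpolate via H{\"o}lder (which the paper cites directly, and which underlies your Vitali/dominated-convergence step) to upgrade to $\Lploc\alpha$ convergence. The paper's proof compresses all of this into a single sentence, but the content is the same.
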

\begin{proof}
	Noting \ref{remark:strict_local_sobolev_space_inclusion} and
	\ref{thm:local_sobolev_embedding}\,\eqref{item:local_sobolev_embedding:m=1}\,\eqref{item:local_sobolev_embedding:q<m},
	the conclusion is a consequence of \ref{thm:rellich-in-measure} and
	H{\"o}lder's inequality.
\end{proof}
\begin{comment}
	The following embedding theorem into continuous functions rests on the
	oscillation estimate \cite[13.1]{snulmenn:tv.v2}; the absence of an
	embedding into locally H{\"o}lder continuous functions shows
	a notable difference to the Euclidean case, see
	\ref{remark:sob_continuous_emb}.
\end{comment}
\begin{theorem} \label{thm:sob_continuous_emb}
	Suppose $\vdim$, $\adim$, $p$, $U$, and $V$ are as in
	\ref{miniremark:situation}, $p = \vdim$, either $1 = \vdim
	\leq q$ or $1 < \vdim < q$, and $Y$ is a finite dimensional normed
	vectorspace.

	Then for every $f \in \SWloc q (V,Y)$ there exists $L(f) \in
	\mathscr{C}( \spt \| V \|,Y)$ uniquely characterised by
	\begin{equation*}
		L(f)(x) = f(x) \quad \text{for $\| V \| + \| \delta V \|$
		almost all $x$}
	\end{equation*}
	and $L$ is a continuous linear map. Moreover, if additionally $q >
	1$, then $L$ maps bounded subsets of $\SWloc q ( V,Y)$ onto sets with
	compact closure in $\mathscr{C}(\spt \| V \|,Y)$.
\end{theorem}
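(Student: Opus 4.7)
The plan is to construct $L$ by extending the restriction map $g \mapsto g|\spt \|V\|$, initially defined on locally Lipschitzian functions in $\SWloc q (V,Y)$, using the oscillation estimate \cite[13.1]{snulmenn:tv.v2} to pass to the limit while simultaneously forcing continuity of the extension. By \ref{remark:basic-top-swloc}\,\eqref{item:basic-top-swloc:lip} the locally Lipschitzian functions form a dense subspace, and by \ref{thm:replacement-w1q-loc}\,\eqref{item:replacement-w1q-loc:top} the topology of $\SWloc q(V,Y)$ is equivalently induced by the seminorms $\sigma_K(h) = \eqLpnorm{\|V\| \restrict K}{q}{h} + \eqLpnorm{\|V\| \restrict K}{q}{\derivative V h}$ corresponding to compact $K \subset U$.

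First I would establish the following quantitative oscillation estimate: for each compact $K \subset U$ there exist a compact $K' \subset U$ with $K \subset \Int K'$, a constant $C < \infty$, and a nondecreasing modulus $\omega : [0,\infty) \to [0,\infty)$ with $\lim_{\delta \to 0+} \omega(\delta) = 0$ such that for every continuous $h \in \SWloc q(V,Y)$ and all $a, x \in \spt \|V\| \cap K$ one has $|h(x) - h(a)| \leq C \sigma_{K'}(h) \, \omega(|x-a|)$. In the case $1 < \vdim < q$ the hypothesis $q > \vdim$ of \cite[13.1]{snulmenn:tv.v2} is met and a covering-and-rescaling argument produces this estimate. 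In the case $\vdim = 1$ the analogous modulus is extracted from \ref{thm:one_dim} applied on small balls at points where the singular part of $\|\delta V\|$ is negligible, combined with a chaining argument across such balls.

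Given $f \in \SWloc q(V,Y)$, pick locally Lipschitzian $g_i : U \to Y$ with $\sigma_K(g_i - f) \to 0$ for every compact $K$. Applying the oscillation estimate to the difference $g_i - g_j$, which is Lipschitzian, hence continuous, and lies in $\SWloc q(V,Y)$, shows that $\{g_i | \spt \|V\|\}$ is locally uniformly Cauchy on $\spt \|V\|$ and so converges locally uniformly to some $L(f) \in \mathscr{C}(\spt \|V\|, Y)$ by \ref{remark:continuous-functions-complete-lcs}. Passing to a subsequence of $g_i$ converging pointwise $(\|V\| + \|\delta V\|)$-almost everywhere yields $L(f)(x) = f(x)$ for $(\|V\| + \|\delta V\|)$-almost all $x$. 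Uniqueness follows since any two continuous representatives would agree on a $\|V\|$-full hence dense subset of $\spt \|V\|$. Linearity is inherited from the restriction map, and continuity of $L : \SWloc q(V,Y) \to \mathscr{C}(\spt \|V\|, Y)$ follows by combining the oscillation estimate with a pointwise bound drawn from \ref{thm:local_sobolev_embedding}\,\eqref{item:local_sobolev_embedding:m=1}\,\eqref{item:local_sobolev_embedding:m<q} or \ref{thm:one_dim}.

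For the compactness assertion when $q > 1$, a bounded sequence $f_i \in \SWloc q(V,Y)$ yields, via the above oscillation estimate, an equicontinuous family $\{L(f_i)\}$ on each compact subset of $\spt \|V\|$; uniform local boundedness follows from the continuous embedding just proved. Extract via \ref{thm:rellich-local-embedding} a subsequence converging in $\Lploc \alpha(\|V\|, Y)$ for some $1 \leq \alpha < \infty$, hence, after a further subsequence, pointwise $\|V\|$-almost everywhere; equicontinuity then promotes this to locally uniform convergence on $\spt \|V\|$, and Ascoli's theorem (see \cite[7.14, 7.18]{MR0370454}) yields the conclusion. The main obstacle I anticipate is the production of the uniform modulus of continuity in the critical case $\vdim = q = 1$, where \cite[13.1]{snulmenn:tv.v2} does not directly apply and the bound must be extracted from \ref{thm:one_dim} by carefully handling the possibly singular part of $\|\delta V\|$; isolating the exceptional point requires a localization argument rather than a single global application.
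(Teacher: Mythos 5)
Your proposal follows essentially the same route as the paper: approximate $f$ by locally Lipschitzian functions, use the oscillation estimate of \cite[13.1]{snulmenn:tv.v2} together with the lower density bound \cite[4.8\,(1)]{snulmenn:tv.v2} to obtain equicontinuity, pass to the limit to define $L(f)$, get continuity of $L$ from the $\Lploc \infty$ embedding, and conclude compactness by Ascoli. The paper does not apply the estimate to differences $g_i-g_j$; it instead observes that the approximating family itself is equicontinuous and that $f$ is therefore uniformly continuous on the full-measure (hence dense) subset of $K \cap \spt \| V \|$ where $f_i \to f$ pointwise --- the two variants are equivalent. Two small points of divergence are worth noting. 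First, the paper cites \cite[4.8\,(1), 13.1]{snulmenn:tv.v2} uniformly in all cases, including $\vdim = q = 1$, so no separate chaining argument via \ref{thm:one_dim} is needed there; your anticipated ``main obstacle'' does not arise in the paper's treatment. Second, your detour through \ref{thm:rellich-local-embedding} in the compactness step is both unnecessary and, in the case $1 < \vdim < q$, inapplicable (that theorem requires $\vdim = 1$ or $q < \vdim$); once you have local uniform boundedness and equicontinuity of $\{L(f_i)\}$, Ascoli's theorem already yields a locally uniformly convergent subsequence directly, which is exactly what the paper does. Deleting that detour leaves a correct argument.
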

\begin{proof}
	To prove the existence of $L(f)$, suppose $f \in \SWloc q ( V,Y)$ and
	$K$ is a compact subset of $U$. Choose a compact subset $C$ of $U$
	with $K \subset \Int C$ and locally Lipschitzian functions $f_i : U
	\to Y$ with
    	\begin{equation*}
		\eqLpnorm{(\| V \| + \| \delta V \| )\restrict C}q{f_i-f} +
		\eqLpnorm{\| V \| \restrict C}{q}{\derivative V{(f_i-f)}} \to
		0 \quad \text{as $i \to \infty$}.
    	\end{equation*}
	By \cite[2.3.10]{MR41:1976} one may assume
	\begin{equation*}
		( \| V \| + \| \delta V \| ) ( K \without A ) = 0, \quad
		\text{where $A = K \cap \left \{x \with \lim_{i \to \infty}
		f_i(x) = f(x) \right \}$}.
    	\end{equation*}
	The family $\{ f_i | K \cap \spt \| V \| \with i \in \nat \}$ is
	equicontinuous by \cite[4.8\,(1), 13.1]{snulmenn:tv.v2}, hence $f | A
	\cap \spt \| V \|$ is uniformly continuous.

	The continuity of $L$ follows from
	\ref{thm:local_sobolev_embedding}\,\eqref{item:local_sobolev_embedding:m=1}\,\eqref{item:local_sobolev_embedding:m<q}
	as $\mathscr{C} ( \spt \| V \|, Y )$ is homeomorphically included in
	$\Lploc \infty ( \| V \|, Y )$.

	To prove the postscript, suppose $q>1$ and $B$ is a bounded subset of
	$\SWloc q (V,Y)$, hence $L \lIm B \rIm$ is bounded in $\mathscr{C} (
	\spt \| V \|, Y )$. Moreover,
	\ref{remark:strict_local_sobolev_space_inclusion} and \cite[4.8\,(1),
	13.1]{snulmenn:tv.v2} yield that the family $\{ L(f)|K \with f \in B
	\}$ is equicontinuous whenever $K$ is a compact subset of $\spt \| V
	\|$. Consequently, the conclusion follows from the Ascoli theorem, see
	\cite[7.14, 7.18]{MR0370454}.
\end{proof}
\begin{remark} \label{remark:sob_continuous_emb}
	Notice that some functions $L(f)$ may not be locally H{\"o}lder
	continuous with any exponent by
	\ref{thm:intrinsic_metric}\,\eqref{item:intrinsic_metric:point} and
	\ref{example:geodesic_distance}.
\end{remark}
\begin{remark} \label{remark:two-crossing-lines}
	Simple examples, see \cite[5.5, 5.6]{snulmenn:tv.v2}, show that not
	all members of $f \in \trunc (V,Y)$ with
	\begin{equation*}
		\eqLpnorm{(\| V \| + \| \delta V \| ) \restrict K}{q}{f} +
		\eqLpnorm{\| V\| \restrict K}{q}{\derivative Vf} < \infty
	\end{equation*}
	whenever $K$ is a compact subset of $U$ admit a continuous function $g
	: \spt \| V \| \to Y$ which is $\| V \| + \| \delta V \|$ almost equal
	to $f$.
\end{remark}
\begin{remark} \label{remark:rellich-local-embedding}
	\ref{thm:local_sobolev_embedding}, \ref{thm:rellich-local-embedding},
	and \ref{thm:sob_continuous_emb} are most useful in conjunction with
	the alternate description of the topology of $\SWloc q (V,Y)$ obtained
	in
	\ref{thm:replacement-w1q-loc}\,\eqref{item:replacement-w1q-loc:top}.
\end{remark}
\begin{comment}
	Sometimes, the seminormed space $E$ defined in the following theorem
	may function as alternate substitute to $\SWSob q (V,Y)$, for the
	Euclidean Sobolev space.
\end{comment}
\begin{theorem} \label{thm:replacement-w1q}
	Suppose $\vdim$, $\adim$, $p$, $U$, $V$, and $\psi$ are as in
	\ref{miniremark:situation}, $p = \vdim$, $1 \leq q \leq \infty$, $Y$
	is a finite dimensional normed vectorspace, $\sigma : \SWloc q ( V,Y )
	\to \overline \rel$ satisfies
	\begin{equation*}
		\sigma (f) = \Lpnorm{\| V \|} q f + \Lpnorm{\| V \|} q
		{\derivative Vf} \quad \text{for $f \in \SWloc q ( V,Y )$},
	\end{equation*}
	and $E = \{ f \with \sigma (f) < \infty \}$.

	Then the following six statements hold.
	\begin{enumerate}
		\item \label{item:replacement-w1q:complete} The vectorspace
		$E$ is $\sigma$ complete.
		\item \label{item:replacement-w1q:lip} The subspace $Y^U \cap
		\{ g \with \text{$g$ locally Lipschitzian, $\sigma (g) <
		\infty$} \}$ is $\sigma$ dense in $E$.
		\item \label{item:replacement-w1q:q<oo_dense} If $q < \infty$,
		then $\mathscr{E} (U,Y) \cap \{ g \with \text{$\sigma (g) <
		\infty$, $\spt g$ bounded} \}$ is $\sigma$ dense in $E$.
		\item \label{item:replacement-w1q:q<oo_separable} If $q <
		\infty$, then $E$ is $\sigma$ separable.
		\item \label{item:replacement-w1q:equal} If $U = \rel^\adim$
		and $\psi ( \rel^\adim ) < \infty$, then $E = \SWSob q(V,Y)$.
		\item \label{item:replacement-w1q:abs} The set of
		\begin{equation*}
			(f,F) \in \Lp q ( \| V \|, Y ) \times \Lp q ( \| V \|,
			\Hom ( \rel^\adim, Y ) )
		\end{equation*}
		such that there exists $g \in E$ satisfying
		\begin{equation*}
			f(x) = g (x) \quad \text{and} \quad F (x) =
			\derivative Vg(x) \quad \text{for $\| V \|$ almost all
			$x$}
		\end{equation*}
		is closed in $\Lp q ( \| V \|, Y ) \times \Lp q ( \| V \|,
		\Hom ( \rel^\adim, Y ) )$.
	\end{enumerate}
\end{theorem}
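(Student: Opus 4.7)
The plan is to prove the six statements in the order (5), (1), (6), (2)--(4), with (1) carrying the bulk of the work and the case $\vdim = 1$ forming the main obstacle. Statement (5) is immediate from \ref{thm:global_special_sob}: that result supplies a finite constant $\Gamma$ with $\Lpnorm{\|\delta V\|}{q}{f} \leq \Gamma \sigma(f)$ for every $f \in \trunc(V,Y)$, whence $\SWnorm qVf \leq (1+\Gamma)\sigma(f)$ and $E \subset \SWSob q(V,Y)$; the reverse inclusion is trivial since $\sigma(f) \leq \SWnorm qVf$.

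For (1), suppose $(f_i)$ is $\sigma$-Cauchy in $E$. Then $f_i \to f$ in $\Lp q(\|V\|,Y)$ and $\derivative V f_i \to F$ in $\Lp q(\|V\|,\Hom(\rel^\adim,Y))$, while Fatou yields $\sigma(f) \leq \liminf_i \sigma(f_i) < \infty$. The essential step is to upgrade this to local convergence in $\Lploc q(\|V\|+\|\delta V\|, Y)$, after which \ref{thm:replacement-w1q-loc}\,\eqref{item:replacement-w1q-loc:abs} places $f$ in $\SWloc q(V,Y)$ with $\derivative V f = F$ almost everywhere, so $f \in E$. For $\vdim > 1$ the mean curvature hypothesis renders $\psi$ absolutely continuous with respect to $\|V\|$, so any compact $K \subset U$ admits a finite cover by open balls on which $\psi$ is as small as desired; multiplying $f_i - f_j$ by a $\mathscr{D}(U,\rel)$ cutoff supported in such a ball produces an element of $\trunc_{\Bdry U}(V)$ to which \ref{thm:special_sob} applies, yielding the local Cauchy estimate in $\Lp q(\|\delta V\|)$. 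The main obstacle is the case $\vdim = 1$ flagged in \ref{remark:replacement-w1q-loc}, where $\|\delta V\|$ may have a non absolutely continuous part. Here I would substitute \ref{thm:one_dim} for \ref{thm:special_sob}, combined with \cite[8.33]{snulmenn:tv.v2} to transfer an $\Lp\infty(\|V\|)$ bound on the cutoff to an $\Lp\infty(\|\delta V\|)$ bound; since \ref{thm:one_dim} tolerates a single exceptional point at which $\|\delta V\|$ concentrates, the plan is to cover $K$ by finitely many small balls, each containing at most one point of concentration of $\|\delta V\|$ and otherwise having $\|\delta V\|$-mass below the threshold $(2+\Lambda)^{-1}$, and then iterate.

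Statement (6) is a direct consequence of (1): given $(f_i,F_i)$ in the prescribed set converging in the product of $\Lp q$ spaces, any representatives $g_i \in E$ form a $\sigma$-Cauchy sequence, hence by (1) converge to some $g \in E$, which must then represent the limit pair by uniqueness of $\Lp q$ limits. For (2), fix $f \in E$ and $\varepsilon > 0$, choose a locally finite $\mathscr{D}(U,\rel)$ partition of unity $\zeta_i$ on $U$, and observe that each $\zeta_i f$ lies in $\SWloc q(V,Y)$ with compact support and finite $\sigma$; \ref{remark:sobolev-simpler-def}\,\eqref{item:sobolev-simpler-def:cpt} produces locally Lipschitzian $g_i$ of compact support with $\sigma(\zeta_i f - g_i) \leq 2^{-i}\varepsilon$, so $g = \sum g_i$ is locally Lipschitzian and subadditivity of $\sigma$ gives $\sigma(f-g) \leq \varepsilon$. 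Statement (3) follows by additionally smoothing each $g_i$ via \ref{corollary:approximation_lip} and \ref{remark:approximation_lip}; in the special case $U = \rel^\adim$ one combines (5) with \ref{remark:density_sobolev_space} to replace the approximants by members of $\mathscr{D}(\rel^\adim,Y)$. Statement (4) is then immediate, since $\mathscr{E}(U,Y) \cap \{g : \sigma(g) < \infty, \spt g \text{ bounded}\}$ admits a countable $\sigma$-dense subset (for instance by taking rational combinations of a countable base with respect to the natural Fr\'echet structure on $\mathscr{E}$ intersected with $E$).
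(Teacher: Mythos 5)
Your overall architecture matches the paper's: \eqref{item:replacement-w1q:equal} from \ref{remark:strict_local_sobolev_space_inclusion} and \ref{thm:global_special_sob}; \eqref{item:replacement-w1q:complete} and \eqref{item:replacement-w1q:abs} from \ref{thm:replacement-w1q-loc} together with completeness of $\SWloc q (V,Y)$. Two remarks on \eqref{item:replacement-w1q:complete}: the hypothesis of \ref{thm:replacement-w1q-loc}\,\eqref{item:replacement-w1q-loc:abs} is convergence in $\Lploc q ( \| V \|, Y ) \times \Lploc q ( \| V \|, \Hom ( \rel^\adim, Y ) )$ only, so your ``essential step'' of upgrading to $\Lploc q ( \| V \| + \| \delta V \|, Y )$ convergence is not needed; and the $\| \delta V \|$ control you re-derive (\ref{thm:special_sob} for $\vdim > 1$; \ref{thm:one_dim}, \cite[8.33]{snulmenn:tv.v2}, H{\"o}lder, and a covering for $\vdim = 1$) is exactly what is already packaged into the proof of \ref{thm:replacement-w1q-loc}\,\eqref{item:replacement-w1q-loc:top}, which the paper simply cites. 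Your derivation of \eqref{item:replacement-w1q:abs} from \eqref{item:replacement-w1q:complete} is a valid variant.

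There is, however, a step that fails as written in \eqref{item:replacement-w1q:lip}. You approximate $\zeta_i f$ by a compactly supported locally Lipschitzian $g_i$ with $\sigma ( \zeta_i f - g_i ) \leq 2^{-i} \varepsilon$, citing \ref{remark:sobolev-simpler-def}\,\eqref{item:sobolev-simpler-def:cpt}; but that item only furnishes, for a \emph{prescribed} compact $K$, a compactly supported $g$ making $\eqLpnorm{( \| V \| + \| \delta V \| ) \restrict K}{q}{\zeta_i f - g} + \eqLpnorm{\| V \| \restrict K}{q}{\derivative V{(\zeta_i f)} - \derivative Vg}$ small, and says nothing about $g$ off $K$; since $\spt g$ is not confined to $K$, the global quantity $\sigma ( \zeta_i f - g )$ is not controlled. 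Moreover the supports of the $g_i$ need not form a locally finite family, so $\sum_i g_i$ need not be defined, let alone locally Lipschitzian. Both defects are repaired by the construction the paper actually invokes, namely that of \ref{remark:density_sobolev_space}: approximate $f$ itself on $K_i = \spt \zeta_i$ with the weight $1 + \sup \im | \Der \zeta_i |$ and set $g = \sum_i \zeta_i g_i$, so that each summand is automatically supported in $K_i$ and the product rule \ref{remark:strict_local_sobolev_space}\,\eqref{item:strict_local_sobolev_space:product} yields $\sigma (f-g) \leq \varepsilon$; the same repair carries \eqref{item:replacement-w1q:q<oo_dense}. Finally, your justification of \eqref{item:replacement-w1q:q<oo_separable} is too loose: locally uniform (Fr{\'e}chet) approximation does not control $\sigma$ when $\| V \| ( U ) = \infty$. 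The clean argument is that $f \mapsto ( f, \derivative Vf )$ embeds $E$ isometrically into $\Lp q ( \| V \|, Y ) \times \Lp q ( \| V \|, \Hom ( \rel^\adim, Y ) )$, which is separable for $q < \infty$ by \ref{miniremark:prop_lp_spaces}.
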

\begin{proof}
	\eqref{item:replacement-w1q:complete} follows from
	\ref{remark:completeness_wqloc} and
	\ref{thm:replacement-w1q-loc}\,\eqref{item:replacement-w1q-loc:top}.
	Replacing $\SWSob q (V,Y)$ and $\SWnorm qV\cdot$ by $E$ and $\sigma$
	in the argument of \ref{remark:density_sobolev_space}, a proof of
	\eqref{item:replacement-w1q:lip} and
	\eqref{item:replacement-w1q:q<oo_dense} results.
	\ref{miniremark:prop_lp_spaces} implies
	\eqref{item:replacement-w1q:q<oo_separable}.
	\ref{remark:strict_local_sobolev_space_inclusion} and
	\ref{thm:global_special_sob} yield
	\eqref{item:replacement-w1q:equal}.  \eqref{item:replacement-w1q:abs}
	follows from
	\ref{thm:replacement-w1q-loc}\,\eqref{item:replacement-w1q-loc:abs}.
\end{proof}
\begin{remark} \label{remark:quotient-sobolov-space}
	In view of \ref{miniremark:lcs-quotient} and
	\ref{thm:replacement-w1q}\,\eqref{item:replacement-w1q:complete}, the
	quotient space
	\begin{equation*}
		Q = E \big / \{ f \with \sigma (f) = 0 \}
	\end{equation*}
	is a Banach space normed by $\sigma \circ \pi^{-1}$, where $\pi : E
	\to Q$ denotes the canonical projection. If $1 < q < \infty$, then $Q$
	is reflexive by \ref{example:quotient-lp-spaces},
	\ref{thm:replacement-w1q}\,\eqref{item:replacement-w1q:abs} and
	\cite[\printRoman 2.3.23]{MR0117523}.
\end{remark}
\begin{comment}
	Turning to Sobolev spaces with ``zero boundary values'', firstly the
	corresponding Sobolev inequalities are stated.
\end{comment}
\begin{theorem} \label{thm:sob_poin_summary}
	Suppose $\vdim$, $\adim$, $p$, $U$, $V$, and $\psi$ are as in
	\ref{miniremark:situation}, $p = \vdim$, $\Lambda =
	\Gamma_{\textup{\cite[10.1]{snulmenn:tv.v2}}} ( \adim )$, $1 \leq q
	\leq \infty$, $Y$ is a finite dimensional normed vectorspace, $f \in
	\SWzero q ( V,Y )$, and $E = U \without \{ x \with f(x) = 0 \}$.

	Then the following three statements hold.
	\begin{enumerate}
		\item \label{item:sob_poin_summary:global:p=m=1} If $\vdim
		= 1$ and $\psi ( E ) \leq \Lambda^{-1}$, then
		\begin{equation*}
			\Lpnorm{\| V \|}{\infty}{f} \leq \Lambda \, \Lpnorm{\|
			V \|}{1}{ \derivative{V}{f} }.
		\end{equation*}
		\item \label{item:sob_poin_summary:global:q<m=p} If $1 \leq
		\alpha < \vdim$ and $\psi ( E ) \leq \Lambda^{-1}$, then
		\begin{equation*}
			\Lpnorm{\| V \|}{\vdim \alpha/(\vdim-\alpha)}{f} \leq
			\Lambda (\vdim-\alpha)^{-1} \Lpnorm{\| V \|}{\alpha}{
			\derivative{V}{f} }.
		\end{equation*}
		\item \label{item:sob_poin_summary:global:p=m<q} If $1 < \vdim
		< \alpha \leq \infty$ and $\psi ( E ) \leq \Lambda^{-1}$, then
		\begin{equation*}
			\Lpnorm{\| V \|}{\infty}{f} \leq
			\Lambda^{1/(1/\vdim-1/\alpha)} \| V \| (
			E)^{1/\vdim-1/\alpha} \Lpnorm{\| V \|}{\alpha}{
			\derivative{V}{f} },
		\end{equation*}
		here $0 \cdot \infty = \infty \cdot 0 = 0$.
	\end{enumerate}
\end{theorem}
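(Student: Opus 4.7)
The plan is to reduce the theorem to a direct application of the three cases of the Sobolev Poincar\'e inequality recorded in \cite[10.1\,(2)]{snulmenn:tv.v2}, by making essential use of the bridge between Sobolev spaces with ``zero boundary values'' and the class of generalised weakly differentiable functions with ``zero boundary values on $\Bdry U$'' established in \ref{thm:zero_implies_zero}. This approach parallels exactly the proof of \ref{corollary:sob_poin_summary}, which is the $\alpha = 1$, $\vdim > 1$ special case.

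First I would reduce to the scalar case $Y = \rel$ with $f \geq 0$ by applying \cite[8.16]{snulmenn:tv.v2}: replacing $f$ by $|f|$ does not change the Lebesgue seminorms on either side of the inequality and does not enlarge $E$. Then I would invoke \ref{thm:zero_implies_zero} to conclude $|f| \in \trunc_{\Bdry U}(V)$. Next, I would verify the finiteness condition on level sets needed for \cite[10.1]{snulmenn:tv.v2}, namely that $\| V \| (\{ x \with f(x) > t \}) < \infty$ for $0 < t < \infty$; this is furnished by \ref{remark:zero_finite_measure}. With these ingredients in place, each of the three conclusions follows by applying the corresponding part of \cite[10.1\,(2)]{snulmenn:tv.v2}: part \eqref{item:sob_poin_summary:global:p=m=1} uses \cite[10.1\,(2b)]{snulmenn:tv.v2} in the degenerate one dimensional case, part \eqref{item:sob_poin_summary:global:q<m=p} uses \cite[10.1\,(2c)]{snulmenn:tv.v2}, and part \eqref{item:sob_poin_summary:global:p=m<q} uses \cite[10.1\,(2d)]{snulmenn:tv.v2}, where the smallness hypothesis $\psi(E) \leq \Lambda^{-1}$ precisely matches the smallness assumption under which the boundary term in \cite[10.1]{snulmenn:tv.v2} may be absorbed.

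I do not anticipate a substantive obstacle: the nontrivial analytic content, both the Sobolev Poincar\'e estimate itself and the transfer from $\SWzero q (V,Y)$ to $\trunc_{\Bdry U} (V)$, is already encapsulated in \cite[10.1]{snulmenn:tv.v2} and \ref{thm:zero_implies_zero} respectively. The only point that requires a moment's care is matching the exponent bookkeeping in cases \eqref{item:sob_poin_summary:global:q<m=p} and \eqref{item:sob_poin_summary:global:p=m<q} to the exponent conventions of \cite[10.1\,(2c)(2d)]{snulmenn:tv.v2}, and confirming that the constant $\Lambda = \Gamma_{\textup{\cite[10.1]{snulmenn:tv.v2}}}(\adim)$ used here is exactly the one entering there; both are routine to check from the statement.
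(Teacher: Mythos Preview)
Your proposal is correct and matches the paper's own proof essentially verbatim: the paper also cites \ref{remark:zero_finite_measure}, \ref{thm:zero_implies_zero}, and \cite[8.16]{snulmenn:tv.v2} to reduce to the nonnegative scalar case in $\trunc_{\Bdry U}(V)$ with the requisite level-set finiteness, and then applies \cite[10.1\,(2b)\,(2c)\,(2d)]{snulmenn:tv.v2} with $q$ replaced by $\alpha$.
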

\begin{proof}
	In view of \ref{remark:zero_finite_measure},
	\ref{thm:zero_implies_zero} and \cite[8.16]{snulmenn:tv.v2}, the
	conclusion is a consequence of
	\cite[10.1\,(2b)\,(2c)\,(2d)]{snulmenn:tv.v2} with $q$ replaced by
	$\alpha$.
\end{proof}
\begin{comment}
	In relation to the topology of $\SWzero q (V,Y)$, also the following
	consequence of the Sobolev inequalities becomes relevant.
\end{comment}
\begin{theorem} \label{thm:special-seminorm-control}
	Suppose $\vdim$, $\adim$, $p$, $U$, $V$, and $\psi$ are as in
	\ref{miniremark:situation}, $p = \vdim$, $\Lambda =
	\Gamma_{\textup{\cite[10.1]{snulmenn:tv.v2}}} ( \adim )$, $1 \leq q
	\leq \infty$, $Y$ is a finite dimensional normed vectorspace, $f \in
	\SWzero q (V,Y)$, and $\psi (U \without \{ x \with f(x)=0 \} ) \leq
	\Lambda^{-1}$.

	Then there holds
	\begin{equation*}
		\Lpnorm{\| \delta V \|} qf \leq \Gamma \big ( \Lpnorm{\| V \|}
		qf + \Lpnorm{\| V \|} q {\derivative Vf} \big ),
	\end{equation*}
	where $\Gamma = 2 ( 1 + \Lambda)$.
\end{theorem}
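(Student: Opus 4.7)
The plan is to reduce the assertion to Remark \ref{remark:special_sob} applied to the nonnegative scalar function $|f|$. The bridge is Theorem \ref{thm:zero_implies_zero}, which delivers $|f| \in \trunc_{\Bdry U} (V)$ for every $f \in \SWzero q (V,Y)$ and so places $|f|$ into the class on which Lemma \ref{lemma:special_sob} operates; recall that $\trunc_{\Bdry U}(V)$ is a class of nonnegative real valued functions.

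First I would observe that one may assume $\Lpnorm{\| V \|} qf + \Lpnorm{\| V \|} q{\derivative Vf} < \infty$, otherwise the asserted inequality is trivial. I would then verify the remaining hypotheses of Lemma \ref{lemma:special_sob} for $|f|$: the superlevel set condition $\| V \| ( \{ x \with |f(x)| > y \} ) < \infty$ for $0 < y < \infty$ is supplied by Remark \ref{remark:zero_finite_measure}, and the smallness assumption $\psi ( \{ x \with |f(x)| > 0 \} ) \leq \Lambda^{-1}$ is a rephrasing of the hypothesis, since $\{ x \with |f(x)| > 0 \} = U \without \{ x \with f(x) = 0 \}$.

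The only delicate point is the pointwise bound $|\derivative V{|f|}(x)| \leq \| \derivative Vf (x) \|$ for $\| V \|$ almost all $x$, which is needed to transport the finiteness and $\Lp q$ control of the derivative from $f$ to $|f|$. Since $y \mapsto |y|$ is Lipschitzian with constant one but not of class $1$, Remark \ref{remark:strict_local_sobolev_space}\,\eqref{item:strict_local_sobolev_space:composition} does not apply directly. I would obtain the bound via the chain rule \cite[8.16]{snulmenn:tv.v2} combined with an approximation of $|\cdot|$ by smooth functions of Lipschitz constant at most one, in the spirit of \cite[8.13\,(4)]{snulmenn:tv.v2}. This is the main technical step, though all ingredients are standard within the framework of \cite{snulmenn:tv.v2}.

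Combining these, Remark \ref{remark:special_sob} applied to $|f|$ yields
\begin{equation*}
	\Lpnorm{\| \delta V \|} q{|f|} \leq \Gamma \big ( \Lpnorm{\| V \|} q{|f|} + \Lpnorm{\| V \|} q{\derivative V{|f|}} \big ),
\end{equation*}
and the assertion follows from $\Lpnorm{\| \delta V \|}qf = \Lpnorm{\| \delta V \|}q{|f|}$, $\Lpnorm{\| V \|}qf = \Lpnorm{\| V \|}q{|f|}$, and the derivative bound. No separate treatment of $q = \infty$ is needed, as Lemma \ref{lemma:special_sob} already covers all $q \in [1,\infty]$ internally via \cite[8.33]{snulmenn:tv.v2}.
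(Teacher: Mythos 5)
Your proposal is correct and follows essentially the same route as the paper, which likewise deduces the result from \ref{remark:zero_finite_measure}, \ref{thm:zero_implies_zero}, and \cite[8.16]{snulmenn:tv.v2} together with \ref{lemma:special_sob} and \ref{remark:special_sob}. The "delicate point" you flag — the reduction from $Y$ valued $f$ to the nonnegative scalar $|f|$ with $|\derivative V{|f|}| \leq \|\derivative Vf\|$ — is exactly what the citation of \cite[8.16]{snulmenn:tv.v2} supplies, so no additional approximation argument is needed beyond invoking that result.
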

\begin{proof}
	In view of \ref{remark:zero_finite_measure},
	\ref{thm:zero_implies_zero}, and \cite[8.16]{snulmenn:tv.v2}, the
	conclusion is a consequence \ref{lemma:special_sob} and
	\ref{remark:special_sob}.
\end{proof}
\begin{comment}
	In the Euclidean case the following theorem is a direct consequence of
	a suitable Poincar{\'e} inequality; here also
	\ref{thm:special-seminorm-control} enters.
\end{comment}
\begin{theorem} \label{thm:alternate-seminorm-w0q}
	Suppose $\vdim$, $\adim$, $p$, $U$, $V$, and $\psi$ are as in
	\ref{miniremark:situation}, $p = \vdim$, $\| V \| ( U ) < \infty$,
	$\Lambda = \Gamma_{\textup{\cite[10.1]{snulmenn:tv.v2}}} ( \adim )$,
	$\psi ( U ) \leq \Lambda^{-1}$, $1 \leq q \leq \infty$, $Y$ is a
	finite dimensional normed vectorspace, and $\tau : \SWzero q (V,Y) \to
	\rel$ satisfies
	\begin{equation*}
		\tau (f) = \Lpnorm {\| V \|}q{\derivative Vf} \quad \text{for
		$f \in \SWzero q (V,Y)$}.
	\end{equation*}

	Then the topology of $\SWzero q (V,Y)$ is induced by $\tau$.
\end{theorem}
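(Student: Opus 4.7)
The plan is to show $\tau$ and $\SWnorm{q}{V}{\cdot}|\SWzero q (V,Y)$ are equivalent seminorms on $\SWzero q (V,Y)$. The inequality $\tau (f) \leq \SWnorm qVf$ is immediate from the definitions, so $\tau$ is continuous with respect to the subspace topology inherited from $\SWSob q(V,Y)$. For the reverse direction the task is to bound
\begin{equation*}
	\SWnorm qVf \leq \Lpnorm{\| V \|}{q}{f} + \Lpnorm{\| \delta V \|}{q}{f} + \tau (f)
\end{equation*}
by $\tau (f)$ times a constant depending only on $q$, $\vdim$, $\Lambda$, and $\| V \| ( U )$.

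First I would control $\Lpnorm{\| V \|}{q}{f}$ by $\tau (f)$. Since $E = U \without \{ x \with f(x) = 0 \}$ satisfies $\psi (E) \leq \psi (U) \leq \Lambda^{-1}$, I may invoke the appropriate clause of \ref{thm:sob_poin_summary} for $f \in \SWzero q (V,Y)$, combined with H\"older's inequality using $\| V \| ( U ) < \infty$ to convert the resulting exponent into $q$ on both sides. Concretely: if $\vdim = 1$ I use \ref{thm:sob_poin_summary}\,\eqref{item:sob_poin_summary:global:p=m=1} on the right and H\"older on the left; if $1 < \vdim \leq q$ I apply \ref{thm:sob_poin_summary}\,\eqref{item:sob_poin_summary:global:p=m<q} with $\alpha = q$ (respectively with an auxiliary $\alpha$ satisfying $\vdim/2 \leq \alpha < \vdim$ in the borderline case $q = \vdim$) and use $\| V \| ( U ) < \infty$; if $1 \leq q < \vdim$ I use \ref{thm:sob_poin_summary}\,\eqref{item:sob_poin_summary:global:q<m=p} with $\alpha = q$ and then H\"older again. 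In each case one obtains $\Lpnorm{\| V \|}{q}{f} \leq C_1 \tau (f)$ for a constant $C_1$ with the dependency described above.

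Next I would control $\Lpnorm{\| \delta V \|}{q}{f}$. Here \ref{thm:special-seminorm-control} applies (its hypotheses are exactly those of the present theorem together with $\psi ( U \without \{ x \with f(x) = 0 \} ) \leq \psi (U) \leq \Lambda^{-1}$), yielding
\begin{equation*}
	\Lpnorm{\| \delta V \|}{q}{f} \leq 2(1+\Lambda) \big ( \Lpnorm{\| V \|}{q}{f} + \tau (f) \big ) \leq C_2 \tau (f).
\end{equation*}
Combining these with the elementary inequality $\Lpnorm{\| V \|+\| \delta V \|}{q}{f} \leq \Lpnorm{\| V \|}{q}{f} + \Lpnorm{\| \delta V \|}{q}{f}$ (valid for $1 \leq q \leq \infty$ by additivity of the integral in case $q<\infty$ and by $\sup$-comparison in case $q=\infty$) delivers $\SWnorm qVf \leq C \tau (f)$, as desired.

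No step here is genuinely deep; the main point to watch is the case $q = \vdim > 1$, which is not listed in \ref{thm:sob_poin_summary} and has to be reached through an intermediate exponent $\alpha$ with $\vdim/2 \leq \alpha < \vdim$ (possible since $\vdim \geq 2$) so that the Sobolev exponent $\vdim\alpha/(\vdim-\alpha)$ is at least $q$, after which H\"older on both sides (using $\| V \|(U) < \infty$) closes the estimate.
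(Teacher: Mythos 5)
Your proposal is correct and follows exactly the paper's route: the paper's proof of \ref{thm:alternate-seminorm-w0q} is a one-line citation of \ref{thm:sob_poin_summary}, \ref{thm:special-seminorm-control}, and H\"older's inequality, which is precisely the argument you spell out. Your explicit treatment of the borderline case $q = \vdim > 1$ via an intermediate exponent $\alpha$ with $\vdim/2 \leq \alpha < \vdim$ is a detail the paper leaves implicit, and you handle it correctly.
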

\begin{proof}
	This is a consequence of \ref{thm:sob_poin_summary},
	\ref{thm:special-seminorm-control}, and H{\"o}lder's inequality.
\end{proof}
\begin{comment}
	Finally, the Sobolev inequalities, see
	\ref{thm:sob_poin_summary}, and the Rellich type embedding for
	generalised weakly differentiable functions, see
	\ref{thm:rellich-in-measure}, combine to the following Rellich type
	embedding for Sobolev functions with ``zero boundary values''.
\end{comment}
\begin{theorem} \label{thm:rellich_embedding}
	Suppose $\vdim$, $\adim$, $p$, $U$, $V$, and $\psi$ are as in
	\ref{miniremark:situation}, $p = \vdim$, $\| V \| ( U ) < \infty$,
	$\Lambda = \Gamma_{\textup{\cite[10.1]{snulmenn:tv.v2}}} ( \adim )$,
	$\psi ( U ) \leq \Lambda^{-1}$,
	\begin{enumerate}
		\item \label{item:rellich_embedding:m=1} either $\vdim = 1$, $1
		\leq q \leq \infty$, and $1 \leq \alpha < \infty$,
		\item \label{item:rellich_embedding:m>1} or $1 \leq q <
		\vdim$ and $1 \leq \alpha < \vdim q/(\vdim-q)$,
	\end{enumerate}
	and $Y$ is a finite dimensional normed vectorspace.

	Then $\SWzero q(V,Y)$ embeds compactly into $\Lp \alpha (\| V \|,Y)$.
\end{theorem}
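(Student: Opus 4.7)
The plan is to reduce the assertion to the Rellich-type convergence in measure obtained in \ref{thm:rellich-in-measure} and then upgrade this to $\Lp \alpha$ convergence by a Vitali-type equi-integrability argument whose input is the Sobolev inequality \ref{thm:sob_poin_summary}. Throughout let $f_i \in \SWzero q(V,Y)$ be a sequence with $\SWnorm q V{f_i}$ bounded; in case \eqref{item:rellich_embedding:m>1} one may additionally invoke \ref{thm:alternate-seminorm-w0q} together with H\"older's inequality and $\|V\|(U)<\infty$ to reduce to the assumption that only $\Lpnorm{\|V\|}q{\derivative V{f_i}}$ is uniformly bounded.

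First, I would show that some higher-integrability bound
\begin{equation*}
	\sup_{i} \Lpnorm{\|V\|}{\alpha_0}{f_i} < \infty
\end{equation*}
holds, where $\alpha_0 = \infty$ in case \eqref{item:rellich_embedding:m=1} and $\alpha_0 = \vdim q/(\vdim-q)$ in case \eqref{item:rellich_embedding:m>1}. This follows directly from \ref{thm:sob_poin_summary} since $\psi(U) \leq \Lambda^{-1}$ yields the needed hypothesis with $E \subset U$. In case \eqref{item:rellich_embedding:m=1} one obtains a uniform $\Lp \infty$ bound, which together with $\|V\|(U) < \infty$ implies uniform boundedness in every $\Lp{\alpha_0}$ with $\alpha < \alpha_0 < \infty$.

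Next, I would verify the two hypotheses of \ref{thm:rellich-in-measure}. The tail control
\begin{equation*}
	\lim_{t \to \infty} \sup_i \|V\|\bigl( K \cap \{ x \with |f_i(x)| > t \} \bigr) = 0
\end{equation*}
for every compact $K \subset U$ is immediate from the higher-integrability bound by Chebyshev's inequality. For the derivative hypothesis, note that $\SWzero q(V,Y) \subset \SWloc q(V,Y) \subset \trunc(V,Y)$ by \ref{remark:strict_local_sobolev_space_inclusion}, so H\"older's inequality together with $\|V\|(U) < \infty$ gives the uniform bound
\begin{equation*}
	\tint{K \cap \{ |f_i| \leq t \}}{} \| \derivative V{f_i} \| \ud \| V \|
	\leq \| V \|(K)^{1-1/q} \Lpnorm{\| V \|}{q}{\derivative V{f_i}}
\end{equation*}
(with the obvious modification when $q = \infty$), uniformly in both $i$ and $t$. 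Thus \ref{thm:rellich-in-measure} produces a $\|V\|$ measurable $Y$ valued function $f$ and a subsequence, still denoted $f_i$, such that $f_i \to f$ in $\|V\|\restrict K$ measure for every compact $K \subset U$. Since $\|V\|(U) < \infty$, inner regularity of Radon measures then promotes this to convergence in $\|V\|$ measure on all of $U$.

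The final step, which I expect to be routine but is the real content of the upgrade, is to deduce $\Lp \alpha$ convergence. Given $\alpha < \alpha_0$ and a $\|V\|$ measurable set $A$, H\"older's inequality yields
\begin{equation*}
	\tint A{} |f_i-f|^\alpha \ud \| V \|
	\leq \|V\|(A)^{1-\alpha/\alpha_0} \bigl( \Lpnorm{\| V \|}{\alpha_0}{f_i} + \Lpnorm{\|V\|}{\alpha_0}{f} \bigr)^\alpha,
\end{equation*}
where Fatou's lemma applied to the convergence in measure gives $\Lpnorm{\|V\|}{\alpha_0}f < \infty$. Combined with $\|V\|(U) < \infty$, this shows that $\{|f_i - f|^\alpha\}$ is uniformly $\|V\|$ integrable. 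Vitali's convergence theorem together with the convergence in measure then gives $\Lpnorm{\|V\|}{\alpha}{f_i-f} \to 0$. The main obstacle is already dispatched by the strong statement of \ref{thm:rellich-in-measure}; the remaining work is simply to feed in the correct bounds and to run the uniform integrability argument to bridge between convergence in measure and $\Lp \alpha$ convergence.
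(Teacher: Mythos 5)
Your argument is correct and follows essentially the same route as the paper: the Sobolev inequalities of \ref{thm:sob_poin_summary} give the uniform $\Lp\infty$ (if $\vdim=1$) respectively $\Lp{\vdim q/(\vdim-q)}$ bound, \ref{thm:rellich-in-measure} yields a subsequence converging in measure, and H{\"o}lder's inequality (your Vitali/equi-integrability step, which the paper leaves implicit) upgrades this to convergence in $\Lp\alpha(\|V\|,Y)$. The only difference is that you spell out the verification of the hypotheses of \ref{thm:rellich-in-measure} and the uniform integrability argument, which the paper compresses into one sentence.
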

\begin{proof}
	Bounded subsets of $\SWzero q (V,Y)$ are bounded in $\Lp \infty ( \| V
	\|, Y )$ if $\vdim = 1$ by
	\ref{thm:sob_poin_summary}\,\eqref{item:sob_poin_summary:global:p=m=1}
	and bounded in $\Lp {\vdim q/(\vdim-q)} ( \| V \|, Y )$ if $\vdim > 1$
	by
	\ref{thm:sob_poin_summary}\,\eqref{item:sob_poin_summary:global:q<m=p}.
	Therefore \ref{thm:rellich-in-measure} and H{\"o}lder's inequality
	imply the conclusion.
\end{proof}
\begin{remark} \label{remark:rellich_embedding}
	The preceding corollary is most useful in conjunction with the
	alternate description of the topology of $\SWzero q (V,Y)$ obtained
	in \ref{thm:alternate-seminorm-w0q}.
\end{remark}
\section{Comparison to other Sobolev spaces} In this section, the notion of
Sobolev space developed in the present paper will be compared to the notion of
strong Sobolev space for finite Radon measures from Bouchitt{\'e}, Buttazzo
and Fragal{\`a} in \cite{MR1857850}, see
\ref{miniremark:setup_bss}--\ref{miniremark:prelim-comparison}, and the space
$\mathbf{W}(V,\rel)$ defined in \cite[8.28]{snulmenn:tv.v2} will be compared
to the weak Sobolev space for finite Radon measures introduced in
Bouchitt{\'e}, Buttazzo and Fragal{\`a} in \cite{MR1857850}, see
\ref{miniremark:comparison-weak-sobolev-spaces}.  In both cases, this mainly
amounts to relating the two notions of tangent spaces involved by means of the
results of Fragal{\`a} and Mantegazza in \cite{MR1686704}.
\begin{comment}
	The following space occurs in the pointwise variant of the definition
	of the tangent space of Bouchitt{\'e}, Buttazzo and Fragal{\`a} in
	\cite{MR1857850} given below.
\end{comment}
\begin{miniremark} \label{miniremark:lip_space}
	Suppose $X$ is a locally compact, separable metric space and
	\begin{equation*}
		E = \mathscr{C} (X) \cap \{ f \with \Lip f \leq 1 \}.
	\end{equation*}

	Then $\mathscr{C} (X)$ and $\rel^X$ endowed with the Cartesian product
	topology induce the same metrisable topology on $E$, see
	\ref{miniremark:lcs-induced-by-seminorms},
	\ref{remark:continuous-functions-complete-lcs}, and
	\cite[2.10.21]{MR41:1976}. Also, $E$ is separable by
	\ref{remark:kx_dense_in_cx} and \cite[2.2, 2.23]{snulmenn:tv.v2}.
	Consequently, the Borel family of $E$ is generated by the sets $E \cap
	\{ f \with f(x) < t \}$ corresponding to all $x \in X$ and $t \in
	\rel$.
\end{miniremark}
\begin{comment}
	For finite Radon measures the concept of tangent space of
	Bouchitt{\'e}, Buttazzo and Fragal{\`a} in \cite{MR1857850} is
	implemented as follows.
\end{comment}
\begin{miniremark} \label{miniremark:setup_bss}
	Suppose $\mu$ is a Radon measure over $\rel^\adim$ with $\mu (
	\rel^\adim ) < \infty$, $1 \leq q \leq \infty$, $1 \leq r \leq
	\infty$, $1/q+1/r=1$, and $C = \{ ( a, \cball{a}{s} ) \with \text{$a
	\in \rel^\adim$, $0 < s < \infty$} \}$. Define $Z$ to be the vector
	subspace of $\Lp r ( \mu, \rel^\adim)$ consisting of those $\theta \in
	\Lp{r} ( \mu, \rel^\adim)$ such that there exists $0 \leq \kappa <
	\infty$ satisfying
	\begin{equation*}
		\tint{}{} \langle \theta, \Der \zeta \rangle \ud \mu \leq
		\kappa \, \Lpnorm{\mu}{q}{\zeta} \quad \text{for $\zeta \in
		\mathscr{D} ( \rel^\adim , \rel )$}.
	\end{equation*}
	Define vector subspaces of $\rel^\adim$ by
	\begin{equation*}
		P(x) = \rel^\adim \cap \left \{ u \with \text{$u = ( \mu, C )
		\aplim_{\chi \to x} \theta ( \chi )$ for some $\theta \in Z$}
		\right \} \quad \text{for $x \in \rel^\adim$}.
	\end{equation*}
	Whenever $D$ is a nonempty countable $\mdistance \cdot \mu$ dense
	subset of $Z$ there holds
	\begin{equation*}
		P(x) = \Clos \{ \theta(x) \with \theta \in D \} \quad \text{for
		$\mu$ almost all $x$};
	\end{equation*}
	in fact, defining $E$ as in \ref{miniremark:lip_space} with $X =
	\rel^\adim$ and $g : \rel^\adim \to E$ by
	\begin{equation*}
		g(x)(u) = \inf \{ | u-\theta(x) | \with \theta \in D \} \quad
		\text{for $x,u \in \rel^\adim$},
	\end{equation*}
	$g$ is $\mu$ measurable and one observes that the equation holds at
	$x$ whenever $g$ and all members of $D$ are $(\mu,C)$ approximately
	continuous at $x$ which is the case for $\mu$ almost all $x$ by
	\cite[2.8.18, 2.9.13]{MR41:1976}. Consequently, in view of
	\cite[p.\,59]{MR0467310}, the function $P$ is a representative of the
	equivalence class introduced under the name ``$T_\mu^q$'' by
	Bouchitt\'e, Buttazzo and Seppecher in \cite[p.\,38]{MR1424348} in case
	$1 < q < \infty$ and by Bouchitt\'e, Buttazzo and Fragal{\`a} in
	\cite[p.\,403]{MR1857850} for all~$q$.
\end{miniremark}
\begin{comment}
	The next example illustrates the preceding concept of tangent plane.
\end{comment}
\begin{miniremark} \label{miniremark:tangent_planes}
	Suppose $1 < q < \infty$, $U = \rel \cap \oball 01$, $f : U \to \rel
	\cap \{ t \with t \geq 1 \}$ is a Borel function,
	\begin{equation*}
		\tint ab f^{1/(q-1)} \ud \mathscr{L}^1 = \infty \quad
		\text{whenever $-1 < a < b < 1$},
	\end{equation*}
	and $\mu$ is the Radon measure over $\rel$ satisfying $\mu (A) =
	\tint{A \cap U}{\ast} 1/f \ud \mathscr{L}^1$ for $A \subset \rel$.
	Then $\mu \leq \mathscr{L}^1 \restrict U$ and
	\begin{equation*}
		P(x) = \{ 0 \} \quad \text{for $\mu$ almost all $x$},
	\end{equation*}
	see \ref{miniremark:setup_bss}; in fact, if $\theta \in Z$, then $g =
	\theta / f$ satisfies
	\begin{equation*}
		\sup \big \{ \tint U{} \langle g, \Der \zeta \rangle \ud
		\mathscr{L}^1 \with \text{$\zeta \in \mathscr{D} ( U, \rel )$
		and $\eqLpnorm{\mathscr{L}^1 \restrict U} q \zeta \leq 1$}
		\big \} < \infty
	\end{equation*}
	and consequently $g$ is $\mathscr{L}^1 \restrict U$ almost equal to a
	continuous real valued function with domain $U$ by
	\ref{miniremark:prop_lp_spaces} and \cite[2.5.7\,(i),
	4.5.9\,(30)\,(\printRoman 1)\,(\printRoman 5), 4.5.16]{MR41:1976}
	implying
	\begin{gather*}
		\tint {U}{} g^{q/(q-1)} f^{1/(q-1)} \ud \mathscr{L}^1
		= \tint{}{} \theta^{q/(q-1)} \ud \mu < \infty, \\
		g(x) = 0 \quad \text{for $\mathscr{L}^1 \restrict U$ almost
		all $x$}, \qquad \theta (x) = 0 \quad \text{for $\mu$ almost
		all $x$}.
	\end{gather*}
	If additionally
	\begin{equation*}
		q < s < \infty \quad \text{and} \quad \tint U{}
		f^{1/(s-1)} \ud \mathscr{L}^1 < \infty,
	\end{equation*}
	then $\Lpnorm \mu {s/(s-1)} f < \infty$, hence $\theta f \in Z'$ for
	$\theta \in \mathscr{D} ( U, \rel )$ and, by \cite[2.8.18,
	2.9.13]{MR41:1976},
	\begin{equation*}
		P'(x) = \rel \quad \text{for $\mu$ almost all $x$},
	\end{equation*}
	where $Z'$ and $P'$ are defined as $Z$ and $P$ in
	\ref{miniremark:setup_bss} with $q$ replaced by $s$.  In any case,
	$\Tan^1 ( \mu, x ) = \rel$ for $\mu$ almost all $x$ by Allard
	\cite[3.5\,(1a)]{MR0307015}.
	
	Whenever $1 < q < s < \infty$ a function $f$ satisfying the
	conditions in the preceding paragraph may be constructed using a
	member of $\Lp 1 ( \mathscr{L}^1) \without \Lploc{(s-1)/(q-1)} (
	\mathscr{L}^1 )$ and the family of translations corresponding to a
	countable dense subset of $\rel$.
\end{miniremark}
\begin{remark} \label{remark:di_marino_speight}
	A very similar example was employed by Di~Marino and Speight, see
	\cite[Theorem~1]{MR3411142}, to show that ``the $p$-weak gradient
	depends on $p$''.  In fact, Di~Marino pointed out to the author that
	the fact that $P$ may depend on~$q$ for $\mu$ almost all $x$ could
	alternately be deduced from Di~Marino and Speight \cite{MR3411142}
	employing the equivalences established by Bouchitt{\'e}, Buttazzo, and
	Seppecher \cite{MR1424348} and Ambrosio, Gigli, and Savar{\'e}
	\cite{MR3090143}.
\end{remark}
\begin{miniremark} \label{miniremark:setup_bss_continued}
	Continuing \ref{miniremark:setup_bss}, results concerning the question
	which conditions on $V \in \RVar_\vdim ( \rel^\adim )$ guarantee
	\begin{equation*}
		P (x) = \Tan^\vdim ( \| V \|, x ) \quad \text{for $\| V \|$
		almost all $x$},
	\end{equation*}
	where $P$ is defined with reference to $\mu = \| V \|$, will be
	briefly summarised. Firstly, in view of Allard
	\cite[3.5\,(1)]{MR0307015}, Fragal\`a and Mantegazza
	\cite[2.4]{MR1686704} implies
	\begin{equation*}
		P (x) \subset \Tan^\vdim ( \| V \|, x ) \quad
		\text{for $\| V \|$ almost all $x$}
	\end{equation*}
	whenever $\mu = \| V \|$ for some $\adim \geq \vdim \in \nat$, $V \in
	\RVar_\vdim (\rel^\adim)$ with $\| V \| ( \rel^\adim ) < \infty$. If
	additionally $\| \delta V \|$ is a Radon measure and, in case $r>1$,
	also
	\begin{gather*}
		\mathbf{h}(V,\cdot) \in \Lploc r ( \| V \|, \rel^\adim ), \\
		( \delta V ) ( \theta ) = - \tint{}{} \mathbf{h} (V,x) \bullet
		\theta (x) \ud \| V \| \, x \quad \text{for $\theta \in
		\mathscr{D} ( \rel^\adim, \rel^\adim )$},
	\end{gather*}
	then equality holds; in fact, whenever $u \in \rel^\adim$ and $\varrho
	\in \mathscr{D} ( \rel^\adim, \rel )$ the function mapping $x \in
	\rel^\adim$ with $\Tan^\vdim ( \| V\|, x ) \in \grass \adim \vdim$
	onto $\varrho(x) \project{\Tan^\vdim ( \| V \|, x )} (u)$ belongs to
	$Z$. The latter argument is a variant of Fragal\`a and
	Mantegazza \cite[3.8]{MR1686704}.
\end{miniremark}
\begin{comment}
	Now, under suitable conditions, the presently introduced Sobolev space
	may be identified with the strong Sobolev space of Bouchitt{\'e},
	Buttazzo, and Fragal{\`a} \cite{MR1857850}.
\end{comment}
\begin{miniremark} \label{miniremark:prelim-comparison}
	If $\vdim$, $\adim$, $U$, $V$, and $\psi$ are as in
	\ref{miniremark:situation}, $p = \vdim$, $U = \rel^\adim$, $\| V \| (
	\rel^\adim ) < \infty$, $1 \leq q < \infty$,
	\begin{equation*}
		P (x) = \Tan^\vdim ( \| V \|, x ) \quad \text{for $\| V \|$
		almost all $x$},
	\end{equation*}
	where $P(x)$ is related to $\mu = \| V \|$ and $q$ as in
	\ref{miniremark:setup_bss}, and $\sigma$, $Q$, and $\pi$ are as in
	\ref{thm:replacement-w1q} and \ref{remark:quotient-sobolov-space},
	then $H_{\| V \|}^{1,q} ( \rel^\adim )$ with notion of derivative
	$\nabla_{\| V \|}$ and norm $\| \cdot \|_{1,q,\| V \|}$ defined by
	Bouchitt{\'e}, Buttazzo, and Fragal{\`a} in \cite[p.\,403]{MR1857850}
	is isometrically isomorphic to $Q$ with notion of derivative induced
	by $\derivative V{}$ and norm $\sigma \circ \pi^{-1}$ by
	\ref{remark:eq_classes} and
	\ref{thm:replacement-w1q}\,\eqref{item:replacement-w1q:complete}\,\eqref{item:replacement-w1q:q<oo_dense}.
	It appears to be unknown whether the condition on $P$ is redundant,
	see \ref{miniremark:setup_bss_continued}.
\end{miniremark}
\begin{comment}
	Finally, turning to the weak Sobolev space of Bouchitt{\'e},
	Buttazzo, and Fragal{\`a} \cite{MR1857850}, a comparison may be given
	as follows.
\end{comment}
\begin{miniremark} \label{miniremark:comparison-weak-sobolev-spaces}
	Suppose $\vdim, \adim \in \nat$, $\vdim \leq \adim$, $U$ is an
	open subset of $\rel^\adim$, $V \in \RVar_\vdim ( U )$, $\| V \| (
	\rel^\adim ) < \infty$, $\| \delta V \|$ is a Radon measure absolutely
	continuous with respect to $\| V \|$, and $\mathbf{h}(V,\cdot) \in \Lp
	\infty ( \| V \|, \rel^\adim )$.  In particular,
	\ref{miniremark:setup_bss_continued} implies
	\begin{equation*}
		P(x) = \Tan^\vdim ( \| V \|, x ) \quad \text{for $\| V \|$
		almost all $x$},
	\end{equation*}
	where $P(x)$ is related to $\mu = \| V \|$ and $q$ as in
	\ref{miniremark:setup_bss}.  Recalling
	\ref{example:quotient-lp-spaces}, \ref{remark:lploc-quotient}, and
	\cite[8.27]{snulmenn:tv.v2} and denoting by $D : \mathbf{W} (V,\rel)
	\to L_1^{\mathrm{loc}} ( \| V \|, \Hom ( \rel^\adim, \rel ) )$ the
	notion of derivative implicit there, define the quotient space
	\begin{align*}
		& Q = \big ( \mathbf{W} (V,\rel) \cap \Lp q ( \| V \| ) \cap
		\{ f \with D(f) \in L_q ( \| V \|, \Hom ( \rel^\adim, \rel ) )
		\} \big ) \big / W, \\
		& \qquad \text{where $W = \mathbf{W} (V,\rel) \cap \{ f \with
		\text{$f(x)=0$ for $\| V \|$ almost all $x$} \}$},
	\end{align*}
	with associated canonical projection $\pi$, and define the
	value of a norm on $Q$ at $f$ to be the sum of the $L_q ( \| V \|,
	\rel )$ norm of $f$ and the $L_q ( \| V \|, \Hom ( \rel^\adim, \rel )
	)$ norm of $(D \circ \pi^{-1})(f)$. Then $Q$ with notion of derivative
	$D \circ \pi^{-1}$ is isometrically isomorphic to the space $W_{\| V
	\|}^{1,q} ( U )$ with notion of derivative $D_{\| V \|}$ defined by
	Bouchitt{\'e}, Buttazzo, and Fragal{\`a} in \cite[p.\,403]{MR1857850}.
\end{miniremark}

\addcontentsline{toc}{section}{\numberline{}References}
\bibliography{UlrichMenne7v5}
\bibliographystyle{myalphaurl}

\medskip

{\small \noindent Max Planck Institute for Gravitational Physics (Albert
Einstein Institute) \newline Am M{\"u}hlen\-berg 1, \newline D-14476 Golm,
Germany \newline \texttt{Ulrich.Menne@aei.mpg.de}

\medskip \noindent University of Potsdam, Institute for Mathematics, \newline
OT Golm, Karl-Liebknecht-Stra{\ss}e 24--25 \newline D-14476 Potsdam, Germany
\newline \texttt{Ulrich.Menne@uni-potsdam.de} }

\end{document}